\documentclass[11pt,reqno]{amsproc}
\usepackage{amsmath,amsfonts,amssymb,amsthm}
\usepackage[abbrev,lite,nobysame]{amsrefs}
\usepackage{mathrsfs,esint}
 \usepackage[usenames,dvipsnames]{color}
\usepackage[usenames,dvipsnames]{color}
\usepackage[dvipsnames]{xcolor}
\usepackage{tikz}

\usepackage[margin=1in]{geometry}
\usepackage{dsfont}
\usepackage{enumerate,enumitem}
\usepackage{comment}
\usepackage{bbm}
\usepackage{graphicx}

\usepackage{tikz}
\usetikzlibrary{calc}

\usepackage[colorlinks=true, pdfstartview=FitV, linkcolor=BrickRed,citecolor=black, urlcolor=black]{hyperref}

\usepackage{mathtools}
\mathtoolsset{showonlyrefs}

\newtheorem{MAINtheorem}{Theorem}

\newtheorem{proposition}{Proposition}[section]
\newtheorem{lemma}[proposition]{Lemma}
\newtheorem{corollary}[proposition]{Corollary}
\newtheorem{theorem}[proposition]{Theorem}

\theoremstyle{definition}
\newtheorem{definition}[proposition]{Definition}

\newtheorem{remark}[proposition]{Remark}

\numberwithin{equation}{section}

 \makeatletter

\renewcommand\subsubsection{\@startsection{subsubsection}{3}%
\normalparindent{.5\linespacing\@plus.7\linespacing}{-.5em}
{\normalfont\bfseries}}

\def\@tocline#1#2#3#4#5#6#7{\relax
  \ifnum #1>\c@tocdepth % then omit
  \else
    \par \addpenalty\@secpenalty\addvspace{#2}%
    \begingroup \hyphenpenalty\@M
    \@ifempty{#4}{%
      \@tempdima\csname r@tocindent\number#1\endcsname\relax
    }{%
      \@tempdima#4\relax
    }%
    \parindent\z@ \leftskip#3\relax \advance\leftskip\@tempdima\relax
    \rightskip\@pnumwidth plus4em \parfillskip-\@pnumwidth
    #5\leavevmode\hskip-\@tempdima
      \ifcase #1
       \or\or \hskip 1em \or \hskip 2em \else \hskip 3em \fi%
      #6\nobreak\relax
    \dotfill\hbox to\@pnumwidth{\@tocpagenum{#7}}\par
    \nobreak
    \endgroup
  \fi}
\makeatother

\newcommand\eps{\varepsilon}
\newcommand\e{{\rm e}}
\newcommand\dd{{\rm d}}

\newcommand\Id{{\rm Id}}

\newcommand{\dist}{{\rm dist}}
\newcommand{\diver}{\text{div}}
\newcommand{\RR}{\mathbb{R}}
\newcommand{\NN}{\mathbb{N}}
\newcommand{\CC}{\mathbb{C}}
\newcommand{\TT}{\mathbb{T}}
\newcommand{\ZZ}{\mathbb{Z}}

\newcommand{\cL}{\mathcal{L}}
\newcommand{\cP}{\mathcal{P}}

\newcommand{\cM}{\mathcal{M}}
\newcommand{\cQ}{\mathcal{Q}}

\newcommand{\cG}{\mathcal{G}}

\newcommand{\ini}{{\rm in}}

\newcommand{\initial}{\text{in}}
\newcommand{\xx}{\mathbf{x}}

\newcommand{\www}{\mathbf{w}}
\newcommand{\HH}{\mathcal{H}}

\newcommand{\XX}{T}

\newcommand{\vvv}{\mathbf{v}}

\begin{document}
\title[A fast dynamo on the three-torus]{\vspace*{-3cm}A fast dynamo on the three-torus} 

\author[M. Coti Zelati]{Michele Coti Zelati}
\address{Department of Mathematics, Imperial College London}
\email{m.coti-zelati@imperial.ac.uk}

\author[M. Sorella]{Massimo Sorella}
\email{m.sorella@imperial.ac.uk}

\author[D. Villringer]{David Villringer}
\email{d.villringer22@imperial.ac.uk}

\subjclass[2020]{35Q35, 34L05,  37D20, 47A55, 76E25}

\keywords{Fast dynamo, anisotropic Banach spaces, exponential growth, spectral perturbation, uniformly hyperbolic maps}

\begin{abstract}
We study the kinematic dynamo equation on the three-torus $\mathbb{T}^3$ and provide a rigorous proof of fast dynamo action for a time-periodic, divergence-free, Lipschitz velocity field. 
Our construction is based on a stretch--fold--shear mechanism generating a uniformly hyperbolic flow. To analyze the associated dynamics, we develop anisotropic Banach spaces adapted to the underlying hyperbolic structure, allowing us to recover a discrete spectral picture for the ideal dynamo operator. In the strong-chaos regime, we show that this operator admits an eigenvalue with modulus strictly larger than one. 
We then prove that this instability persists under the singular perturbation induced by diffusion, yielding exponential growth of the magnetic field uniformly in the vanishing resistivity limit $\varepsilon \to 0$.
\end{abstract}

\maketitle

\tableofcontents

\section{Introduction}\label{sec:intro}

The fast dynamo conjecture asserts the existence of exponentially growing solutions to the kinematic dynamo equation
\begin{equation} \label{passive-vector}\tag{KDE} 
\begin{cases} \partial_t B=\nabla \times (u\times B)+\varepsilon \Delta B, \\ 
\nabla \cdot B =0, 
\end{cases} \qquad (t,x)\in (0,\infty)\times \mathbb{T}^3, 
\end{equation}
uniformly in the vanishing resistivity limit $\varepsilon\to 0$. Here, $\mathbb{T}^3=\mathbb{R}^3/\mathbb{Z}^3\simeq [0,1)^3$ denotes the periodic box, and the velocity field $u:(0,\infty)\times \mathbb{T}^3\to \RR^3$ is divergence-free. 
The conjecture, originally posed by Ya.~B.~Zeldovich and A.~D.~Sakharov in the 1970s (see \cite{AK98}*{Chapter~V} and \cite{Arnold04}*{Pb.~1994--28}), can be stated formally as follows.

\smallskip

\noindent\textbf{Fast Dynamo Conjecture.} \textit{There exist a smooth, time-independent, divergence-free field $u:\TT^3\to\RR^3$, and constants $\eps_0, \gamma_0>0$ such that, for every $\varepsilon\in(0,\eps_0]$, one can find an initial datum $B_{in}^\varepsilon\in L^2$ whose corresponding solution $B^\varepsilon(t)$ to \eqref{passive-vector} satisfies}
\begin{equation}\label{eq:expoGro}
\|B^\varepsilon(t)\|_{L^2}\ge \e^{\gamma_0 t}\,\|B_{in}^\varepsilon\|_{L^2},
\qquad \forall\, t\ge 0.
\end{equation}
\textit{Such a velocity field $u$ is called a \emph{fast dynamo}.}

\smallskip

Despite its simple formulation, this problem remains open in the class of smooth, time-independent velocity fields.

Originating in astrophysics as a mechanism to explain the persistence of magnetic fields in stars and planets \cite{larmor1919possible}, the problem has since motivated extensive research across dynamical systems, spectral theory, and stochastic analysis \cites{Arnold04,AK98,CG95,BaxendaleRozovskii93,CZNF24}.
Whether purely autonomous flows can sustain fast dynamo action remains a major open problem. Even numerically, the situation is unclear: for instance, simulations of the classical ABC flows suggest that the principal growth rate may not reach its asymptotic regime even for very small resistivities $\varepsilon\sim 10^{-4}$, see \cite{BD13}.
Instead, a variant of the problem, where one allows for time-periodic velocity fields $u$, has widely been regarded as a natural and physically relevant model of fast dynamo action \cites{AK98,CG95}. While allowing for a richer class of velocity fields than the autonomous setting, it still retains many of the central analytical, dynamical and spectral difficulties associated with sustained magnetic field amplification. Our main result establishes a fast dynamo theorem in this time-periodic framework for a class of Lipschitz velocity fields.

\begin{MAINtheorem}\label{thm:fastdynamo}
There exists a time-periodic, Lipschitz, divergence-free velocity field on $\TT^3$ which is a fast dynamo.
\end{MAINtheorem}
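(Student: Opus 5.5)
We construct a piecewise-in-time velocity field with a fixed period made of three phases: stretch, fold, shear. Each phase is a shear flow of the form $u=(f(x_2),0,0)$, $u=(0,g(x_3),0)$, or similar. Each is divergence-free and Lipschitz, and is switched on over a unit time interval.

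The time-one map $\Phi:\TT^3\to\TT^3$ will be the composition of explicit volume-preserving maps. We choose the profiles so that:
\begin{itemize}[label={}]
\item $\Phi$ is uniformly hyperbolic, with a cone field $\mathcal{C}^u$ that is strictly invariant under $D\Phi$ and uniformly expanded at rate $\lambda\gg1$, and a complementary contracting cone $\mathcal{C}^s$;
\item its derivative $D\Phi$ is piecewise constant. Concretely, we take piecewise-linear (tent/sawtooth) profiles; these are Lipschitz but not $C^1$, which is why the theorem only claims Lipschitz.
\end{itemize}
The fold phase is arranged so that stretched flux tubes are folded back \emph{with aligned orientation}, so no cancellation occurs. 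This is the stretch-fold mechanism of Vainshtein-Zeldovich, with the shear making it genuinely hyperbolic in three dimensions.

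Over one period the ideal ($\eps=0$) evolution is the Cauchy transfer operator
\[
\mathcal{L}_0B=(D\Phi\, B)\circ\Phi^{-1}.
\]
The diffusive evolution $\mathcal{L}_\eps$ interlaces the same transports with heat semigroups. By Floquet periodicity, growth of $\|B^\eps(n)\|$ reduces to the spectral radius of $\mathcal{L}_\eps$ on a suitable space. This space must embed into $H^{-s}$, and $L^2$ growth is then recovered from parabolic smoothing over one period.

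The core analytic step is building anisotropic Banach spaces $\mathcal{B}$, in the spirit of Gou\"ezel-Liverani and Baladi-Tsujii. Their norms have positive regularity along the unstable cone and negative regularity (duality against test functions) along the stable cone, adapted to the piecewise-affine structure. On $\mathcal{B}$ we prove a Lasota-Yorke inequality
\[
\|\mathcal{L}_0^nB\|_{\mathcal{B}}\le C\sigma^n\|B\|_{\mathcal{B}}+C_n\|B\|_{\mathcal{B}_w},
\]
with $\mathcal{B}\hookrightarrow\mathcal{B}_w$ compact and $\sigma$ smaller than the candidate growth rate. By Hennion's theorem, $\mathcal{L}_0$ is quasi-compact, so its spectrum outside radius $\sigma$ is discrete.

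Next we exhibit an eigenvalue $|\mu|>1$ in the strong-chaos regime $\lambda\to\infty$. We test $\mathcal{L}_0^n$ against a field aligned with the unstable direction and pair it with an invariant co-vector/measure. Orientation-preserving folding makes this pairing grow like $(c\lambda)^n$ with $c\lambda>1$. Since $\sigma$ scales like a negative power of the contraction rate, it is smaller, and so the spectral radius exceeds $\max(1,\sigma)$.

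Finally, we treat $\mathcal{L}_\eps-\mathcal{L}_0$ as a singular perturbation, in the Keller-Liverani framework. The heat semigroup $e^{\eps\Delta}$ is not close to the identity in operator norm on $\mathcal{B}$, but it satisfies two properties:
\begin{itemize}[label={}]
\item $\|\mathcal{L}_\eps-\mathcal{L}_0\|_{\mathcal{B}\to\mathcal{B}_w}\to0$;
\item $\mathcal{L}_\eps$ obeys the same Lasota-Yorke inequality uniformly in $\eps$, because diffusion is a contraction that does not damage the anisotropic norm, given that we choose the norms to be compatible with convolution by the heat kernel.
\end{itemize}
Keller-Liverani then yields an eigenvalue $\mu_\eps$ of $\mathcal{L}_\eps$ with $\mu_\eps\to\mu$, so $|\mu_\eps|\ge e^{\gamma_0}>1$ for $\eps\le\eps_0$. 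The associated eigenfunction, smoothed over one period, is an $L^2$ divergence-free datum. Since it is an eigenfunction, the $L^2$ norm grows at every integer time. For non-integer times we use the a priori bound $\|B(t)\|\ge e^{-C\|\nabla u\|_\infty}\|B(\lfloor t\rfloor)\|$, and adjust $\gamma_0$ and the initial normalization accordingly.

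The main obstacle is designing the anisotropic spaces. They must simultaneously handle the discontinuities of $D\Phi$ along the fold and kink surfaces, which demands control of the complexity growth of the singular set, and be uniformly stable under $e^{\eps\Delta}$. I would first try norms defined by integrating against test forms on admissible leaves close to the stable direction, since these behave well under both transport and convolution.
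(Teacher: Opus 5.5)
Your overall architecture (anisotropic spaces, Lasota--Yorke, strong-chaos limit, Keller--Liverani, then continuous time) matches the paper's. The step that fails is the production of the eigenvalue.

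For a map generated by a divergence-free flow on $\TT^3$, the lift satisfies $\Phi(x+k)=\Phi(x)+k$. Hence $\Phi-\mathrm{id}$ is periodic and $\int_{\TT^3}D\Phi^n\,\dd x=\Id$ for every $n$; equivalently, the mean field is conserved. So if you test $\mathcal L_0^n$ on a constant field $e$ in the unstable direction, and pair with Lebesgue measure (the invariant measure) and a constant co-vector $e^*$, you get exactly $e^*e$ for every $n$: there is no growth.

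The same identity rules out ``folding with aligned orientation'' in your piecewise-affine setting, where the unstable cone is narrow about a fixed direction $e$. Indeed $\int e^*D\Phi\,e\,\dd x=e^*e$, while $|e^*D\Phi\,e|\gtrsim\lambda$ pointwise. So $e^*D\Phi\,e$ must be negative on a set carrying essentially half of $\int|e^*D\Phi\,e|$; periodic shear profiles necessarily have zero-mean slopes. This forced cancellation is exactly what kills growth. In the strong-chaos limit, the normalised planar stretch--fold operator converges to
$$h\mapsto\bigl(\textstyle\int\mathrm{sgn}(x-\tfrac12)h^{(1)}\bigr)\,\mathrm{sgn}(y-\tfrac12)\,e_1,$$
which is nilpotent. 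A growing ``invariant co-vector'' would be an eigenvector of the dual operator, which is what you are trying to prove.

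The missing idea is the paper's. Restrict to the Fourier mode $\e^{2\pi i z}$, on which the out-of-plane shear $z\mapsto z-g(x,y)$ acts as multiplication by the complex phase $\e^{2\pi i g}$. The limit operator then has the eigenvalue
$$\mu=\int_{\TT^2}\mathrm{sgn}(x-\tfrac12)\,\mathrm{sgn}(y-\tfrac12)\,\e^{2\pi i g},$$
which is nonzero for suitable $g$ and vanishes for $g\equiv0$. Transferring $\mu$ to finite $\alpha$ then needs two further ingredients: operator-norm convergence $\|\e^{2\pi i g}(\cL_\alpha-\cL_\infty)\|_{X\to X}\lesssim\alpha^{-\tilde\eta}$ on the $\alpha$-dependent spaces, and $\alpha$-uniform resolvent bounds. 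Note also that the Lasota--Yorke constant of the unnormalised vector-valued operator carries the factor $|D\Phi|\sim\lambda$; in the paper it is $C\alpha^{2-\eta}$ against an eigenvalue of at least $\alpha^2/4$. So you need $|\mu|\gtrsim\lambda$, not merely $|\mu|>1$, and $\sigma$ is not a negative power of the contraction rate.

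There is a second gap in the diffusive step. For \eqref{passive-vector}, diffusion acts simultaneously with the piecewise-linear shears, so the period map is not an interlacing of transports and heat semigroups. Your argument that $\e^{\eps\Delta}$ is a contraction compatible with the norms only covers the pulsed model $\e^{\eps\Delta}\cL$, which is the paper's Theorem~\ref{thm:fastpulsed}. With continuous noise, trajectories within $O(\sqrt\eps)$ of the kinks cross between branches on which $D\Phi$ jumps. No pathwise factorisation preserving the hyperbolic structure survives, so the uniform Lasota--Yorke inequality is not automatic.

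The paper obtains it in two steps:
\begin{itemize}
\item It compares the averaged Green's matrix with an explicit Gaussian model kernel; the error is exponentially small outside an $O(\sqrt\eps)$-neighbourhood of $S^\star$.
\item It inserts an idle diffusion block of length $N$, giving $\|\e^{N\eps\Delta}h\|_{L^\infty}\lesssim(N\eps)^{-(1-\sigma)/2}\|h\|_s$ exactly at that critical scale. This makes the error term $C_\alpha N^{-(1-\sigma)/2}$, which is small for $N$ large.
\end{itemize}

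Finally, the divergence-free property of the eigenfunction needs an argument. $\nabla\cdot B$ solves an advection--diffusion equation and is $L^2$-nonexpanding, so $|\lambda_\eps|>1$ forces it to vanish.
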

The significance of this result is threefold. First, the growth parameters in \eqref{eq:expoGro} are independent of $\eps \in (0,1]$. This demonstrates the sharpness of the general upper bound
\begin{equation}\label{eq:expbound}
  \|B(t)\|_{L^2}\le \e^{\|\nabla u\|_{L^\infty_{t,x}} t}\,\|B_{in}\|_{L^2},  \qquad \forall\, t\ge 0,
\end{equation}
which is valid for any  solution to \eqref{passive-vector}. 
Second, to the best of our knowledge, this is the first result that constructs a genuine fast dynamo on $\mathbb{T}^3$ driven by a Lipschitz velocity field, rather than a discrete-time map. 
Third, our proof rigorously establishes the existence of an isolated eigenvalue for the ideal dynamo operator (for $\eps=0$, see Theorem~\ref{thm:main eigenvalue}) in a suitable space of distributions and in the limit of asymptotically large stretching---a result long conjectured for flows \cites{CG95,soward_1993}, but never previously proven. 

A detailed survey of the literature and the technical motivations underlying our approach are provided in the subsequent sections. 

\subsection{The Lagrangian picture and the transfer operator approach}\label{sub:lagrangianpic}
Our approach is intrinsically Lagrangian, hinging on the representation formula for the ideal dynamo equation ($\eps=0$), given by\footnote{This representation formula yields the solution to $\partial_t B + u \cdot \nabla B - B \cdot \nabla u = 0$. If the initial datum $B_{\ini}$ satisfies $\diver B_{\ini}=0$, then this coincides with the solution to \eqref{passive-vector}.}
\begin{equation}\label{eq:solution-formula}
    B(t,x)
    = \bigl(D \Phi_t\, B_{\ini}\bigr)\circ \Phi_t^{-1}(x),
    \qquad
    \frac{\dd}{\dd t}\Phi_t(x)=u\bigl(t,\Phi_t(x)\bigr),
    \qquad
    \Phi_0(x)=x \,,
\end{equation}
where $\nabla\cdot B_{\ini}=0$. Consequently, the ideal dynamo evolution is determined by the flow map $\Phi_t$ and its derivative. By restricting attention to  $T$-periodic velocity fields, this evolution defines a linear operator $\mathcal{L}h := (D\Phi_T\, h)\circ \Phi_T^{-1}$, which is the pushforward operator induced by $\Phi_T$ acting on vector fields. In the language of dynamical systems, $\cL$ may be viewed as a \emph{vector-valued transfer operator}---the natural extension to vector fields of the scalar transfer operator $h\circ \Phi_T^{-1}$ acting on functions.

To identify a regime where $\cL$ admits unstable eigenvalues, we construct a velocity field $u$ that implements the \emph{stretch-fold-shear} (SFS) mechanism \cite{CG95} (see Section \ref{sub:velocity}). This model utilizes a pair of alternating shear flows to generate chaotic advection in the $(x,y)$-plane, a strategy that has been used extensively to establish optimal exponential mixing and enhanced dissipation rates \cites{ELM25,pierrehumbert1994tracer,BCZG23,CIS24,CRWZ23,LM26}. However, such two-dimensional dynamics are insufficient for dynamo action; as mandated by the Zeldovich anti-dynamo theorem \cite{zeldovich1980magnetic}, any purely planar flow eventually leads to the decay of the magnetic field. To circumvent this topological restriction, we introduce a third, out-of-plane shear component. This SFS paradigm has long been a cornerstone of numerical and theoretical investigations into the fast dynamo problem \cites{baylychildress,Gilbert_1993,DO93,finn1990fast,GilbertSFS,PMG23,Soward94}, as it provides a minimal setting that reconciles the simplicity of alternating shears with the three-dimensional requirements for magnetic growth.

\subsection{Uniform hyperbolicity and anisotropic Banach spaces}
\label{sec:hyperbolicity intro}
The primary obstacle in establishing the fast dynamo property \eqref{eq:expoGro} is the singular nature of the vanishing resistivity limit $\varepsilon \to 0$.  This difficulty is already apparent in simplified settings, such as pulsed diffusions, where one might attempt to view the operator $\e^{\varepsilon \Delta} \mathcal{L}$ as a perturbation of the vector-valued transfer operator $\mathcal{L}$. However, classical results \cite{Chicone_Latushkin_Montgomery-Smith_1995} show that, in the $L^2$ framework, the ideal operator does not possess a discrete spectrum; instead, its spectrum typically fills a vertical strip in the complex plane. 

Such a spectral structure is highly unstable under singular perturbations \cite{K76}. In particular, there exist velocity fields for which the ideal operator is spectrally unstable at $\varepsilon=0$, yet becomes spectrally stable for all $\varepsilon>0$. This instability explains why recent rigorous works \cites{CZSV25, sorellaVillringer2025, RowanDynamo25, NF_DV2025spectral, Oseledets_1993} do not proceed via perturbative arguments. Instead, they typically establish growth for fixed $\varepsilon>0$ and, in some cases \cites{CZSV25, sorellaVillringer2025}, use scaling arguments to pass to the limit $\varepsilon \to 0$.

To overcome this difficulty, we move beyond the $L^2$ framework and exploit the \emph{uniform hyperbolicity} of the flow. The guiding heuristic is that fast dynamo action should be associated with a limiting eigenmode as $\varepsilon\to 0$. However, it has long been understood that such a limiting eigenfunction cannot be smooth \cite{Moffatt_Proctor_1985}, see also \cite{CG95}*{Section 4.3}. Rather, as suggested in \cites{CG95,Oseledets_1993,C92}, one expects it to belong to a suitable space of distributions adapted to the chaotic features of the flow \cite{soward_1993}. 

This leads to the following strategy: identify an appropriate distribution space on which the limiting eigenfunction lives; establish favorable spectral properties of the ideal operator on this space; and finally treat the diffusive term $\varepsilon \Delta$ via singular perturbation theory.

The idea of extending transfer operators to distributional spaces in order to recover improved spectral properties has proved highly successful in hyperbolic dynamics. In the case of uniformly hyperbolic systems, the associated scalar transfer operator acts anisotropically: it enhances regularity along expanding directions while improving distributional behaviour along contracting ones. This observation underlies the construction of \emph{anisotropic Banach spaces}, developed in a series of works \cites{Rugh1992CorrelationSpectrum,Kitaev1999FredholmDeterminants,BlankKellerLiverani2002RPFAnosov,GouezelLiverani2006BanachSpacesAnosov,demers_liverani,BaladiTsujii2007AnisotropicHolderSobolev}.

In our vector-valued setting, this anisotropic structure allows one to control the essential spectrum by confining it to a disk of radius strictly smaller than the spectral radius. As a consequence, the leading spectral behaviour is governed by isolated eigenvalues—so-called \emph{Ruelle resonances}---which are stable under small diffusive perturbations \cite{Keller_Liverani}. This picture is further supported by \cite{zworski_2015}, where it is shown that, for smooth contact Anosov flows, resonances persist under sufficiently small elliptic perturbations. Related ideas have also been successfully applied to the passive scalar problem in the recent work \cite{LM26}, where anisotropic spaces were used to establish an averaged version of Batchelor's law.

There are, however, two major challenges in our setting. First, since smooth Anosov flows do not exist on $\TT^3$, we must construct anisotropic Banach spaces adapted to \emph{piecewise smooth} dynamics. Second, while the existing theory provides sharp bounds on the essential spectral radius, much less is known about the identification of the \emph{leading} spectral radius. For our purposes, it is crucial not only to produce an isolated resonance, but also to prove that its modulus is strictly larger than $1$.

To address this issue, we analyze the vector-valued transfer operator in a strong-chaos regime, where both stretching and mixing become large (see Section \ref{sub:velocity}). In this limit, the leading resonance can be identified explicitly (Theorem~\ref{thm:main eigenvalue}).

\subsection{From maps to continuous flows}
A central difficulty in the fast dynamo problem lies in the presence of strong geometric and topological constraints. In particular, a large class of \emph{antidynamo theorems} \cites{zeldovich1980magnetic,Cowling33,AK98,CG95,Klapper_Young_1995} rule out dynamo action in settings with specific symmetries; most notably, Zeldovich's theorem \cite{zeldovich1980magnetic} precludes such behaviour in two-dimensional flows. These results highlight the rigidity of the continuous-time problem and help explain the scarcity of rigorous constructions.

As a result, much of the mathematical progress has taken place in the setting of discrete-time dynamical systems \cites{baylychildress,CG95,Oseledets_1993,C92,AG93,GilbertSFS,Gilbert_1993,Vytnova_2014}. In this framework, the evolution of the magnetic field is modeled via a vector-valued transfer operator associated with a diffeomorphism $T$ of the manifold. This approach allows one to consider strongly chaotic maps, such as toral automorphisms, which need not arise as time-$1$ maps of any incompressible flow. 
While this flexibility has led to the only rigorous examples of fast dynamo action on compact domains to date \cites{Oseledets_1993,Gilbert_1993}, it comes at a cost: such maps may bypass key structural constraints of the physical problem.\footnote{A common unphysical feature of algebraic maps, such as the CAT map, is that the spatial average of their Jacobian matrix need not be the identity. As a result, even the spatial average of a vector field may grow exponentially. By contrast, such growth of the mean field cannot occur for maps generated by the flow of a divergence-free velocity field.}

In contrast, when the dynamics are generated by a continuous-time, divergence-free velocity field on a compact domain, the full force of antidynamo constraints and spectral obstructions must be confronted. This makes the construction of fast dynamos in this setting significantly more challenging.

The present work addresses this difficulty by constructing a fast dynamo generated by a time-periodic, Lipschitz velocity field on the compact manifold $\TT^3$. In doing so, we provide the first example of fast dynamo action in continuous time on a compact domain, within the natural class of divergence-free flows.

This result should be contrasted with several recent developments. The construction in \cite{CZSV25} establishes fast dynamo action for autonomous flows on $\RR^3$, where the lack of compactness alters the spectral properties of the associated operator and allows for the use of localized approximate eigenfunctions. On the other hand, recent ``$\limsup$''-type results \cites{RowanDynamo25,sorellaVillringer2025} consider fully time-dependent velocity fields and establish growth along sequences of times $t_n \to \infty$, rather than uniform exponential growth.

By contrast, our result yields uniform-in-time exponential growth for a time-periodic flow on the compact torus, fully within the constraints of the classical setting. Notably, the velocity field constructed in Theorem~\ref{thm:fastdynamo} is also a \emph{perfect dynamo}; see Corollary~\ref{corollary:perfect-dynamo}. In particular, it provides an explicit example for which the flux conjecture holds (see \cite{CG95}*{Chapter 4, Conjecture 1}).

\subsection{Continuous diffusion and singular perturbations}
As discussed in Section \ref{sec:hyperbolicity intro}, the main motivation for working on anisotropic spaces of distributions is that they provide a framework in which the diffusive term $\eps \Delta$ may be treated perturbatively. However, while the piecewise smooth structure of our dynamics is crucial for constructing suitable anisotropic Banach spaces for the inviscid problem, it also makes the perturbative analysis of diffusion substantially more delicate.

Indeed, for $\eps>0$, the stochastic flow associated with \eqref{passive-vector} typically deviates from the deterministic trajectory by distances of order $\mathcal O(\sqrt{\eps})$. Near the singularity set \eqref{d:forward-singular}, which separates the different smoothness regions of the dynamics, such fluctuations may cause trajectories to cross between distinct branches of the map, where the stable and unstable directions vary discontinuously. Consequently, the random transfer operators associated with individual noise realisations no longer preserve the hyperbolic structure underlying the anisotropic framework, preventing a direct pathwise perturbative analysis.

To overcome this difficulty, we work instead at the level of the averaged \emph{Green's matrix} (see Section \ref{sec:heat} and \eqref{eq:greens matrix expression}) associated with \eqref{passive-vector}. A central part of the paper is devoted to establishing sharp $\eps$-dependent kernel estimates showing that, away from the singularity set, the Green's matrix is well approximated by an explicit \emph{model kernel},  see  Section \ref{sec:kernel}. Inside each smoothness strip, this model kernel behaves as a Gaussian transport kernel, while the discrepancy between the true and model dynamics is exponentially small outside an $\mathcal O(\sqrt{\eps})$ neighbourhood of the singularity set.

The remaining difficulty is therefore concentrated precisely at spatial scales of order $\mathcal O(\sqrt{\eps})$. To handle this regime, we introduce an additional ``idle heat block'', namely an auxiliary diffusive step acting exactly at these critical scales. This additional smoothing restores sufficient regularity to close the singular perturbation argument and ultimately yields spectral stability of the leading resonance under continuous-time diffusion.

This should be contrasted with the somewhat simpler setting of \emph{pulsed diffusion}, which is more common in the dynamical systems literature and in previous works on fast dynamos for maps \cites{AK98,Oseledets_1993,CG95}. In that setting, one studies the vector-valued transfer operator
\begin{equation}\label{eq:transfer}
 \cL B=(DT\,B)\circ T^{-1},
\end{equation}
associated with a diffeomorphism $T:\TT^3\to\TT^3$, together with the pulsed operator
\[
B \mapsto \e^{\eps \Delta}\cL B.
\]
A fast dynamo in the setting of maps corresponds to a map $T$ for which the spectral radius of $\e^{\eps\Delta}\cL$ remains strictly larger than $1$ uniformly as $\eps\to0$. As a consequence of the methods developed in the present paper, we also obtain the following result for pulsed dynamos.

\begin{theorem}
\label{thm:fastpulsed}
There exists a Lipschitz continuous, measure-preserving map $T:\TT^3\to\TT^3$, isotopic to the identity, such that the associated pulsed transfer operator $\e^{\eps\Delta}\cL$, with $\cL$ defined in \eqref{eq:transfer}, satisfies
$$
\liminf_{\eps \to 0} \lim_{n \to \infty} \|(\e^{\eps \Delta} \cL)^n\|^{1/n} >1.
$$
In other words, it is a fast dynamo in the sense of maps.
\end{theorem}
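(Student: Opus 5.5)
The plan is to take $T=\Phi_{\XX}$, the time-period map of the stretch--fold--shear velocity field of Section~\ref{sub:velocity}, in the strong-chaos regime. By construction $T$ is Lipschitz and measure preserving. It is isotopic to the identity because it is the endpoint of the flow $\Phi_t$, $t\in[0,\XX]$. It is piecewise smooth and uniformly hyperbolic, with singularity set given by \eqref{d:forward-singular}. The proof then follows the same three-step strategy as for Theorem~\ref{thm:fastdynamo}, with the Green's matrix replaced by the much simpler heat semigroup $\e^{\eps\Delta}$ applied after each application of $\cL$.

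\textbf{Step 1: the ideal operator.} I would take the anisotropic Banach space $\mathcal B$ (with weak companion $\mathcal B_w$) constructed for the piecewise hyperbolic dynamics. On this space I would invoke the Lasota--Yorke inequality for $\cL$, which bounds its essential spectral radius by some $\rho_{ess}$. I would then use Theorem~\ref{thm:main eigenvalue}, which provides an isolated eigenvalue $\lambda_0$ with $|\lambda_0|>\max(1,\rho_{ess})$ in the strong-chaos limit. Since $\cL$ preserves divergence-free fields and the eigenvalue comes with a divergence-free eigendistribution, the growth mode is physically admissible.

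\textbf{Step 2: the perturbation.} I would apply the Keller--Liverani perturbation theorem to the family $\cL_\eps=\e^{\eps\Delta}\cL$. Three things need checking:
\begin{itemize}
\item[(i)] a Lasota--Yorke inequality for $\cL_\eps$ with constants uniform in $\eps$;
\item[(ii)] uniform boundedness of $\cL_\eps$ on $\mathcal B_w$;
\item[(iii)] the triple-norm estimate $\|(\cL_\eps-\cL)h\|_{\mathcal B_w}\le \eta(\eps)\|h\|_{\mathcal B}$ with $\eta(\eps)\to0$.
\end{itemize}
For (i) and (ii), I would show that $\e^{\eps\Delta}$ is a uniformly bounded operator on both $\mathcal B$ and $\mathcal B_w$. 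The heat kernel is a Gaussian average of translations. Translation commutes with the test-function and stable-curve integrations that define the anisotropic norms, provided the admissible curve family is translation invariant; this is where working on the torus, rather than with the continuous-flow Green's matrix, helps. So $\|\e^{\eps\Delta}h\|\le\|h\|$ in both norms, and the Lasota--Yorke constants for $\cL$ transfer directly to $\cL_\eps$.

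For (iii), I would write $\e^{\eps\Delta}-\Id=\int_0^\eps\Delta \e^{s\Delta}\,ds$, or equivalently estimate the translation $\tau_v-\Id$ for $|v|\lesssim\sqrt{\eps}$. The goal is to show $\|\tau_vh-h\|_{\mathcal B_w}\lesssim|v|^\beta\|h\|_{\mathcal B}$, which one gets by trading one degree of smoothness (or a Hölder exponent) between the strong and weak norms. Integrating this against the Gaussian gives $\eta(\eps)\lesssim\eps^{\beta/2}$.

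\textbf{Step 3: conclusion.} Keller--Liverani then gives, for $\eps$ small, an eigenvalue $\lambda_\eps$ of $\cL_\eps$ on $\mathcal B$ with $\lambda_\eps\to\lambda_0$. Hence the spectral radius on $\mathcal B$ is at least $(1+|\lambda_0|)/2>1$. To transfer this to the operator norm in the statement, I would first note that for $\eps>0$ the operator $\e^{\eps\Delta}$ is smoothing, so the eigenfunction $h_\eps$ is smooth and lies in $L^2$. Then $\|(\cL_\eps)^nh_\eps\|_{L^2}=|\lambda_\eps|^n\|h_\eps\|_{L^2}$, which gives $\lim_n\|(\cL_\eps)^n\|^{1/n}\ge|\lambda_\eps|$, uniformly in small $\eps$.

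\textbf{Main obstacle.} I expect the hardest point to be checking that the anisotropic norm is translation compatible near the singularity set. Translating a stable curve that crosses a discontinuity line of $T$ could destroy admissibility. My first approach would be to define the norms using curves in a translation-invariant cone family on all of $\TT^3$; the shear construction plausibly gives constant cones on each strip, which would make this work. If the cones are not translation invariant, the fallback is to isolate an $O(\sqrt{\eps})$ neighbourhood of the singularity lines and control it by the weak norm, in the spirit of the idle heat block.
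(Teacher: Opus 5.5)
Your perturbative core is the paper's own argument. $\Sigma$ consists of all segments with direction in the constant cone $C_s$ of Lemma~\ref{lemma:uniformly hyperbolic}, defined with no reference to $S^\star$, so it is translation invariant. Lemma~\ref{lemma:gaussian-bound} then gives exactly your (i)--(iii): Gaussian convolution contracts all three norms, and $|\e^{\eps\Delta}h-h|_w\le C_\alpha\eps^{\beta/2}\|h\|$, using the unstable norm for shifts below $2/\alpha$ and Gaussian tails beyond. Your ``main obstacle'' is therefore a non-issue: the singularity set enters only through the curve cutting inside the Lasota--Yorke proof for $\cL_\alpha$.

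The gap is elsewhere. Proposition~\ref{proposition:main lasota yorke} and Theorem~\ref{thm:main eigenvalue} concern the twisted planar operator $\e^{2\pi i g}\cL_\alpha$ on the space $X$ over $\TT^2$. You apply them instead to the three-dimensional transfer operator $\cL$ of \eqref{eq:transfer} for $T=\mathcal T_\alpha$, and suggest building the norms from curves on all of $\TT^3$. That does not work as stated. $D\mathcal T_\alpha$ fixes $\partial_z$, so $z$ is a neutral direction in which the stable-curve construction gains nothing. None of the paper's Lasota--Yorke estimates apply there, and no eigenvalue of the 3D operator has been produced. Likewise, your claim that the eigendistribution is divergence-free is unsupported: the planar eigendistribution $h$ carries no such information by itself.

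The missing idea is the Fourier reduction used in the proof of Theorem~\ref{thm:fastdynamo}. Both $\cL$ and $\e^{\eps\Delta}$ preserve fields of the form $\e^{2\pi i z}(h,H^{(3)})$, on which $\e^{\eps\Delta}$ acts as $\e^{-4\pi^2\eps}\e^{\eps\Delta_{x,y}}$. By \eqref{eq:comp-intro}, $\e^{\eps\Delta}\cL$ is block upper triangular on this subspace, with diagonal blocks $\cP_\eps=\e^{-4\pi^2\eps}\e^{\eps\Delta_{x,y}}\e^{2\pi i g}\alpha^2\cL_\alpha$ and $J_\eps H=\e^{-4\pi^2\eps}\e^{\eps\Delta_{x,y}}\bigl(\e^{2\pi i g}H\circ T_\alpha^{-1}\bigr)$. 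The vertical shear now appears only as the $C^1$ multiplier $\e^{2\pi i g}$, controlled by Lemma~\ref{lemma:smooth multiplier}. It is $\alpha^{-2}\cP_\eps$ on $X$ to which your Steps 1--2 and Keller--Liverani apply.

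You then lift the eigenfunction $h_\eps$, which is bounded by heat smoothing. Since $\|J_\eps\|_{L^2\to L^2}<1<|\lambda_\eps|$, setting $H^{(3)}=(\lambda_\eps-J_\eps)^{-1}\e^{-4\pi^2\eps}\e^{\eps\Delta_{x,y}}\bigl(\e^{2\pi i g}\mathcal K_{\alpha,g}h_\eps\bigr)$ yields an $L^2$ eigenfunction of $\e^{\eps\Delta}\cL$ with eigenvalue $\lambda_\eps$. For the bare norm bound in the statement, testing the triangular operator on $\e^{2\pi i z}(h_\eps,0)$ already suffices.

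Divergence-freeness is then a consequence rather than an input. Because $T$ preserves volume, $\nabla\cdot(\e^{\eps\Delta}\cL B)=\e^{\eps\Delta}\bigl((\nabla\cdot B)\circ T^{-1}\bigr)$ is an $L^2$ contraction of $\nabla\cdot B$. Hence $|\lambda_\eps|>1$ forces $\nabla\cdot B=0$, exactly as in Step~3 of the proof of Theorem~\ref{thm:fastdynamo}.
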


\begin{remark}
The kernel estimates developed in this paper are also closely connected to problems in mixing and enhanced dissipation. In particular, similar perturbative and spectral-stability arguments can be used to establish exponential mixing estimates for passive scalars, uniform for $0<\eps\ll1$, as well as enhanced dissipation estimates with decay rates independent of $\eps$ for the first two components of the velocity field defined in \eqref{eq:u_def}. Since this lies beyond the scope of the present work, we leave these questions for future investigation.
\end{remark}

% {\color{red}
%     \begin{remark}
% Similar perturbative and spectral-stability arguments can be used to prove exponential mixing estimates for passive scalars, uniform for $0<\varepsilon\ll1$,  as well as  enhanced dissipation estimates with exponential rate independent on $\eps$
% with  the first two components of the velocity field defined in \eqref{eq:u_def}. 
% Since this lies beyond the scope of the present paper, we leave it for future work.
% \end{remark}}

\section{Main ideas and outline of the proof}

The proof of Theorem~\ref{thm:fastdynamo} is based on a quantitative analysis of a family of transfer operators in a strong-chaos regime, where stretching, folding, and phase mixing interact to produce spectral instability. 

This section outlines the main ingredients of the argument. We first describe the geometric construction of the velocity field and the associated dynamical mechanism (Section~\ref{sub:velocity}). We then present the perturbative framework of Keller and Liverani \cite{Keller_Liverani}, which allows us to control the effect of diffusion (Section~\ref{sub:KellerLiverani}). Finally, we summarize the key steps in the proof of Theorem~\ref{thm:fastdynamo} (Section~\ref{sub:proof}).

\subsection{The stretch--fold--shear velocity field}\label{sub:velocity}

To generate the hyperbolic dynamics underlying our construction, we build a time-periodic velocity field based on the classical \emph{stretch--fold--shear} (SFS) mechanism from dynamo theory \cite{CG95}, combined with recent ideas from optimal mixing \cites{ELM25,LM26}. The goal is to produce strong planar stretching and folding, while introducing a transverse shear that prevents cancellation of the magnetic field.

Fix $\alpha \in 2\NN$. For $(x,y,z)\in \TT^3$, we define the constituent shear flows
\begin{equation} \label{eq:shears}
V_\alpha(x,y) = \begin{pmatrix} 0 \\ 2\alpha |x - 1/2| \\ 0 \end{pmatrix}, \qquad 
H_\alpha(x,y) = \begin{pmatrix} 2\alpha |y - 1/2| \\ 0 \\ 0 \end{pmatrix}, \qquad 
Z(x,y) = \begin{pmatrix} 0 \\ 0 \\ -g(x,y) \end{pmatrix},
\end{equation}
where $g:\TT^2 \to \RR$ is a $C^1$ function. 
For $N\in \mathbb{N}$ large but fixed, we define the time-periodic velocity field $u_{\alpha,N}$ over one period $[0,N+3)$ by
\begin{equation} \label{eq:u_def}
u_{\alpha, N} (t, x, y, z) = \begin{cases}
0, & t \in [0,N),\\
V_\alpha(x, y), \quad & t \in [N, N+1/2), \\
H_\alpha(x, y), & t \in [N+1/2, N+1), \\
0, & t \in [N+1,N+2),\\
Z(x, y),  & t \in [N+2, N+3)
\end{cases}
\end{equation}
and extend it periodically to all $t\geq 0$. By construction, $u_{\alpha,N}$ is divergence-free. Over one period, the flow mainly consists of a planar stretch--fold step in the $(x,y)$-variables, and a shear in the $z$-direction. 

\begin{remark}
The velocity field can be smoothed in time without affecting the analysis, but spatial regularity is essential: the arguments rely on the piecewise linear structure of the flow.
\end{remark}

The planar component generates, through its flow map over $t\in [N,N+1]$, a uniformly hyperbolic map $T_\alpha:\TT^2\to\TT^2$ (see Section~\ref{sec:themap}). This map was used in \cite{ELM25} to prove optimal exponential mixing and in \cite{LM26} to establish a cumulative form of Batchelor's law. 
However, purely two-dimensional dynamics cannot produce dynamo action due to classical antidynamo theorems \cites{Zeldovich_1992,Oseledets_1993}. Crucially, this obstruction persists even at the level of transfer operators. To overcome it, we introduce the out-of-plane shear $Z$, following the SFS paradigm \cite{CG95}. Physically, this shear modulates the phase of the magnetic field and prevents the destructive interference that would otherwise lead to decay.

To formalize this mechanism, consider the time-$(N+3)$ map
\[
\mathcal{T}_\alpha(x,y,z)
=
\begin{pmatrix}
T_\alpha(x,y) \\
z - g(T_\alpha(x,y))
\end{pmatrix}.
\]
We consider initial data of the form
\begin{equation} \label{d:initial}
B_{\ini}(x,y,z) = \e^{2\pi i z}
\begin{pmatrix}
h(x,y) \\
H^{(3)}(x,y)
\end{pmatrix},
\end{equation}
where $h:\TT^2\to\RR^2$ and $H^{(3)}:\TT^2\to\RR$. The oscillatory factor $\e^{2\pi i z}$ isolates a single Fourier mode in the vertical direction, allowing the effect of the shear to be captured through a phase modulation in the planar variables.
The planar evolution is governed by the vector-valued transfer operator
\begin{equation}\label{eq:Lalpha}
\mathcal{L}_\alpha h
=
\frac{1}{\alpha^2} (D T_\alpha\, h)\circ T_\alpha^{-1}.
\end{equation}
A direct computation shows that
\begin{align} \label{eq:comp-intro}
(D\mathcal{T}_\alpha B_{\ini})\circ \mathcal{T}_\alpha^{-1}(x,y,z)
=
\e^{2\pi i z}\,\e^{2\pi i g(x,y)}
\begin{pmatrix}
\alpha^2 \mathcal{L}_\alpha h \\
\mathcal{K}_{\alpha, g} h + H^{(3)}\circ T_\alpha^{-1}
\end{pmatrix},
\end{align}
for a suitable bounded operator $\mathcal{K}_{\alpha, g}$. While the purely two-dimensional operator $\alpha^2 \mathcal{L}_\alpha$ cannot produce growth due to antidynamo constraints, the phase-modulated operator
\[
\e^{2\pi i g(x,y)}\,\alpha^2 \mathcal{L}_\alpha
\]
has fundamentally different spectral properties. The shear introduces oscillations that prevent cancellation, and we show that this operator admits unstable eigenvalues in the strong-chaos limit $\alpha\to\infty$ (see Theorem~\ref{thm:main eigenvalue}).

\begin{remark}[On the strong-chaos limit]
The strong-chaos limit $\alpha \to \infty$ is non-trivial precisely because of the phase-shearing effect. In our construction, introducing the shear $g$ at each iteration with $\alpha \gg 1$ yields fundamentally different spectral behaviour compared to applying a shear only after $n \gg 1$ iterations of a fixed map.  
Although both scenarios formally correspond to a regime of ``infinite mixing'', we show in Remark~\ref{remark:1 iteration} that even two iterations of the advection step performed before a single shear result in a limiting operator with no non-trivial eigenvalues. This highlights the extreme sensitivity of the dynamo mechanism to the fine-scale temporal structure of the flow.
\end{remark}

Our analysis also extends to the case of positive diffusion $\eps>0$. We focus on the evolution of the first two components of the magnetic field and introduce the corresponding solution operators.

We denote by $\alpha^2 \cL_{\alpha,\eps}$ the solution operator from time $t=N$ to $t=N+2$ associated with the two-dimensional dynamics on $\TT^2$. More precisely, for $b:[N,N+2]\times \TT^2 \to \RR^2$ solving
\begin{align} \label{d:solution-cL-alpha-eps}
\partial_t b + u_{\alpha,N}^{(1,2)} \cdot \nabla b - b \cdot \nabla u_{\alpha,N}^{(1,2)} = \eps \Delta b,
\end{align}
we define $\alpha^2 \cL_{\alpha,\eps} b(N) = b(N+2)$. Here, $u_{\alpha,N}^{(1,2)}$ refers to the first two components of $u_{\alpha,N}$. Note that this evolution includes the interval $[N+1,N+2]$, during which $u_{\alpha,N}\equiv 0$ and only diffusion acts.

We next consider the shear phase, corresponding to the time interval $[N+2,N+3]$. In this regime, the first two components evolve according to
\begin{align} \label{d:transport-diffusion-in-z}
\partial_t b - g \partial_z b = \eps \Delta b,
\end{align}
for $b:[0,1]\times \TT^3 \to \RR^2$. Writing $\Delta_{x,y} = \partial_{xx} + \partial_{yy}$, we observe that for initial data of the form
\[
b_{\initial}(x,y,z) = \e^{2\pi i z} h_{\initial}(x,y),
\]
the solution remains of the form $b(t,x,y,z) = \e^{2\pi i z} h(t,x,y)$, where $h$ solves
\[
\partial_t h
=
\varepsilon (\Delta_{x,y} - 4\pi^2) h
+ 2\pi i\, g\, h.
\]
We denote by $\cL_{g,\eps}$ the corresponding solution operator at time $t=1$, so that $h(1)=\cL_{g,\eps} h_{\initial}$.

Combining the different phases of the evolution, a direct computation shows that if $B$ solves \eqref{passive-vector} with velocity field $u_{\alpha,N}$ and initial datum \eqref{d:initial},
then the first two components $b = B^{(1,2)}$ at time $t=N+3$ satisfy
\begin{align} \label{d:eps-2d-operator}
b(N+3)
=
\e^{2\pi i z}\,
\cL_{g,\eps}
\circ
\e^{-8\pi^2 \eps}\,\alpha^2 \cL_{\alpha,\eps}
\circ
\e^{N\eps(\Delta_{x,y}-4\pi^2)} h.
\end{align}
This factorization reflects the three stages of the dynamics: an initial diffusive phase, a planar advection–diffusion step, and a final shear–diffusion step. It provides the starting point for our perturbative analysis of the full problem.

\subsection{The spectral perturbation framework}\label{sub:KellerLiverani}

As discussed above, our goal is to construct an anisotropic Banach space on which the ideal dynamo operator admits Ruelle resonances, namely isolated eigenvalues separated from the essential spectrum. The key tool for achieving this is a \emph{Lasota--Yorke inequality}, which provides quantitative control of the essential spectral radius via the Nussbaum formula \cite{Nussbaum}. We recall a standard formulation (see, e.g., \cite{Baladi_textbook}).

\begin{proposition}\label{prop:lasota_yorke abstract}
Let $(X, \| \cdot \|)$ and $(X_w,|\cdot |_w)$ be Banach spaces such that $X \subset X_w$, with continuous inclusion $|\cdot|_w \leq \|\cdot \|$. Assume that the unit ball $\{f \in X: \| f \| \leq 1 \}$ is relatively compact in $(X_w,|\cdot |_w)$. 

Let $\cP: X \to X$ be a bounded linear operator that also extends to a bounded operator $\cP: X_w \to X_w$. Suppose that $\cP$ satisfies a \emph{Lasota--Yorke inequality}: there exist constants $0 \leq a < M$ such that 
\begin{equation}\label{eq:lasota-Yorke theorem}
\|\cP f\|\leq a\|f\|+M|f|_w \qquad \forall f \in X \,.
\end{equation}
Then, the essential spectrum $\sigma_{\mathrm{ess}}(\cP)$ of $\cP$ on $X$ is contained in a disk $\overline{B_a(0)}$ of radius $a$. In particular, any spectral point of modulus strictly larger than $a$ is an isolated eigenvalue of finite multiplicity.
\end{proposition}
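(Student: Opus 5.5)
This is Hennion's theorem, so I would give a Nussbaum-type argument. The plan is to show that the essential spectral radius $r_{\mathrm{ess}}(\cP)\le a$. Nussbaum's formula expresses $r_{\mathrm{ess}}(\cP)$ as $\lim_n (\kappa(\cP^n))^{1/n}$, where $\kappa$ is the Kuratowski measure of non-compactness of the image of the unit ball $\mathcal B=\{\|f\|\le 1\}$. Once the bound on $r_{\mathrm{ess}}$ is in hand, the standard Riesz-theory fact gives the conclusion: on the unbounded component of $\CC\setminus \overline{B_{r_{\mathrm{ess}}}(0)}$, the spectrum consists of isolated eigenvalues of finite multiplicity. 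That component is everything outside the closed disk, so every spectral point of modulus $>a$ is such an eigenvalue.

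\textbf{Iterating the inequality.} I first iterate the Lasota--Yorke inequality. Set $C_w$ to be the norm of $\cP$ on $X_w$. Induction gives
\[
\|\cP^n f\|\le a^n\|f\|+M_n|f|_w,\qquad M_n:=M\sum_{k=0}^{n-1}a^{k}C_w^{\,n-1-k}.
\]
The induction step applies \eqref{eq:lasota-Yorke theorem} to $\cP^{n-1}f$ and uses $|\cP^{n-1}f|_w\le C_w^{n-1}|f|_w$. The exact size of $M_n$ is irrelevant; it only needs to be finite for each $n$.

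\textbf{Covering argument.} Fix $n$ and $\delta>0$. Since $\mathcal B$ is relatively compact in $X_w$, it is totally bounded there. So there are $f_1,\dots,f_m\in\mathcal B$ such that every $f\in\mathcal B$ has some $j$ with $|f-f_j|_w\le \delta/M_n$. Applying the iterated inequality to $f-f_j$, which lies in $2\mathcal B$, gives
\[
\|\cP^n f-\cP^n f_j\|\le 2a^n+\delta .
\]
Hence $\cP^n\mathcal B$ is covered by finitely many $\|\cdot\|$-balls of radius $2a^n+\delta$. This means $\kappa(\cP^n\mathcal B)\le 2(2a^n+\delta)$, and letting $\delta\to0$ gives $\kappa(\cP^n)\le 4a^n$. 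Taking $n$-th roots and $n\to\infty$ in Nussbaum's formula yields $r_{\mathrm{ess}}(\cP)\le a$.

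\textbf{Main obstacle.} There is no real difficulty here; the content is bookkeeping. The subtle point is to apply the inequality to differences $f-f_j$ rather than to $f$ itself, so that the compact term $M_n|\cdot|_w$ can be made small. If one prefers to avoid Nussbaum's formula, an alternative route exists. One decomposes $\cP^n$ as a norm-$\lesssim a^n$ part plus a part that is compact from $X$ to $X$. This decomposition is not directly available, since the embedding is only compact into $X_w$, which is why the measure-of-non-compactness route is the one I would take.
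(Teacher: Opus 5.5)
Your proof is correct and is the standard Hennion--Nussbaum argument; the paper gives no proof of its own and simply defers to Nussbaum's formula and Baladi's textbook for it. Iterating the inequality via boundedness on $X_w$, taking a finite $|\cdot|_w$-net of $\mathcal B$ and applying the iterated bound to the differences $f-f_j$ gives $\kappa(\cP^n\mathcal B)\le 4a^n$, and the factor-of-two gap between this quantity and the operator measure of non-compactness disappears after taking $n$-th roots, so $r_{\mathrm{ess}}(\cP)\le a$ follows.
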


The relevance of Proposition~\ref{prop:lasota_yorke abstract} is twofold. First, once a Lasota--Yorke inequality is established on a suitable anisotropic space $X$, it yields a spectral gap: any eigenvalue of modulus larger than $a$ is automatically isolated. Second, such isolated eigenvalues are stable under singular perturbations, as ensured by the following theorem of Keller and Liverani \cite{Keller_Liverani}.

\begin{proposition}[Keller--Liverani]\label{prop:liverani-keller}
Let $(\cP_\eps)_{\eps \geq 0}$ be a family of bounded linear operators acting on Banach spaces $(X, \| \cdot \|)$ and $(X_w,|\cdot |_w)$ satisfying the assumptions of Proposition~\ref{prop:lasota_yorke abstract}. Suppose there exist constants $0 \leq a < M$ such that for all $f \in X$:
\begin{enumerate}[label=(\roman*), ref=(\roman*)]
\item \label{check:1} $|\cP_\eps f|_w \leq M |f|_w$;
\item \label{check:2} $|\cP_\eps f- \cP_0 f|_w \leq \tau_\eps \|f\|$, with $\tau_\eps \to 0$ as $\eps \to 0$;
\item \label{check:3} $\|\cP_\eps f\|\leq a \|f\|+ M|f|_w$;
\item \label{check:4} $\cP_0$ admits a unique, (algebraically) simple eigenvalue $\lambda \in \mathbb{C}$ with $|\lambda| > a$.
\end{enumerate}
Then, there exists $\delta > 0$ such that for all $\eps$ small enough, $\sigma(\cP_\eps) \cap \{ \mu \in \mathbb{C} : |\mu - \lambda|\leq \delta \}$ consists of a single eigenvalue that converges to $\lambda$ as $\eps \to 0$. 
\end{proposition}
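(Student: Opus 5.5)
The plan is to follow the standard Keller--Liverani argument, which compares resolvents of $\cP_\eps$ and $\cP_0$ in the weak norm and then upgrades this to closeness of spectral projectors.

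The first step is the resolvent bound. Set $r\in(a,|\lambda|)$ and $V_{\delta,r}=\{|z|\le r\}\cup\{|z-\mu|\le\delta \text{ for } \mu\in\sigma(\cP_0), |\mu|>r\}$. For $z\notin V_{\delta,r}$, condition (iii) iterated gives uniform Lasota--Yorke bounds
\[
\|\cP_\eps^n f\|\le C a^n\|f\|+C M^n|f|_w .
\]
Combining this with (i), the standard resolvent identity argument yields
\[
\|(z-\cP_\eps)^{-1}\|\le C
\]
uniformly in $\eps$ small, and a weak bound
\[
|(z-\cP_\eps)^{-1}f-(z-\cP_0)^{-1}f|_w\le C\tau_\eps^{\eta}\|f\|
\]
for some $\eta\in(0,1)$. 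This is obtained by writing
\[
(z-\cP_\eps)^{-1}-(z-\cP_0)^{-1}=(z-\cP_\eps)^{-1}(\cP_\eps-\cP_0)(z-\cP_0)^{-1},
\]
controlling $\cP_\eps-\cP_0$ from the strong to the weak norm by (ii), and interpolating with the iterates via a Neumann-series splitting at a time $n\sim\log\tau_\eps^{-1}$.

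The next step works with the Riesz projectors
\[
\Pi_\eps=\frac{1}{2\pi i}\oint_{|z-\lambda|=\delta}(z-\cP_\eps)^{-1}dz .
\]
Here $\delta$ is chosen so that the disk around $\lambda$ lies outside $\overline{B_a}$ and contains no other spectrum of $\cP_0$; this is possible by Proposition~\ref{prop:lasota_yorke abstract} together with (iv), since $\lambda$ is isolated. The weak resolvent estimate gives
\[
|(\Pi_\eps-\Pi_0)f|_w\le C\tau_\eps^\eta\|f\| .
\]
We also have $\|\Pi_\eps f\|\le C|\Pi_\eps f|_w$, because on the range of $\Pi_\eps$ the Lasota--Yorke inequality forces the strong norm to be controlled by the weak one, the spectrum there having modulus greater than $a$. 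From these two facts one deduces $\operatorname{rank}\Pi_\eps=\operatorname{rank}\Pi_0=1$ for small $\eps$. Indeed, if the ranks differed, one could find an element of one range that is nearly annihilated by the other projector, and this contradicts the norm equivalence on the ranges.

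Since the rank is one, $\sigma(\cP_\eps)$ inside the disk consists of a single simple eigenvalue $\lambda_\eps$. To see that $\lambda_\eps\to\lambda$, write
\[
\lambda_\eps=\frac{\ell(\cP_\eps\Pi_\eps v)}{\ell(\Pi_\eps v)}
\]
for a fixed $v$ and a suitable functional $\ell$, and use the closeness of the projectors. Alternatively, shrink $\delta$: for every smaller $\delta'$ the same argument holds once $\eps$ is small enough.

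The main obstacle is the weak resolvent estimate, namely getting a uniform bound despite the perturbation being controlled only from $X$ to $X_w$. The approach is to split
\[
(z-\cP_\eps)^{-1}=\sum_{k<n}z^{-k-1}\cP_\eps^k+z^{-n}\cP_\eps^n(z-\cP_\eps)^{-1},
\]
apply (iii) to the tail (which is small since $|z|>r>a$), and apply (i) and (ii) to the finitely many head terms.
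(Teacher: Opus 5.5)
The paper gives no proof of this proposition; it quotes the result from Keller--Liverani \cite{Keller_Liverani}, so the relevant comparison is with their argument. Your outline has the right overall shape: uniform resolvent bounds off $V_{\delta,r}$, weak closeness of resolvents at rate $\tau_\eps^\eta$, Riesz projectors, rank comparison, and shrinking $\delta$. The step that carries all the weight, however, is asserted rather than proved.

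You claim that for $z\notin V_{\delta,r}$ the operator $z-\cP_\eps$ is invertible with $\|(z-\cP_\eps)^{-1}\|\le C$ ``by the standard resolvent identity argument''. Both identities you use presuppose that $z\in\rho(\cP_\eps)$: the second resolvent identity, and $(z-\cP_\eps)^{-1}=\sum_{k<n}z^{-k-1}\cP_\eps^k+z^{-n}\cP_\eps^n(z-\cP_\eps)^{-1}$. The usual Neumann perturbation of $z-\cP_0$ is not available, because $\cP_\eps-\cP_0$ is small only as a map $X\to X_w$. It need not be small on $X$, and this is exactly why the theorem is needed for diffusive perturbations.

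Your claim that the tail $z^{-n}\cP_\eps^n(z-\cP_\eps)^{-1}$ is small by (iii) ``since $|z|>r>a$'' is also false. Iterating (iii) gives $\|z^{-n}\cP_\eps^n g\|\le C(a/r)^n\|g\|+C(M/r)^n|g|_w$. Testing (i) on the eigenvector shows $M\ge|\lambda|>r$, so the second term grows with $n$ unless $|g|_w$ is itself $O(\tau_\eps)$.

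The missing idea is to build an approximate inverse of $z-\cP_\eps$ from the known resolvent $R_0(z)=(z-\cP_0)^{-1}$:
\[
\mathcal R_n(z)=\sum_{k<n}z^{-k-1}\cP_\eps^k+z^{-n}\cP_\eps^nR_0(z),\qquad (z-\cP_\eps)\mathcal R_n(z)=\Id+z^{-n}\cP_\eps^n(\cP_0-\cP_\eps)R_0(z).
\]
Here the iterate acts on $g=(\cP_0-\cP_\eps)R_0(z)f$. By (ii), $|g|_w\le\tau_\eps H\|f\|$, where $H=\sup_{z\notin V_{\delta,r}}\|R_0(z)\|$. Hence the error term has $X\to X$ norm at most $CH[(a/r)^n+(M/r)^n\tau_\eps]$.

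Fix $n$ first and then take $\eps$ small; this makes the error less than $1/2$ and gives a uniformly bounded right inverse. Injectivity follows because Proposition~\ref{prop:lasota_yorke abstract}, applied to $\cP_\eps$, shows that $z-\cP_\eps$ is Fredholm of index zero for $|z|>a$.

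Once this is in place, your weak estimate is fine, provided you apply your splitting to $g=(\cP_\eps-\cP_0)R_0(z)f$ and commute:
\[
(z-\cP_\eps)^{-1}g=\sum_{k<n}z^{-k-1}\cP_\eps^kg+(z-\cP_\eps)^{-1}z^{-n}\cP_\eps^ng,\qquad n\sim\log\tau_\eps^{-1}.
\]
The head terms are controlled by (i) and the tail by (iii), using that $|g|_w=O(\tau_\eps)$.

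A smaller point concerns the uniform bound $\|f\|\le C|f|_w$ on $\operatorname{range}\Pi_\eps$. It does not follow from Lasota--Yorke alone, since that would require a uniform weak bound on $(\cP_\eps|_{\operatorname{range}\Pi_\eps})^{-n}$, which you do not have before knowing the rank. Instead, compare $\oint z^{-n}(z-\cP_\eps)^{-1}\,\dd z$ with its $\eps=0$ analogue in the weak norm, and use norm equivalence on the fixed one-dimensional range of $\Pi_0$. With these repairs, the rank comparison and the shrinking-$\delta$ step go through as you describe.
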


We apply this framework through a double-limit argument. In the strong-chaos regime $\alpha \to \infty$, we first analyze the ideal operator and identify a leading resonance $\lambda_\alpha$ for $\alpha^2 \e^{2\pi i g}\mathcal{L}_\alpha$ satisfying $|\lambda_\alpha| > \alpha^2/4$ (see \eqref{eq:Lalpha}--\eqref{eq:comp-intro} and Theorem~\ref{thm:main eigenvalue}). We then establish a Lasota--Yorke inequality that is uniform in $\varepsilon$, which ensures that this eigenvalue is isolated and persists under the singular perturbation induced by diffusion.

\begin{remark}
A key technical difficulty is that the anisotropic spaces depend on the stretching parameter $\alpha$. Extracting spectral information in the limit $\alpha \to \infty$ therefore requires quantitative control of the resolvent, as well as a refinement of the Keller--Liverani framework to track this dependence. The limiting resonances provide the spectral anchor for the perturbative analysis of the diffusive problem.
\end{remark}

\subsection{Outline of the proof of Theorem \ref{thm:fastdynamo}}\label{sub:proof}
As hinted throughout the introduction, our proof relies on three key propositions. 
The goal of this section is to demonstrate how, once these building blocks are established, one can deduce the existence of a fast dynamo via the abstract results of Propositions \ref{prop:lasota_yorke abstract} and \ref{prop:liverani-keller}. 
In Section \ref{sec:anisotropic}, we construct suitable anisotropic Banach spaces $(X,\|\cdot\|)$ and $(X_w,|\cdot|_w)$ satisfying the assumptions of Proposition \ref{prop:lasota_yorke abstract}. These spaces depend intrinsically on the large parameter $\alpha \in 2\mathbb{N}$; we suppress this dependence for brevity, though extracting uniform spectral behaviour as $\alpha \to \infty$ requires quantitative resolvent estimates.
The first key ingredient is a Lasota--Yorke inequality.

\begin{proposition}[Lasota--Yorke inequality]\label{proposition:main lasota yorke}
There exist constants $C, \eta > 0$, independent of $\alpha$, such that for all $\alpha$ sufficiently large,
\begin{equation}\label{eq:Lasointro}
\|\e^{2 \pi i g} \mathcal{L}_\alpha h \| \leq C\alpha^{-\eta} \| h \| + C |h|_w, \qquad |\e^{2 \pi i g}  \mathcal{L}_\alpha h |_w \leq C |h|_w, \qquad \forall h \in X .    
\end{equation}   
Consequently, by Proposition \ref{prop:lasota_yorke abstract}, any eigenvalue of $\e^{2 \pi i g} \mathcal{L}_\alpha$ with modulus strictly greater than $C\alpha^{-\eta}$ is isolated and of finite multiplicity.
\end{proposition}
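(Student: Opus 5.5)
We plan to prove Proposition~\ref{proposition:main lasota yorke} directly from the definition of the anisotropic norms of Section~\ref{sec:anisotropic}. We take these to be of Demers--Liverani type. The strong norm $\|\cdot\|$ combines a stable $C^{-\beta}$-type norm, obtained by testing $h$ (componentwise, against a frame adapted to the splitting) with $C^\beta$ test functions along admissible stable curves, with an unstable $C^{1}$-type norm that controls the derivative along the unstable direction tested against $C^{\beta}$ functions. The weak norm $|\cdot|_w$ tests against $C^1$ functions on admissible curves.

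First we decompose $h$ along the invariant (piecewise constant) cone fields of $T_\alpha$. Since $T_\alpha$ is piecewise linear, $DT_\alpha$ on each of its $O(\alpha^2)$ smoothness strips is a constant hyperbolic matrix. Its expansion is $\sim 4\alpha^2$ on unstable vectors and $\sim\alpha^{-2}$-scale contraction on stable vectors up to bounded distortion. With the normalization $\alpha^{-2}$ in \eqref{eq:Lalpha}, the unstable component is multiplied by $O(1)$ and the stable component by $O(\alpha^{-4})$.

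For a test pairing $\int_W \varphi\,\langle \ell, \mathcal{L}_\alpha h\rangle$ along an admissible stable curve $W$, change variables to obtain a sum over the preimage components $W_i = T_\alpha^{-1}W\cap(\text{strip}_i)$. Each $W_i$ is again admissible by invariance of the stable cone and is shorter by a factor $\sim\alpha^{-2}$, and there are $O(\alpha^2)$ of them. The multiplier $\e^{2\pi i g}$ is $C^1$ with bounded norm, so it only changes the test function by a factor whose $C^\beta$ norm is $O(1)$.

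To get the contraction $C\alpha^{-\eta}$ in the strong norm, split $\varphi\circ T_\alpha$ on each $W_i$ into its average plus a remainder. The averaged part is estimated by the weak norm, giving the $C|h|_w$ term. The remainder has $C^\beta$-seminorm reduced by $|W_i|^\beta$-type contraction, roughly $\alpha^{-2\beta}$, which is then summed against the Jacobian weights. Summing over $i$ with the Jacobian factor $|DT_\alpha|_{W_i}|^{-1}\sim \alpha^{-2}$ and $O(\alpha^2)$ pieces gives a total weight of $O(1)$ times $\alpha^{-2\beta}$, which yields $\eta\approx 2\beta$ for the stable part.

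For the unstable part, a derivative along the unstable direction of $\mathcal{L}_\alpha h$ is $(4\alpha^2)^{-1}$ times the pulled-back derivative. This gives a factor $\alpha^{-2}$ plus a term where the derivative falls on $\e^{2\pi i g}$. That second term is lower order and is absorbed into the weak norm by an interpolation-type compactness argument. The weak-norm bound follows from the same computation without the splitting, since $C^1$ test functions pulled back stay bounded in $C^1$.

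The main obstacle is the cuts. $T_\alpha^{-1}W$ is cut by the singularity lines, and the pieces meeting the folds $x=1/2$, $y=1/2$ (and the torus boundary) are short and non-uniformly sized. This is the usual growth-versus-complexity issue. We must show that the number of short pieces is $O(1)$ per unit length, relative to the $\alpha^2$ expansion, so that their total contribution is $O(\alpha^{-2})$ times the norm. We would first check, using the explicit piecewise-linear geometry, that stable curves cross the singularity set transversally at most $O(\alpha^2 |W|+1)$ times. We would then group the short pieces into an admissible family with the "$+1$" term controlled by the weak norm. Uniformity of $C,\eta$ in $\alpha$ hinges on the cone angles staying bounded away from the singularity directions as $\alpha\to\infty$.

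Finally, the spectral consequence follows directly from Proposition~\ref{prop:lasota_yorke abstract} once $C\alpha^{-\eta}<C$.
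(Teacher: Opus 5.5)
\textbf{There is a genuine gap, in the unstable-norm step.} Your weak-norm bound and the core of your stable-norm step are what the paper does. That step splits $\varphi\circ\XX_\alpha$ on each preimage leaf into its average plus an oscillation whose $C^q$ norm drops by $\alpha^{-2q}$, against a weight $\le 4\alpha^{-2}$ per piece and $O(\alpha^2)$ pieces.

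The problem is that the norm \eqref{eq:strong-unstable} is not a derivative along unstable directions. It is the Demers--Liverani H\"older-type quantity $\delta^{-\beta}\bigl|\int_{W^1}h\varphi_1-\int_{W^2}h\varphi_2\bigr|$, taken over stable leaves with $\dd_\Sigma(W^1,W^2)<\delta\le 2/\alpha$ and test functions with $\dd_q(\varphi_1,\varphi_2)<\delta$. Your computation ``an unstable derivative of $\mathcal L_\alpha h$ is $\alpha^{-2}$ times a pulled-back derivative'' therefore does not address it.

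Nor could a $C^1$-in-the-unstable-direction norm be used here. $\mathcal L_\alpha h$ jumps across $S^\star$, whose three families of lines ($y\in\{0,\tfrac12\}$ and $x\pm\alpha y=\mathrm{const}$) cannot all be tangent to a single vector field. Differentiating therefore creates terms concentrated on $S^\star$. Tested along a transverse stable leaf, these are evaluations of $h$ at isolated points of $\partial\cM_\ell$, which none of the norms controls.

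The missing idea is the matching of Lemma~\ref{lemma:technical}:
\begin{itemize}
\item Cut $W_1,W_2$ into pairs of pieces lying in the same $\XX_\alpha(\cM_\ell)$, plus at most eight unmatched pieces of length $\le C\delta$.
\item Cut their preimages again, so that matched preimage pieces are $C\alpha^{-2}\delta$-close. This yields the contraction $\alpha^{-2\beta}\|h\|_u$. The new unmatched pieces have length $\le C\alpha^{-1}\delta$.
\item Transport $\varphi_1$ onto the matched preimage of $W_2$ and show that its $C^q$ discrepancy with $\varphi_2\circ\XX_\alpha$ is $\le C\delta^{1-q}\alpha^{-2q}+\delta$. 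This gives $C\alpha^{-1+\beta}\delta^\beta\|h\|_s$, because $1-q>\beta$.
\end{itemize}

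The unmatched pieces are where the weight $|W|^\sigma$ in $\|\cdot\|_{\sigma,W}$ is indispensable. One has $\bigl|\int_U\mathcal L_\alpha h\,\varphi\bigr|\le|U|^\sigma\|\mathcal L_\alpha h\|_s\le C\delta^{\beta}\alpha^{\beta-\sigma}\|\mathcal L_\alpha h\|_s$, using $\delta\le 2/\alpha$ and $\sigma>\beta$, after which the stable estimate is inserted. With the unweighted $C^\beta$ test functions of your proposal, a piece of length $\delta$ only gives $O(\|h\|_s)$, with no factor $\delta^\beta$.

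Keeping the weight, however, breaks your claim that the averaged part is always bounded by $|h|_w$. For $|W|<\alpha^{-2}$ the average is only $\le|W|^{-\sigma}$. One must instead use $\|h\|_s$ on the $\alpha^2$-longer preimages, gaining $\alpha^{-2}(\alpha^2|W|)^\sigma|W|^{-\sigma}=\alpha^{2\sigma-2}$.

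Two smaller points:
\begin{itemize}
\item No stable/unstable splitting of $h$ is needed, and the four $A_\ell$ have different eigendirections, so there is no common frame anyway. The paper uses only $\alpha^{-2}|A_\ell|\le 2$ componentwise, and handles $\e^{2\pi i g}$ afterwards by Lemma~\ref{lemma:smooth multiplier}.
\item The complexity count you single out is harmless here: $W$ meets $S^\star$ only $O(\alpha|W|+1)$ times, against expansion $\alpha^2$. The real work is the matching above.
\end{itemize}
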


In Section \ref{sec:spectral-ideal}, we characterize the limiting operator $\e^{2 \pi i g}\mathcal{L}_\infty$ of $\e^{2 \pi ig}\mathcal{L}_\alpha$ as $\alpha\to\infty$. In a suitable sense (see Lemma \ref{lemma:convergence}), one has
\[
    \e^{2 \pi i g}\mathcal{L}_\infty h
= \e^{2 \pi i g}
\begin{pmatrix}
\left ( \int_{x \geq 1/2} h^{(1)} - \int_{x<1/2} h^{(1)} \right )
\left(\mathds{1}_{y \geq 1/2} - \mathds{1}_{y<1/2}\right)
\\
0
\end{pmatrix}
 \,, \qquad h=(h^{(1)},h^{(2)})
\]
Using this characterization alongside Proposition \ref{proposition:main lasota yorke}, we prove in Section \ref{sec:spectral-ideal} the existence of a discrete eigenvalue with modulus larger than $1$ (in fact, of order $\approx \alpha^2$ for $\alpha\gg 1$).

\begin{theorem}\label{thm:main eigenvalue}
There exist $g \in C^1(\mathbb{T}^2)$ and $\alpha_0 \geq 1$ such that, for every $\alpha \geq \alpha_0$ with $\alpha \in 2\mathbb{N}$, the operator $\alpha^2 \e^{2\pi i g}\mathcal{L}_\alpha : X \to X$ admits a unique discrete, algebraically simple eigenvalue $\lambda_\alpha \in \mathbb{C}$ satisfying $|\lambda_\alpha| \geq \alpha^2/4$.
\end{theorem}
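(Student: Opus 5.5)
Dividing by $\alpha^2$, it suffices to show that $\cP_\alpha:=\e^{2\pi i g}\cL_\alpha$ has a unique algebraically simple eigenvalue $\mu_\alpha$ with $|\mu_\alpha|\ge 1/4$. The plan is a perturbation argument around the limiting operator $\cP_\infty:=\e^{2\pi i g}\cL_\infty$, in the spirit of Proposition~\ref{prop:liverani-keller}, with $\alpha^{-1}$ playing the role of $\eps$.

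\textbf{Step 1: spectrum of $\cP_\infty$.} This operator has rank one. Write $\psi:=\mathds{1}_{y\ge1/2}-\mathds{1}_{y<1/2}$ and $\ell(h):=\int_{x\ge1/2}h^{(1)}-\int_{x<1/2}h^{(1)}$. Then
\[
\cP_\infty h=\ell(h)\,v,\qquad v:=\e^{2\pi i g}(\psi,0)^T .
\]
Its only nonzero eigenvalue is
\[
\mu_\infty=\ell(v)=\int_{\TT^2}\e^{2\pi i g(x,y)}\,\mathrm{sgn}(x-1/2)\,\mathrm{sgn}(y-1/2)\,\dd x\,\dd y ,
\]
and this eigenvalue is algebraically simple whenever it is nonzero. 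We choose $g\in C^1$ to make $|\mu_\infty|$ large. The target $g$ is $0$ on the quadrants where $\mathrm{sgn}(x-1/2)\mathrm{sgn}(y-1/2)=1$ and $1/2$ on the others, which would give $\mu_\infty=1$. Smoothing this across thin strips near $x=1/2$ and $y=1/2$ loses only an arbitrarily small amount, so we can ensure $|\mu_\infty|\ge 1/2>1/4$. The remaining spectrum of $\cP_\infty$ is $\{0\}$.

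\textbf{Step 2: convergence and uniform bounds.} From Lemma~\ref{lemma:convergence} we need a weak-to-strong convergence estimate
\[
|(\cP_\alpha-\cP_\infty)h|_w\le \tau_\alpha\|h\|,\qquad \tau_\alpha\to0 .
\]
We also need the Lasota--Yorke bounds of Proposition~\ref{proposition:main lasota yorke}, whose constants $C$ are uniform in $\alpha$ and whose contraction factor is $a_\alpha=C\alpha^{-\eta}\to0$. In addition, $\cP_\infty$ must be bounded on $X$ and $X_w$ uniformly, which requires $\ell$ to be a continuous functional on $X_w$ and $v\in X$.

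\textbf{Step 3: Keller--Liverani with $\alpha$-dependent spaces.} The spaces depend on $\alpha$, so Proposition~\ref{prop:liverani-keller} cannot be quoted directly. We redo its resolvent argument quantitatively. Fix the circle $\Gamma=\{|z-\mu_\infty|=\delta\}$ and the region $\{|z|\ge 1/4\}\setminus\{|z-\mu_\infty|<\delta\}$. On these sets the resolvent of $\cP_\infty$ is explicit:
\[
(z-\cP_\infty)^{-1}=z^{-1}\bigl(I+\cP_\infty/(z-\mu_\infty)\bigr),
\]
with norm bounds controlled only by the constants of Step 2. The Keller--Liverani argument then gives, uniformly in $\alpha$, bounds on $\|(z-\cP_\alpha)^{-1}\|$ in $X$ once $\tau_\alpha^{\gamma}$ and $a_\alpha$ are small. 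Here $\gamma>0$ is the Keller--Liverani exponent $\log(z/a)/\log(M/a)$, which stays positive because $a_\alpha\to0$. It also gives smallness of
\[
\Pi_\alpha-\Pi_\infty=\frac{1}{2\pi i}\oint_\Gamma\Bigl[(z-\cP_\alpha)^{-1}-(z-\cP_\infty)^{-1}\Bigr]\dd z
\]
in the mixed $X\to X_w$ norm. Together these yield three conclusions:
\begin{enumerate}[label=(\alph*)]
\item no spectrum of $\cP_\alpha$ lies in $\{|z|\ge1/4\}$ outside the disc around $\mu_\infty$;
\item the spectral projector $\Pi_\alpha$ onto that disc has rank equal to that of $\Pi_\infty$, namely one, via the standard $\|\Pi_\alpha\|$-bound plus weak-closeness argument;
\item that rank-one part is an algebraically simple eigenvalue $\mu_\alpha\to\mu_\infty$.
\end{enumerate}
Since $|\mu_\alpha|\ge1/2-\delta\ge1/4$ and it exceeds $a_\alpha$, $\mu_\alpha$ is discrete. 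Then $\lambda_\alpha:=\alpha^2\mu_\alpha$ satisfies the statement.

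\textbf{Main obstacle.} The hard step is making Step 3 uniform in $\alpha$. Every constant in the Keller--Liverani machinery must be independent of the $\alpha$-dependent norms, including the relative compactness and the bound on $\ell$ and $\|v\|$ in those norms. The rate $\tau_\alpha$ must also decay in a way compatible with the exponent $\gamma$. I would first try to establish $\tau_\alpha\lesssim\alpha^{-\eta'}$ by direct computation on the branches of $T_\alpha$, using averaging of the fine oscillations of the pushforward against test functions.
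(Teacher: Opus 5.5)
Your plan is sound, but it takes a different route from the paper. The paper proves the much stronger operator-norm estimate $\|\cP_\alpha-\cP_\infty\|_{X\to X}\le C\alpha^{-\tilde\eta}$ (Lemma~\ref{lemma:convergence}). It combines this with an $\alpha$-uniform bound on $(\lambda-\cP_\infty)^{-1}$ (Lemma~\ref{lemma:technical resolvent lemma}), obtained from a finite-rank Sherman--Morrison--Woodbury computation that is equivalent to your explicit rank-one resolvent. A plain Neumann series then gives the resolvent of $\cP_\alpha$ on $\Gamma$ together with $\|P_\alpha-P_\infty\|_{X\to X}\to0$. Equality of ranks follows from a one-line injectivity argument in the strong norm, so no Keller--Liverani theory is needed at this stage.

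Your route asks less of the convergence step: only the $X\to X_w$ bound $\tau_\alpha\to0$, tested against $C^1$ functions without $|W|^{\sigma}$ weights. Short leaves are then trivial, and the separate smallness of $\|\cL_\alpha h\|_u$ that the paper extracts from Proposition~\ref{prop:LY} is not needed. The price is the Keller--Liverani machinery.

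Its uniformity in $\alpha$ is less of an obstacle than you suggest. For each fixed $\alpha$ it compares just two operators on one pair of spaces. The admissible size of $\tau_\alpha$ then depends only on a fixed $a_0\ge C\alpha^{-\eta}$, on $M$ and $\delta$, and on $\sup_z\|(z-\cP_\infty)^{-1}\|_{X\to X}$. All of these are uniform by Lemmas~\ref{lemma:indicators-y} and~\ref{lemma:smooth multiplier}, and $|v|_w\ge|\ell(v)|=|\mu_\infty|$ handles the reverse rank inequality. Compactness of the embedding enters only qualitatively, to exclude residual spectrum outside the disc of radius $a_\alpha$. Your item (a) also makes explicit the uniqueness in $\{|z|\ge 1/4\}$; in the paper this follows from the same Neumann series run on that region.

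The substantive piece you leave open is $\tau_\alpha\to0$ itself. Averaging over the thin strips $T_\alpha(\cM_\ell)$ only accounts for passing from the intermediate operator $\bar\cL_\alpha$ of Section~\ref{sec: convergence} to $\cL_\infty$. The other half is replacing $h^{(1)}\circ T_\alpha^{-1}$ on a strip piece by the constant $2\int_{x\ge1/2}h^{(1)}$. That requires three steps:
\begin{enumerate}[label=(\roman*)]
\item pull the piece back to about $\alpha/2$ unit-length, nearly vertical leaves evenly spaced in $x$;
\item compare each leaf with the vertical circle $\{x=x_k\}$ using $\|h\|_u$;
\item sum the result as a Riemann sum in $x$.
\end{enumerate}
Weak-norm information on $h$ alone cannot do this, which is why $\tau_\alpha$ must be measured against $\|h\|$. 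This is the argument of Lemma~\ref{lemma:convergence 1}, and it simplifies considerably when restricted to $C^1$ test functions.
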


Finally, we establish a uniform-in-$\varepsilon$ Lasota--Yorke inequality for the diffusive operator and a weak--strong convergence result (Section \ref{sec:heat}).

\begin{proposition}\label{prop: uniform Lasota Yorke}
For any $\alpha \geq 1$ there exists  $N \in \NN$ large enough and $\eps_0>0$ so that for all $\eps\leq \eps_0$ it holds
\begin{align} \label{eq:uniform-lasota-yorke}
\|\cL_{g, \eps} \circ  \e^{- 8\eps \pi^2 } \cL_{\alpha, \eps} \circ \e^{N  \eps (\Delta_{x,y} - 4\pi^2)} h\|\leq C\alpha^{-\eta} \|h\|+C|h|_w\,, \qquad \forall h \in X\,,
\end{align}
and 
\begin{align}  \label{eq:weak-full-operator}
|\cL_{g, \eps} \circ  \e^{- 8\eps \pi^2 } \cL_{\alpha, \eps} \circ \e^{N  \eps (\Delta_{x,y} - 4\pi^2)} h |_w \leq C|h|_w\,, \qquad \forall h \in X\,, 
\end{align}
where the constant $C>0$ depends only on $g$.
Furthermore, there exists $\beta>0$ so that the following weak-strong convergence holds true  for all $\eps \leq \eps_0$
\begin{align} \label{eq:weak-strong-convergence-full-operator}
|\cL_{g, \eps} \circ  \e^{- 8\eps \pi^2 } \cL_{\alpha, \eps} \circ \e^{N  \eps (\Delta_{x,y} - 4\pi^2)} h -\e^{2 \pi i g}\cL_\alpha h|_w \leq C\eps^{ \beta /2}\|h\| \,, \qquad \forall h \in X\,,
\end{align}
where the constant $C>0$ depends on $\alpha, N$ and $g$.
\end{proposition}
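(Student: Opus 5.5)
We treat the full diffusive operator $\cP_\eps := \cL_{g,\eps}\circ \e^{-8\eps\pi^2}\cL_{\alpha,\eps}\circ \e^{N\eps(\Delta_{x,y}-4\pi^2)}$ as a perturbation of the ideal operator $\e^{2\pi i g}\cL_\alpha$. The plan is to write each factor as its ideal counterpart composed with a heat-type smoothing, plus an error that is small in the weak norm. Concretely, we will use three decompositions. First, $\cL_{g,\eps} = \e^{2\pi i g}\,\mathcal{S}_\eps$, where $\mathcal{S}_\eps$ is a Gaussian-type averaging at scale $\sqrt\eps$ twisted by a smooth phase. Since $g\in C^1$, commuting the phase with the heat kernel costs $O(\sqrt\eps\,\|\nabla g\|_\infty)$. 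Second, $\cL_{\alpha,\eps}$ is represented through its averaged Green's matrix, as in \eqref{eq:greens matrix expression}. Away from an $O(\sqrt\eps)$ neighbourhood of the singularity set, the kernel estimates of Section~\ref{sec:kernel} give $\cL_{\alpha,\eps}\approx \cL_\alpha\circ(\text{Gaussian kernel})$ up to exponentially small errors. Third, the idle heat blocks $\e^{N\eps(\Delta-4\pi^2)}$ and the diffusion on $[N+1,N+2]$ are contractions in both norms, provided the anisotropic norms are built from test functions or cones stable under convolution with positive normalized kernels.

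\textbf{Step 1: uniform Lasota--Yorke inequality.} We will first check that convolution with a probability Gaussian $G_t$ is bounded by $1$ on $X$ and on $X_w$. This should follow because the norms are suprema over admissible curves or test functions, and translating a test function stays admissible. Given this, on the good region we write $\cP_\eps h = \e^{2\pi i g}\cL_\alpha(G\ast h) + \mathcal{E}_\eps h$ and apply Proposition~\ref{proposition:main lasota yorke} to $G\ast h$. This yields $C\alpha^{-\eta}\|h\|+C|h|_w$ for the main term, and the weak bound \eqref{eq:weak-full-operator} follows the same way. The error $\mathcal{E}_\eps$ comes from trajectories starting within $O(\sqrt\eps)$ of the singularity lines $\{x=1/2\}$ and $\{y=1/2\}$ and their preimages, where noise mixes branches. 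To bound it we use the idle heat block: after time $N$, $h$ has been smoothed at scale $\sqrt{N\eps}\gg\sqrt\eps$. The contribution of an $O(\sqrt\eps)$ strip is then controlled by $N^{-1/2}$ times a weak-type norm, or by $C\alpha^{-\eta}\|h\|$ once $N$ is chosen large depending on $\alpha$. This is why the statement lets $N$ depend on $\alpha$.

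\textbf{Step 2: weak--strong convergence \eqref{eq:weak-strong-convergence-full-operator}.} We telescope:
\[
\cP_\eps-\e^{2\pi i g}\cL_\alpha = (\cL_{g,\eps}-\e^{2\pi i g})A + \e^{2\pi i g}(\e^{-8\eps\pi^2}\cL_{\alpha,\eps}-\cL_\alpha)B + \e^{2\pi i g}\cL_\alpha(\e^{N\eps(\Delta-4\pi^2)}-\Id)h,
\]
with $A,B$ the remaining bounded factors. Each difference involves either $\Id-G_t$ with $t\lesssim N\eps$, or the model-kernel discrepancy. The basic estimate is $|(G_t-\Id)f|_w\lesssim t^{\beta/2}\|f\|$, obtained by using that the weak norm tests against $C^\beta$ functions while the strong norm has $\beta$-regularity along the unstable direction. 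Together with the exponential kernel bounds, this gives $C(\alpha,N,g)\eps^{\beta/2}\|h\|$.

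\textbf{Main obstacle.} The hardest step will be controlling the singularity-strip error $\mathcal{E}_\eps$ in the strong norm uniformly in $\eps$. There the stable and unstable directions jump and the model kernel fails. My first attempt would be to estimate this piece in $L^\infty$ or $L^1$ only, using the prior smoothing of the idle block, and then to embed into $X$ via a bound like $\|\cdot\|\lesssim \eps^{-\kappa}\|\cdot\|_{L^1}$ paired with the $\sqrt\eps$ strip width. Balancing these exponents is where I expect difficulty.
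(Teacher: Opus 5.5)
Your outline has the right architecture: a Gaussian model kernel composed with the transfer operator, an error concentrated near the singular set and paid for by the idle block, $N$ chosen depending on $\alpha$, and telescoping for the weak--strong estimate. But the decisive step, an $\eps$-uniform bound on the \emph{strong} norm of $\mathcal E_{\alpha,\eps}\circ\e^{N\eps\Delta}$, is left open, and the routes you sketch do not close it. Combining an $L^\infty$ bound with the $\eps^{1/2}$ width of the strip is the right idea, but it only works if the powers of $\eps$ cancel exactly. Generic smoothing bounds are too weak. From $\|\e^{\tau\Delta}h\|_{L^\infty}\lesssim\tau^{-1/2}|h|_w$ with $\tau=N\eps$ and test functions of size $|W|^{-\sigma}$, you get $N^{-1/2}\eps^{-\sigma/2}|h|_w$. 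So the strong norm of the error cannot be paid by a weak-type norm, as your Step~1 suggests, and an $L^1$-based bound loses even more. What is needed is the following.
(i) Localization of the error in the \emph{output} variable: Lemma~\ref{lemma:kernel bound} gives $\int_{\TT^2}|E_{\alpha,\eps}(x,y)|\,\dd y\le C_\alpha\e^{-c_\alpha\dd^2_{\TT^2}(x,S^\star)/\eps}$. Here $S^\star$ consists of nearly horizontal lines uniformly transverse to the stable cone, so $\int_W\e^{-c_\alpha\dd^2_{\TT^2}(x,S^\star)/\eps}\,\dd\HH^1\le C\min\{|W|,\eps^{1/2}\}$. Input-side lines such as $\{x=1/2\}$, which you mention, are nearly \emph{parallel} to admissible leaves, so localization there would give nothing.
(ii) The anisotropic smoothing estimate $\|\e^{\tau\Delta}h\|_{L^\infty}\le C\tau^{-\frac{1-\sigma}{2}}\|h\|_s$ for $\tau\le1$. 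It holds because vertical segments are admissible, so $|\int_I h(x,y)\,\dd y|\le|I|^\sigma\|h\|_s$. Thus $h$ minus its vertical mean is $\partial_yH$ with $H(x,\cdot)$ uniformly $C^\sigma$, and the heat flow in $y$ costs only $\tau^{-\frac{1-\sigma}{2}}$.
(iii) The exact cancellation $\min\{|W|,\eps^{1/2}\}|W|^{-\sigma}\le\eps^{\frac{1-\sigma}{2}}$ against $(N\eps)^{-\frac{1-\sigma}{2}}$, which leaves $C_\alpha N^{-\frac{1-\sigma}{2}}\|h\|_s$.
Estimate (ii) is also what makes your Step~2 work: it gives $|\mathcal E_{\alpha,\eps}\circ\e^{N\eps\Delta}h|_w\le C_\alpha N^{-\frac{1-\sigma}{2}}\eps^{\sigma/2}\|h\|_s$. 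The weak-norm smoothing bound only yields $C_\alpha N^{-1/2}|h|_w$, which does not vanish as $\eps\to0$ at fixed $N$.

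You also do not treat the strong \emph{unstable} norm of the error, which needs a separate argument. For $\delta\ge\eps^{1/2}$, bound the two curve integrals separately by $C_\alpha\eps^{1/2}\|\e^{N\eps\Delta}h\|_{L^\infty}$ and use $\eps^{\sigma/2}\le\delta^\beta\eps^{\frac{\sigma-\beta}{2}}$; here $\sigma>\beta$ matters. For $\delta<\eps^{1/2}$, match the curves inside a single connected (convex) piece of $T_\alpha(\cM_\ell)$. Then use the gradient bound $|\nabla_x E_{\alpha,\eps}|\le C_\alpha\eps^{-3/2}(\cdots)$, valid off $S^\star$; with transversality it gives $C_\alpha\delta\|\e^{N\eps\Delta}h\|_{L^\infty}$, and finally $\delta\le\delta^\beta\eps^{\frac{1-\beta}{2}}$.
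Two smaller corrections. First, the main term is not $\e^{2\pi i g}\cL_\alpha(G\ast h)$ for a single Gaussian, because the covariance $\eps(A_\ell^{-1}\Gamma_\alpha^\ell A_\ell^{-T}+2A_\ell^{-1}A_\ell^{-T})$ depends on the branch. The model is $\sum_\ell\cL_\alpha^\ell\circ\mathcal G_\eps^\ell$, so you need the branchwise inequality of Corollary~\ref{lemma:improved-lasota} rather than Proposition~\ref{proposition:main lasota yorke}. Second, for $\cL_{g,\eps}$ no commutator estimate is needed. Duhamel's formula and Gr\"onwall, with Lemmas~\ref{lemma:smooth multiplier} and~\ref{lemma:gaussian-bound}, give $\eps$-uniform bounds in both norms and weak--strong convergence to multiplication by $\e^{2\pi i g}$.
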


With these three propositions in hand, we can now establish our main result.

\begin{proof}[Proof of Theorem \ref{thm:fastdynamo}]
We consider the velocity field $u_{\alpha, N}$ defined in \eqref{eq:u_def}, extended periodically in time, and initial data of the form \eqref{d:initial}. The time-$N+3$ ideal dynamo operator $\cP_{3d,0}$ acts as\footnote{This is the solution formula for the PDE $\partial_t B + u \cdot \nabla B - B \cdot \nabla u =0$ which is equivalent to $\partial_t B = \nabla \times (u \times B)$ if both $u$ and $B_{\initial}$ are divergence-free.}
\[
(D \mathcal{T}_\alpha B_{\mathrm{in}})\circ \mathcal{T}_\alpha^{-1}
= \e^{2 \pi i z} \e^{2 \pi i g(x,y)}
\begin{pmatrix}
\alpha^2\mathcal{L}_\alpha h \\
\mathcal{K}_{\alpha, g} h + H^{(3)} \circ \XX_\alpha^{-1}
\end{pmatrix},
\]
where $\mathcal{K}_{\alpha, g}$ is a  bounded linear operator on $L^2$.

For $\varepsilon>0$, the time-$N+3$ operator $\cP_{3d,\varepsilon}$ is given by
$$
\cP_{3d,\eps} (\e^{2 \pi i z}H) =  \e^{2 \pi iz} \begin{pmatrix}      \cL_{g, \eps} \circ  \e^{- 8\eps \pi^2 } \alpha^2\cL_{\alpha, \eps} \circ \e^{N  \eps (\Delta_{x,y} - 4\pi^2)} h
\\
\mathcal{K}_{\alpha, \eps, g} h +  \bar{\cL}_{g,\eps }\circ \e^{ (N +2)\eps \Delta} H^{(3)} 
    \end{pmatrix} \,,
    $$
    where the operator on the first two components has been introduced in \eqref{d:eps-2d-operator} and $\mathcal{K}_{\alpha, \eps, g}$ is a bounded linear operator on $L^2$ and $\bar{\cL}_{g,\eps }$ is the time-one  solution operator of the one-dimensional equation \eqref{d:transport-diffusion-in-z}.
 
 \medskip

\noindent   
\emph{Step 1: Spectral instability of the $2\times2$ block.}
By Theorem \ref{thm:main eigenvalue}, for $\alpha \geq \alpha_0$, the ideal operator 
$$\cP_{0}= \e^{2 \pi ig(x,y)} \alpha^2\mathcal{L}_\alpha h $$ 
has an eigenvalue $|\lambda| \geq \alpha^2/4$, thus verifying \ref{check:4}. The weak norm bound  \ref{check:1},  the
weak-strong convergence \ref{check:2}  and the uniform Lasota--Yorke inequality \ref{check:3} follow directly from Proposition \ref{proposition:main lasota yorke} and  Proposition \ref{prop: uniform Lasota Yorke}. 
Hence, an application of Proposition \ref{prop:liverani-keller} gives that
for all $\eps \leq \eps_0$, the operator $ \cP_{\eps}$ admits a discrete eigenvalue $\lambda_\eps$ with $|\lambda_\eps| \geq \alpha^2/8 > 1$.

\medskip

\noindent
\emph{Step 2: Lifting to the full system.}
Let $h$ be the corresponding unstable eigenfunction of $\cP_{\eps}$. Define
\[
J_\varepsilon(H^{(3)}) 
:= \bar{\cL}_{g,\varepsilon}\circ \e^{ (N +2)\varepsilon \Delta} H^{(3)}.
\]
Since $\|J_\eps\|_{L^2 \to L^2} < 1$ and $|\lambda_\eps| > 1$, the operator $(\lambda_\eps - J_\eps)$ is invertible. Setting 
\[
H^{(3)} = (\lambda_\varepsilon - J_\varepsilon)^{-1}  \mathcal{K}_{\alpha, \varepsilon, g} h
\]
yields an eigenfunction for $\cP_{3d,\varepsilon}$ with eigenvalue $\lambda_\varepsilon$. By parabolic regularity, the eigenfunction is at least $H^2$-regular, since $g\in C^1(\TT^2)$.

\medskip

\noindent
\emph{Step 3: Divergence-free condition.}
Taking divergence,
\[
\partial_t B + u_{\alpha,N} \cdot \nabla B - B \cdot \nabla u_{\alpha,N} =\eps \Delta B  \qquad   \Rightarrow \qquad  (\partial_t  + u_{\alpha,N} \cdot \nabla )(\nabla\cdot B) =\eps \Delta (\nabla\cdot B).
\]
Hence
\[
\|\nabla \cdot \cP_{3d,\varepsilon} B_{\mathrm{in}}\|_{L^2}
\leq \|\nabla \cdot B_{\mathrm{in}}\|_{L^2}.
\]
For the eigenfunction,
\[
|\lambda_\varepsilon| \|\nabla \cdot B_{\mathrm{in}}\|_{L^2}
\leq \|\nabla \cdot B_{\mathrm{in}}\|_{L^2}.
\]
Since $|\lambda_\varepsilon|>1$, this implies $\nabla \cdot B_{\mathrm{in}} = 0$.

\medskip

\noindent
\emph{Step 4: Extension to continuous time.}
Finally, let $U^\eps(s,t):L^2 \to L^2$ be the propagator of the PDE \eqref{passive-vector} with velocity field $u_{\alpha,N}$, in other words, $B^\eps(t)=U^\eps(s,t)B_{\ini}$ is the solution to \eqref{passive-vector} at time $t$, with initial condition $B^\eps(s)=B_{\ini}$. In particular, $U^\eps(0,N+3)$ is equal to $\mathcal{P}_{3d,\eps}$. Take $B^\eps_{\ini}$ to be the principal eigenfunction of $U^\eps(0,N+3)$, with eigenvalue $\lambda_\eps$, and let $B^\eps(t)=U(0,t)B^\eps_{\ini}$. Then, there holds $\|B^\eps(n(N+3))\|_{L^2(\TT^3)}=|\lambda_\eps|^n\|B^\eps_{\ini}\|_{L^2(\TT^3)}$, and in view of the growth bound \eqref{eq:expbound}, it holds that 
$$
\e^{-\|\nabla u_{\alpha,N}\|_{L_{t,x}^\infty}(N+3)}\|B^\eps(n(N+3))\|_{L^2(\TT^3)}\leq \|B^\eps(n(N+3)+t)\|_{L^2(\TT^3)},
$$
for all $t \in [0,N+3]$ and any $n \in \NN$. Thus, we conclude the proof.

% so we estimate for any $t \in [n(N+3),(n+1)(N+3)]$
% $$
% \|B^\eps(t)\|_{L^2(\TT^3)} \geq \e^{-\|\nabla u_{\alpha,N}\|_{L_{t,x}^\infty}(N+3)}\|B^\eps(n(N+3))\|_{L^2(\TT^3)} \geq |\lambda_\eps|^t |\lambda_\eps|^{-(N+3)} \e^{-\|\nabla u_{\alpha,N}\|_{L_{t,x}^\infty}(N+3)}\|B_{\ini}\|_{L^2(\TT^3)},
% $$
% completing the proof.
\end{proof}

\subsection{Connection with the flux conjecture}
As a consequence of our spectral approach, we show that the velocity field $u_{\alpha, N}$, for $\alpha$ sufficiently large but fixed, is a \emph{perfect dynamo}. This notion concerns the growth of magnetic flux in the ideal limit and is generally more delicate than energy growth, due to the possibility of fine-scale cancellations. We recall the definition.

\begin{definition}[Perfect dynamo, \cite{CG95}] \label{d:perfect-dynamo}
Let $\psi: \TT^3 \to \RR^3$ be a smooth vector field. The \emph{perfect dynamo growth rate relative to $\psi$}, denoted by $\Gamma(\psi)$, is defined as
\[
\Gamma(\psi)
=
\sup_{\substack{B_{\ini} \in L^2(\mathbb{T}^3) \\ \nabla \cdot B_{\ini}=0}}
\limsup_{t \to \infty} \frac{1}{t} 
\log\left | \int_{\mathbb{T}^3} B(t,x) \cdot \psi(x) \, \dd x\right |,
\]
where $B(t,x)$ is the solution to the ideal equation \eqref{passive-vector} (with $\varepsilon=0$) with initial datum $B_{\ini}$. 
A time-periodic, divergence-free velocity field $u \in L^\infty_t W^{1,\infty}_x$ is called a \emph{perfect dynamo} if there exists some $\psi \in C^\infty (\TT^3; \RR^3)$ such that $\Gamma (\psi) > 0$.
\end{definition}
The relationship between ideal flux growth and diffusive energy growth is encapsulated in the following conjecture \cite{CG95}*{Chapter 4, Conjecture 1}.

\smallskip

\noindent\textbf{Flux conjecture.} \textit{Every perfect dynamo is a fast dynamo.}

\smallskip

The flux conjecture suggests that the growth mechanism underlying a perfect dynamo is robust enough to overcome the dissipative effects of arbitrarily small diffusion \cite{CG95}*{Chapter 4}. In Section~\ref{subsec:flux}, we prove that the velocity field $u_{\alpha, N}$ satisfies the perfect dynamo condition. Combined with Theorem~\ref{thm:fastdynamo}, this provides a concrete example for which the flux conjecture holds.

\begin{corollary} \label{corollary:perfect-dynamo}
The velocity field $u_{\alpha, N}$ defined in \eqref{eq:u_def} is a perfect dynamo for any $\alpha \in 2\NN$ sufficiently large and any $N \in \NN$.
\end{corollary}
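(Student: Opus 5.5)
Fix $\alpha\ge\alpha_0$ and $g$ as in Theorem~\ref{thm:main eigenvalue}, so that $\cP_0:=\alpha^2\e^{2\pi i g}\cL_\alpha$ has an isolated, algebraically simple eigenvalue $\lambda_\alpha$ with $|\lambda_\alpha|\ge\alpha^2/4>1$ on the anisotropic space $X$. Let $h_\alpha\in X$ be the eigenvector and $\ell_\alpha\in X^*$ the associated left eigenvector (dual eigenfunctional), which exists because the Riesz projector $\Pi_\alpha$ onto $\lambda_\alpha$ has rank one. The plan is to realize the flux $\int B(t)\cdot\psi$ along the ideal evolution as a pairing against iterates of $\cP_0$, and to show that this pairing picks up the factor $\lambda_\alpha^n$ for suitable $B_{\ini}$ and $\psi$.

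First choose initial data of the form \eqref{d:initial}, $B_{\ini}=\e^{2\pi i z}(h,H^{(3)})$, with $h$ smooth. Take the real part at the end, or observe that the $\e^{-2\pi i z}$ mode evolves by complex conjugation, so real data $\Re B_{\ini}$ give the same flux up to a conjugate. Choose $\psi=\e^{-2\pi i z}(\varphi,0)$ with $\varphi\in C^\infty(\TT^2;\RR^2)$, or its real part. By \eqref{eq:comp-intro} iterated $n$ times (at $\eps=0$ the idle phases do nothing), the planar components at time $n(N+3)$ are $\e^{2\pi i z}\cP_0^n h$. Hence
\[
\int_{\TT^3}B(n(N+3))\cdot\psi = \int_{\TT^2}\cP_0^n h\cdot\varphi ,
\]
and the third component and $H^{(3)}$ drop out. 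Divergence-free data are obtained as in Step~3 of the proof of Theorem~\ref{thm:fastdynamo}. Given any smooth $h$, solve $\partial_x h^{(1)}+\partial_y h^{(2)}+2\pi i H^{(3)}=0$ for $H^{(3)}$, which is always possible thanks to the factor $2\pi i\neq0$. This gives an admissible $B_{\ini}\in L^2$.

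Next, decompose $\cP_0^n=\lambda_\alpha^n\Pi_\alpha+\cP_0^n(1-\Pi_\alpha)$. The remainder has spectral radius at most some $r<|\lambda_\alpha|$: the essential spectrum is bounded by $C\alpha^{2-\eta}$ via Proposition~\ref{proposition:main lasota yorke}, and $\lambda_\alpha$ is the unique eigenvalue of modulus $\ge \alpha^2/4$. So $\|\cP_0^n(1-\Pi_\alpha)\|_X\le C_\delta (r+\delta)^n$. It remains to check that $\varphi\mapsto\int\cdot\,\varphi$ is a continuous functional on $X$, and that $\int\Pi_\alpha h\cdot\varphi=\ell_\alpha(h)\int h_\alpha\cdot\varphi\neq0$ for some smooth $h,\varphi$. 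Then $|\int B(t_n)\cdot\psi|\gtrsim|\lambda_\alpha|^n$, and so $\Gamma(\psi)\ge\log|\lambda_\alpha|/(N+3)>0$. Intermediate times only change constants by \eqref{eq:expbound}, though the $\limsup$ along $t_n$ already suffices.

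The main obstacle is the nondegeneracy in the last step. In anisotropic spaces $C^\infty$ test functions act continuously on $X$, and smooth functions are dense in $X$; these are standard features of the construction in Section~\ref{sec:anisotropic}, which I would verify there. Density gives a smooth $h$ with $\ell_\alpha(h)\ne0$, since $\ell_\alpha\neq0$. Because $h_\alpha\neq0$ is a distribution, some smooth $\varphi$ has $\int h_\alpha\cdot\varphi\neq0$. If passing to real parts causes cancellation, replace $\varphi$ by $i\varphi$; one of the two choices survives. No uniformity in $\alpha$ or $\eps$ is needed, so the result holds for every $N$, since $N$ does not affect the ideal operator.
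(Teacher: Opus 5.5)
Your proposal is correct and follows essentially the same route as the paper's proof via Proposition~\ref{prop:flux conjecture}: split $\cP_0^n$ using the rank-one Riesz projector, use density of smooth data in $X$ to get a nonzero projection, use injectivity of $X$ into distributions (with continuity of the pairing against $C^q$ test functions) to find a smooth $\varphi$ with nonvanishing pairing, and reduce the 3d flux to the 2d pairing through $\Psi=\e^{-2\pi i z}(\varphi,0)$. Your explicit divergence-free choice $H^{(3)}=\frac{i}{2\pi}(\partial_x h^{(1)}+\partial_y h^{(2)})$ and the real-part/$i\varphi$ argument fill in points the paper leaves implicit; the one misstep is the aside that \eqref{eq:expbound} controls intermediate times, since an energy bound does not bound a flux from below, but it is not needed because the $\limsup$ along $t_n=n(N+3)$ already suffices.
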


\subsection{Notation} \label{subsec:notation}  We denote by $(X,\|\cdot\|)$ the anisotropic Banach space endowed with the strong norm, and by $|\cdot|_w$ a weak norm on $X$ defined in Section \ref{sec:anisotropic}.
For a bounded linear operator $\cL:X\to X$, we define the operator norms
\[
\|\cL\|_{X\to X}=\sup_{\|x\|\le 1}\|\cL x\|,
\qquad
\|\cL\|_{w\to w}=\sup_{|x|_w\le 1}|\cL x|_w,
\qquad
\|\cL\|_{X\to w}=\sup_{\|x\|\le 1} |\cL x |_w.
\]
For any vector $\vvv\in\RR^d$ with $d\ge 2$, we write $\vvv^{(i)}$ for its $i$-th component.  
We denote by $\dd_{\TT^d}$ the standard toroidal distance on $\TT^d \cong [0,1]^d/\sim$.

Throughout the manuscript, the constant $C>0$ may change from line to line but is always independent of all relevant parameters, in particular independent of $\alpha$.
We abuse notation slightly and denote by $C^q(W)$ the closure of Lipschitz functions with respect to the $\|\cdot\|_{C^q(W)}$ norm. In particular, we write $C^1(\mathbb{T}^2)$ for the space of Lipschitz functions on $\TT^2$.
Finally, for any curve $W$ and any function $\varphi \in C^1(W)$, we denote by
\[
\int_W \varphi \, \dd \HH^1
\]
the line integral with respect to the one-dimensional Hausdorff measure.

\section{A family of hyperbolic maps on $\TT^2$} \label{sec:themap}
In this section, we analyze a family of hyperbolic maps $(T_\alpha)_{\alpha \geq \alpha_0} : \TT^2 \to \TT^2$, given by the first two components of the time-$ N+1$ flow map of the velocity field $u_{\alpha, N}$ defined in \eqref{eq:u_def}, i.e. without the shearing action induced by $Z(x,y)$. 

We define the indicator functions on the circle $\mathbb{T} = [0,1)$ as
\begin{equation}
\mathbbm{1}_{\geq a} (x) = 
\begin{cases}
    1, \quad& \text{if } x \in [a, 1), \\
    0, & \text{otherwise},
\end{cases}
\end{equation}
and let $\mathbbm{1}_{< a} (x) = 1 - \mathbbm{1}_{\geq a} (x)$. Given the velocity field defined in \eqref{eq:shears}--\eqref{eq:u_def} with shear strength $\alpha \in 2\mathbb{N}$, we define the horizontal and vertical shearing flow maps, $T_{H_\alpha}$ and $T_{V_\alpha}$ respectively, as:
\begin{equation}
T_{H_\alpha}(x,y) = 
\begin{pmatrix} 
(x + \alpha y) \mathbbm{1}_{\geq 1/2} (y) - (x-\alpha y) \mathbbm{1}_{< 1/2} (y) \\ 
y 
\end{pmatrix} \pmod{1}
\end{equation}
and
\begin{equation}
T_{V_\alpha}(x,y) = 
\begin{pmatrix} 
x \\ 
(y + \alpha x) \mathbbm{1}_{\geq 1/2} (x) +(y- \alpha x) \mathbbm{1}_{< 1/2} (x)
\end{pmatrix} \pmod{1}.
\end{equation}
Let $\XX_\alpha: \mathbb{T}^2 \to \mathbb{T}^2$ be the $(x,y)$-components of the time-1 flow map associated with the velocity field $u$. This map is given by the composition
\begin{equation}
    \XX_\alpha = T_{H_\alpha} \circ T_{V_\alpha}, \qquad \XX_\alpha^{-1} = T_{V_\alpha}^{-1} \circ T_{H_\alpha}^{-1}.
\end{equation}
By construction, $\XX_\alpha$ is a piecewise linear, volume-preserving map. There exists a partition of the torus into four regions $\mathcal{M}_\ell \subset \mathbb{T}^2$ for $\ell=1, \dots, 4$, such that the restriction $\XX_\alpha |_{\mathcal{M}_\ell}$ coincides with a linear transformation $A_\ell \in SL(2, \mathbb{Z})$. We refer to these $\mathcal{M}_\ell$ as the \emph{forward smoothness regions} of the map.

Correspondingly, by the inverse function theorem, the inverse map $\XX_\alpha^{-1}: \XX_\alpha(\mathcal{M}_\ell) \to \mathcal{M}_\ell$ is given by the inverse matrices $A_\ell^{-1}$. We refer to the sets $\XX_\alpha(\mathcal{M}_\ell)$ as the \emph{backward smoothness regions}. We now proceed to characterize the geometry of these regions and the explicit form of the matrices $A_\ell$.

\subsection*{The forward map} 
By computing the composition $\XX_\alpha = T_{H_\alpha} \circ T_{V_\alpha}$ explicitly, we find that the second component is determined solely by the initial vertical shear, namely
\begin{equation}\label{eq:Talpha2}
\XX_\alpha^{(2)}(x,y) = (y + \alpha x) \mathbbm{1}_{\geq 1/2}(x) + (y - \alpha x) \mathbbm{1}_{< 1/2}(x) \pmod{1}.
\end{equation}
The first component, which incorporates the horizontal shear acting on the already-sheared vertical coordinate, is given by:
\begin{align}
    \XX_\alpha^{(1)} (x,y) &= \left[ (1+ \alpha^2) x + \alpha y \right] \mathbbm{1}_{\mathcal{M}_1} (x,y) + \left[ (1 - \alpha^2) x + \alpha y \right] \mathbbm{1}_{\mathcal{M}_2} (x,y) \nonumber \\
    &\quad + \left[ (1 - \alpha^2) x - \alpha y \right] \mathbbm{1}_{\mathcal{M}_3} (x,y) + \left[ (1+ \alpha^2) x - \alpha y \right] \mathbbm{1}_{\mathcal{M}_4} (x,y) \pmod{1},\label{eq:Talpha1}
\end{align}
where the four forward smoothness regions $\mathcal{M}_\ell \subset \mathbb{T}^2$ are defined as
\begin{alignat*}{2}
    \mathcal{M}_1 &= \{ x \geq 1/2, \, y + \alpha x \geq 1/2 \pmod{1} \}, &\qquad \mathcal{M}_2 &= \{ x < 1/2, \, y - \alpha x \geq 1/2 \pmod{1} \}, \\
    \mathcal{M}_3 &= \{ x \geq 1/2, \, y + \alpha x < 1/2 \pmod{1} \}, &\qquad \mathcal{M}_4 &= \{ x < 1/2, \, y - \alpha x < 1/2 \pmod{1} \}.
\end{alignat*}
Consequently, the map $\XX_\alpha$ is a piecewise linear automorphism of the torus. On each region $\mathcal{M}_\ell$, the map acts as $\XX_\alpha(x,y) = A_\ell \binom{x}{y} \pmod{1}$, where the matrices $A_\ell \in SL(2, \mathbb{Z})$ are given by:
\begin{equation}\label{eq:matrixMi}
A_1 = \begin{pmatrix} 1+\alpha^2 & \alpha \\ \alpha & 1 \end{pmatrix}, \quad A_2 = \begin{pmatrix} 1-\alpha^2 & \alpha \\ -\alpha & 1 \end{pmatrix}, \quad A_3 = \begin{pmatrix} 1-\alpha^2 & -\alpha \\ \alpha & 1 \end{pmatrix}, \quad A_4 = \begin{pmatrix} 1+\alpha^2 & -\alpha \\ -\alpha & 1 \end{pmatrix}.
\end{equation}
This characterization confirms that the smoothness region of $\XX_\alpha$ is the union of these four components, each corresponding to a specific combination of the directional shears.

\subsection*{The backward map} 
Following a computation analogous to the forward map, we determine the inverse map $\XX_\alpha^{-1} = T_{V_\alpha}^{-1} \circ T_{H_\alpha}^{-1}$. The first component of the inverse is determined by the horizontal inverse shear acting on the $y$-coordinate:
\begin{equation}
(\XX_\alpha^{-1})^{(1)} (x,y) = (x - \alpha y) \mathbbm{1}_{\geq 1/2}(y) + (x + \alpha y) \mathbbm{1}_{< 1/2}(y) \pmod{1},
\end{equation}
and the second component, which accounts for the vertical inverse shear acting on the result of the horizontal shear, is given by:
\begin{align}
    (\XX_\alpha^{-1})^{(2)} (x,y) &= \left[ (1+\alpha^2) y - \alpha x \right] \mathbbm{1}_{\XX_\alpha(\mathcal{M}_1)} (x,y) + \left[ (1 - \alpha^2) y + \alpha x \right] \mathbbm{1}_{\XX_\alpha(\mathcal{M}_2)} (x,y) \nonumber \\
    &\quad + \left[ (1 - \alpha^2) y - \alpha x \right] \mathbbm{1}_{\XX_\alpha(\mathcal{M}_3)} (x,y) + \left[ (1+\alpha^2) y + \alpha x \right] \mathbbm{1}_{\XX_\alpha(\mathcal{M}_4)} (x,y) \pmod{1}.
\end{align}
The \emph{backward smoothness regions} $\XX_\alpha(\mathcal{M}_\ell)$ partition the torus according to the continuity of the inverse map:
\begin{alignat*}{2}
    \XX_\alpha(\mathcal{M}_1) &= \{ y \geq 1/2, \, x - \alpha y \geq 1/2 \pmod{1} \}, &\quad \XX_\alpha(\mathcal{M}_2) &= \{ y \geq 1/2, \, x - \alpha y < 1/2 \pmod{1} \}, \\
    \XX_\alpha(\mathcal{M}_3) &= \{ y < 1/2, \, x + \alpha y \geq 1/2 \pmod{1} \}, &\quad \XX_\alpha(\mathcal{M}_4) &= \{ y < 1/2, \, x + \alpha y < 1/2 \pmod{1} \}.
\end{alignat*}
The structure of these backward smoothness sets is critical, as they define the regions where the vector-valued transfer operator  remains regular. Just as the forward regions $\mathcal{M}_\ell$ are skewed vertically, these backward regions are skewed horizontally, reflecting the time-reversal symmetry of the shear operations. We refer to Figure \ref{figure1} to visualize these sets.

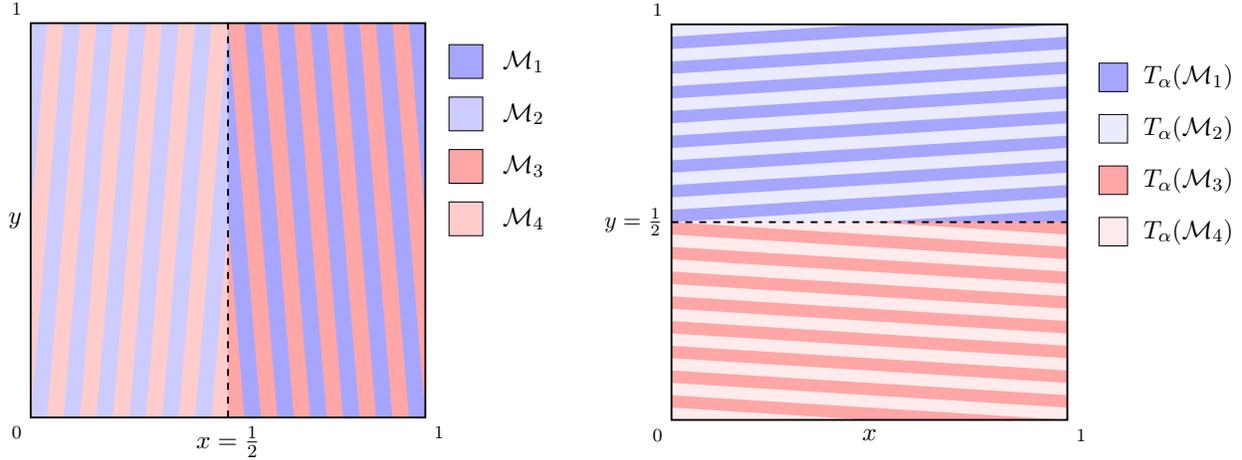
\begin{figure}[ht]
\centering

\begin{minipage}{0.445\textwidth}
\vspace{0.2cm}
\centering
\resizebox{\linewidth}{!}{%
\begin{tikzpicture}[scale=6]

% Parameter alpha
\def\A{12}

% Colors
\colorlet{Mone}{blue!35}   
\colorlet{Mtwo}{blue!20}   
\colorlet{Mthree}{red!35}  
\colorlet{Mfour}{red!20}

% ==========================================================
% Partition of [0,1]^2 into M1,...,M4
% ==========================================================

% Left half: x < 1/2  (M4 base + M2 stripes)
\begin{scope}
  \clip (0,0) rectangle (0.5,1);

  % base region M4
  \fill[Mfour] (0,0) rectangle (0.5,1);

  % stripes for M2: y - A x in [k+1/2, k+1)
  \foreach \k in {-20,...,20} {
    \pgfmathsetmacro{\ca}{\k + 0.5}
    \pgfmathsetmacro{\cb}{\k + 1.0}
    \fill[Mtwo]
      (0,\ca) --
      (1,{\A + \ca}) --
      (1,{\A + \cb}) --
      (0,\cb) -- cycle;
  }
\end{scope}

% Right half: x >= 1/2  (M3 base + M1 stripes)
\begin{scope}
  \clip (0.5,0) rectangle (1,1);

  % base region M3
  \fill[Mthree] (0.5,0) rectangle (1,1);

  % stripes for M1: y + A x in [k+1/2, k+1)
  \foreach \k in {-20,...,40} {
    \pgfmathsetmacro{\ca}{\k + 0.5}
    \pgfmathsetmacro{\cb}{\k + 1.0}
    \fill[Mone]
      (0,\ca) --
      (1,{\ca - \A}) --
      (1,{\cb - \A}) --
      (0,\cb) -- cycle;
  }
\end{scope}

% Frame and guides
\draw[thick] (0,0) rectangle (1,1);
\draw[thick,dashed] (0.5,0) -- (0.5,1);

% Axes labels
\node[below] at (0.5,0) {$x=\frac{1}{2}$};
\node[left]  at (0,0.5) {$y$};

% Tick labels
\node[below left]  at (0,0) {\scriptsize $0$};
\node[below right] at (1,0) {\scriptsize $1$};
\node[above left]  at (0,1) {\scriptsize $1$};
%\node[below]       at (0.5,0) [yshift=-10pt] {\scriptsize $\frac12$};

% Legend
\begin{scope}[shift={(1.06,0.86)}, scale=0.95]
  \fill[Mone] (0,0) rectangle (0.09,0.09);
  \draw (0,0) rectangle (0.09,0.09);
  \node[right] at (0.12,0.045) {$\mathcal M_1$};

  \fill[Mtwo] (0,-0.14) rectangle (0.09,-0.05);
  \draw (0,-0.14) rectangle (0.09,-0.05);
  \node[right] at (0.12,-0.095) {$\mathcal M_2$};

  \fill[Mthree] (0,-0.28) rectangle (0.09,-0.19);
  \draw (0,-0.28) rectangle (0.09,-0.19);
  \node[right] at (0.12,-0.235) {$\mathcal M_3$};

  \fill[Mfour] (0,-0.42) rectangle (0.09,-0.33);
  \draw (0,-0.42) rectangle (0.09,-0.33);
  \node[right] at (0.12,-0.375) {$\mathcal M_4$};
\end{scope}

\end{tikzpicture}%
}
\end{minipage}
\hfill
\begin{minipage}{0.52\textwidth}
\centering
\resizebox{\linewidth}{!}{%
\begin{tikzpicture}[scale=6]

% Large alpha (example)
\def\A{16}

% Colors
\colorlet{Mone}{blue!35}
\colorlet{Mtwo}{blue!8}
\colorlet{Mthree}{red!35}
\colorlet{Mfour}{red!8}

% ==========================================================
% TOP HALF: y >= 1/2
% Centered at y = 1/2  => use x - A(y-1/2) mod 1
% M1: >= 1/2 mod 1   (dark blue)
% M2: <  1/2 mod 1   (light blue)
% ==========================================================
\begin{scope}
  \clip (0,0.5) rectangle (1,1);
  \fill[Mtwo] (0,0.5) rectangle (1,1);

  \foreach \k in {-60,...,60} {
    \pgfmathsetmacro{\ca}{\k + 0.5}
    \pgfmathsetmacro{\cb}{\k + 1.0}
    \fill[Mone]
      ({\A*(-1.5) + \ca},-1) --
      ({\A*( 1.5) + \ca}, 2) --
      ({\A*( 1.5) + \cb}, 2) --
      ({\A*(-1.5) + \cb},-1) -- cycle;
  }
\end{scope}

% ==========================================================
% BOTTOM HALF: y < 1/2
% Centered at y = 1/2  => use x + A(y-1/2) mod 1
% M3: >= 1/2 mod 1   (dark red)
% M4: <  1/2 mod 1   (light red)
% ==========================================================
\begin{scope}
  \clip (0,0) rectangle (1,0.5);
  \fill[Mfour] (0,0) rectangle (1,0.5);

  \foreach \k in {-60,...,60} {
    \pgfmathsetmacro{\ca}{\k + 0.5}
    \pgfmathsetmacro{\cb}{\k + 1.0}
    \fill[Mthree]
      ({-\A*(-1.5) + \ca},-1) --
      ({-\A*( 1.5) + \ca}, 2) --
      ({-\A*( 1.5) + \cb}, 2) --
      ({-\A*(-1.5) + \cb},-1) -- cycle;
  }
\end{scope}

% Main square and split line y = 1/2
\draw[thick] (0,0) rectangle (1,1);
\draw[thick,dashed] (0,0.5) -- (1,0.5);

% Axis labels
\node[below] at (0.5,0) {$x$};

% Mark y=1/2
\node[left] at (0,0.5) {\small $y=\frac12$};

% Corner labels
\node[below left]  at (0,0) {\scriptsize $0$};
\node[below right] at (1,0) {\scriptsize $1$};
\node[above left]  at (0,1) {\scriptsize $1$};

% Legend
\begin{scope}[shift={(1.08,0.83)}, scale=0.9]
  \fill[Mone] (0,0) rectangle (0.08,0.08);
  \draw (0,0) rectangle (0.08,0.08);
  \node[right] at (0.10,0.04) {$\XX_\alpha(\mathcal M_1)$};
\end{scope}

\begin{scope}[shift={(1.08,0.70)}, scale=0.9]
  \fill[Mtwo] (0,0) rectangle (0.08,0.08);
  \draw (0,0) rectangle (0.08,0.08);
  \node[right] at (0.10,0.04) {$\XX_\alpha(\mathcal M_2)$};
\end{scope}

\begin{scope}[shift={(1.08,0.57)}, scale=0.9]
  \fill[Mthree] (0,0) rectangle (0.08,0.08);
  \draw (0,0) rectangle (0.08,0.08);
  \node[right] at (0.10,0.04) {$\XX_\alpha(\mathcal M_3)$};
\end{scope}

\begin{scope}[shift={(1.08,0.44)}, scale=0.9]
  \fill[Mfour] (0,0) rectangle (0.08,0.08);
  \draw (0,0) rectangle (0.08,0.08);
  \node[right] at (0.10,0.04) {$\XX_\alpha(\mathcal M_4)$};
\end{scope}

\end{tikzpicture}%
}
\end{minipage}

\caption{The partition of $[0,1]^2$ into the regions $\mathcal M_1,\mathcal M_2,\mathcal M_3,\mathcal M_4$ and the sets $\XX_\alpha(\mathcal M_1), \XX_\alpha(\mathcal M_2), \XX_\alpha(\mathcal M_3), \XX_\alpha(\mathcal M_4)$, with $\alpha=16$.}
\label{figure1}
\end{figure}

Finally, we define the forward singular set 
\begin{align} \label{d:forward-singular}
S^\star=\bigcup_{\ell=1}^4 \partial T_\alpha(\mathcal{M}_\ell)\,,
\end{align}
namely the image of the partition boundaries under the map \(T_\alpha\), across which the local hyperbolic structure changes discontinuously.
\subsection{The hyperbolic structure.}
The crucial property of the map $\XX_\alpha$ is that it is \emph{uniformly hyperbolic}. Notably, the hyperbolic structure is significantly more rigid than the general case outlined in \cite{demers_liverani}; here, the unstable and stable cone fields are independent of the spatial coordinates $(x,y)$. The following lemma summarizes the spectral and geometric properties required for our analysis.
\begin{lemma}
\label{lemma:uniformly hyperbolic}
There exists $\alpha_0 > 0$ such that for all $\alpha \geq \alpha_0$, there exist two constant cones
\begin{equation}\label{eq:cones}
C_s = \{ \mathbf{v}=(v_1,v_2) : |v_1| \leq 2\alpha^{-1}|v_2| \}, \qquad C_u = \{ \mathbf{v}=(v_1,v_2) : |v_2| \leq 2\alpha^{-1}|v_1| \}
\end{equation}
satisfying the invariance properties
\begin{equation}
D\XX_\alpha C_u \subset C_u, \quad D\XX_\alpha^{-1} C_s \subset C_s.
\end{equation}
Furthermore, we have the expansion estimates
\begin{equation}
|D\XX_\alpha \mathbf{v}| \geq \frac{1}{2}\alpha^{2} |\mathbf{v}|, \quad \forall \mathbf{v} \in C_u, \qquad  |D\XX_\alpha^{-1} \mathbf{v}| \geq \frac{1}{2}\alpha^{2} |\mathbf{v}|, \quad \forall \mathbf{v} \in C_s \,.
\end{equation}
\end{lemma}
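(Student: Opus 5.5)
The proof is a direct computation, because $\XX_\alpha$ is piecewise linear. On the interior of each forward smoothness region $\mathcal M_\ell$ we have $D\XX_\alpha \equiv A_\ell$, with $A_\ell$ given in \eqref{eq:matrixMi}. On the interior of each backward region $\XX_\alpha(\mathcal M_\ell)$ we have $D\XX_\alpha^{-1}\equiv A_\ell^{-1}$. Both statements are understood off the partition boundaries, which form a null set. The cones \eqref{eq:cones} are constant in space, so it suffices to check finitely many matrices: the four $A_\ell$ acting on $C_u$ and the four $A_\ell^{-1}$ acting on $C_s$.

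\emph{Step 1: common structure.} Every $A_\ell$ has the form
\[
A_\ell=\begin{pmatrix}1+\sigma\alpha^2 & \tau\alpha\\ \rho\alpha & 1\end{pmatrix},\qquad \sigma,\tau,\rho\in\{\pm1\}.
\]
Since $\det A_\ell=1$, the inverse is
\[
A_\ell^{-1}=\begin{pmatrix}1 & -\tau\alpha\\ -\rho\alpha & 1+\sigma\alpha^2\end{pmatrix}.
\]
This is the same shape as $A_\ell$ with the roles of the two coordinates exchanged. The swap $(v_1,v_2)\mapsto(v_2,v_1)$ interchanges $C_u$ and $C_s$, so the statement for $A_\ell^{-1}$ on $C_s$ follows verbatim from the statement for matrices of this shape on $C_u$. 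I will therefore only treat the forward case.

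\emph{Step 2: cone invariance.} Take $\mathbf v\in C_u$, so $|v_2|\le 2\alpha^{-1}|v_1|$. For the first component,
\[
|(A_\ell\mathbf v)_1|\ge(\alpha^2-1)|v_1|-\alpha|v_2|\ge(\alpha^2-3)|v_1|.
\]
For the second component,
\[
|(A_\ell\mathbf v)_2|\le\alpha|v_1|+|v_2|\le(\alpha+2\alpha^{-1})|v_1|.
\]
Hence
\[
\frac{|(A_\ell\mathbf v)_2|}{|(A_\ell\mathbf v)_1|}\le\frac{\alpha+2/\alpha}{\alpha^2-3},
\]
and this is at most $2\alpha^{-1}$ as soon as $\alpha^2+2\le 2\alpha^2-6$, i.e.\ $\alpha^2\ge 8$. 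Therefore $A_\ell C_u\subset C_u$ for all $\alpha$ beyond an explicit threshold.

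\emph{Step 3: expansion.} For $\mathbf v\in C_u$ we have $|\mathbf v|\le|v_1|\sqrt{1+4\alpha^{-2}}$. Combining this with the lower bound from Step 2 gives
\[
|A_\ell\mathbf v|\ge\frac{\alpha^2-3}{\sqrt{1+4\alpha^{-2}}}\,|\mathbf v|\ge\tfrac12\alpha^2|\mathbf v|
\]
for $\alpha$ large, say $\alpha\ge 3$. Choosing $\alpha_0$ as the maximum of the thresholds from Steps 2 and 3 concludes the argument.

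There is no real obstacle here. The only points needing care are the sign bookkeeping across the four matrices, which the uniform $\pm$ parametrisation in Step 1 handles, and the remark that $D\XX_\alpha$ is only defined off the singular lines. The latter is harmless because the cones do not depend on the point.
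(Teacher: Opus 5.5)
Your proof is correct and follows the same route as the paper: a direct check on the four constant matrices $A_\ell$ and their inverses, bounding the dominant component from below and the other component from above. Your sign parametrisation and the coordinate-swap reduction for $A_\ell^{-1}$ on $C_s$ make explicit the cases the paper dismisses as ``identical'' or ``analogous.'' They also correctly account for the weaker lower bound $(\alpha^2-3)|v_1|$ on the branches whose top-left entry is $1-\alpha^2$.
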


\begin{proof}
We provide the details for the forward map, as the argument for the inverse is analogous. 
Recall that on each forward smoothness region $\mathcal{M}_\ell$, the derivative $D\XX_\alpha$ is given by one of the matrices $A_1,\dots, A_4$ derived in \eqref{eq:matrixMi}.

We demonstrate the argument for $A_1 = \begin{pmatrix} 1+\alpha^2 & \alpha \\ \alpha & 1 \end{pmatrix}$; the remaining cases follow by identical estimates. Let $\mathbf{v} = (v_1, v_2) \in C_u$. By definition, $|v_2| \leq 2\alpha^{-1}|v_1|$. The image vector $\mathbf{v}' = A_1 \mathbf{v}$ has components
\begin{align*}
|v_1'| &= |(1+\alpha^2)v_1 + \alpha v_2| \geq (1+\alpha^2)|v_1| - \alpha |v_2| \geq (1+\alpha^2)|v_1| - 2|v_1| = (\alpha^2 - 1)|v_1|, \\
|v_2'| &= |\alpha v_1 + v_2| \leq \alpha |v_1| + |v_2| \leq (\alpha + 2\alpha^{-1})|v_1|.
\end{align*}
To satisfy $D\XX_\alpha C_u \subset C_u$, we require $|v_2'| \leq 2\alpha^{-1}|v_1'|$. Substituting the bounds
\begin{equation}
(\alpha + 2\alpha^{-1})|v_1| \leq 2\alpha^{-1}(\alpha^2 - 1)|v_1| = (2\alpha - 2\alpha^{-1})|v_1|.
\end{equation}
For large $\alpha$, this simplifies to $\alpha \leq 2\alpha$, which is clearly satisfied. For the expansion estimate, since $|v| \approx |v_1|$ for vectors in $C_u$, we have:
\begin{equation}
|D\XX_\alpha \mathbf{v}| \geq |v_1'| \geq (\alpha^2 - 1)|v_1| \geq \frac{1}{2}\alpha^2 |\mathbf{v}|\,, \qquad \forall \alpha \geq \alpha_0 \,,
\end{equation}
concluding the proof.
\end{proof}
The existence of these constant cones facilitates the construction of \emph{anisotropic Banach spaces}. Because the expansion factor $ \approx \alpha^2$ grows quadratically with the shear strength, the essential spectral radius of the vector-valued transfer operator can be made arbitrarily small relative to the dominant eigenvalue as $\alpha \to \infty$.

\section{Functional setting and anisotropic Banach spaces} \label{sec:anisotropic}

Having outlined the hyperbolic structure of the map, we now define suitable anisotropic Banach spaces. These are essentially those of \cite{demers_liverani}, with adaptations made to account for our piecewise affine map structure.\footnote{These adaptations are, in fact, essential in order to treat the Laplacian as a perturbation. The class of admissible perturbations in \cite{demers_liverani} \emph{does not} natively include parabolic smoothing.}

We begin by defining a class of admissible leaves against which we shall integrate. Recall from Lemma \ref{lemma:uniformly hyperbolic} that our dynamics admit constant stable/unstable cones given in \eqref{eq:cones}, which are invariant under the backward and forward dynamics of $\XX_\alpha$, respectively. For any basepoint $\mathbf{x} \in \mathbb{T}^2$, unit direction vector  $\mathbf{v} \in \hat{C}_s = \{\vvv \in C_s : |\vvv|=1 \,, \vvv^{(2)}>0 \}$, and length $S \in (0,1]$, we define a parameterized curve $\gamma_{\mathbf{x},\mathbf{v},S}(t) = \mathbf{x} + t\mathbf{v}$ for $t \in [0,S]$. This yields a line segment $W \subset \mathbb{T}^2$ given by $W = \gamma_{\mathbf{x},\mathbf{v},S}([0,S])$.

We first verify that such lines admit a canonical parameterisation.

\begin{lemma}
\label{lemma:unique parameterisation}
Let $W\subset \mathbb{T}^2 \cong [0,1]^2/\sim$ be a line segment of length at most $1$, with tangent direction in $C_s$. If $W$ is not a vertical segment of length $1$, then there exist unique $\mathbf{x}\in\mathbb{T}^2$, $\mathbf{v}\in \hat{C}_s$, and $S\in[0,1]$ such that $
W=\gamma_{\mathbf{x},\mathbf{v},S}([0,S]).$
If $W$ is a vertical line segment of length $1$, we fix the convention that its base point is $\xx = (x_0,0)$ for some $x_0\in\mathbb{T}$; under this convention, the above parameterisation is unique as well.
\end{lemma}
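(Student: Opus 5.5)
The plan is to lift to the universal cover and read off the parameters. A line segment $W\subset\TT^2$ is, by definition, the image under the projection $\pi:\RR^2\to\TT^2$ of a segment $\tilde W=\{\tilde\xx+t\vvv: t\in[0,S]\}\subset\RR^2$, with $\vvv$ a unit vector and $S$ its length. Since the tangent direction lies in $C_s$, we have $\vvv^{(2)}\neq 0$: indeed $|v_1|\le 2\alpha^{-1}|v_2|$ together with $|\vvv|=1$ forces $|v_2|\ge (1+4\alpha^{-2})^{-1/2}>0$. So, after possibly replacing $\vvv$ by $-\vvv$ and the base point by the other endpoint $\tilde\xx+S\vvv$, we may assume $\vvv\in\hat C_s$. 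This proves existence, with $\xx=\pi(\tilde\xx)$ the lower endpoint.

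For uniqueness, I would first note that $S$ is intrinsic: it equals $\HH^1(W)$ whenever $\pi|_{\tilde W}$ is injective. Next, the direction $\pm\vvv$ is determined by the tangent line of $W$ at any interior point, and the sign is fixed by the requirement $\vvv^{(2)}>0$. It then remains to show that the base point is the unique endpoint of $W$ from which $W$ emanates in direction $+\vvv$. The main point to check is that $\pi|_{\tilde W}$ is injective, so that $W$ has exactly two endpoints (or is a closed curve).

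Suppose $\tilde\xx+t_1\vvv-(\tilde\xx+t_2\vvv)\in\ZZ^2\setminus\{0\}$ for some $t_1,t_2\in[0,S]$. Then $(t_1-t_2)\vvv=k\in\ZZ^2\setminus\{0\}$, so $|t_1-t_2|\ge 1$. Since $S\le1$, this forces $\{t_1,t_2\}=\{0,1\}$, $S=1$, and $\vvv\in\ZZ^2$ a unit vector. The condition $\vvv\in\hat C_s$ then gives $\vvv=(0,1)$. This is exactly the excluded vertical loop of length $1$.

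In all other cases $\pi|_{\tilde W}$ is injective. Hence $W$ is an embedded arc with two distinct endpoints, and only the lower one, with respect to the orientation $\vvv^{(2)}>0$, serves as a base point. The length, the direction and the base point are therefore all determined by $W$.

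In the exceptional case, $W=\{x_0\}\times\TT$ is a closed circle, every point could serve as a base point, and the convention $\xx=(x_0,0)$ singles one out. The remaining parameters are still forced: $\vvv=(0,1)$ and $S=1$.

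The only real obstacle is this injectivity and wrap-around analysis, which is elementary once we observe that the integer lattice has minimal nonzero length $1$.
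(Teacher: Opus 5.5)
Your proof is correct and follows the same route as the paper's proof. You rule out wrap-around away from the vertical loop, use $\vvv^{(2)}\neq 0$ to single out the orientation with $\vvv^{(2)}>0$, and read off the base point, direction and length from the resulting embedded arc. The one difference is that the paper only asserts that $W$ ``cannot wrap around the torus and self-intersect'', whereas your lattice argument proves it: $(t_1-t_2)\vvv\in\ZZ^2\setminus\{0\}$ forces $|t_1-t_2|\ge 1$, hence $S=1$ and $\vvv=(0,1)$. Like the paper, you tacitly need $S>0$ (the paper's $\Sigma$ uses $S\in(0,1]$), since a one-point segment does not determine $\vvv$.
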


\begin{proof}
Assume that $W$ is not a vertical line segment of length $1$, then it cannot wrap around the torus and self-intersect; therefore, the set $W$ is a simple line segment with exactly two distinct endpoints on $\TT^2$. Since any tangent vector $\mathbf{v} \in \hat{C}_s$ must satisfy $|\mathbf{v}^{(1)}| \leq 2\alpha^{-1}|\mathbf{v}^{(2)}|$, its vertical component cannot be zero. Consequently, between the two possible unit vectors parallel to $W$, exactly one satisfies $\mathbf{v}^{(2)} > 0$. Choosing this orientation uniquely determines the starting endpoint $\mathbf{x} \in \mathbb{T}^2$, the direction $\mathbf{v}$, and the arc length $S \in (0,1)$, completing the proof.
If $W$ is a vertical line segment of length $1$, it is straightforward to see that  the convention restores a unique parameterisation.
\end{proof}

\subsection*{The class of admissible leaves}
Having established a canonical parameterisation for segments with tangents in $\hat{C}_s$, we may now define a metric space of admissible leaves. Indeed, define first the space $\Lambda$ to be 
$$
\Lambda=\mathbb{T}^2 \times \hat C_s \times [0,1] ,
$$
with metric 
$$
\dd_\Lambda((\mathbf{x_1},\mathbf{v}_1,S_1),(\mathbf{x_2},\mathbf{v}_2,S_2))=\dd_{\mathbb{T}^2}(\mathbf{x}_1,\mathbf{x}_2)+|\mathbf{v}_1-\mathbf{v}_2|+|S_1-S_2|.
$$
Clearly the space $(\Lambda,\dd_\Lambda)$ is a compact metric space. We then define $ \Gamma$ to be a subset of $\Lambda$ given by the union of non-vertical line segments of length at most one, perfectly vertical line segments of length strictly less than one, and finally perfectly vertical line segments of length exactly $1$
$$
\Gamma=\left (\mathbb{T}^2\times \hat{C}_s\setminus \{(0,1)\}  \times (0,1] \right ) \cup \left (\mathbb{T}^2  \times \{(0,1)\} \times (0,1)\right ) \cup \left ( (\mathbb{T} \times \{0\} )  \times \{(0,1)\}\times  \{1\}\right ).
$$
In particular, $(\Gamma, \dd_{\Lambda})$ is a pre-compact metric space.
We now define the class of admissible stable leaves as:
\begin{equation} \label{d:Sigma}
    \Sigma = \Sigma_s = \{ W \subset \mathbb{T}^2 : \exists \mathbf{v}\in \hat{C}_s, \, \exists \mathbf{x} \in \mathbb{T}^2, \, \exists S \in (0,1] \text{ such that } W = \gamma_{\mathbf{x}, \mathbf{v},S} ([0,S]) \}.
\end{equation}

Now, by Lemma \ref{lemma:unique parameterisation}, we note that the map $\Psi: \Gamma \to \Sigma$ given by $\Psi(\mathbf{x},\mathbf{v},S)=\gamma_{\mathbf{x},\mathbf{v},S}([0,S])$ is a bijection. Therefore, we may induce a metric on $\Sigma$ via the map $\Psi$, given by 
$$
\dd_\Sigma(W_1,W_2)=\dd_{\Lambda}(\Psi^{-1}(W_1),\Psi^{-1}(W_2)).
$$
Under this metric, $\Psi$ becomes a homeomorphism, and thus the metric space $(\Sigma,\dd_\Sigma)$ inherits the pre-compactness property of $(\Gamma,\dd_\Lambda)$, which will be of use in Lemma \ref{lemma:compact}.
From now on, we abuse notation and identify elements in $\Sigma$ with their parameterisation. Thus, for $W_1 = \gamma_{\mathbf{x}_1, \mathbf{v}_1, S_1} ([0,S_1])$ and $W_2 = \gamma_{\mathbf{x}_2, \mathbf{v}_2,S_2} ([0,S_2])$, we have 
\begin{equation} \label{d:d-Sigma}
    \dd_\Sigma (W_1, W_2) = \dd_{\mathbb{T}^2}(\mathbf{x}_1,\mathbf{x}_2) + |\mathbf{v}_1 - \mathbf{v}_2| + |S_1 - S_2|,
\end{equation}
where $\dd_{\mathbb{T}^2}(\mathbf{x}_1,\mathbf{x}_2)$ is the standard toroidal distance.

\medskip

We now define a pseudo-metric on test functions.
From now on, we abuse notation and use $C^1(\TT^2; \CC)$ as the space of Lipschitz functions.
We define a distance between test functions defined on these leaves. For any $q \in (0,1]$,  $W \in \Sigma$, and $\varphi \in C^1(\mathbb{T}^2, \mathbb{C})$, we define the Hölder seminorm and norm along $W$ as:
\begin{equation}
    [\varphi]_{C^q(W)} = \sup_{\substack{x, y \in W \\ x \neq y}} \frac{|\varphi(x) - \varphi(y)|}{|x-y|^q}, \qquad \|\varphi\|_{C^q(W)} = \sup_{y \in W} |\varphi(y)| + [\varphi]_{C^q(W)},
\end{equation}
and define, for $\sigma\in (0,1)$, a geometrically weighted norm 
$$\|\varphi\|_{\sigma, W} = \|\varphi\|_{C^q(W)} |W|^\sigma.$$ 
We denote by $C^q(W)$ the closure of the space of Lipschitz continuous functions, $C^1(\mathbb{T}^2, \mathbb{C})$, with respect to the $\|\cdot\|_{C^q(W)}$ norm.\footnote{We remark that $C^q(W)$ defined via this closure is smaller than the full standard Hölder space when $q \in (0,1)$.}

To compare functions on different leaves, let $W_i = \gamma_{\mathbf{x}_i, \mathbf{v}_i,S_i} ([0,S_i]) \in \Sigma$ for $i=1,2$. For test functions $\varphi_i \in C^q(W_i)$, we define the pseudo metric
\begin{equation}
    \dd_q (\varphi_1, \varphi_2) = \left \| \varphi_1(\gamma_{\mathbf{x}_1, \mathbf{v}_1,S_1}(\cdot)) - \varphi_2(\gamma_{\mathbf{x}_2, \mathbf{v}_2,S_2}(\cdot)) \right \|_{C^q([0, S_1 \wedge S_2])}.
\end{equation}

\subsection*{Definitions of the norms}
We fix positive parameters $\sigma, \beta, q \in (0,1)$ satisfying the strict inequalities:
\begin{equation} \label{eq:choice-parameters}
     \sigma > \beta, \quad 1-q > \beta.
\end{equation}
For any vector valued function $f \in C^1(\mathbb{T}^2; \mathbb{C}^2)$, we define the \emph{weak stable norm} by
\begin{equation}\label{eq:weak-norm}
    |f|_w := \sup_{W \in \Sigma} \sup_{\|\varphi \|_{C^1(W)} \leq 1} \left| \int_W f \varphi \, \dd \HH^1 \right|\,,
\end{equation}
where  we denote by $|\cdot|$ the euclidean norm on $\CC^2$.
Next, we define the \emph{strong stable norm} as
\begin{equation}\label{eq:strong-stable}
    \| f \|_s := \sup_{W \in \Sigma} \sup_{\|\varphi\|_{\sigma, W} \leq 1} \left| \int_W f \varphi \, \dd \HH^1\right|,
\end{equation}
and the \emph{strong unstable norm}, which measures the regularity of the distributions across stable leaves 
\begin{equation}\label{eq:strong-unstable}
    \|f\|_u := \sup_{0 < \delta  \leq \frac{2}{\alpha}}\, \, \sup_{\dd_\Sigma(W^1, W^2) < \delta} \, \,  \sup_{\substack{\|\varphi_1 \|_{C^1},\|\varphi_2 \|_{C^1} \leq 1 \\ \dd_q(\varphi_1, \varphi_2) < \delta}} \frac{1}{\delta^\beta} \left| \int_{W^1} f \varphi_1 \, \dd \HH^1 - \int_{W^2} f \varphi_2 \, \dd \HH^1 \right|.
\end{equation}
Finally, for any $h \in C^1(\mathbb{T}^2, \mathbb{C}^2)$, the full \emph{strong norm} is given by 
\begin{equation}\label{eq:Xstrongnorm}
    \|h\| = \|h\|_s + \|h\|_u.
\end{equation}
We define the Banach space   $X$  as the completion of $C^1(\mathbb{T}^2; \mathbb{C}^2)$ with respect to the strong norm $\|\cdot\|$, and the weak space  $X_w$  as the completion with respect to $|\cdot|_w$.

\begin{remark}
    We emphasize that these norms inherently depend on the shear parameter $\alpha \geq 1$.
\end{remark}

\subsubsection*{On the choice of anisotropic Banach spaces}
While all constructions of anisotropic Banach spaces in the literature follow the philosophy of enforcing ``negative regularity in the contracting direction, positive regularity in the expanding direction of the map'', they are by no means a monolith. Over the last few decades, there has been a wide proliferation of different anisotropic spaces, each adapted to particular dynamical systems and coming with distinct advantages and drawbacks. For a comprehensive overview of these spaces, we refer the reader to the excellent survey by Baladi \cite{Baladi_quest}. 

The vast majority of spaces in the literature were constructed to handle relatively smooth transfer operators, such as those arising from smooth Anosov flows. For instance, the microlocal anisotropic spaces utilized in \cites{BaladiTsujii2007AnisotropicHolderSobolev,BaladiTsujii2008DynamicalDeterminantsSpectrum,zworski_2015,DZ16} rely heavily on the high regularity of the underlying map. Even spaces designed to accommodate piecewise smooth maps, such as \cite{BaladiGouezel2009GoodBanachPiecewiseHyperbolic}, often assume some regularity on the stable foliation, which is absent in our setting.

Therefore, the functional spaces adapted to our problem must be flexible enough to handle piecewise smooth transfer operators with merely measurable stable foliations. Following the taxonomy in \cite{Baladi_quest}, candidates with these properties include the ``Triebel'' space approach of \cite{BaladiGouezel2010PiecewiseConeHyperbolic}, the geometric spaces of Demers and Liverani \cite{demers_liverani}, and the spaces introduced in \cite{Baladi_quest}. While it is likely that any of these frameworks could be adapted to our arguments, the present article employs spaces closely modeled on the geometric approach of \cite{demers_liverani}. The simple piecewise-linear geometry of our map makes the analysis of the strong-chaos limit ($\alpha \to \infty$) in these spaces significantly more tractable.

\subsection{Compact embeddings}
Having defined the weak and strong norms on our Banach space, we now establish the fundamental functional-analytic properties required for spectral analysis. Specifically, the spectral gap of the dynamo operator is contingent upon the Lasota-Yorke inequalities. We seek to show that the pair $(X_w, |\cdot|_w)$ and $(X, \|\cdot\|)$ satisfy the following:

\begin{enumerate}
    \item \textit{Norm ordering:} $|f|_w \leq \|f\|$ for all $f \in X$.
    \item \textit{Compact embedding:} The unit ball $\{f \in X : \|f\| \leq 1\}$ is relatively compact in $(X_w, |\cdot|_w)$.
\end{enumerate}
The first property is immediate from the definition of the norms, as the test function space for the weak norm is a subset of that for the strong norm (with a stronger topology). We now prove the second property.

\begin{lemma} \label{lemma:compact}
The unit ball of $X$ embeds compactly into $X_w$.
\end{lemma}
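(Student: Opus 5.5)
The plan is a Demers--Liverani style Arzelà--Ascoli argument. Each $f\in X$ is viewed as a family of functionals indexed by admissible leaves, and the precompactness of $(\Sigma,\dd_\Sigma)$ is combined with compactness of the embedding $C^1(W)\hookrightarrow C^q(W)$. It suffices to work with $f\in C^1(\TT^2;\CC^2)$ with $\|f\|\le 1$, since these are dense in the unit ball. We need to show that for every $\kappa>0$ the set $\{f:\|f\|\le1\}$ can be covered by finitely many $|\cdot|_w$-balls of radius $C\kappa$. We do this by constructing finitely many linear functionals $\ell_1,\dots,\ell_K$ on $X$ such that
\begin{equation}
|f|_w\le C\kappa+ C\max_j|\ell_j(f)|\quad\text{whenever }\|f\|\le 2 .
\end{equation}
Since each $|\ell_j(f)|\le C_\kappa\|f\|$ is bounded, total boundedness follows by covering a bounded subset of $\CC^K$ and applying the inequality to differences $f-g$.

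\emph{Step 1 (discretise leaves).} Fix $\delta\le\min(\kappa^{1/\beta},2/\alpha)$. By precompactness of $(\Sigma,\dd_\Sigma)$, choose a finite $\delta$-net $\{W_i\}_{i=1}^{K_1}$. We first treat short leaves. If $|W|\le \kappa^{1/\sigma}$, then any $\varphi$ with $\|\varphi\|_{C^1(W)}\le1$ satisfies $\|\varphi\|_{\sigma,W}\le |W|^\sigma\le\kappa$. Hence the strong stable norm gives $|\int_W f\varphi|\le\kappa\|f\|_s$, so we may restrict to leaves with $|W|\ge\kappa^{1/\sigma}$ and take the net among these. For such $W$, pick $W_i$ with $\dd_\Sigma(W,W_i)<\delta$.

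\emph{Step 2 (transfer test functions).} Given $\varphi$ on $W$ with $\|\varphi\|_{C^1(W)}\le1$, define $\varphi_i$ on $W_i$ by $\varphi_i(\gamma_i(t))=\varphi(\gamma(t))$ for $t\le S\wedge S_i$, extended constantly beyond that. Then $\|\varphi_i\|_{C^1}\le1$ and $\dd_q(\varphi,\varphi_i)=0<\delta$, and both can be extended to Lipschitz functions on $\TT^2$. The leftover length $|S-S_i|<\delta$ is a short piece. On it we use the strong stable norm on a subsegment of length $\le\delta$, which costs at most $\delta^\sigma\|f\|$. The unstable norm \eqref{eq:strong-unstable} then gives
\begin{equation}
\Bigl|\int_W f\varphi-\int_{W_i}f\varphi_i\Bigr|\le \delta^\beta\|f\|_u+C\delta^\sigma\|f\|_s\le C\kappa .
\end{equation}

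\emph{Step 3 (compactness on a fixed leaf).} On the fixed leaf $W_i$, the unit ball of $C^1(W_i)$ is compact in $C^q(W_i)$ by Arzelà--Ascoli. Choose a finite $\kappa$-net $\{\psi_{i,k}\}$ in $C^q$-norm. Then
\begin{equation}
\Bigl|\int_{W_i} f(\varphi_i-\psi_{i,k})\Bigr|\le \|f\|_s\,\|\varphi_i-\psi_{i,k}\|_{C^q}|W_i|^\sigma\le\kappa\|f\|_s .
\end{equation}
Taking $\ell_{i,k}(f)=\int_{W_i}f\psi_{i,k}\,\dd\HH^1$, applied componentwise, completes the construction.

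The main obstacle is Step 2. The unstable norm only compares test functions that are globally $C^1$ with $\dd_q<\delta$ and is defined only for $\delta\le2/\alpha$, so we must check that the transported $\varphi_i$ qualifies. This is where the reduction to leaves of length $\ge\kappa^{1/\sigma}$ and the matching of $S$ versus $S_i$ by truncation is needed. Some care is also required at the exceptional vertical length-one leaves, where the parametrisation convention of Lemma~\ref{lemma:unique parameterisation} makes $\Psi$ discontinuous. There one can split the leaf into two halves and treat each half separately.
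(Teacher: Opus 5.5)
Your proposal is correct and follows essentially the same route as the paper. Both arguments take a finite $\dd_\Sigma$-net of leaves, transport the test function to the nearby net leaf with $\dd_q=0$ so that the unstable norm controls the change of leaf, take a finite $C^q$-net of test functions on each net leaf controlled by the strong stable norm, and finish with a finite-dimensional covering (the paper's ``diagonal argument''). Your extra steps of discarding leaves shorter than $\kappa^{1/\sigma}$ and truncating the leftover piece of length $|S-S_i|$ are harmless but not needed, since the unstable norm already compares leaves of unequal length directly; your remarks about $\delta\le 2/\alpha$ and the closed vertical leaves address small technical points the paper leaves implicit.
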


\begin{proof}
Let $\mathcal{B}_1 = \{h \in X : \|h\| \leq 1\}$ and fix $\eps > 0$. Since $(\Sigma,\dd_\Sigma)$ is pre-compact, it is totally bounded. Hence, there exists a finite $\eps$-net of leaves $\{(\mathbf{x}_i, \mathbf{v}_i, S_i)\}_{i=1}^n = \{ W_i \}_{i=1}^n$.
For each leaf $W_i$ in our net, the space of test functions $\{\varphi \in C^1(W_i) : \|\varphi\|_{C^1} \leq 1\}$ is compactly embedded in $C^q(W_i)$ by the Arzel\`a-Ascoli theorem since $q\in (0,1)$. Thus, we can select a finite $\eps$-net of test functions $\{\varphi_{i,j}\}_{j=1}^{n_i}$ for each $W_i$. In other words, for any $\varphi \in C^1 (W_i)$ with $\|\varphi\|_{C^1(W_i)}\leq 1$, there exists some $\varphi_{i,j}$ so that $\|\varphi-\varphi_{i,j}\|_{C^q(W_i)} <\eps$.

Let $h \in \mathcal{B}_1$. For any $W \in \Sigma$ and $\varphi \in C^1(W)$ with $\|\varphi\|_{C^1} \leq 1$, we estimate $\int_W h\varphi$ by comparing it to an integral over some $W_i$ from our net $\{ W_i\}_{i=1}^n$. We parametrize $W, W_i$ respectively by 
$$
x+t \mathbf{v},  \quad  t \leq S; \qquad x_i +t \mathbf{v}_i, \quad t \leq S_i.
$$
hence we can define $\tilde \varphi \in C^1 (W_i)$
$$\tilde \varphi(x_i+t \mathbf{v}_i)= \begin{cases}
    \varphi(x+t \mathbf{v}) & \forall t \in (0,S\wedge S_1)
    \\
    \varphi(x+S \mathbf{v}) & \forall t \in (S\wedge S_1, S_1) \,,
\end{cases} $$
and we have that the Lipschitz norm of $\tilde \varphi $ satisfies $\| \tilde \varphi \|_{C^1 (W_i)} \leq \|  \varphi \|_{C^1 (W)} \leq 1$ and $\dd_q (\varphi, \tilde \varphi) = 0$.  Picking $\varphi_{i,j}$ from our $\eps$-net of test functions such that $\|\varphi_{i,j} - \tilde{\varphi}\|_{C^q(W_i)} < \eps$, we decompose the error as
\begin{equation}
\int_W h \varphi \, \dd \HH^1 - \int_{W_i} h \varphi_{i,j} \, \dd \HH^1 = \left( \int_W h \varphi \, \dd \HH^1 - \int_{W_i} h \tilde{\varphi} \, \dd \HH^1\right) + \int_{W_i} h (\tilde{\varphi} - \varphi_{i,j})\, \dd \HH^1.
\end{equation}
The first term is bounded by $\eps^\beta \|h\|_u$ via the strong unstable norm. The second term is bounded by $\eps |W_i|^\sigma \|h\|_s$. Combining these, we find that for any $\eps > 0$, there exists a finite collection of linear functionals $\ell_{i,j}(h) = \int_{W_i} h \varphi_{i,j}:(X,\|\cdot \|) \to \CC^2$ such that
\begin{equation}
|h|_w \leq \max_{i,j} |\ell_{i,j}(h)| + C(\eps^\beta + \eps) \|h\|.
\end{equation}
Since $C(\eps^\beta + \eps) \to 0$ as $\eps \to 0$, a standard diagonal argument shows that any sequence in $\mathcal{B}_1$ has a subsequence that is Cauchy in $X_w$, completing the proof.
\end{proof}

\subsection{Some technical lemmas}
In this section, we collect a few results on our anisotropic Banach spaces and admissible curves, which we shall need throughout the manuscript.
We begin with a lemma, which shows that indicator functions of half-planes live in our spaces, and have uniformly bounded norms in $\alpha$. Whilst somewhat involved, the proof is in effect a simpler version of the matching of curves we shall do in more generality later.
\begin{lemma}
\label{lemma:indicators-y}
Let $\www \in \CC^2$ be a normalised vector $|\www|=1$ and $\mathds{1}_{y \geq 1/2}$ be the periodised indicator function of $\{y \geq 1/2 \}$. Then the following hold true
$$
\|\www \mathds{1}_{y \geq 1/2}\|_s\leq C \,, \qquad \| \www \mathds{1}_{y \geq 1/2} \|_u \leq C \alpha^{-1 + \beta } \,.
$$
Furthermore, for any $h \in X$ it holds that 
$$ \left |\int_{x\geq 1/2} h \,  \right | \leq \frac{1}{2} |h|_w \,. $$
\end{lemma}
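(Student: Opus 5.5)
Three bounds are needed, and I would handle them separately: the strong stable bound, the strong unstable bound, and the weak-norm bound on the half-domain integral.

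\emph{Strong stable bound.} Fix $W\in\Sigma$ and $\varphi$ with $\|\varphi\|_{\sigma,W}\le 1$. Then $\sup_W|\varphi|\le |W|^{-\sigma}$, and so
\[
\left|\int_W \www\mathds{1}_{y\ge1/2}\varphi\,\dd\HH^1\right|\le |W|\sup_W|\varphi|\le |W|^{1-\sigma}\le 1 .
\]
This gives $\|\www\mathds{1}_{y\geq 1/2}\|_s\le C$ with $C=1$.

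\emph{Strong unstable bound.} Take $\delta\le 2/\alpha$, leaves $W^1,W^2$ with $\dd_\Sigma(W^1,W^2)<\delta$, and test functions with $\dd_q(\varphi_1,\varphi_2)<\delta$ and $\|\varphi_i\|_{C^1}\le1$. Parametrize by arclength $t\in[0,S_i]$, so the integrals are $\int_0^{S_i}\mathds{1}_{y\ge1/2}(\gamma_i(t))\varphi_i(\gamma_i(t))\,\dd t$. Split the difference into three pieces.
\begin{enumerate}
\item The length mismatch $|S_1-S_2|<\delta$ contributes at most $\delta$.
\item The test-function mismatch on the common interval contributes at most $\sup|\varphi_1\circ\gamma_1-\varphi_2\circ\gamma_2|\le\delta$.
\item The indicator mismatch, $\int_0^{S_1\wedge S_2}|\mathds{1}(\gamma_1(t))-\mathds{1}(\gamma_2(t))|\,\dd t$, is the main step.
\end{enumerate}
For the third piece, the vertical components satisfy $\vvv^{(2)}\ge 1-O(\alpha^{-2})$, since the cone $C_s$ is nearly vertical. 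Hence $t\mapsto\gamma_i^{(2)}(t)$ is strictly increasing with derivative close to 1 and wraps around at most once. Each indicator is therefore the indicator of a union of at most two $t$-intervals whose endpoints are the crossing times with $y=1/2$ and $y=0$. Since $|\gamma_1(t)-\gamma_2(t)|\le \dd(\xx_1,\xx_2)+t|\vvv_1-\vvv_2|\le 2\delta$, these crossing times move by $O(\delta)$. The symmetric difference of the two $t$-sets thus has measure $O(\delta)$. Altogether the numerator is $O(\delta)$, and dividing by $\delta^\beta$ gives $C\delta^{1-\beta}\le C\alpha^{-1+\beta}$ because $\delta\le 2/\alpha$.

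The only subtlety is the toroidal identification: near $y=0\equiv 1$ a crossing can switch from one end of the segment to the other. Because both segments move in the same nearly vertical direction with $\vvv^{(2)}>0$, the matching still costs only $O(\delta)$ boundary measure. I expect this case bookkeeping to be the main (mild) obstacle. I would handle it by lifting both segments to $\RR^2$ from nearby lifts of their base points.

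\emph{Weak-norm bound.} First take $h$ smooth and write
\[
\int_{x\ge1/2}h=\int_{1/2}^1\left(\int_{W_x}h\,\dd\HH^1\right)\dd x ,
\]
where $W_x=\{x\}\times\TT$ is the vertical unit leaf with base point $(x,0)$. This leaf lies in $\Sigma$ by the convention of Lemma~\ref{lemma:unique parameterisation}. Testing against $\varphi\equiv1$, which has $\|\varphi\|_{C^1}=1$, each inner integral is bounded by $|h|_w$. The $x$-interval has length $1/2$, so the whole integral is at most $\frac12|h|_w$. For general $h\in X$, pass to the limit along a sequence of $C^1$ approximants: the left side is a continuous functional for $|\cdot|_w$ by exactly this bound, so it extends to $X_w\supset X$ with the same constant.
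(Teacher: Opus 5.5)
Your proposal is correct and follows the paper's proof in all three parts: the same $|W|^{1-\sigma}$ bound for the strong stable norm, an $O(\delta)$ numerator divided by $\delta^\beta$ with $\delta\lesssim\alpha^{-1}$ for the unstable norm, and averaging over the vertical unit leaves $\{x\}\times\TT$ tested against $\varphi\equiv1$ for the weak bound. The only variation is in the unstable step, which is a minor difference. The paper matches the (at most two) connected pieces of $W_i\cap\{y\ge 1/2\}$ pairwise, whereas you keep a common arclength parameter and bound the indicator mismatch by the measure of a symmetric difference after lifting to $\RR^2$. Your version is slightly cleaner because it never compares $\varphi_1,\varphi_2$ on re-based subcurves, and your density remark for general $h\in X$ supplies a step the paper leaves implicit.
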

\begin{proof}
We shall prove the result for $\www = (1,0)$ for the sake of convenience.
We estimate the strong stable norm. Let $W \in \Sigma$ be an admissible curve, then  there exist at most two connected components $W_1,W_2$ so that $W \cap \{y \geq 1/2\}=W_1 \cup W_2$. Then, for $\varphi \in C^q(W)$ with $\|\varphi\|_{C^q(W)} \leq |W|^{-\gamma}$, we have 
$$
\int_{W } \mathds{1}_{y \geq 1/2} \varphi \, \dd \HH^1 =\int_{W_1} \varphi \, \dd \HH^1 +\int_{W_2} \varphi \, \dd \HH^1 \leq (|W_1|+|W_2| )\|\varphi\|_{C^q} \leq 2|W|^{1-\gamma} \leq 2 \,.
$$
We estimate the strong unstable norm. We consider two admissible curves $W_1, W_2 \in \Sigma$, with $\dd_\Sigma (W_1, W_2) < \delta < 1/\alpha$ and $\varphi_1 \in C^1(W_1)$, $\varphi_2 \in C^1 (W_2)$ with $\dd_q (\varphi_1, \varphi_2) < \delta$ and $\|\varphi_1\|_{C^1 (W_1)}\leq 1$, $ \| \varphi_2\|_{C^1 (W_2)}\leq 1$. Since we are on $\TT^2$ there exist at most two connected curves $(W_{1,j})_{j=1,2}$ and $(W_{2,j})_{j=1,2}$ so that $\cup_{j=1,2} W_{1,j} = W_1 \cap \{ y \geq 1/2 \}$ and $\cup_{j=1,2} W_{2,j} = W_2 \cap \{ y \geq 1/2 \}$. A computation similar to that in Lemma \ref{lemma:technical} implies that 
$$ \dd_\Sigma (W_{1,j}, W_{2,j}) \leq C \dd_{\Sigma} (W_1, W_2),$$
uniformly in $\alpha$.
Thus, we estimate 
\begin{align*}
\int_{W_1 \cap \{y \geq 1/2\}} \varphi_1\, \dd \HH^1 -\int_{W_2 \cap \{y \geq 1/2\}} \varphi_2\, \dd \HH^1 = \sum_{j=1}^2 \int_{W_{1,j}} \varphi_1 \, \dd \HH^1 -\int_{W_{2,j}} \varphi_2 \, \dd \HH^1 \,.
\end{align*}
Choosing the parameterisations $W_{i,j}$ as $\gamma_{i,j} (t) = \xx_{i,j} + t \vvv_{i,j}$ with $t \in (0, S_{i,j})$ for $i,j=1,2$ we deduce, for e.g. comparing  $W_{1,1}$ and $W_{2,1}$ and assuming that $S_{1,1} \leq S_{2,1}$ we have 
\begin{align*}
&\int_{W_{1,1}} \varphi_1 \, \dd \HH^1 -\int_{W_{2,1}} \varphi_2 \, \dd \HH^1 =\int_{0}^{S_{1,1}} \varphi_1(\mathbf{x}_{1,1}+t \mathbf{v}_{1,1}) \dd t-\int_{0}^{S_{2,1}} \varphi_2(\mathbf{x}_{2,1}+t \mathbf{v}_{2,1}) \dd t\\
& \qquad= \int_{0}^{S_{1,1}} \varphi_1(\mathbf{x}_{1,1}+t \mathbf{v}_{1,1}) \dd t- \int_{0}^{S_{1,1}} \varphi_2(\mathbf{x}_{2,1}+t \mathbf{v}_{2,1}) \dd t
+ \int_{S_{1,1}}^{S_{2,1}} \varphi_2(\mathbf{x}_{2,1}+t \mathbf{v}_{2,1}) \dd t
\\
& \qquad\leq \dd_q (\varphi_1, \varphi_2) S_{1,1} + |S_{2,1} - S_{1,1}| \| \varphi_2 \|_{C^1 (W_{2,1})} \leq  \dd_q (\varphi_1, \varphi_2) + \dd_{\Sigma} (W_{1,1} , W_{2,1}) \leq 2 \delta  \,.
\end{align*}
Since the argument works the same for $W_{1,2}$ and $W_{2,2}$ we deduce that
$
\left | \int_{W_1} \mathds{1}_{y \geq 1/2} \varphi_1 -\int_{W_2}\mathds{1}_{y \geq 1/2} \right | \leq 4 \delta \,.
$
Hence, dividing through by $\delta^\beta$ we obtain an upper bound of order $4\delta^{1-\beta} \leq 4\alpha^{\beta-1}$, showing the claimed estimate for the strong unstable norm.

The proof of the second assertion follows from 
$$ \left | \int_{x\geq 1/2} h \right |\leq  \int_{x\geq 1/2}  \left | \int_{\TT} h(x,y)\dd y \right |\dd x \leq \int_{x\geq 1/2} |h|_w\dd x = \frac{1}{2}|h|_w \,.$$
\end{proof}

Next, we show that multiplication by Lipschitz continuous functions is a bounded operator on our space.
\begin{lemma}
\label{lemma:smooth multiplier}
Let $g \in C^1(\mathbb{T}^2, \CC)$. Then, the following hold true
$$
|g h|_w \leq C\|g\|_{C^1}|h|_w, \quad \|g h\| \leq C \|g\|_{C^1}\|h\| \,, \quad \forall h \in X\,.
$$
\end{lemma}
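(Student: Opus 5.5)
The plan is to test against leaves directly and push the multiplier $g$ onto the test function. For any $W\in\Sigma$ and admissible $\varphi$ we have
\[
\int_W (gh)\varphi\,\dd\HH^1=\int_W h\,(g|_W\varphi)\,\dd\HH^1 .
\]
So every estimate reduces to checking that $\varphi\mapsto g\varphi$ maps each class of test functions into a bounded multiple of the same class. Throughout, $g$ is understood as its restriction to the leaf in question. Multiplication acts componentwise on $h\in\CC^2$ by a scalar, so the vector structure plays no role.

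\emph{Weak and strong stable norms.} For the weak norm, the Leibniz rule gives $\|g\varphi\|_{C^1(W)}\le 2\|g\|_{C^1}\|\varphi\|_{C^1(W)}$, which yields $|gh|_w\le 2\|g\|_{C^1}|h|_w$. For $\|\cdot\|_s$ I will use the elementary Hölder product estimate
\[
[g\varphi]_{C^q(W)}\le \|g\|_{L^\infty}[\varphi]_{C^q(W)}+\|\varphi\|_{L^\infty}[g]_{C^q(W)},
\]
together with $[g]_{C^q(W)}\le \|g\|_{C^1}|x-y|^{1-q}\le \|g\|_{C^1}$, since $|W|\le 1$. This gives $\|g\varphi\|_{\sigma,W}\le 2\|g\|_{C^1}\|\varphi\|_{\sigma,W}$, and hence $\|gh\|_s\le 2\|g\|_{C^1}\|h\|_s$. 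We also need $g\varphi$ to lie in the closure class $C^q(W)$; this holds because $g\varphi$ is Lipschitz whenever $\varphi$ is, and approximations pass through the product.

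\emph{Unstable norm.} This is the main step. Fix $W^1,W^2$ with $\dd_\Sigma(W^1,W^2)<\delta\le 2/\alpha$ and $\varphi_i$ with $\|\varphi_i\|_{C^1}\le1$ and $\dd_q(\varphi_1,\varphi_2)<\delta$. Set $\psi_i=g\varphi_i/(2\|g\|_{C^1})$, so that $\|\psi_i\|_{C^1}\le1$. It remains to bound $\dd_q(\psi_1,\psi_2)$. Write
\[
g\varphi_1\circ\gamma_1-g\varphi_2\circ\gamma_2=(g\circ\gamma_1)(\varphi_1\circ\gamma_1-\varphi_2\circ\gamma_2)+(\varphi_2\circ\gamma_2)(g\circ\gamma_1-g\circ\gamma_2),
\]
with both terms evaluated on $[0,S_1\wedge S_2]$. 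The first term has $C^q$ norm at most $2\|g\|_{C^1}\delta$. For the second, $|\gamma_1(t)-\gamma_2(t)|\le \dd_{\TT^2}(\xx_1,\xx_2)+t|\vvv_1-\vvv_2|\le\delta$, so its sup norm is $O(\|g\|_{C^1}\delta)$.

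The Hölder seminorm of the second term is the delicate point, because $g$ is only Lipschitz. The function $t\mapsto g(\gamma_1(t))-g(\gamma_2(t))$ is bounded by $\|g\|_{C^1}\delta$ and is Lipschitz with constant $2\|g\|_{C^1}$. Interpolation between these two facts gives a $C^q$ seminorm of order $\|g\|_{C^1}\delta^{1-q}$ rather than $\delta$. Consequently $\dd_q(\psi_1,\psi_2)\le C\delta^{1-q}$.

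To recover this loss I will apply the definition of $\|h\|_u$ at scale $\delta'=C\delta^{1-q}$. When $\delta'>2/\alpha$, I will instead bound the difference by the strong stable norm. The resulting estimate is
\[
\delta^{-\beta}\left|\int_{W^1}gh\varphi_1-\int_{W^2}gh\varphi_2\right|\lesssim \|g\|_{C^1}\delta^{-\beta}(\delta^{1-q})^{\beta}\|h\|_u .
\]
This is not uniformly bounded. I therefore expect the main obstacle to be this Hölder loss.

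The remedy I would try first is to split off the $g$-difference and estimate it directly rather than folding it into $\dd_q$. Write
\[
\int_{W^2}h\,\varphi_2\,(\tilde g_1-g),
\]
where $\tilde g_1$ is $g\circ\gamma_1$ transported to $W^2$ via the parametrisations. The function $\varphi_2(\tilde g_1-g)$ has sup norm $O(\|g\|_{C^1}\delta)$ and $C^q$ seminorm $O(\|g\|_{C^1}\delta^{1-q})$. Rather than using this as a perturbation in $\dd_q$, I will pair it with $h$ through the strong stable norm. For this I need to bound $\|\varphi_2(\tilde g_1-g)\|_{C^q(W^2)}$ carefully. If that only gives $\delta^{1-q}$, I will use $1-q>\beta$ from \eqref{eq:choice-parameters}: the term $\delta^{1-q}\|h\|_s$ divided by $\delta^\beta$ is then bounded by $\|h\|_s$. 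This is exactly why the constraint $1-q>\beta$ is imposed, and it closes the estimate with $\|gh\|_u\le C\|g\|_{C^1}(\|h\|_u+\|h\|_s)$.

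Finally, all of these bounds will be proved for $h\in C^1$ and extended to $X$ and $X_w$ by density.
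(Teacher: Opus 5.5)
Your final argument is correct and is essentially the paper's: once the lossy first attempt (putting the whole $g$-difference into $\dd_q$) is discarded, you split $\int_{W^1}gh\varphi_1-\int_{W^2}gh\varphi_2$ into a matched comparison controlled by $\|h\|_u$ and a single-leaf remainder controlled by $\|h\|_s$, using interpolation and $1-q>\beta$. The paper transports the whole product $g\varphi_1$ to $W_2$, so its matched pair has $\dd_q=0$ and its remainder is $\widetilde{g\varphi_1}-g\varphi_2$. You transport only $g$, so your matched pair $g\varphi_1,\ \tilde g_1\varphi_2$ has $\dd_q\le 2\|g\|_{C^1}\delta$ and your remainder is $\varphi_2(\tilde g_1-g)$; this difference is cosmetic.
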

\begin{proof}
Let $W \in \Sigma$ be an admissible curve, and let $\varphi \in C^1(W)$ with $C^1$ norm at most $1$. Then, we have 
$$
\left |\int_{W} gh \varphi \, \dd \HH^1 \right |=\left |\int_{W} h (g\varphi) \, \dd \HH^1 \right | \leq |h|_w \|g \varphi\|_{C^1}.
$$
Since $\|g \varphi\|_{C^1}\leq 2\|g\|_{C^1}\|\varphi\|_{C^1}$, it follows that $|gh|_w \leq 2\|g\|_{C^1}|h|_w$.

Next, we prove the strong norm bound. As above, we consider $W$ admissible and $\varphi \in C^1 (W)$ with $\|\varphi\|_{\sigma ,W} \leq 1$. Then we estimate
$$
\left |\int_{W} gh\varphi \, \dd \HH^1  \right | \leq \|h\|_{s}\|g\varphi\|_{\sigma,W}.
$$
But now, $\|g \varphi\|_{\sigma, W}=|W|^{\sigma}\|g \varphi\|_{C^q(W)} \leq 2|W|^{\sigma}\|g\|_{C^1}\|\varphi\|_{C^q(W)} \leq 2\|g\|_{C^1}$. Hence, it holds $\|g h\|_{s} \leq 2\|g\|_{C^1}\|h\|_{s}$. 

Finally, we bound the strong unstable norm. We consider $W_1, W_2 \in \Sigma$ with $\dd_\Sigma (W_1,W_2) < \delta$ and $\varphi_1 \in C^1(W_1)$, $ \varphi_2 \in C^1(W_2)$ with $\dd_q(\varphi_1, \varphi_2)< \delta$ and $C^1$ norm bounded by 1. Suppose that the $W_i$ are parameterised respectively by $\mathbf{x}_i +t \mathbf{v}_i$, $t \leq S_i$. Say $S_1 \geq S_2$. Then, define $\widetilde{g\varphi_1} \in C^1(W_2)$ by 
$$
\widetilde{g\varphi_1}(\xx_2+t\mathbf{v}_2)=g\varphi_1(\mathbf{x}_1+t\mathbf{v}_1).
$$
This is a well-defined function, and by construction it holds $\dd_q(g\varphi_1, \widetilde{g \varphi_1})=0$.
Thus, we have 
$$
\int_{W_1} gh \varphi_1 \, \dd \HH^1 -\int_{W_2} gh \varphi_2 \, \dd \HH^1 =\int_{W_1} h (g\varphi_1) \, \dd \HH^1 -\int_{W_2} h (\widetilde{g\varphi_1}) \, \dd \HH^1+\int_{W_2} h (\widetilde{g\varphi_1}-g\varphi_2)\, \dd \HH^1.
$$
Firstly, it follows from the definition of the strong unstable norm that 
$$
\left |\int_{W_1} h (g\varphi_1) \, \dd \HH^1 -\int_{W_2} h (\widetilde{g\varphi_1}) \, \dd \HH^1\right | \leq \delta^{\beta} \|h\|_{u} \max\{\|g \varphi_1\|_{C^1(W_1)},\|\widetilde{g \varphi_1}\|_{C^1(W_2)}\} \leq 2\delta^\beta \|h\|_u \|g\|_{C^1}.
$$
Next, we claim that $\|g \varphi_2-\widetilde{g \varphi_1}\|_{C^q(W_2)} \leq 12 \|g\|_{C^1} \delta^{1-q}$. To do so, note first that $\|g \varphi_2-\widetilde{g \varphi_1}\|_{C^1(W_2)}\leq 4\|g\|_{C^1}$. Next, note that 
\begin{align*}
\sup_{t \leq S_2}\left |g \varphi_2(\mathbf{x}_2+t \mathbf{v}_2)- \widetilde{g \varphi_1}(\mathbf{x_2}+t\mathbf{v}_2)\right | &\leq \sup_{t \leq S_2}\left |g(\mathbf{x_2}+t\mathbf{v}_2)(\varphi_2(\mathbf{x_2}+t \mathbf{v_2})-\varphi_1(\mathbf{x}_1+t\mathbf{v_1})) \right |\\
& \quad +\sup_{t \leq S_2}\left |\varphi_1(\mathbf{x_1}+t\mathbf{v_1})(g(\mathbf{x}_2+t \mathbf{v_2})- g(\mathbf{x}_1+t \mathbf{v_1}))\right |\\
&\leq \|g\|_{C^1}\delta+\|\varphi_1\|_{C^1(W_1)}\|g\|_{C^1}(|\mathbf{x}_1-\mathbf{x}_2|+S_2|\mathbf{v}_1-\mathbf{v_2}|) 
\\
& \leq 3 \delta\|g\|_{C^1},
\end{align*}
where we used that $S_2 \leq 1$ by the definition of our set of admissible leaves $\Sigma$. Therefore, interpolating yields 
$$
\|g \varphi_2-\widetilde{g \varphi_1}\|_{C^q(W_2)} \leq \|g \varphi_2-\widetilde{g \varphi_1}\|^q_{C^1(W_2)} \|g \varphi_2-\widetilde{g \varphi_1}\|^{1-q}_{C^0(W_2)} \leq 12\|g\|_{C^1} \delta^{1-q},
$$
as claimed. Therefore, all in all 
$$
\frac{1}{\delta^\beta}\left | \int_{W_1} gh \varphi_1\, \dd \HH^1 -\int_{W_2} gh \varphi_2\, \dd \HH^1\right | \leq C\|h\|_u\|g\|_{C^1}(1+\delta^{1-q-\beta}) \leq C\|g\|_{C^1}\|h\|,
$$
since $\beta<1-q$ by assumption. Hence, the claim is proven.
\end{proof}

The next result we consider governs the way integration against stable curves interacts with changes of variables through the map $\XX_\alpha$.
\begin{lemma} \label{lemma:jacobian}
Let $W \in \Sigma$ be an admissible curve parametrized by  $\gamma: (0,S) \to \TT^2$ 
$
\gamma(t)=\{(x,y)+t \mathbf{v}, t \in (0,S)\} \,,
$
 Let $\tilde W = T^{-1}_\alpha (W)$ be
         parametrized by $\tilde \gamma = T^{-1}_\alpha (\gamma)$. We define $J_{\tilde W} \XX_\alpha= |D \XX_\alpha(\tilde \gamma(t)) \tilde \tau (t)|$ with $\tilde \tau(t) := \frac{\tilde \gamma'(t)}{| \tilde \gamma'(t)|}\,.$ Then for any $C^1$ vector field $h$, the following holds true:
         \begin{align}
             \int_{{W}} h \, \dd \HH^1= \int_{\tilde W} h\circ \XX_\alpha |J_{\tilde W} \XX_\alpha| \, \dd \HH^1 \,.
         \end{align}
   Furthermore, if $W \in \Sigma$ is such that $W \subset \overline{\XX_\alpha (\cM_\ell)}$ then $J_{\tilde{W}} \XX_\alpha$ is a constant value satisfying
   $|J_{\tilde{W}} \XX_\alpha| \leq 2 \alpha^{-2} \,.$
\end{lemma}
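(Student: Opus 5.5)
The identity is a one-dimensional change of variables along the curve, applied piece by piece on the smoothness regions. I will first reduce to the case where $W$ lies in the closure of a single backward smoothness region $\XX_\alpha(\cM_\ell)$. The regions $\XX_\alpha(\cM_\ell)$ are bounded by the horizontal line $y=1/2$ and by lines of slope $\pm 1/\alpha$; the tangent of $W$ lies in $C_s$, so it is nearly vertical. Hence $W$ meets $\partial\XX_\alpha(\cM_\ell)$ in finitely many points (at most $O(\alpha)$ of them) and splits into finitely many subsegments $W_k$, each contained in some $\overline{\XX_\alpha(\cM_\ell)}$. Both sides of the identity are additive over this decomposition: the exceptional points have $\HH^1$-measure zero, and $\tilde W$ is the union of the $\tilde W_k = \XX_\alpha^{-1}(W_k)$. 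So it suffices to prove the claim on a single $W_k$.

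On such a piece, $\XX_\alpha^{-1}$ acts as the linear map $A_\ell^{-1}$ modulo $1$. The preimage $\tilde W_k$ is therefore a line segment, parametrized by $\tilde\gamma(t)=A_\ell^{-1}\gamma(t)$ up to an integer translation, with constant velocity $\tilde\gamma'(t)=A_\ell^{-1}\vvv$. Since $\XX_\alpha(\tilde\gamma(t))=\gamma(t)$ and $D\XX_\alpha = A_\ell$ on $\cM_\ell$, we get
\[
|D\XX_\alpha(\tilde\gamma(t))\tilde\tau(t)| = \frac{|A_\ell A_\ell^{-1}\vvv|}{|A_\ell^{-1}\vvv|} = \frac{1}{|A_\ell^{-1}\vvv|},
\]
which is constant in $t$. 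Arclength on $\tilde W_k$ is $\dd s = |A_\ell^{-1}\vvv|\,\dd t$. Therefore
\[
\int_{\tilde W_k} h\circ\XX_\alpha\, J\, \dd\HH^1 = \int_0^{S} h(\gamma(t))\,\frac{1}{|A_\ell^{-1}\vvv|}\,|A_\ell^{-1}\vvv|\,\dd t = \int_{W_k} h\, \dd\HH^1,
\]
since $|\vvv|=1$. The same computation applies verbatim to a general piecewise-$C^1$ parametrization (chain rule, with the Jacobian $|\tilde\gamma'|^{-1}$ in the arclength variable), but the linear case is all that is needed here.

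For the bound, I will use the second part of Lemma~\ref{lemma:uniformly hyperbolic}. The vector $\vvv$ lies in $C_s$, so $|A_\ell^{-1}\vvv| = |D\XX_\alpha^{-1}\vvv| \ge \tfrac12\alpha^2$, and hence $|J_{\tilde W}\XX_\alpha| = |A_\ell^{-1}\vvv|^{-1} \le 2\alpha^{-2}$. This requires checking that $A_\ell^{-1}$, the derivative of $\XX_\alpha^{-1}$ on $\XX_\alpha(\cM_\ell)$, is exactly the matrix covered by the lemma's inverse estimate, which holds by construction.

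I expect the main obstacle to be bookkeeping rather than analysis. One must check that $\tilde W_k$ is really a single straight segment: the mod-$1$ reduction is constant on the closure of the region, and boundary points can be assigned to either adjacent region without affecting the integral. One must also make sure the finite decomposition is legitimate when $W$ wraps around the torus in the vertical direction.
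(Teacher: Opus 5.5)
Your proposal is correct and follows essentially the paper's argument: the identity is the one-dimensional change of variables coming from $\XX_\alpha\circ\tilde\gamma=\gamma$, and the bound follows from $J_{\tilde W}\XX_\alpha = 1/|D\XX_\alpha^{-1}\vvv|$ together with the stable-cone expansion in Lemma~\ref{lemma:uniformly hyperbolic}. The only difference is that you first cut $W$ into pieces lying in a single region and compute with the constant matrix $A_\ell^{-1}$, whereas the paper applies the chain rule $\gamma'(t)=D\XX_\alpha(\tilde\gamma(t))\tilde\gamma'(t)$ along the whole curve at once.
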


\begin{remark}
    Note that $\tilde{W}$ need not be admissible, since $|\tilde{W}|\approx \alpha^{2}|W|$ may be too long. Crucially however, it may always be written as a disjoint union of admissible curves.
\end{remark}
\begin{proof}
   Since $T_\alpha $ is a bijection we have that $\gamma ' (t) = D T_\alpha (\tilde \gamma (t)) \tilde {\gamma}'(t)$. 
Let $h : \mathbb{T}^2 \to \mathbb{C}^2$ be a $C^1$ function.
Then using the definition of integral along curves with parameterisations $\gamma$ and $ \tilde \gamma$ we deduce
\begin{align}
    \int_{W} h \, \dd \HH^1& = \int_0^S  h({\gamma} (t)) |{\gamma}'(t)| \dd t = \int_0^S h(\XX_\alpha (\tilde \gamma(t))) |D \XX_\alpha(\tilde \gamma(t)) \tilde \gamma'(t)| \dd t
    \\
    & = \int_0^S h(\XX_\alpha (\tilde \gamma(t))) |D \XX_\alpha(\tilde \gamma(t)) \tilde  \tau (t)| |\tilde \gamma'(t)| \dd t = \int_{\tilde W} h\circ \XX_\alpha |J_{\tilde W} \XX_\alpha| \, \dd \HH^1\,.
\end{align}
For the estimate of the Jacobian along the curve  we use ${\gamma} (t) = \XX_\alpha (\tilde \gamma (t))$ and assume $|\gamma '(t)|=1$. Then, it holds that 
$$ \XX_\alpha \circ \XX_\alpha^{-1} ( \gamma(t))=  \gamma(t),$$
and taking the derivative in time we have 
\begin{align} \label{eq:identity-trivial}
    | D\XX_\alpha (\tilde \gamma(t)) \tilde  \gamma'(t) | = | \gamma '(t)| =1 \,.
\end{align}
By definition and by the fact that $ \gamma'(t)$ is in the stable cone, hence expanding through $\XX_\alpha^{-1}$,  we obtain   
$$  |\tilde \gamma '(t)| =  |D\XX_\alpha^{-1} ({\gamma} (t))  \gamma'(t)| \geq \frac{1}{2} \alpha^2  \,.$$
Hence, by this bound and \eqref{eq:identity-trivial} we conclude $|J_{\tilde W} \XX_\alpha| \leq 2 \alpha^{-2}$ for any $t$.
\end{proof}

In the next lemma we show how compositions with $\XX_\alpha$ affect the H\"older norms of test functions.
\begin{lemma} \label{lemma:property-test}
Let $W \in \Sigma $ such that $W\subset \XX_\alpha (\cM_\ell)$ for some $\ell$ and let $\tilde W  \subset \XX_\alpha^{-1} (W)$ be an admissible curve in $\Sigma$. Then,
for any $x,y \in \tilde{W}$ we have 
        $$|\XX_{\alpha} (x) - \XX_\alpha (y)| \leq 2 \alpha^{-2} |x-y| \,.$$
        In particular, $[ \XX_\alpha ]_{C^1 (\tilde{W})} \leq 2 \alpha^{-2}$ and $[\varphi \circ \XX_\alpha]_{C^r (\tilde{W})} \leq 2 \alpha^{-2r} [\varphi]_{C^r (W)}\,,$ for $r \in [0,1]$.
\end{lemma}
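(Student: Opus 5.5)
The plan is to reduce everything to the fact that $\XX_\alpha$ restricted to $\tilde W$ is affine with linear part $A_\ell$, and that the tangent direction of $\tilde W$ lies in the stable cone.

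First, since $\tilde W \subset \XX_\alpha^{-1}(W) \subset \mathcal{M}_\ell$ (closure taken if necessary), the map $\XX_\alpha$ acts on $\tilde W$ as $\mathbf{z}\mapsto A_\ell \mathbf{z} \pmod 1$. Hence for $x,y\in\tilde W$ we have $\XX_\alpha(x)-\XX_\alpha(y) = A_\ell(x-y)$, computed in a lift. This is legitimate because $\tilde W$ is a segment of length at most $1$ and its image lies on the segment $W$, so no wrapping ambiguity affects the distance along the curve. Moreover, $x-y$ is parallel to the tangent $\tilde{\mathbf{v}}$ of $\tilde W$.

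Second, I would show $\tilde{\mathbf{v}}\in C_s$. The direction of $W$ lies in $C_s$, and Lemma~\ref{lemma:uniformly hyperbolic} gives $D\XX_\alpha^{-1}C_s\subset C_s$. Therefore $\tilde{\mathbf{v}} = A_\ell^{-1}\mathbf{v}/|A_\ell^{-1}\mathbf{v}|\in C_s$. The same lemma gives $|A_\ell^{-1}\mathbf{w}|\ge \tfrac12\alpha^2|\mathbf{w}|$ for $\mathbf{w}\in C_s$.

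Third, I would apply this with $\mathbf{w}=A_\ell(x-y)$, which is parallel to $\mathbf{v}$ and hence lies in $C_s$. This gives $|x-y| = |A_\ell^{-1}\mathbf{w}| \ge \tfrac12\alpha^2|\mathbf{w}|$, that is, $|\XX_\alpha(x)-\XX_\alpha(y)|\le 2\alpha^{-2}|x-y|$. The same bound can be read off from Lemma~\ref{lemma:jacobian}: $J_{\tilde W}\XX_\alpha$ is constant and at most $2\alpha^{-2}$, and distances along $\tilde W$ scale by exactly this constant.

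The Lipschitz bound $[\XX_\alpha]_{C^1(\tilde W)}\le 2\alpha^{-2}$ then follows immediately. For the Hölder claim, I would write
\[
|\varphi(\XX_\alpha x)-\varphi(\XX_\alpha y)|\le [\varphi]_{C^r(W)}|\XX_\alpha x-\XX_\alpha y|^r\le [\varphi]_{C^r(W)}(2\alpha^{-2})^r|x-y|^r,
\]
and use $2^r\le 2$ for $r\in[0,1]$.

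The only delicate point is the first step: checking that $x-y$ genuinely equals the segment direction times $|x-y|$ (no torus wrap), and that the whole of $\tilde W$ lies in a single smoothness region, so that one matrix $A_\ell$ applies. Both follow from the hypotheses, because $W\subset\XX_\alpha(\mathcal{M}_\ell)$ and $\tilde W$ is an admissible segment of length at most $1$ with a non-horizontal stable direction.
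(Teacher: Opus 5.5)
Your proof is correct and follows essentially the same route as the paper: with $z=\XX_\alpha x$ and $w=\XX_\alpha y$ in $W$, the vector $z-w$ lies in $C_s$, and $\XX_\alpha^{-1}=A_\ell^{-1}$ expands it by at least $\alpha^2/2$, which gives the Lipschitz bound; the H\"older bound then follows by composition, using $2^r\le 2$. Your extra care about the affine lift and the absence of wrapping fills in a detail the paper leaves implicit; note that the first half of your second step ($\tilde{\mathbf v}\in C_s$) is not actually needed.
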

\begin{proof}
    Since $\XX_\alpha$ is a bijection for any $x, y \in \tilde{W}$ we can define $x= \XX_\alpha^{-1} (z)$ and $y = \XX_\alpha^{-1} (w)$ with $z,w \in W$. Hence,  
    $$ |\XX_\alpha (x) - \XX_\alpha (y)| = |z-w|$$
    and using that $z-w $ is a vector in the stable cone and $\XX_\alpha^{-1}$ expand such vector by $\alpha^2$ we have 
    $$ |\XX_\alpha^{-1} (z)- \XX_\alpha^{-1} (w)| \geq \frac{\alpha^2}{2} |z-w|$$
    hence 
    $$|\XX_\alpha (x) - \XX_\alpha (y)| = |z-w| \leq 2 \alpha^{-2} |\XX_\alpha^{-1} (z) - \XX_\alpha^{-1} (w)| = 2 \alpha^{-2} |x-y| \,.$$
    The estimate for $\varphi \circ \XX_\alpha$ is an immediate consequence.
\end{proof}

Finally, we state a crucial technical lemma which concerns matching of curves according to smoothness strips in the spirit of the proof of Lemma \ref{lemma:indicators-y}, as well as the behaviour of matched curves under preimages of $\XX_\alpha$. The proof of this result is quite lengthy and technical, and will thus be postponed to Appendix \ref{appendix}.

\begin{lemma} \label{lemma:technical}
    There exist $\alpha_0 \geq 1$ such that for all $\alpha \geq \alpha_0$, and for any two admissible curves $W_1, W_2 \in \Sigma$  with $\dd_{\Sigma} (W_1, W_2)  \leq \frac{2}{\alpha}$, the following hold true:
    \begin{enumerate}
        \item \label{enum:0} For any  $W \in \Sigma$ there exist $(W_{j})_{j=1}^{M_\alpha}$ with $M_\alpha  \leq 3+ \lceil \alpha |W| \rceil $ such that $W_{j} \in \Sigma$, $W= \bigcup_j^{M_\alpha} W_{j}$ and $|W_{j}| \leq \frac{2}{\alpha}$, and for any $j  $  there exists $\ell=1,2,3,4 $ such that  $W_{j} \subset \overline{\XX_\alpha (\cM_\ell)}$.  Furthermore, for each $W_{j}$ we can write $\XX_\alpha^{-1} (W_{j}) = \bigcup_{k=1}^{N_\alpha} \tilde W_{j,k}$ with $N_\alpha \leq 3+ \lceil \alpha^2 |W_j| \rceil $ and $\tilde W_{j,k} \in \Sigma$ for any $j,k$. 
        \item \label{enum:1}  For any $i=1,2$  we have $ W_i = \bigcup_{j=1}^{M_\alpha} W_{i,j} \cup \bigcup_{j=1}^8 U_{i,j}$ with $W_{i,j}, U_{i,j} \in \Sigma$ for any $i,j$ and $W_{i,j} \subset \overline{\XX_\alpha (\cM_\ell)}$ for some $\ell=1,2,3,4$ and $M_\alpha \leq 2\alpha$. Furthermore, $|W_{i,j}| \leq \frac{2}{\alpha}$, $|U_{i,j}| \leq C \dd_\Sigma (W_1, W_2)$ and  $\dd_\Sigma (W_{1,j}, W_{2,j}) \leq C \dd_\Sigma (W_{1}, W_{2})$.
        \item \label{enum:2}   For any $i=1,2$ and  $j$  we have $\XX_\alpha^{-1} (W_{i,j} )= \bigcup_{k=1}^{N_\alpha}
         \tilde W_{i,j,k} \cup \bigcup_{k=1}^8 \tilde U_{i,j,k} $ with $N_\alpha \leq 2 \alpha$ and $$\dd_{\Sigma} (\tilde W_{1,j,k}, \tilde W_{2,j,k} ) \leq C \alpha^{-2} \dd_\Sigma (W_{1,j}, W_{2,j})\,, \quad \forall j,k\,,$$ $\tilde W_{i,j,k}, \tilde U_{i,j,k} \in \Sigma$ for all $i,j,k$ and 
        $$|\tilde U_{i,j,k}|\leq C \alpha^{-1} \dd_{\Sigma} (W_{1,j}, W_{2,j}) \,, \forall i,j,k\,.$$
        Furthermore, if $W_{i,j}$ are parameterised  by $\gamma_{i,j}(t)$, then $\tilde W_{i,j,k}=\XX_\alpha^{-1} \circ \gamma_{i,j}([t_{i,j,k},t_{i,j,k+1}])$, with $|t_{1,j,k}-t_{2,j,k}|\leq C\alpha^{-1} \dd_\Sigma(W_{1,j},W_{2,j})$ and $|t_{1,j,k}|+|t_{2,j,k}|\leq C\alpha$.
    \end{enumerate}
\end{lemma}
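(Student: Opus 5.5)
The plan is elementary geometry: all leaves are straight segments with slopes in $C_s$, and all singular lines are explicit lines.

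The backward smoothness regions are bounded by the horizontal lines $y=0$ and $y=1/2$, together with the lines $x\mp\alpha y = 1/2 \pmod 1$. The latter have slope $\pm 1/\alpha$, so they lie in $C_u$.

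\emph{Part (1).} A leaf $W\in\Sigma$ crosses each of $y=0$ and $y=1/2$ at most once, since $|W|\le 1$. It is almost vertical, so it meets the family $x\mp\alpha y\in \tfrac12+\ZZ$ about once per vertical length $1/\alpha$. This follows because $x\mp\alpha y$ restricted to $W$ is affine in arclength, with derivative of size between $\alpha(1-2/\alpha)$ and $\alpha(1+2/\alpha)$.

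Cutting $W$ at all these intersection points gives at most $3+\lceil\alpha|W|\rceil$ pieces. Each piece lies in a single $\overline{\XX_\alpha(\cM_\ell)}$ and has length $\le 2/\alpha$. If a piece is longer than $2/\alpha$, I subdivide it further, adjusting the count if needed.

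On each piece, $\XX_\alpha^{-1}$ is affine with matrix $A_\ell^{-1}$. By Lemma~\ref{lemma:uniformly hyperbolic} the image is a straight segment with direction in $C_s$ and length at most $\tfrac32\alpha^2|W_j|$, say, since the norms of the $A_\ell^{-1}$ on $C_s$ are at most $\alpha^2+O(\alpha)$. On the torus, this segment is cut into pieces of length $\le 1$, giving $N_\alpha\le 3+\lceil\alpha^2|W_j|\rceil$.

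\emph{Part (2).} Given $W_1,W_2$ with $\dd_\Sigma\le 2/\alpha$, I parametrize both by $\gamma_i(t)=\xx_i+t\vvv_i$. Consider the functions $t\mapsto x_i(t)\mp\alpha y_i(t)$. Their difference is bounded by $C\alpha\,\dd_\Sigma(W_1,W_2)$ in value, and their slopes are comparable to $\alpha$. Hence the $k$-th crossing times $t_{1,k}$ and $t_{2,k}$ satisfy $|t_{1,k}-t_{2,k}|\le C\dd_\Sigma$. This is where the slope $\sim\alpha$ against separation $\sim\alpha\,\dd_\Sigma$ is crucial.

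I then match the pieces $[t_{i,k},t_{i,k+1}]$ pairwise. The endpoints differ by $C\dd_\Sigma$, directions coincide with the original ones, and lengths differ by $C\dd_\Sigma$, which yields $\dd_\Sigma(W_{1,j},W_{2,j})\le C\dd_\Sigma(W_1,W_2)$. Two further points must be checked:
\begin{itemize}
\item Matched pieces must lie in the same region $\ell$. This holds because the separation $\dd_\Sigma\le 2/\alpha$ is small compared with the strip geometry; one uses ordering of crossings and tests the sign of $y-1/2$ on both.
\item Unmatched pieces have length $\le C\dd_\Sigma$. These come from the ends, since $|S_1-S_2|$ and basepoints are close, and from crossings of $y=0,1/2$ or strip lines occurring for only one of the curves near the ends. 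They give at most $8$ short pieces $U_{i,j}$.
\end{itemize}

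\emph{Part (3).} On matched $W_{1,j},W_{2,j}$, both in $\overline{\XX_\alpha(\cM_\ell)}$, the same affine map $A_\ell^{-1}$ (up to a lattice translation) applies. So the images are segments with directions $A_\ell^{-1}\vvv_i/|A_\ell^{-1}\vvv_i|$. I then bound each component of $\dd_\Sigma$ between the images:
\begin{itemize}
\item \emph{Directions.} The direction map $\vvv\mapsto A_\ell^{-1}\vvv/|A_\ell^{-1}\vvv|$ contracts angles by a factor $\sim\alpha^{-4}\le C\alpha^{-2}$ on $C_s$; this is standard projective contraction of a hyperbolic matrix.
\item \emph{Basepoints.} The image basepoints differ by $|A_\ell^{-1}(\xx_1-\xx_2)|$, which is not small in general. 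I therefore reparametrize as follows. The images are cut on the torus at crossings of the horizontal line $y=0$, at parameters $t_{i,k}$ spaced by $\sim\alpha^{-2}$ in the original arclength. The $k$-th cut points $\XX_\alpha^{-1}\gamma_{i,j}(t_{i,k})$ both lie on $y=0$, and their $x$-coordinates differ by the component of $A_\ell^{-1}(\gamma_1-\gamma_2)$ along the unstable direction, divided by slope effects. This is the delicate step.
\end{itemize}

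The claimed bound $C\alpha^{-2}\dd_\Sigma$ on basepoint distance seems suspicious, since $A_\ell^{-1}$ expands unstable displacements by $\alpha^2$. However, the vector $\gamma_1-\gamma_2$ is arbitrary, and its stable component contracts under $A_\ell^{-1}$ after sliding along the leaf, i.e.\ after choosing $t_{i,k}$ so that the points lie on the same horizontal cut. So I decompose $\gamma_1(t)-\gamma_2(t)=a e_u+b e_s$. Sliding along the image leaf absorbs the $e_s$ part, which is expanded along the leaf direction, giving a time shift $|t_{1,k}-t_{2,k}|\le C\alpha^{-1}\dd_\Sigma$. The $e_u$ part is mapped by $A_\ell^{-1}$ to contraction $\alpha^{-2}$. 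This gives the basepoint estimate.

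Uncut leftover end pieces are the $\tilde U$, of length $\le C\alpha^2\cdot|t_1-t_2|$ in image length. They need to be $\le C\alpha^{-1}\dd_\Sigma$, so the matching must be arranged carefully. I expect this accounting, namely getting $\alpha$-uniform constants with at most $8$ leftover pieces, to be the main obstacle. The bound $|t_{i,j,k}|\le C\alpha$ is immediate from the lengths, after rescaling the parameter to image arclength.
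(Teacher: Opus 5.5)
Your Parts (1) and (2) follow the paper's basic route: cut at strip crossings, and note that crossing times of a common level differ by $O(\dd_\Sigma)$. One correction in (1): the region $\{x-\alpha y\ge 1/2 \pmod 1\}$ is bounded by $x-\alpha y\in\frac12\ZZ$, not only $\frac12+\ZZ$. So crossings are about $1/(2\alpha)$ apart and the count is about $2\alpha|W|$; this changes only constants.

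The genuine gap is in Part (3), and its source is how you build the matched pairs in (2). You assert that for an arbitrary displacement $\gamma_1-\gamma_2=a e_u+b e_s$ of size $\dd_\Sigma$, sliding along the image leaf absorbs the $e_s$ part at the cost of a shift $\le C\alpha^{-1}\dd_\Sigma$. This fails in either parametrisation.
\begin{itemize}
\item $A_\ell^{-1}$ stretches $b e_s$ to a vector of length $\sim\alpha^2|b|$, almost parallel to the image leaf.
\item Absorbing it therefore needs a slide of $\sim|b|$ in original arclength, i.e.\ $\sim\alpha^2|b|$ in image arclength.
\item The slid-over portion is an unmatched $\tilde U$. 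For a generic displacement $|b|\sim\dd_\Sigma$, since $e_s$ is almost vertical, so $|\tilde U|\sim\alpha^2\dd_\Sigma$.
\end{itemize}
But (3) demands $|\tilde U_{i,j,k}|\le C\alpha^{-1}\dd_\Sigma$ and $|t_{1,j,k}-t_{2,j,k}|\le C\alpha^{-1}\dd_\Sigma$ in image arclength; that is the normalisation forced by $|t_{i,j,k}|\le C\alpha$. Sliding does give the basepoint bound $C\alpha^{-2}\dd_\Sigma$, but not together with the bound on $\tilde U$. You notice this tension at the end, and the ``careful arrangement'' you postpone is precisely the missing idea.

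The paper's resolution is structural. Every matched pair from (2) is made to start and to end on a \emph{common} line of the strip family: $\{x+\alpha y=q\}$ in $\{y<1/2\}$, or $\{x-\alpha y=q\}$ in $\{y\ge 1/2\}$. On $\XX_\alpha(\cM_\ell)$ the map $\XX_\alpha^{-1}$ sends such a line onto the vertical line $\{x=q\}$. For example, on $\XX_\alpha(\cM_4)$ one has $\XX_\alpha^{-1}(x,y)=(x+\alpha y,\alpha x+(1+\alpha^2)y)$. So the image base points are $(q,\alpha q+y_i)$ and differ only vertically, by $|y_1-y_2|\le\alpha^{-1}|x_1-x_2|$; in your notation, $|b|\lesssim\alpha^{-3}\dd_\Sigma$. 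Consequences:
\begin{itemize}
\item A slide of $O(\alpha^{-3}\dd_\Sigma)$ in original arclength gives $|\tilde U_1|\lesssim\alpha^{-1}\dd_\Sigma$ and image base points within $C\alpha^{-2}\dd_\Sigma$.
\item The common end line gives the bound on the final $\tilde U_2$, via the explicit comparison of $t_i|\XX_\alpha^{-1}\vvv_i|$.
\end{itemize}

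Your matching does not have this property. At the crossings of $y=1/2$ (and $y=0$) you cut both curves on the horizontal line. A horizontal displacement $(\Delta,0)$ is mapped by $\XX_\alpha^{-1}$ to a nearly vertical vector of length $\sim\alpha\Delta$, so those pairs have image offsets of order $\alpha\dd_\Sigma$. The paper handles the change of family differently. It removes from $W_1$ the piece between $\{x+\alpha y=q_{2,down}\}$ and $\{x-\alpha y=q_{2,up}\}$, the two strip lines through $W_2\cap\{y=1/2\}$, and proves this piece has length $O(\alpha^{-1}\dd_\Sigma)$. Its pre-processing of base points and end points (Step 1) likewise puts the two curves' initial points, and their final points, on common strip lines.
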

\section{Lasota-Yorke inequality} \label{sec:lasota}
In this section, we establish the spectral properties of the vector-valued transfer operator in \eqref{eq:Lalpha}, namely
$$
\mathcal{L}_\alpha : X \to X,\qquad \mathcal{L}_\alpha h = \frac{1}{\alpha^2} (D\XX_\alpha h) \circ \XX_\alpha^{-1}\,,
$$
where $T_\alpha : \TT^2 \to \TT^2$ is the uniformly hyperbolic map defined in Section \ref{sec:themap}. In particular, we shall prove the Lasota-Yorke inequality stated in Proposition \ref{proposition:main lasota yorke} with $(X,\|\cdot \|)$, $(X_w, |\cdot |_w)$ defined in Section \ref{sec:anisotropic}.

Notation-wise, when we write $\mathcal{L}_\alpha h $, we highlight that $h$ is a vector-valued distribution, and its integration against scalar test functions is performed component-wise. To facilitate the estimates in the following subsections, we recall several key properties of the map $\XX_\alpha$ and the geometry of the leaves in $\Sigma$:

\smallskip
\noindent $\diamond$ \emph{Jacobian Scaling:} For any leaf $\tilde{W}$, the Jacobian $J_{\tilde{W}} \XX_\alpha$ is constant on each partition element and satisfies (see Lemma \ref{lemma:jacobian})
    \begin{equation}\label{eq:JacBound}
        |J_{\tilde{W}} \XX_\alpha| \leq 2 \alpha^{-2} \,.
    \end{equation}
$\diamond$ \emph{Matrix norms:} On each partition element $\mathcal{M}_\ell$, the derivative $D \XX_\alpha$ is a constant matrix $A_\ell$ with magnitude (see \eqref{eq:matrixMi})
\begin{equation}\label{eq:AiNorms}
|A_\ell| \leq 2 \alpha^2.
\end{equation}
$\diamond$ \emph{Test function pullback:} For any $\varphi \in C^1(W)$, in light of Lemma \ref{lemma:property-test}, its pullback satisfies the non-expansion property
    \begin{equation}\label{eq:testpullbackbound}
        \| \varphi \circ \XX_\alpha \|_{C^1 (\XX_\alpha^{-1} (W))} \leq \| \varphi \|_{C^1 (W)} \,.
    \end{equation}
$\diamond$ \emph{Leaf averages:} We denote the average of a function $f$ over a leaf $W$ by 
\begin{equation}\label{eq:leafavg}
\overline{f}^{W} = \frac{1}{|W|} \int_{W} f \, \dd \HH^1 \,.
\end{equation}
The main result of this section is summarized in the following proposition, which provides the quantitative bounds necessary to prove the existence of a spectral gap. Note that the following proposition implies Proposition \ref{proposition:main lasota yorke} by recalling \eqref{eq:choice-parameters} and defining $\eta = \min\{ 2q, 2 - 2 \sigma, 2 - \sigma - \beta, 1- \beta \}>0$.

\begin{proposition}\label{prop:LY}
There exists a constant $\alpha_0 > 0$, so that for all  $\alpha \geq \alpha_0$, and any $h \in X$, the following hold. The operator $\cL_\alpha$ is continuous in the weak norm, namely
\begin{equation}\label{eq:weakcont}
|\cL_\alpha h|_w \leq C |h|_w.
\end{equation}   
Moreover, it satisfies the strong stable norm estimate
\begin{equation}\label{eq:strongstabcont}
\| \cL_\alpha h \|_{s} \leq C (\alpha^{-2q} + \alpha^{2\sigma- 2}) \| h \|_s + C |h|_w,
\end{equation}  
and strong unstable norm estimate
\begin{equation}\label{eq:strongunstabcont} 
        \| \cL_\alpha h \|_{u} \leq C \left[ (\alpha^{-2 + \sigma+ \beta }  + \alpha^{- 1 + \beta} + \alpha^{-2q + \beta - \sigma} ) \| h \|_s + \alpha^{-2 \beta} \|h \|_{u} + \alpha^{\beta- \sigma}|h|_w \right].
\end{equation}   
\end{proposition}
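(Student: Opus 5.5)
The plan is to follow the Demers--Liverani scheme, adapted to the fact that $\XX_\alpha$ is piecewise linear with constant cones. Fix $W\in\Sigma$ and a test function $\varphi$. Apply Lemma~\ref{lemma:technical}\eqref{enum:0} to write $W=\bigcup_j W_j$, with each $W_j\subset\overline{\XX_\alpha(\cM_{\ell_j})}$, and $\XX_\alpha^{-1}(W_j)=\bigcup_k\tilde W_{j,k}$. The change of variables of Lemma~\ref{lemma:jacobian} then gives the basic identity
\begin{equation*}
\int_W \cL_\alpha h\,\varphi\,\dd\HH^1=\sum_{j,k}\frac{J_{\tilde W_{j,k}}\XX_\alpha}{\alpha^{2}}\int_{\tilde W_{j,k}} (A_{\ell_j}h)\,\varphi\circ\XX_\alpha\,\dd\HH^1 .
\end{equation*}
On each piece $\alpha^{-2}A_{\ell_j}$ is a constant matrix of norm $\le 2$ by \eqref{eq:AiNorms}, so it can be absorbed componentwise. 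Each term therefore carries the Jacobian weight $|J|\le 2\alpha^{-2}$ from \eqref{eq:JacBound}. The piece counts are $\sum_j N_\alpha\le \sum_j(3+\alpha^2|W_j|)\le 3(3+\lceil\alpha|W|\rceil)+\alpha^2|W|$. The weak estimate \eqref{eq:weakcont} then follows directly. By \eqref{eq:testpullbackbound}, each term is at most $C\alpha^{-2}|h|_w\|\varphi\|_{C^1}$, and multiplying by the piece count gives $C(\alpha^{-1}+|W|)|h|_w\le C|h|_w$.

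For the strong stable bound \eqref{eq:strongstabcont}, take $\|\varphi\|_{C^q(W)}\le|W|^{-\sigma}$ and separate the pieces $\tilde W_{j,k}$ into \emph{long} ones, of length comparable to $1$, and \emph{short} ones, at most three per $W_j$ (the end pieces).
\begin{itemize}
\item On long pieces, split $\varphi\circ\XX_\alpha=\overline{\varphi\circ\XX_\alpha}^{\tilde W_{j,k}}+\psi$. The constant part is bounded by $|h|_w$ times $|W|^{-\sigma}\cdot\alpha^{-2}\cdot\#\{\text{long}\}\lesssim|W|^{1-\sigma}$, which contributes $C|h|_w$. The oscillating part satisfies $\|\psi\|_{C^q}\le C\alpha^{-2q}|W|^{-\sigma}$ by Lemma~\ref{lemma:property-test}, so it is bounded by $\|h\|_s|\tilde W|^{\sigma}\|\psi\|_{C^q}$. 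Summing over the $\sim\alpha^2|W|$ long pieces with weight $\alpha^{-2}$ gives $C\alpha^{-2q}\|h\|_s$.
\item On short pieces, use $\|\varphi\circ\XX_\alpha\|_{C^q}\le|W|^{-\sigma}$ and $|\tilde W_{j,k}|\le C\alpha^2|W_j|$. Each such piece contributes $\alpha^{-2}|\tilde W_{j,k}|^\sigma|W|^{-\sigma}\|h\|_s\le C\alpha^{2\sigma-2}(|W_j|/|W|)^\sigma\|h\|_s$. I expect that summing over $j$ with $|W_j|\le 2/\alpha$ and at most $3+\alpha|W|$ pieces yields $C\alpha^{2\sigma-2}\|h\|_s$, possibly up to an extra $|h|_w$-type term; this bookkeeping still has to be checked.
\end{itemize}

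The unstable norm \eqref{eq:strongunstabcont} is the step I expect to be the main obstacle. Take $W^1,W^2$ with $\dd_\Sigma<\delta\le 2/\alpha$ and $\dd_q(\varphi_1,\varphi_2)<\delta$. I would use Lemma~\ref{lemma:technical}\eqref{enum:1} and then \eqref{enum:2} to produce matched preimage pieces $\tilde W_{1,j,k},\tilde W_{2,j,k}$ at $\dd_\Sigma$-distance $\le C\alpha^{-2}\delta$, together with unmatched pieces $U_{i,j}$ and $\tilde U_{i,j,k}$ of length $\le C\delta$ and $\le C\alpha^{-1}\delta$ respectively. Because matched pieces lie in the same strip, they carry the same matrix $A_\ell$ and the same Jacobian, and so compare directly.
\begin{itemize}
\item \emph{Unmatched pieces.} Bound them with the strong stable norm: a piece of length $\le C\delta$ contributes at most $\|h\|_s\,\alpha^{-2}|U|^\sigma$-type quantities. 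Dividing by $\delta^\beta$ and summing should produce the terms $\alpha^{-2+\sigma+\beta}$ and $\alpha^{-1+\beta}$ multiplying $\|h\|_s$.
\item \emph{Matched pieces.} Compare $\varphi_1\circ\XX_\alpha$ and $\varphi_2\circ\XX_\alpha$ along the parameterisations $t_{i,j,k}$. Lemma~\ref{lemma:property-test} contracts Hölder seminorms by $\alpha^{-2q}$, and the offset bound $|t_{1,j,k}-t_{2,j,k}|\le C\alpha^{-1}\delta$ controls the shift. Together these should give $\dd_q\le C\delta$ for the pulled-back test functions, although this scaling needs care. Then transport one test function onto the other leaf, as in the proof of Lemma~\ref{lemma:smooth multiplier}.
\item \emph{Resulting terms.} The transported difference is estimated by $\|h\|_u(C\alpha^{-2}\delta)^\beta$, which after dividing by $\delta^\beta$ and summing against the $\alpha^{-2}$ weights gives $\alpha^{-2\beta}\|h\|_u$. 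The leftover test-function mismatch is split into mean plus oscillation. The mean is bounded by $|h|_w$ and yields $\alpha^{\beta-\sigma}|h|_w$; the oscillation is bounded by $\|h\|_s$ with the $C^q$-contraction and yields $\alpha^{-2q+\beta-\sigma}\|h\|_s$.
\end{itemize}
The delicate points are that the piece counts in Lemma~\ref{lemma:technical} have to be balanced against the constraint $\delta\le 2/\alpha$, and that the matched pieces must sit at distance $\lesssim\delta$ after pullback.
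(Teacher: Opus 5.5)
Your overall architecture matches the paper's: fragmentation by Lemma~\ref{lemma:technical}, the change of variables with per-piece weight $\le 4\alpha^{-2}$, a direct count for the weak norm (correct as written), a mean/oscillation split for the stable norm, and matched/unmatched pieces for the unstable norm. There you transport $\varphi_1$ onto $\tilde W_{2,j,k}$ so that $\dd_q=0$ and $\|h\|_u$ can be applied at scale $C\alpha^{-2}\delta$, which gives $\alpha^{-2\beta}\|h\|_u$. However, two steps do not go through as written.

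\emph{First gap: the short-piece bookkeeping in the stable estimate.} Your per-piece bound summed over $j$ gives $C\alpha^{2\sigma-2}\|h\|_s\sum_j(|W_j|/|W|)^\sigma$. For $|W|\sim1$ cut into $\sim\alpha$ pieces of length $\sim\alpha^{-1}$, this sum is $\sim\alpha^{1-\sigma}$, so you only get $\alpha^{\sigma-1}\|h\|_s$. Using $|\tilde W_{j,k}|\le1$ improves this only to $\alpha^{-1}\|h\|_s$, which is still worse than $\alpha^{2\sigma-2}$ when $\sigma<1/2$; remainder pieces can have length $\sim1$, so $\|h\|_s$ alone cannot do better. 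The remedy, which is the paper's, is to split according to $|W|$ rather than according to the lengths of the preimage pieces.
\begin{itemize}
\item If $|W|\ge\alpha^{-2}$, there are at most $C\alpha^2|W|$ preimage pieces in total. The averages over \emph{all} of them, short ones included, are then absorbed into $C|W|^{1-\sigma}|h|_w$.
\item Only if $|W|<\alpha^{-2}$ do you bound the averages by $\|h\|_s$. There are then at most $16$ pieces, each of length $\le2\alpha^2|W|$, and this produces exactly $\alpha^{-2}(\alpha^2|W|)^\sigma|W|^{-\sigma}=\alpha^{2\sigma-2}$.
\item The oscillation part is treated uniformly on all pieces and gives $\alpha^{-2q}$.
\end{itemize}

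\emph{Second gap: the leftover mismatch in the unstable estimate.} Let $\psi=\varphi_{1,C_{j,k}}\circ\XX_\alpha-\varphi_2\circ\XX_\alpha$ on $\tilde W_{2,j,k}$. The $C^q$-contraction of Lemma~\ref{lemma:property-test} alone gives $[\psi]_{C^q}\le C\alpha^{-2q}$ with no power of $\delta$, and after dividing by $\delta^\beta$ this is unbounded as $\delta\to0$. Your claim $\dd_q\le C\delta$ also fails in general for $\delta<\alpha^{-2}$, since the $\varphi_i$ are merely Lipschitz. What you need is:
\begin{itemize}
\item the sup bound $\|\psi\|_{C^0}\le C\delta$, which follows from $|t_{1,j,k}-t_{2,j,k}|\le C\alpha^{-1}\delta$, $|t_{i,j,k}|\le C\alpha$ and $\bigl|\,|\XX_\alpha^{-1}\mathbf{v}_1|^{-1}-|\XX_\alpha^{-1}\mathbf{v}_2|^{-1}\bigr|\le C\alpha^{-2}\delta$;
\item the Lipschitz bound $[\psi]_{C^1}\le C\alpha^{-2}$;
\item the interpolation $[\psi]_{C^q}\le[\psi]_{C^1}^q\|\psi\|_{C^0}^{1-q}\le C\alpha^{-2q}\delta^{1-q}$.
\end{itemize}
Only then does the hypothesis $1-q>\beta$ from \eqref{eq:choice-parameters}, which your sketch never uses, combine with $\delta\le2/\alpha$ to give $\|\psi\|_{C^q}\le C\alpha^{\beta-1}\delta^\beta$. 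The mismatch therefore contributes $C\alpha^{\beta-1}\|h\|_s$, and no mean/oscillation split or $|h|_w$ is needed there.

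\emph{Where the remaining terms actually come from.} The terms $\alpha^{\beta-\sigma}|h|_w$, $\alpha^{-2q+\beta-\sigma}\|h\|_s$ and $\alpha^{-2+\sigma+\beta}\|h\|_s$ arise from the first-level unmatched pieces, not from the mismatch. The paper bounds $\bigl|\int_{U_{i,j}}\cL_\alpha h\,\varphi_i\,\dd\HH^1\bigr|\le|U_{i,j}|^\sigma\|\cL_\alpha h\|_s$, inserts the already proven \eqref{eq:strongstabcont}, and uses $|U_{i,j}|^\sigma\delta^{-\beta}\le C\delta^{\sigma-\beta}\le C\alpha^{\beta-\sigma}$. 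Your bound ``$\alpha^{-2}|U|^\sigma\|h\|_s$'' for these pieces undercounts: $\XX_\alpha^{-1}(U_{i,j})$ can consist of up to $C(1+\alpha^2\delta)$, i.e.\ up to $C\alpha$, admissible pieces. Counting them correctly still yields an acceptable $C(\alpha^{-2+2\beta}+\alpha^{\beta-1})\|h\|_s$, but going through \eqref{eq:strongstabcont} is simpler.
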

The proof of Proposition \ref{prop:LY} is organized as follows: Section \ref{sub:weak} establishes the weak norm continuity \eqref{eq:weakcont}, while Sections \ref{sub:stable} and \ref{sub:unstable} provide the bounds \eqref{eq:strongstabcont}-\eqref{eq:strongunstabcont} for the strong stable and unstable norms, respectively.

In particular, inspecting the proof of Proposition \ref{prop:LY}, which estimates each branch contribution separately before summing over $\ell$, we deduce the following branchwise version, which will be needed in Section \ref{sec:heat}.
\begin{corollary} \label{lemma:improved-lasota}
    For any $\ell=1,2,3,4$ and $A_\ell$ in \eqref{eq:matrixMi} we define the operators
    $ \cL_{\alpha}^\ell (h) = \frac{1}{\alpha^2} A_\ell \mathbbm{1}_{T_\alpha (\cM_\ell)} h(T_\alpha^{-1})$.
    Then, there exists a constant $C>0$ such that 
    $$ \| \cL_{\alpha}^\ell (h) \| \leq C \alpha^{-\eta} \| h \| + C |h|_w \,, \qquad  | \cL_{\alpha}^\ell (h) |_w \leq C |h|_w \,.$$
\end{corollary}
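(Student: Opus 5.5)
We plan to obtain Corollary~\ref{lemma:improved-lasota} by re-running the proof of Proposition~\ref{prop:LY} with the full operator $\cL_\alpha = \sum_{\ell=1}^4 \cL_\alpha^\ell$ replaced by a single branch $\cL_\alpha^\ell$. The starting point is the duality identity behind every estimate of Proposition~\ref{prop:LY}. For $W\in\Sigma$ and a test function $\varphi$, we first decompose $W$ via Lemma~\ref{lemma:technical}\eqref{enum:0} into pieces $W_j \subset \overline{\XX_\alpha(\cM_{\ell_j})}$. We then pull each piece back by Lemma~\ref{lemma:jacobian} to get
\[
\int_{W} \cL_\alpha h\,\varphi\,\dd\HH^1=\sum_j \frac{1}{\alpha^2}\sum_k \int_{\tilde W_{j,k}} A_{\ell_j} h\,(\varphi\circ\XX_\alpha)\,J_{\tilde W_{j,k}}\XX_\alpha\,\dd\HH^1 .
\]
For $\cL_\alpha^\ell$, the indicator $\mathbbm{1}_{\XX_\alpha(\cM_\ell)}$ simply removes the pieces $W_j$ with $\ell_j\neq\ell$. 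So the pairing $\int_W \cL^\ell_\alpha h\,\varphi$ is a sub-sum of the same expression, with the same matrices, the same Jacobians and the same pulled-back test functions. Every bound in Sections~\ref{sub:weak}--\ref{sub:unstable} is obtained by estimating each summand in absolute value, using \eqref{eq:JacBound}, \eqref{eq:AiNorms} and \eqref{eq:testpullbackbound}, and then counting the pieces. Dropping summands therefore only improves each bound. This yields $|\cL_\alpha^\ell h|_w\le C|h|_w$ and the analogue of \eqref{eq:strongstabcont} for $\|\cL^\ell_\alpha h\|_s$ directly.

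The one place needing care is the strong unstable norm. There one compares $\int_{W^1}$ with $\int_{W^2}$ using the matching of Lemma~\ref{lemma:technical}\eqref{enum:1}--\eqref{enum:2}. The matched pairs $(W_{1,j},W_{2,j})$ lie in the closure of a common backward smoothness region, so the restriction to branch $\ell$ keeps or drops both members of a pair together. The unmatched short pieces $U_{i,j}$ and $\tilde U_{i,j,k}$ were already bounded individually through the strong stable norm, so restricting to those of branch $\ell$ again only removes terms. Likewise, the $\tilde W_{i,j,k}$ come in matched pairs at distance $C\alpha^{-2}\dd_\Sigma$. These contribute the $\alpha^{-2\beta}\|h\|_u$ term, and the contribution from comparing test functions gives the $\|h\|_s$ and $|h|_w$ terms, both unchanged by the restriction. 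We thus get \eqref{eq:strongunstabcont} for $\cL^\ell_\alpha$ with the same constant.

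Combining the three bounds exactly as in the passage from Proposition~\ref{prop:LY} to Proposition~\ref{proposition:main lasota yorke} gives $\|\cL_\alpha^\ell h\|\le C\alpha^{-\eta}\|h\|+C|h|_w$. The $\alpha^{\beta-\sigma}|h|_w$ term is absorbed into $C|h|_w$, using $\sigma>\beta$ from \eqref{eq:choice-parameters}. The main obstacle we anticipate is verifying that the boundary pieces created by cutting along $\partial\XX_\alpha(\cM_\ell)$ are already among the decompositions of Lemma~\ref{lemma:technical}. If they are, the indicator introduces no new discontinuity beyond those handled for $\cL_\alpha$. Since those decompositions are built precisely along the boundaries of the backward smoothness regions, we expect this to hold, and the corollary follows.
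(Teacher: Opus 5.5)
Your proposal is correct and matches the paper's argument. The paper obtains this corollary in one line: the proof of Proposition~\ref{prop:LY} bounds each branch contribution separately before summing over $\ell$ — pieces $W_j\subset\overline{\XX_\alpha(\cM_\ell)}$, and matched pairs $(W_{1,j},W_{2,j})$ lying in a common strip — so restricting to one branch only removes terms, exactly as you observe. You also correctly resolve the two points the paper leaves implicit: the unmatched pieces can be controlled through the strong stable bound for $\cL_\alpha^\ell$ itself, and the cuts along $\partial\XX_\alpha(\cM_\ell)$ are already built into the decompositions of Lemma~\ref{lemma:technical}.
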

\subsection{Weak norm continuity}\label{sub:weak}
We begin by establishing the bound \eqref{eq:weakcont}. Let $W \in \Sigma$ be an admissible leaf and $\varphi \in C^1(W)$ be a test function such that $\|\varphi\|_{C^1(W)} \leq 1$.

By Lemma \ref{lemma:technical}-\eqref{enum:0}, we partition $W$ into segments $\{W_j\}_{j=1}^{M_\alpha}$, each contained in a single image partition $\overline{\XX_\alpha(\mathcal{M}_{\ell_j})}$. Each segment pulls back to $N_\alpha$ pre-image leaves $\{\tilde{W}_{j,k}\}_{k=1}^{N_\alpha}$. Applying the definition of $\mathcal{L}_\alpha$ and the change of variables formula, we have
\begin{align*}
    \left |\int_W \mathcal{L}_\alpha h \varphi \, \dd \HH^1\right |&= \left |\frac{1}{\alpha^2} \sum_{j=1}^{M_\alpha} \sum_{k=1}^{N_\alpha} \int_{\tilde{W}_{j,k}} D\XX_\alpha h \cdot (\varphi \circ \XX_\alpha) \, |J_{\tilde{W}_{j,k}}\XX_\alpha| \, \dd \HH^1 \right | \\
    &\leq \frac{1}{\alpha^2} \sum_{j=1}^{M_\alpha} \sum_{k=1}^{N_\alpha} |A_{\ell_j}| \cdot |J_{\tilde{W}_{j,k}}\XX_\alpha| \cdot \left| \int_{\tilde{W}_{j,k}} h (\varphi \circ \XX_\alpha) \, \dd \HH^1 \right|,
\end{align*}
where we used that on each pre-image leaf $\tilde{W}_{j,k}$, the derivative $D\XX_\alpha$ is a constant matrix $A_{\ell_j}$ and the Jacobian is a constant. Using the Jacobian bound \eqref{eq:JacBound} and the matrix bound \eqref{eq:AiNorms}, and noting that $\|\varphi \circ \XX_\alpha\|_{C^1} \leq 1$ by \eqref{eq:testpullbackbound}, we obtain
\begin{equation}\label{eq:weakLalphabd1}
    \left| \int_W \mathcal{L}_\alpha h \varphi \, \dd \HH^1 \right| \leq  \frac{C}{\alpha^2} \sum_{j=1}^{M_\alpha} \sum_{k=1}^{N_\alpha} |h|_w.
\end{equation}
From the counting bounds in Lemma \ref{lemma:technical}, the total number of pre-image components satisfies 
\begin{equation}
    \sum_{j=1}^{M_\alpha} N_\alpha \leq \sum_{j=1}^{M_\alpha} (\alpha^2 |W_j| + 1) \leq \alpha^2 |W| + M_\alpha \leq C \alpha^2,
\end{equation}
since $|W| \leq 1$ and $M_\alpha \sim \alpha$. Substituting this into \eqref{eq:weakLalphabd1} completes the proof of  \eqref{eq:weakcont}.

\subsection{Strong stable norm}\label{sub:stable}
We now establish the estimate \eqref{eq:strongstabcont}. Let $W \in \Sigma$ and $\varphi \in C^q(W)$ such that $\| \varphi \|_{\sigma, W} = \|\varphi \|_{C^q (W)} |W|^\sigma \leq 1$. Using the leaf fragmentation and pullback from Lemma \ref{lemma:technical}-\eqref{enum:0}, we write
\begin{align}
    \left | \int_{W} \cL_\alpha h \varphi \, \dd \HH^1 \right | &= \left | \frac{1}{\alpha^2} \sum_{j=1}^{M_\alpha} \sum_{k=1}^{N_\alpha} \int_{\tilde{W}_{j,k}} D\XX_\alpha h \cdot (\varphi \circ \XX_\alpha) \, |J_{\tilde{W}_{j,k}}\XX_\alpha| \, \dd \HH^1 \right | \nonumber \\
    &\leq \frac{4}{\alpha^2} \sum_{j=1}^{M_\alpha} \sum_{k=1}^{N_\alpha} \left| \int_{\tilde{W}_{j,k}} h (\varphi \circ \XX_\alpha) \, \dd \HH^1 \right| \nonumber \\
    &\leq \underbrace{\frac{4}{\alpha^2} \sum_{j,k} \left| \int_{\tilde{W}_{j,k}} h (\varphi \circ \XX_\alpha - \overline{\varphi \circ \XX_\alpha}^{\tilde{W}_{j,k}}) \, \dd \HH^1 \right|}_{I} + \underbrace{\frac{4}{\alpha^2} \sum_{j,k} \left| \int_{\tilde{W}_{j,k}} h \overline{\varphi \circ \XX_\alpha}^{\tilde{W}_{j,k}} \, \dd \HH^1 \right|}_{II}, \label{eq:IandII}
\end{align}
where the leaf-average notation is that of \eqref{eq:leafavg}.

We first estimate term $II$.  Note that $|\overline{\varphi \circ \XX_\alpha}^{\tilde{W}_{j,k}}| \leq \|\varphi\|_{C^0(W)} \leq |W|^{-\sigma}$ given our normalization of $\varphi$. Recall that $j \in \{ 1, \ldots , M_\alpha \}$ and $k \in \{1 , \ldots, N_\alpha \}$. We consider three regimes for the length of the original leaf $|W|$ using the bound for $M_\alpha, N_\alpha$ in Lemma \ref{lemma:technical}:

\smallskip 

\noindent $\diamond$ \emph{Large leaves ($|W| \geq \alpha^{-1}$):} In this case, $M_\alpha \leq 4\alpha|W|$ and $N_\alpha \leq 4\alpha$. Since $\overline{\varphi \circ \XX_\alpha}^{\tilde{W}_{j,k}}$ is constant, the integral is bounded by $|h|_w$. Summing over the fragments yields
    $$ II \leq \frac{4}{\alpha^2} (4\alpha|W|)(4\alpha) |h|_w |W|^{-\sigma} \leq 64 |W|^{1-\sigma} |h|_w \leq C |h|_w. $$
$\diamond$ \emph{Medium leaves ($ \alpha^{-2} \leq |W| < \alpha^{-1}$):} Here, $M_\alpha \leq 4$ and $N_\alpha \leq 3 + \lceil \alpha^2 |W_j| \rceil \leq 4 \alpha^2 |W|$, where the last holds true for any $j$ thanks to $|W| \geq  \alpha^{-2} $. We have
    $$ II \leq \frac{4}{\alpha^2} (3)(4\alpha^2 |W|) |h|_w |W|^{-\sigma} \leq 48 |W|^{1-\sigma} |h|_w \leq C \alpha^{\sigma-1} |h|_w. $$
$\diamond$ \emph{Small leaves ($|W| < \alpha^{-2}$):} At this scale, $M_\alpha, N_\alpha \leq 4$. We utilize the definition of the strong stable norm directly. Since $|\tilde{W}_{j,k}| \leq 2\alpha^2|W|$ and $\|1\|_{\sigma, \tilde{W}_{j,k}} = |\tilde{W}_{j,k}|^\sigma$:
    $$ 
    II \leq \frac{C}{\alpha^2} \|h\|_s \max_{j,k} |\tilde{W}_{j,k}|^\sigma |W|^{-\sigma} \leq C \alpha^{-2} (\alpha^2 |W|)^\sigma |W|^{-\sigma} \|h\|_s \leq C \alpha^{2\sigma-2} \|h\|_s. 
    $$
We now estimate $I$, which measures the oscillation of the test function. By Lemma \ref{lemma:property-test} we have $[\varphi \circ \XX_\alpha]_{C^q (\tilde W_{j,k})} \leq C \alpha^{-2q} \| \varphi \|_{C^q}$. Furthermore, using that $|\XX_\alpha (\tilde W_{j,k})|\leq C \alpha^{-2}$, hence the fluctuation satisfies $\|\varphi \circ \XX_\alpha - \overline{\varphi \circ \XX_\alpha}^{\tilde W_{j,k}} \|_{C^0 (\tilde W_{j,k})} \leq C \alpha^{-2q} \|\varphi\|_{C^q} $. Therefore, we deduce 
$$ \| \varphi \circ \XX_\alpha - \overline{\varphi \circ \XX_\alpha}^{\tilde W_{j,k}} \|_{\sigma, \tilde W_{j,k}} \leq C \alpha^{-2q} \| \varphi \|_{\sigma, W} |W|^{- \sigma} |\tilde W_{j,k}|^\sigma$$
Repeating the same analysis as above, the dominant factor is the contraction of the $C^q$ norm. From the definition of $\|h\|_s$ and the properties of the test function on the pre-image leaves we have
\begin{equation*}
    I \leq \frac{4}{\alpha^2} \sum_{j,k} \|h\|_s \|\varphi \circ \XX_\alpha - \overline{\varphi \circ \XX_\alpha}\|_{\sigma, \tilde{W}_{j,k}} \leq \frac{C}{\alpha^2} \sum_{j,k} \|h\|_s \left( \alpha^{-2q} |W|^{-\sigma} |\tilde W_{j,k}|^\sigma \right) \,.
\end{equation*}
As before, we split the cases of large leaves, medium leaves and small leaves. 

If $|W| \geq \alpha^{-1}$, we use $M_\alpha \leq 4 \alpha |W|$ and $N_\alpha \leq 4 \alpha$ to deduce that $I\leq C\alpha^{-2 q} |W|^{1- \sigma} \| h \|_s$. 

If $  \alpha^{-2} \leq |W|< \alpha^{-1} $, we use $M_\alpha \leq 3$, and $N_\alpha \leq  4 \alpha^2 |W|$ to deduce that $I \leq  \alpha^{- 2q} |W|^{1- \sigma} \| h \|_s$

If $|W| < \alpha^{-2}$, we use $M_\alpha , N_\alpha \leq 3$ and $|\tilde W_{j,k}| \leq 2 \alpha^2 |W|$ to deduce that $I \leq \alpha^{-2 - 2q + 2 \sigma} \| h \|_s$.

Combining the bounds for $I$ and $II$ results in we conclude the proof of \eqref{eq:strongstabcont}.

\subsection{Strong unstable norm}\label{sub:unstable}

To establish the unstable norm estimate \eqref{eq:strongunstabcont}, we consider $W_1, W_2 \in \Sigma$ two admissible curves with $\dd_\Sigma (W_1, W_2) < \delta \leq \alpha^{-1}$. We use property \eqref{enum:1} in Lemma \ref{lemma:technical} to decompose each $W_i$ as
$$ W_i = \bigcup_{j=1}^{M_\alpha} W_{i,j} \cup \bigcup_{j=1}^{8} U_{i,j} \,, $$
where $M_\alpha \leq \alpha$ and $|U_{i,j}| \leq C \delta$. Let $\varphi_1, \varphi_2$ be test functions satisfying $\dd_q (\varphi_1, \varphi_2) \leq \delta$ and $\|\varphi_i\|_{C^1(W_i)} \leq 1$. The integral difference is split into matched and unmatched components 
\begin{align*}
     & \frac{1}{\alpha^2}  \left | \int_{W_1} (D \XX_\alpha h ) \circ \XX_\alpha^{-1} \varphi_1 \, \dd \HH^1  - \int_{W_2} (D \XX_\alpha h ) \circ \XX_\alpha^{-1} \varphi_2 \, \dd \HH^1 \right |
    \\
    & \leq \sum_{j=1}^{M_\alpha} \frac{1}{\alpha^2} \left | A_j \int_{W_{1,j}} h \circ \XX_\alpha^{-1} \varphi_1\, \dd \HH^1 - A_j \int_{W_{2,j}} h \circ \XX_\alpha^{-1} \varphi_2\, \dd \HH^1 \right | \quad (\text{Matched})
    \\
    & \quad + \sum_{i=1,2} \sum_{j=1}^8 \frac{1}{\alpha^2} \left | A_j \int_{U_{i,j}} h \circ \XX_\alpha^{-1} \varphi_i\, \dd \HH^1 \right | \quad (\text{Unmatched}).
\end{align*}
We treat the two components separately.

\subsubsection*{The unmatched curves}
We first estimate the unmatched curves, tackling the integral over $U_{i,j}$ for $i=1$ (the case $i=2$ is symmetric). Using that $|A_j| \leq C\alpha^2$ and  the strong stable bound \eqref{eq:strongstabcont} we deduce
\begin{align*}
\frac{1}{\alpha^2}\left |\int_{U_{i,j}} A_j h \circ \XX_\alpha^{-1} \varphi_i \, \dd \HH^1 \right | \leq \|\varphi_i\|_{\sigma, U_{i,j}} \|\cL_\alpha h\|_{s}  \leq & |U_{i,j}|^\sigma \left ((\alpha^{-2q}+\alpha^{2 \sigma -2})\|h\|_s+C|h|_w \right )\\
\leq &\delta^\beta \alpha^{\beta-\sigma}\left ((\alpha^{-2q}+\alpha^{2 \sigma -2})\|h\|_s+C|h|_w \right ).
\end{align*}

\subsubsection*{The matched curves} We now turn to the matched curves $W_{i,j}$. We will repeatedly use that $M_\alpha, N_\alpha \le 2\alpha$, $|A_j|\le 2\alpha^2$ for any $j$, and that the Jacobian satisfies
$
\bigl|J_{\tilde{W}}\XX_\alpha\bigr|\le \frac{2}{\alpha^2}\,.
$ 
By property \eqref{enum:2} of Lemma \ref{lemma:technical}, for any $j=1, \ldots, M_\alpha$, the pre-image decomposes as
$$ 
\XX_\alpha^{-1} (W_{i,j}) = \bigcup_{k=1}^{N_\alpha} \tilde{W}_{i,j,k} \cup \bigcup_{k=1}^{8} \tilde{U}_{i,j,k} \,,
$$
 where $|\tilde U_{i,j,k}| \leq C \alpha^{-1} \delta$ for any $i,j,k$ and $N_\alpha \leq 2 \alpha$. 
The difference over the matched segments expands to
\begin{align}
    \sum_{j=1}^{M_\alpha} \frac{1}{\alpha^2} & \left | A_j \int_{W_{1,j}} h \circ \XX_\alpha^{-1} \varphi_1\, \dd \HH^1 - A_j \int_{W_{2,j}} h \circ \XX_\alpha^{-1} \varphi_2\, \dd \HH^1 \right | 
    \nonumber\\
    & \leq \sum_{j=1}^{M_\alpha} \sum_{k=1}^{N_\alpha} \frac{2}{\alpha^4} \left | A_j \int_{\tilde W_{1,j,k}} h (\varphi_1 \circ \XX_\alpha)\, \dd \HH^1 - A_j \int_{\tilde W_{2,j,k}} h (\varphi_2 \circ \XX_\alpha)\, \dd \HH^1 \right |\label{eq:estiWijk}
    \\
    & \quad + \sum_{i=1,2} \sum_{j=1}^{M_\alpha} \sum_{k=1}^{8} \frac{2}{\alpha^4} \left | A_j \int_{\tilde U_{i,j,k}} h (\varphi_i \circ \XX_\alpha)\, \dd \HH^1 \right | \,.\label{eq:estiUijk}
\end{align}
We may bound the second term by
\begin{align*}
    \sum_{j=1}^{M_\alpha} \sum_{k=1}^{8} \frac{1}{\alpha^4} \left | A_j \int_{\tilde U_{i,j,k}} h (\varphi_i \circ \XX_\alpha) \, \dd \HH^1 \right |
    & \leq C \alpha^{-1} \| h \|_s \max_{j,k} \| \varphi_i \circ \XX_\alpha \|_{\sigma, \tilde U_{i,j,k}} 
     \\
     &\leq C \alpha^{-1} \|h \|_s \|\varphi_i \|_{C^q(W_i)} |\tilde U_{i,j,k}|^\sigma 
     \leq C \alpha^{-1} (\alpha^{-1} \delta)^\sigma \| h \|_s
    \\
    & \leq C \alpha^{-1-\sigma} \delta^{\sigma-\beta} \delta^\beta \| h \|_s
   \leq C \alpha^{-1 - 2\sigma + \beta} \delta^\beta \| h \|_s \,.
\end{align*}
This takes care of \eqref{eq:estiUijk} in a way that is consistent with \eqref{eq:strongunstabcont}. 

We now turn to \eqref{eq:estiWijk}. We parametrize $W_{i,j}$ as
$$ \gamma_{i,j} (t) = \xx_{i,j} + t \vvv_i \,, \quad t \in [0, t_{fin, i,j}] \,.$$
Hence $\tilde W_{i,j,k}$ is parametrized by 
$$\tilde \gamma_{i,j,k} (t) = \XX_\alpha^{-1}(\xx_{i,j}) + t_{i,j,k} \frac{\XX_\alpha^{-1} (\vvv_i)}{|\XX_\alpha^{-1} (\vvv_i)|} + t \frac{\XX_\alpha^{-1} (\vvv_i)}{|\XX_\alpha^{-1} (\vvv_i)|} \,, \quad t \in [0, \tilde t_{fin, i,j, k}] \,. \footnote{In the case where $\XX_\alpha$ flips orientations, the starting and ending times, as well as the beginning and end points should be reversed.}$$
For any $i,j,k$ we extend the curves $\tilde W_{i,j,k}$ as affine curves on $\RR^2$. From now on we assume without loss of generality that $|\tilde W_{2,j,k}| \leq |\tilde W_{1,j,k}|$. We define an affine map $C_{j,k}: \RR^2 \to \RR^2$ so that $C_{j,k} (\tilde \gamma_{2, j,k} (t)) = \tilde \gamma_{1,j, k} (t)$ for all $t \leq t_{fin,2,j,k}$.
  We define the map ${\varphi}_{1,C_{j,k}} = \varphi_1 ( \XX_\alpha \circ C_{j,k} \circ \XX_\alpha^{-1}) $ which satisfies
  \begin{align*}
      {\varphi}_{1,C_{j,k}} (\XX_\alpha \circ \tilde{\gamma}_{2,j,k} (t)) &= \varphi_{1} (\XX_\alpha \circ C_{j,k} (\tilde{\gamma}_{2,j,k} (t))) = \varphi_1 (\XX_\alpha  (\tilde{\gamma}_{1,j,k} (t)))
      \\
      & = \varphi_1 \left (\XX_\alpha \circ  \left (\XX_\alpha^{-1}(\xx_{1,j}) + t_{1,j,k} \frac{\XX_\alpha^{-1} (\vvv_1)}{|\XX_\alpha^{-1} (\vvv_1)|} + t \frac{\XX_\alpha^{-1} (\vvv_1)}{|\XX_\alpha^{-1} (\vvv_1)|} \right ) \right )
        \\
        & = \varphi_1 \left (\xx_{1,j} + t_{1,j,k} \frac{\vvv_1}{|\XX_\alpha^{-1} (\vvv_1)|} + t \frac{\vvv_1}{|\XX_\alpha^{-1} (\vvv_1)|} \right ) \,.
  \end{align*}
Hence we deduce
    $$ \dd_q (\varphi_1 \circ \XX_\alpha, {\varphi_{1,C_{j,k}}} \circ \XX_\alpha) =0\,.$$
Summing and subtracting the function $\varphi_{1,C_{j,k}}$ we have 
\begin{align*}
    \frac{2}{\alpha^4} & \sum_{j=1}^{M_\alpha} \sum_{k=1}^{N_\alpha}   \left | A_j \int_{\tilde W_{1,j,k}} h  \varphi_1 \circ \XX_\alpha \, \dd \HH^1 -  A_j \int_{\tilde W_{2,j,k}} h  \varphi_2 \circ \XX_\alpha \, \dd \HH^1 \right |
    \\
    & \leq  \frac{2}{\alpha^4} \sum_{j=1}^{M_\alpha} \sum_{k=1}^{N_\alpha}   \left | A_j \int_{\tilde W_{1,j,k}} h  \varphi_1 \circ \XX_\alpha \, \dd \HH^1 -  A_j \int_{\tilde W_{2,j,k}} h  {\varphi}_{1,C_{j,k}} \circ \XX_\alpha \, \dd \HH^1 \right | 
    \\
    & \quad + \frac{2}{\alpha^4} \sum_{j=1}^{M_\alpha} \sum_{k=1}^{N_\alpha}   \left | A_j \int_{\tilde W_{2,j,k}} h  ( {\varphi}_{1,C_{j,k}} \circ \XX_\alpha - {\varphi}_2 \circ \XX_\alpha) \, \dd \HH^1 \right |  \,.
\end{align*}
The first summand can be estimated  thanks to $\dd_q (\varphi_1 \circ \XX_\alpha, \varphi_{1,C_{j,k}} \circ \XX_\alpha)=0$, and properties \eqref{enum:1},  \eqref{enum:2} in Lemma \ref{lemma:technical}
\begin{align*}
    \frac{2}{\alpha^4} \sum_{j=1}^{M_\alpha} \sum_{k=1}^{N_\alpha}   \left | A_j \int_{\tilde W_{1,j,k}} h  \varphi_1 \circ \XX_\alpha\, \dd \HH^1 -  A_j \int_{\tilde W_{2,j,k}} h  \varphi_{1,C_{j,k}} \circ \XX_\alpha \, \dd \HH^1\right | & \leq C \| h \|_u \max_{j,k}\dd_\Sigma^\beta (\tilde W_{1,j,k} , \tilde W_{2,j,k}) 
    \\
    & \leq C \|h \|_u \alpha^{-2\beta} \dd_\Sigma^\beta (W_1, W_2) 
    \\
    & \leq C \alpha^{-2 \beta} \delta^\beta \| h \|_u \,.
\end{align*} 
For the second summand we claim that  
\begin{align}\label{claim:1}
    \| \varphi_{1,C_{j,k}} \circ \XX_\alpha - \varphi_2 \circ \XX_\alpha \|_{C^q (\tilde W_{2,j,k})} \leq   C \delta^{1-q} \alpha^{-2q } + \delta  \,.
\end{align} 
If the claim holds true, recalling that $N_\alpha, M_\alpha \leq 2 \alpha $ and $|A_j|\leq 2\alpha^2$ we bound the second summand
\begin{align*}
    \frac{2}{\alpha^4} \sum_{j=1}^{M_\alpha} \sum_{k=1}^{N_\alpha}   \left | A_j \int_{\tilde W_{2,j,k}} h  ( {\varphi}_{1,C_{j,k}} \circ \XX_\alpha - {\varphi}_2 \circ \XX_\alpha) \, \dd \HH^1 \right | & \leq  C \| h \|_s  \| \varphi_{1,C_{j,k}} \circ \XX_\alpha - \varphi_2 \circ \XX_\alpha \|_{C^q (\tilde W_{2,j,k})}
    \\
    & \leq C \delta^\beta \alpha^{-1+ \beta} \|h \|_s  
\end{align*}
and we conclude the proof.

We now prove the claim. We first estimate the $C^0 (\tilde W_{2,j,k})$ norm, assuming without loss of generality that $|W_{2,j}|\leq |W_{1,j}|$.
\begin{align*}
    & \sup_{t}  |{\varphi}_{1,C_{j,k}} (\XX_\alpha \circ \tilde{\gamma}_{2,j,k} (t)) - \varphi_{2} (\XX_\alpha (\tilde{\gamma}_{2,j,k} (t)))| 
    \\
    & \quad = \sup_{t} \left |\varphi_1 \left (\xx_{1,j} + t_{1,j,k} \frac{\vvv_1}{|\XX_\alpha^{-1} (\vvv_1)|} + t \frac{\vvv_1}{|\XX_\alpha^{-1} (\vvv_1)|} \right ) - \varphi_2 \left (\xx_{2,j} + t_{2,j,k} \frac{ \vvv_2}{|\XX_\alpha^{-1} (\vvv_2)|} + t \frac{\vvv_2}{|\XX_\alpha^{-1} (\vvv_2)|} \right ) \right|
    \\
    & \quad \leq \sup_{t} \left |\varphi_1 \left (\xx_{1,j} + t_{1,j,k} \frac{\vvv_1}{|\XX_\alpha^{-1} (\vvv_1)|} + t \frac{\vvv_1}{|\XX_\alpha^{-1} (\vvv_1)|} \right ) - \varphi_1 \left (\xx_{1,j} + t_{2,j,k} \frac{\vvv_1}{|\XX_\alpha^{-1} (\vvv_2)|} + t \frac{\vvv_1}{|\XX_\alpha^{-1} (\vvv_2)|} \right ) \right |
    \\
    &\quad  \quad + \sup_{t} \left | \varphi_1 \left (\xx_{1,j} + t_{2,j,k} \frac{\vvv_1}{|\XX_\alpha^{-1} (\vvv_2)|} + t \frac{\vvv_1}{|\XX_\alpha^{-1} (\vvv_2)|} \right ) - \varphi_2 \left (\xx_{2,j} + t_{2,j,k} \frac{ \vvv_2}{|\XX_\alpha^{-1} (\vvv_2)|} + t \frac{\vvv_2}{|\XX_\alpha^{-1} (\vvv_2)|} \right ) \right |
\end{align*}
The final term is bounded by $\delta$ by definition of the strong unstable norm. Now, using \eqref{enum:2} in Lemma \ref{lemma:technical}, we deduce 
$$ |t_{1,j,k}-t_{2,j,k} | \leq C \alpha^{-1} \delta \,, \qquad |t_{1,j,k}| + |t_{2,j,k}| \leq C \alpha \,,$$
and 
$$
\left|\frac{1}{|\XX_\alpha^{-1}\mathbf{v}_2|}-\frac{1}{|\XX_\alpha^{-1}\mathbf{v}_1|}\right|
\leq \frac{|\XX_\alpha^{-1}(\mathbf{v}_1-\mathbf{v}_2)|}{|\XX_\alpha^{-1}\mathbf{v}_1||\XX_\alpha^{-1}\mathbf{v}_2|} \leq C\frac{|\mathbf{v_1}-\mathbf{v_2}|}{|\XX_\alpha^{-1}\mathbf{v_1}|} \leq C\alpha^{-2} \delta\,,
$$
and using also $\|\varphi_1\|_{C^1(W_{1})} \leq 1$ we deduce 
$$
 \| \varphi_{1,C_{j,k}} \circ \XX_\alpha - \varphi_2 \circ \XX_\alpha \|_{C^0 (\tilde W_{2,j,k})}  \leq C\delta. 
$$
Now, from Lemma \ref{lemma:property-test} we deduce that the Lipschitz seminorms satisfy the bound
$$[\varphi_{1,C_{j,k}} \circ \XX_\alpha ]_{C^1} + [\varphi_2 \circ \XX_\alpha]_{C^1} \leq  C\alpha^{-2} \,.$$
Hence, using the interpolation inequality
$
 [\cdot  ]_{\dot C^q} \leq [\cdot]^{q}_{ C^1} [\cdot ]^{1-q}_{C^0}
$ we deduce that  the claim \eqref{claim:1} holds true.

\section{Spectral theory of the ideal dynamo operator} \label{sec:spectral-ideal}

We recall that $(X, \| \cdot \|)$ is the anisotropic Banach space defined in Section \ref{sec:anisotropic}. Our focus is the operator:
$$\mathcal{L}_\alpha : X \to X, \qquad \mathcal{L}_\alpha h = \frac{1}{\alpha^2} (D\XX_\alpha  h ) \circ \XX_\alpha^{-1} \,,
$$
where $\XX_\alpha: \mathbb{T}^2 \to \mathbb{T}^2$ is the uniformly hyperbolic map defined in Section \ref{sec:themap} and $h : \mathbb{T}^2 \to \mathbb{C}^2$. To perform the analysis, we partition the torus into four quadrants:$$
\begin{aligned}
Q_1 &:= \{(x,y): x \ge \tfrac12, y \ge \tfrac12\}, \quad Q_2 := \{(x,y): x < \tfrac12, y \ge \tfrac12\},\\
Q_3 &:= \{(x,y): x < \tfrac12, y < \tfrac12\}, \quad Q_4 := \{(x,y): x \ge \tfrac12, y < \tfrac12\} \,.
\end{aligned}
$$
We denote the integral of a function $f$ over a set $Q$ by $$ 
(f)_Q = \int_Q f \, , \qquad \forall f \in L^1\,, \forall Q \subset \TT^2\,.
$$ 
In this section, we prove Theorem \ref{thm:main eigenvalue} by establishing the existence of a leading eigenvalue through a limiting rank-1 operator.
\begin{proposition} \label{prop:eigenvalue}
Let $g \in C^1 (\mathbb{T}^2)$ be such that $(\e^{2 \pi i g})_{Q_1} + (\e^{2 \pi i g})_{Q_3} - (\e^{2 \pi i g})_{Q_4} - (\e^{2 \pi i g})_{Q_2} \neq 0$. There exists $\alpha_0 \geq 1$ such that for any $\alpha \geq \alpha_0$, the operator $\e^{2 \pi i g}\mathcal{L}_\alpha$ admits a simple, discrete eigenvalue $\mu_\alpha \in \mathbb{C}$ satisfying
$$ 
|{\mu}_\alpha| \geq \frac{1}{2} \left| (\e^{2 \pi i g})_{Q_1} + (\e^{2 \pi i g})_{Q_3} - (\e^{2 \pi i g})_{Q_4} - (\e^{2 \pi i g})_{Q_2} \right| \,.
$$
In particular, there exists $g \in C^1 (\mathbb{T}^2)$ such that the operator $\alpha^2 \e^{2 \pi i g} \mathcal{L}_\alpha$ admits a leading eigenvalue $\lambda_\alpha=\alpha^2\mu_\alpha$ with $|\mu_\alpha| \geq 1/4$.
\end{proposition}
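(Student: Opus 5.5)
The plan is a perturbation argument around a rank-one limiting operator. First identify the limit $\cL_\infty$ of $\cL_\alpha$ as $\alpha\to\infty$, in the sense of the $X\to w$ norm. On $\XX_\alpha(\cM_\ell)$ one has $\alpha^{-2}A_\ell = \pm e_1e_1^T + O(\alpha^{-1})$: the sign is $+$ for $\ell=1,4$ and $-$ for $\ell=2,3$. Moreover $\XX_\alpha(\cM_1)\cup\XX_\alpha(\cM_2)=\{y\ge 1/2\}$. Integrate $\cL_\alpha h$ against a test function $\varphi$ on an admissible leaf $W$ and pull back by Lemma \ref{lemma:jacobian}. The preimage leaves $\tilde W_{j,k}$ are long, of length about $\alpha^2|W|$, and nearly horizontal. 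They sweep $\cM_\ell$ equidistributively, since $\cM_\ell$ is a union of thin stripes of width $\sim\alpha^{-1}$.

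Because $\varphi\circ\XX_\alpha$ is nearly constant on each $\tilde W_{j,k}$ by Lemma \ref{lemma:property-test}, the sum $\alpha^{-2}\sum_{j,k}|J|\int_{\tilde W_{j,k}} h\,\varphi\circ\XX_\alpha$ should converge to the following Riemann-sum expression:
\[
\int_W \varphi\,\big(\mathds 1_{y\ge1/2}-\mathds 1_{y<1/2}\big)\,\dd\HH^1 \cdot \Big(\int_{x\ge1/2}h^{(1)}-\int_{x<1/2}h^{(1)}\Big)e_1 .
\]
The sign pattern comes out as $+$ on $\cM_1\cup\cM_4$ and $-$ on $\cM_2\cup\cM_3$. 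Pairing $\cM_1$ with $\cM_3$ (both in $x\ge1/2$, images in $y\ge1/2$ and $y<1/2$) gives exactly the rank-one formula stated in Section \ref{sub:proof}. The error should be $C\alpha^{-\kappa}\|h\|$, where the rate comes from the $\delta^\beta$ modulus in $\|h\|_u$ and from the $O(\alpha^{-1})$ matrix corrections. This is Lemma \ref{lemma:convergence}, and I expect it to be the main obstacle. The difficulty is that $h$ is only a distribution. The averaging over many nearly parallel stable leaves must therefore be done via the strong unstable norm: replace integrals over neighbouring leaves at distance $\le 2/\alpha$ by a common one, then identify the resulting leaf average with the area integral, using Lemma \ref{lemma:indicators-y}-type arguments together with the matching of Lemma \ref{lemma:technical}.

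With the limit in hand, set $\cP_\alpha=\e^{2\pi i g}\cL_\alpha$ and $\cP_\infty=\e^{2\pi ig}\cL_\infty$. The limit $\cP_\infty$ is rank one, $\cP_\infty h=\ell(h)\,v$ with $v=\e^{2\pi i g}(\mathds 1_{y\ge1/2}-\mathds 1_{y<1/2})e_1$ and $\ell(h)=\int_{x\ge1/2}h^{(1)}-\int_{x<1/2}h^{(1)}$. Its only nonzero eigenvalue is $\mu_\infty=\ell(v)=(\e^{2\pi ig})_{Q_1}+(\e^{2\pi ig})_{Q_3}-(\e^{2\pi ig})_{Q_4}-(\e^{2\pi ig})_{Q_2}$, and it is algebraically simple because $\mu_\infty\neq0$. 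Lemmas \ref{lemma:indicators-y} and \ref{lemma:smooth multiplier} give $v\in X$ with $\|v\|\le C$ uniformly in $\alpha$. The same lemmas give $|\ell(h)|\le C|h|_w$, so $\cP_\infty$ is bounded $X_w\to X$ uniformly.

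Next, run a Keller--Liverani argument with $\alpha$-dependent spaces. By Proposition \ref{proposition:main lasota yorke} (using Lemma \ref{lemma:smooth multiplier} for the multiplier), $\cP_\alpha$ satisfies a Lasota--Yorke inequality with constants $a=C\alpha^{-\eta}$ and $M=C$ independent of $\alpha$. Fix a circle $\Gamma=\{|z-\mu_\infty|=|\mu_\infty|/2\}$; it stays well away from $\overline{B_a(0)}$ and from $0$ once $\alpha$ is large. On $\Gamma$, the resolvent of $\cP_\infty$ is explicit: $(z-\cP_\infty)^{-1}=z^{-1}+z^{-1}(z-\mu_\infty)^{-1}\ell(\cdot)v$. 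Its norm is bounded uniformly in $\alpha$, both $X\to X$ and $w\to w$.

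The standard Keller--Liverani resolvent estimate then needs only three uniform inputs: the Lasota--Yorke constants, the bound $\|\cP_\alpha\|_{w\to w}\le C$, and $\|\cP_\alpha-\cP_\infty\|_{X\to w}\le C\alpha^{-\kappa}$. From these it gives a uniform bound for $(z-\cP_\alpha)^{-1}$ on $\Gamma$ and $\|\Pi_\alpha-\Pi_\infty\|_{X\to w}\to0$ for the Riesz projections. Tracking constants explicitly gives the quantitative version mentioned in the paper's remark on $\alpha$-dependence. Since $\Pi_\infty$ has rank one, the projection $\Pi_\alpha$ has rank one for large $\alpha$. So $\cP_\alpha$ has exactly one, algebraically simple, eigenvalue $\mu_\alpha$ inside $\Gamma$, and it lies outside the essential disk, making it discrete by Proposition \ref{prop:lasota_yorke abstract}. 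This yields $|\mu_\alpha|\ge|\mu_\infty|/2$.

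Finally, to obtain $|\mu_\alpha|\ge 1/4$, choose $g$ to make $|\mu_\infty|\ge1/2$. For instance, take $g$ Lipschitz with $\e^{2\pi ig}\approx 1$ on $Q_1\cup Q_3$ and $\approx-1$ on $Q_2\cup Q_4$, equal to $0$ and $1/2$ away from thin transition layers. Then $\mu_\infty\approx 1$, and certainly $|\mu_\infty|\ge 1/2$.
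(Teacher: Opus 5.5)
Your argument is correct, but the perturbation step follows a different route from the paper's. The paper uses Lemma~\ref{lemma:convergence} at full strength, as closeness in the operator norm $X\to X$. The unstable part of that lemma comes from showing that $\|\cL_\alpha h\|_u$ and $\|\cL_\infty h\|_u$ are each $O(\alpha^{-\tilde\eta})\|h\|$. The paper combines this with a uniform $X\to X$ bound for $(\lambda-\e^{2\pi i g}\cL_\infty)^{-1}$ on $\Gamma$, obtained from a $2\times2$ Sherman--Morrison--Woodbury reduction; your rank-one formula says the same thing more directly. A plain Neumann series then inverts $\lambda-\e^{2\pi i g}\cL_\alpha$ on $\Gamma$ and gives $\|P_\alpha-P_\infty\|_{X\to X}\le C\alpha^{-\tilde\eta}$. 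Equality of ranks then follows from the elementary close-projector argument. No Lasota--Yorke bound, compact embedding or weak-norm control enters this step.

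You keep only the $X\to w$ consequence of that lemma, and pay for it by rerunning Keller--Liverani with all constants uniform in $\alpha$. The inputs you need are:
\begin{enumerate}
\item the iterated Lasota--Yorke bounds for both $\cP_\alpha$ and $\cP_\infty$ (trivial for the latter, since $\|\cP_\infty h\|\le C|h|_w$);
\item $\sup_{z\in\Gamma}\|(z-\cP_\infty)^{-1}\|_{X\to X}$;
\item the compact embedding at fixed $\alpha$, so that the a priori estimate excluding eigenvalues on $\Gamma$ actually yields $\Gamma\subset\rho(\cP_\alpha)$;
\item in the rank step, the uniform equivalence $\|f\|\le C|f|_w$ on $\mathrm{Ran}\,\Pi_\alpha$, which is where your uniform $w\to w$ bound on $(z-\cP_\infty)^{-1}$ is needed.
\end{enumerate}
All of this goes through, because the Keller--Liverani estimates are explicit in exactly these quantities. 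Your route is more robust: it would survive with only weak closeness, as in the $\eps\to0$ problem of Section~\ref{sec:heat}. The paper's route is shorter, given the stronger lemma it proves anyway.

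One correction to your heuristic for Lemma~\ref{lemma:convergence}: preimages of stable leaves are nearly \emph{vertical}, not horizontal, since $C_s$ is $DT_\alpha^{-1}$-invariant. A piece $W_j$ crossing one strip of $T_\alpha(\cM_\ell)$ pulls back to a curve that wraps vertically about $\alpha/2$ times while drifting horizontally by $1/2$. Its unit-length pieces are compared, through $\|\cdot\|_u$, with vertical lines spaced $1/\alpha$ apart. The averaging is therefore over the half-plane $\{x\ge1/2\}$ or $\{x<1/2\}$, not over $\cM_\ell$. It produces $2\int_{x\ge1/2}h^{(1)}$ or $2\int_{x<1/2}h^{(1)}$, and the factor $2$ is compensated by $\mathds 1_{T_\alpha(\cM_\ell)}\rightharpoonup\frac12\mathds 1_{y\ge1/2}$ (respectively $\frac12\mathds 1_{y<1/2}$).
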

A suitable $g \in C^1(\mathbb{T}^2)$ can be constructed via mollification 
$$
g = f \star \varphi_\ell, \qquad f(x,y)=\frac12\,\mathbbm{1}_{Q_2\cup Q_4}(x,y) \,,
$$
where   $\varphi_\ell$ is a standard periodic Friedrichs mollifier.
Then,
\[
(\e^{2 \pi i f})_{Q_1} + (\e^{2 \pi i f})_{Q_3}
- (\e^{2 \pi i f})_{Q_4} - (\e^{2 \pi i f})_{Q_2}
= 1/4+1/4-(-1/4)-(-1/4)=1\,,
\]
which implies that $|(\e^{2 \pi i g})_{Q_1} + (\e^{2 \pi i g})_{Q_3}
- (\e^{2 \pi i g})_{Q_4} - (\e^{2 \pi i g})_{Q_2}| \geq 1/2$ for $\ell $  sufficiently small.

The core of the proof is the existence of a rank-1 ``limiting operator'' $\mathcal{L}_\infty : X \to X$ that approximates $\mathcal{L}_\alpha$ as $\alpha \to \infty$:
\begin{align} \label{d:cL-infty}
\mathcal{L}_\infty h= \begin{pmatrix}
\left ( \int_{x \geq 1/2} h^{(1)}   -\int_{x<1/2}h^{(1)} \right )(\mathbbm{1}_{y \geq 1/2}-\mathbbm{1}_{y<1/2})
\\
0
\end{pmatrix} \,, \qquad h=(h^{(1)},h^{(2)}).
\end{align}
While it is tempting to assume that the spectral properties of $\mathcal{L}_\alpha$ must mirror those of $\mathcal{L}_\infty$ for large $\alpha$, this convergence is non-trivial in our setting. Specifically, because the anisotropic Banach space $(X, \|\cdot\|)$ is itself $\alpha$-dependent, the very notion of operator convergence requires a careful, quantitative treatment. To bridge this gap, we proceed via uniform resolvent estimates, rigorously tracking the dependencies on $\alpha$ to ensure the stability of the leading resonance.

\begin{remark}[The role of $g$] \label{remark:1 iteration}
In the absence of the phase factor $g$ (which represents the out-of-plane shear), the operator $\mathcal{L}_\alpha$ has no non-trivial eigenvalues. This is consistent with anti-dynamo theorems, as the purely $2d$ map cannot sustain growth.

Our proof relies on introducing the shear $\e^{2 \pi ig}$ after only \emph{one} iteration of the $2d$ operator. If the shear were applied after two iterations, we would be considering $\e^{2 \pi ig} \mathcal{L}_\infty^2$. However, $\mathcal{L}_\infty^2 \equiv 0$, as the range of
$\cL_\infty$ is simply the span of $\begin{pmatrix}\mathds{1}_{y \geq 1/2}-\mathds{1}_{y<1/2}\\
0\end{pmatrix}$, and 
$$
\left ( \cL_\infty \begin{pmatrix}
    \mathds{1}_{y \geq 1/2}
    \\
    0
\end{pmatrix} \right )^{(1)}=\left (\int_{x \geq 1/2, y \geq 1/2} 1  -\int_{x < 1/2, y \geq 1/2} 1   \right )(\mathds{1}_{y \geq 1/2}-\mathds{1}_{y<1/2})=0\,,
$$
and the same holds true with $\mathds{1}_{y < 1/2}$.
Hence, the behaviour of $\e^{2\pi i g}\cL_\alpha^n$ for fixed $\alpha$ as $n\to\infty$ is indeed entirely distinct from that of $\e^{2\pi i g}\cL_\alpha$ as $\alpha\to\infty$.
\end{remark}

\subsection{Technical lemmas}
The following two lemmas provide the rigorous bridge from $\alpha = \infty$ to finite $\alpha$. The convergence of $\e^{2 \pi i g}\cL_\alpha$ as $\alpha \to \infty$ is established in the following lemma and it represents the main technical result of this section.
\begin{lemma} \label{lemma:convergence}
    Let $\e^{2 \pi ig} \in C^1(\mathbb{T}^2)$. Then, there exist a constant $\tilde \eta$ depending only on $\|\e^{2 \pi i g} \|_{C^1}$, and $\alpha_0 \geq 1$ so that for all $\alpha \geq \alpha_0$, it holds 
$$
\|\e^{2 \pi i g}\cL_\alpha h -\e^{2 \pi i g}\cL_\infty h\| \leq C\alpha^{-\tilde \eta }\|h\| \,, \quad \forall h \in X\,.
$$
\end{lemma}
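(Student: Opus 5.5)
By Lemma~\ref{lemma:smooth multiplier}, multiplication by $\e^{2\pi i g}$ is bounded on $X$ with norm $\le C\|\e^{2\pi i g}\|_{C^1}$. It therefore suffices to prove $\|\cL_\alpha h-\cL_\infty h\|\le C\alpha^{-\tilde\eta}\|h\|$.

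\emph{Step 1: reduce to a scalar transfer operator.} I would write $\cL_\alpha h=\sum_{\ell}\alpha^{-2}A_\ell\mathbbm 1_{\XX_\alpha(\cM_\ell)}\,h\circ\XX_\alpha^{-1}$. From \eqref{eq:matrixMi}, each matrix decomposes as $\alpha^{-2}A_\ell=s_\ell E_{11}+R_\ell$ with $|R_\ell|\le C\alpha^{-1}$, where $E_{11}$ is the projection onto the first component, $s_1=s_4=+1$ and $s_2=s_3=-1$.

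The proof of Proposition~\ref{prop:LY} (in its branchwise form, Corollary~\ref{lemma:improved-lasota}) only used $|\alpha^{-2}A_\ell|\le C$. The same argument therefore gives $\|\mathbbm 1_{\XX_\alpha(\cM_\ell)}h\circ\XX_\alpha^{-1}\|\le C\|h\|$. Consequently the $R_\ell$ terms contribute at most $C\alpha^{-1}\|h\|$, and in particular the second component of $\cL_\alpha h$ is $O(\alpha^{-1}\|h\|)$.

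For the leading term I use the geometry of the smoothness regions. The regions $\cM_1,\cM_3$ lie in $\{x\ge 1/2\}$ and $\cM_2,\cM_4$ in $\{x<1/2\}$. Their images satisfy $\XX_\alpha(\cM_{1,2})\subset\{y\ge1/2\}$ and $\XX_\alpha(\cM_{3,4})\subset\{y<1/2\}$. Comparing with the signs $s_\ell$ shows that $s_\ell$ equals the product of $\operatorname{sgn}(x-1/2)$ on $\cM_\ell$ and $\operatorname{sgn}(y-1/2)$ on $\XX_\alpha(\cM_\ell)$. Writing $S:=\mathbbm 1_{x\ge1/2}-\mathbbm 1_{x<1/2}$ and $\chi:=\mathbbm 1_{y\ge1/2}-\mathbbm 1_{y<1/2}$, the leading first component is exactly $\chi\cdot (Sh^{(1)})\circ\XX_\alpha^{-1}$. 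Since $\cL_\infty h=(\chi\int Sh^{(1)},0)$, it remains to prove
\[
\bigl\|\chi\bigl[(Sh^{(1)})\circ\XX_\alpha^{-1}-\textstyle\int_{\TT^2}Sh^{(1)}\bigr]\bigr\|\le C\alpha^{-\tilde\eta}\|h\|.
\]
This is a quantitative equidistribution (``one-step mixing'') statement for the scalar transfer operator.

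\emph{Step 2: equidistribution along stable leaves.} Fix $W\in\Sigma$ and a test function $\varphi$. First, by Lemma~\ref{lemma:technical}\eqref{enum:0}, cut $W$ at the line $\{y=1/2\}$ (the discontinuity of $\chi$, where $\chi$ is constant on each piece) and at the boundaries of the backward smoothness regions. Next, pull back each piece by the change of variables of Lemma~\ref{lemma:jacobian}, so that the integral becomes $\sum_{j,k}|J|\int_{\tilde W_{j,k}}Sh^{(1)}\,\varphi\circ\XX_\alpha$. By Lemma~\ref{lemma:property-test}, $\varphi\circ\XX_\alpha$ is constant up to an error $O(\alpha^{-2q})$ on each piece; as in term $I$ of Section~\ref{sub:stable}, this error costs $\alpha^{-2q}\|h\|_s$.

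The preimage pieces $\tilde W_{j,k}$ are nearly vertical with length $\approx 1$, except for at most boundedly many short pieces per $j$, which are handled by $\|h\|_s$ as in the small-leaf case. Being almost vertical, they do not cross $\{x=1/2\}$ transversally, so $S$ is constant on each of them. Their horizontal positions form an arithmetic-like progression with spacing $\approx\alpha^{-1}$ (coming from the shear $T_{V_\alpha}^{-1}$), and the weights $|J|\approx\alpha^{-2}$ sum to $\approx|W|$.

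Fubini gives $\int_{\TT^2}Sh^{(1)}=\int_0^1\bigl(\int_{\{x\}\times\TT}Sh^{(1)}\bigr)\,\dd x$. I would compare this with the Riemann sum over the pieces using the strong unstable norm: replacing a leaf by a vertical leaf at $\dd_\Sigma$-distance $\le C\alpha^{-1}$ costs $\alpha^{-\beta}\|h\|_u$. This yields $\int_W\chi(Sh^{(1)})\circ\XX_\alpha^{-1}\varphi=\bar\varphi\,|W\cap\cdot|\int Sh^{(1)}+O(\alpha^{-\min(\beta,2q)}\|h\|)$, which is exactly the pairing of $W$ with $\cL_\infty h$. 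This bounds the $\|\cdot\|_s$ part of the difference. For leaves with $|W|\lesssim\alpha^{-2}$, no averaging is available, so I would instead bound the two terms separately using $|W|^{1-\sigma}$ weights, giving $\alpha^{2\sigma-2}$.

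\emph{Step 3: the unstable norm — the main obstacle.} The strong unstable norm of the error requires comparing two close leaves $W_1,W_2$ at scale $\delta\le2/\alpha$. I would split into two regimes. If $\delta\ge\alpha^{-\kappa}$ for a small $\kappa>0$, bound each leaf separately by Step 2 and divide by $\delta^\beta$; the result is $\alpha^{\kappa\beta-\min(\beta,2q)}\|h\|$. If $\delta<\alpha^{-\kappa}$, I would reuse the matching of Lemma~\ref{lemma:technical}\eqref{enum:1}--\eqref{enum:2} exactly as in Section~\ref{sub:unstable}: the two separate unstable-norm bounds for $\chi(Sh^{(1)})\circ\XX_\alpha^{-1}$ (gain $\alpha^{-2\beta}$) and, by Lemma~\ref{lemma:indicators-y}, for $\chi$ times the constant $\int Sh^{(1)}$ (bound $C\alpha^{\beta-1}|h|_w$), each already decay in $\alpha$.

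The delicate point throughout is keeping track of how the $\alpha$-dependent norms interact with the indicator $S$ inside the pullback. Handling this requires checking that the matched preimage pieces lie on the same side of $\{x=1/2\}$, except for $O(1)$ unmatched short pieces. Taking $\tilde\eta$ to be the minimum of the exponents obtained gives the claim.
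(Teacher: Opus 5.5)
Your argument is essentially the paper's. You discard the $O(\alpha^{-1})$ off-principal matrix entries and bound $\|\cL_\alpha h\|_u$ and $\|\cL_\infty h\|_u$ separately via \eqref{eq:strongunstabcont} and Lemma~\ref{lemma:indicators-y}, so Step~3 is actually the easy part, and your regime $\delta\ge\alpha^{-\kappa}$ is empty since $\delta\le 2/\alpha$. You then obtain the stable-norm estimate by pulling back to length-one near-vertical leaves, comparing them with vertical lines through $\|h\|_u$, and summing in $x$; your identity $\chi\,(Sh^{(1)})\circ\XX_\alpha^{-1}$ just merges the paper's two steps $\cL_\alpha\to\bar\cL_\alpha\to\cL_\infty$ (Lemmas~\ref{lemma:convergence 1} and~\ref{lemma:convergence 2}) into one.

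One point needs correcting. The preimage of a single strip $\XX_\alpha(\cM_\ell)$ lands in $\cM_\ell$, so it sweeps only one half-torus, $\{x\ge 1/2\}$ or $\{x<1/2\}$. A full sweep of $x\in[0,1]$ therefore needs a stretch of $W$ of length $\approx\alpha^{-1}$, namely two adjacent strips of equal length. Consequently, leaves shorter than $\alpha^{-1}$ (not just those shorter than $\alpha^{-2}$) must be bounded crudely, giving $\alpha^{\sigma-1}$. Freezing $\varphi$ over each sweep costs $\alpha^{-q}$ rather than $\alpha^{-2q}$, which is still a positive power, so your conclusion stands.
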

The proof of this result is delayed to Section \ref{sec: convergence}.
In order to deduce the convergence of spectra from the above, we need to ensure that the resolvents of our limiting operator are uniformly bounded in the strong norm, despite the fact that $\|\cdot \|$ intrinsically depends on $\alpha$. The following Lemma ensures this is the case.

\begin{lemma}[Resolvent Bound]
\label{lemma:technical resolvent lemma} 
Let $g \in C^1 (\TT^2)$ be a non zero function.
There exists a matrix $M \in \CC^{2\times 2}$ with eigenvalues $0$ and $\mu=(\e^{2 \pi i g})_{Q_1}+(\e^{2 \pi i g})_{Q_3}-(\e^{2 \pi i g})_{Q_2}-(\e^{2 \pi i g})_{Q_4}$, so that the following holds true.
There exists $\alpha_0 >1$ so that for any  $\alpha \geq \alpha_0$ and for any  $\lambda \in \CC \setminus \{ 0, \mu \}$ it holds 
$$
\|(\lambda -\e^{2 \pi i g}\mathcal{L}_\infty)^{-1}\|_{X \to X} \leq C|\lambda|^{-1}(1+\|(\lambda -M)^{-1}\|_{\CC^2 \to \CC^2})\,.
$$
\end{lemma}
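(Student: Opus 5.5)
The plan is to exploit that $\e^{2\pi i g}\mathcal{L}_\infty$ has rank one and is given explicitly. Write $\psi := \mathbbm{1}_{y\ge 1/2}-\mathbbm{1}_{y<1/2}$, $e_1=(1,0)^T$, and let
\[
\ell(h) := \int_{x\ge 1/2} h^{(1)} - \int_{x<1/2} h^{(1)} .
\]
Then
\[
P h := \e^{2\pi i g}\mathcal{L}_\infty h = \ell(h)\, u, \qquad u := \e^{2\pi i g}\psi\, e_1 .
\]
For a rank-one operator the resolvent is explicit:
\[
(\lambda - P)^{-1} h = \lambda^{-1} h + \frac{\ell(h)}{\lambda(\lambda - \ell(u))}\, u
\qquad \text{whenever } \lambda\ne 0,\ \lambda \ne \ell(u).
\]
A direct computation gives
\[
\ell(u) = \int_{x\ge1/2}\e^{2\pi i g}\psi - \int_{x<1/2}\e^{2\pi i g}\psi
= (\e^{2\pi i g})_{Q_1} - (\e^{2\pi i g})_{Q_4} - (\e^{2\pi i g})_{Q_2} + (\e^{2\pi i g})_{Q_3} = \mu .
\]
To match the statement's form, I would take $M := \begin{pmatrix} 0 & 0 \\ 0 & \mu\end{pmatrix}$, or any $2\times 2$ matrix with eigenvalues $0$ and $\mu$. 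The scalar factor $(\lambda-\mu)^{-1}$ is then exactly an entry of $(\lambda - M)^{-1}$, so $|\lambda-\mu|^{-1}\le \|(\lambda-M)^{-1}\|$. (A $2\times2$ formulation presumably arises because the authors track $\ell$ on the two-dimensional span of $\e^{2\pi i g}\mathbbm{1}_{y\ge1/2}e_1$ and $\e^{2\pi i g}\mathbbm{1}_{y<1/2}e_1$; either choice works.)

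The estimate then reduces to three uniform-in-$\alpha$ bounds:
\[
\|h\| \text{ trivially}, \qquad |\ell(h)| \le C|h|_w \le C\|h\|, \qquad \|u\| \le C .
\]

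\textbf{Bound on $\ell$.} The second inequality follows from the last assertion of Lemma \ref{lemma:indicators-y}, applied to the first component. Each half-strip integral $\int_{x\ge1/2}h^{(1)}$ is an average over $x$ of integrals along vertical unit leaves, which are admissible with constant test function $1$. This gives $|\ell(h)|\le |h|_w$, and then $|h|_w\le\|h\|$.

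\textbf{Bound on $u$.} For the third, write $u = \e^{2\pi i g}(2\,\mathbbm{1}_{y\ge1/2} - 1)e_1$. Lemma \ref{lemma:indicators-y} gives $\|e_1\mathbbm{1}_{y\ge1/2}\|\le C$ uniformly in $\alpha$. The constant $e_1$ has $\|\cdot\|_s\le 1$ and $\|\cdot\|_u\le C\delta^{1-\beta}\le C$ by the same curve-matching computation, even more easily since there is no cut. Multiplication by $\e^{2\pi i g}$ is handled by Lemma \ref{lemma:smooth multiplier}, with constant $C\|\e^{2\pi i g}\|_{C^1}$, hence depending only on $g$.

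\textbf{Combining.} Putting these together gives
\[
\|(\lambda-P)^{-1}h\| \le |\lambda|^{-1}\|h\| + |\lambda|^{-1}|\lambda-\mu|^{-1} C\|h\| ,
\]
which is the claim. I should also confirm that $P$ maps $X$ to $X$ and that the formula defines a genuine two-sided inverse on $X$. This is immediate, since $\ell$ is a bounded functional on $X$ (it extends continuously from $C^1$ by the weak-norm bound) and $u\in X$.

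\textbf{Main obstacle.} The only delicate point is uniformity in $\alpha$ of $\|u\|$, specifically the strong unstable part for the indicator, since the norm's definition involves $\delta\le 2/\alpha$ and curve matching across the discontinuity $y=1/2$. This is exactly what Lemma \ref{lemma:indicators-y} provides, so I expect no real difficulty beyond carefully citing it and checking that the constant vector field poses no issue.
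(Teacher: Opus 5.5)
Your proof is correct and is essentially the paper's argument. The paper factors $\e^{2\pi i g}\mathcal{L}_\infty = A_g B$ through $\CC^2$, with $Bf = (\int_{x\ge 1/2} f, \int_{x<1/2} f)$ and $A_g(v_1,v_2) = \e^{2\pi i g}(v_1-v_2)(\mathbbm{1}_{y\ge 1/2}-\mathbbm{1}_{y<1/2})$. It then applies the Sherman--Morrison--Woodbury formula with $M = BA_g$, and bounds $\|B\|_{X\to\CC^2}$ and $\|A_g\|_{\CC^2\to X}$ uniformly in $\alpha$ via Lemmas \ref{lemma:indicators-y} and \ref{lemma:smooth multiplier}, just as you bound $\ell$ and $u$. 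Since $A_g$ only depends on $v_1-v_2$, that factorization is really your rank-one one. Your Sherman--Morrison version with $M=\mathrm{diag}(0,\mu)$ — or any $M$ with spectrum $\{0,\mu\}$, using $|\lambda-\mu|^{-1}\le\rho((\lambda-M)^{-1})\le\|(\lambda-M)^{-1}\|$ — is a slightly more economical presentation of the same proof.
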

\begin{proof}
Since the operator $\e^{2 \pi ig}\cL_\infty$ is diagonal in its components, and its second component is the zero map, it suffices to compute the inverse of its first component. Thus, for the remainder of the proof, we abuse notation and identify $\e^{2 \pi ig}\cL_\infty$ with its first component.

We express $\e^{2 \pi i g} \cL_\infty$ as the product
$$
\e^{2 \pi i g}\cL_\infty=A_g B,
$$
where $A_g : \mathbb{R}^2 \to X$ and $B: X \to \mathbb{C}^2$ are defined respectively by
\begin{align} \label{d:matrix-A-g}
A_g (v_1,v_2)=\e^{2 \pi i g} (v_1 - v_2) (\mathds{1}_{y \geq 1/2}-\mathds{1}_{y < 1/2})\,,
\end{align}
and 
\begin{align} \label{d:matrix-B}
B f=\begin{pmatrix}
    \int_{x \geq 1/2} f  
    \\
    \int_{x<1/2} f  
\end{pmatrix} \,.
\end{align}
Applying the Sherman-Morrison-Woodbury finite dimensional reduction formula to compute
$$ f= (\lambda - \e^{2 \pi ig}\cL_\infty)^{-1} h \,.$$
For some $c \in \CC^2$ we use the ansatz $f = \lambda^{-1} (h + A_g c)$
then $f= (\lambda - \e^{2 \pi ig}\cL_\infty)^{-1} h $ if and only if 
$$ A_g c - \lambda^{-1} A_g B h - \lambda^{-1} A_g B A_g c =0$$
hence we impose that $c \in \CC^2$ is such  that
$(\Id - \lambda^{-1}B A_g) c= \lambda^{-1} B h$. Hence assuming that the $2 \times 2$ matrix $(\Id - \lambda^{-1}B A_g)$ is invertible for any $\lambda \in \Gamma$ we choose 
$$ c=  \lambda^{-1} (\Id - \lambda^{-1} B A_g)^{-1} B h \,.$$
We deduce  the Sherman-Morrison-Woodbury finite dimensional reduction formula
\begin{align*}
   f= (\lambda - \cL_\infty)^{-1} h = \lambda^{-1} (h + \lambda^{-1} A_g (\Id - \lambda^{-1} B A_g)^{-1} B h  )\,.
\end{align*}
We define $M = B A_g \in \CC^{2\times 2}$. Hence, we have 
\begin{align*}
    \| (\lambda - \cL_\infty)^{-1} h\| \leq |\lambda|^{-1} (\| h \| + \| A_g \|_{\CC^2 \to X} \| (\lambda - M)^{-1} \|_{\CC^2 \to \CC^2} \| B \|_{X \to \CC^2} \| h \|) \,.
\end{align*}
Using Lemma \ref{lemma:indicators-y}, we deduce $\| B \|_{X \to \CC^2} \leq 2$ and using Lemma \ref{lemma:indicators-y} and Lemma \ref{lemma:smooth multiplier} we deduce $\| A_g \|_{\CC^2 \to X} \leq C $ uniformly in $\alpha$. Finally, we note that the matrix $M=BA_g$ can be computed to be
$$ M=BA_g = \begin{pmatrix}
(\e^{2 \pi i g})_{Q_1}-(\e^{2 \pi i g})_{Q_4} & (\e^{2 \pi i g})_{Q_4}-(\e^{2 \pi i g})_{Q_1}\\
(\e^{2 \pi i g})_{Q_2} -(\e^{2 \pi i g})_{Q_3} & (\e^{2 \pi i g})_{Q_3}-(\e^{2 \pi i g})_{Q_2} 
\end{pmatrix}  \in \CC^{2\times 2}\,,$$
which indeed possesses the claimed eigenvalues of $0$ and $\mu=(\e^{2 \pi i g})_{Q_1}+(\e^{2 \pi i g})_{Q_3}-(\e^{2 \pi i g})_{Q_2}-(\e^{2 \pi i g})_{Q_4}$. Thus, we conclude the proof.
\end{proof}
With these results under our belt, the proof of Proposition \ref{prop:eigenvalue} may now be deduced in a way analogous to the standard result on isomorphisms of ``close'' Riesz projectors.

\begin{proof}[Proof of Proposition \ref{prop:eigenvalue}]
    Let $\Gamma$ be some circle in the complex plane enclosing $(\e^{ 2 \pi ig})_{Q_1}-(\e^{ 2 \pi ig})_{Q_2}-((\e^{ 2 \pi ig})_{Q_3}-(\e^{ 2 \pi ig})_{Q_4})$, with radius $r>0$ small enough so that the circle does not enclose or pass through the origin. For any $\alpha \geq 1$ (including $\alpha = \infty$), we define the Riesz projector 
$$
P_\alpha h=\frac{1}{2 \pi i}\int_{\Gamma}(\lambda -\e^{2 \pi i g}\cL_\alpha)^{-1} h\,\dd \lambda
$$
We claim that $P_\infty$ is a non-zero operator. 
 We recall that $(\e^{2 \pi i g}\cL_\infty)^{(1)}$ may be written as the composition of two operators, namely
$$
(\e^{2 \pi i g}\cL_\infty)^{(1)}=A_g B,
$$
where $A_g : \mathbb{R}^2 \to X$, $B: X \to \mathbb{R}^2$ are defined above in \eqref{d:matrix-A-g}, \eqref{d:matrix-B}. 
 In particular, note that the non-zero eigenvalues of $\e^{2 \pi i g}\cL_\infty$ are actually equal to those of the $2 \times 2$ matrix $BA_g$. Indeed, if $BA_g v=\lambda v$ and $\lambda \neq 0$, $v \neq 0$ (which implies $A_g v \neq 0$), then $A_g B A_g v= \lambda A_g v$, and since $A_g v \neq 0$, it follows that $\lambda$ is also an eigenvalue of $A_g B$. Similarly, if $A_g Bf =\lambda f$ and $\lambda \neq 0$, $f \neq0$ (which implies $Bf \neq 0$) it holds that $B A_g Bf=\lambda Bf$ and so $\lambda$ is an eigenvalue of $BA_g$. As in the proof of Lemma \ref{lemma:technical resolvent lemma}, we see that  
$$ BA_g = \begin{pmatrix}
(\e^{2 \pi i g})_{Q_1}-(\e^{2 \pi i g})_{Q_4} & (\e^{2 \pi i g})_{Q_4}-(\e^{2 \pi i g})_{Q_1}\\
(\e^{2 \pi i g})_{Q_2} -(\e^{2 \pi i g})_{Q_3} & (\e^{2 \pi i g})_{Q_3}-(\e^{2 \pi i g})_{Q_2} 
\end{pmatrix}  \in \CC^{2\times 2}\,, $$
which has eigenvalues $0$ and $(\e^{2 \pi i g})_{Q_1}+ (\e^{2 \pi i g})_{Q_3} -(\e^{2 \pi i g})_{Q_2}-(\e^{2 \pi i g})_{Q_4}$, and so as soon as the latter quantity is non-zero, we deduce the existence of a non-zero eigenvalue for  $A_g B$.

Now, we develop the resolvent of $\e^{2 \pi ig}\cL_\alpha$ in a Neumann series 
\begin{align*}
(\lambda -\e^{2 \pi i g}\cL_\alpha)^{-1}&=(\lambda -\e^{2 \pi ig}\cL_\infty)^{-1}\left (\Id+(\e^{2 \pi i g}\cL_\infty-\e^{2 \pi i g}\cL_\alpha)(\lambda -\e^{2 \pi ig}\cL_\infty)^{-1} \right )^{-1}\\
&=(\lambda -\e^{2 \pi ig}\cL_\infty)^{-1}\sum_{n \geq 0}(-1)^n \left ((\e^{2 \pi i g}\cL_\infty-\e^{2 \pi i g}\cL_\alpha)(\lambda -\e^{2 \pi ig}\cL_\infty)^{-1} \right )^n.
\end{align*}
Indeed, by Lemma \ref{lemma:convergence} combined with Lemma \ref{lemma:technical resolvent lemma}, it holds that 
$$
\|(\e^{2 \pi i g}\cL_\infty-\e^{2 \pi i g}\cL_\alpha)(\lambda -\e^{2 \pi ig}\cL_\infty)^{-1}\|_{X \to X}\leq C\alpha^{-\tilde\eta}(1+\|(\lambda -M)^{-1}\|_{\CC^2 \to \CC^2}),
$$
which for all $\alpha$ large enough is strictly less than $1$, uniformly for $\lambda \in \Gamma$. As such, the resolvent of $\e^{2 \pi ig}\cL_\alpha$ exists on $\Gamma$, and we deduce that 
$\|P_\alpha -P_\infty \|_{X \to X} \leq C|\Gamma| \alpha^{-\tilde\eta}$, where $C$ depends only on the choice of $\Gamma$. We claim that this implies that the dimension of the range of $P_\alpha$ is bigger or equal than the one of $P_\infty$, i.e. $\text{dim} (R (P_\alpha))  \geq \text{dim}( R(P_\infty))$, for $\alpha \geq \alpha_0$. We consider  the operator $P_\alpha : R(P_\infty) \to R(P_\alpha)$. Suppose $P_\alpha x=0$. Then, it holds 
$$
\|x\|=\|P_\infty x-P_\alpha x\| \leq C\alpha^{-\tilde\eta}\|x\| \,,
$$
Thus, taking $\alpha $ large enough, we deduce that $x=0$, and hence that $\text{dim}(R(P_\alpha)) \geq \text{dim}(R(P_\infty))=1$. Exchanging the roles of $P_\alpha$ and $P_\infty$ we obtain the opposite inequality, and thus deduce that for all $\alpha$ large enough, $\e^{2 \pi ig}\cL_\alpha$ has a simple eigenvalue with the claimed property.
\end{proof}
It thus remains to prove Lemma \ref{lemma:convergence}, which will be the focus of the following section.

\subsection{Convergence as $\alpha\to\infty$} \label{sec: convergence}

In order to study the convergence of $\cL_\alpha$ as $\alpha \to \infty$, we define an ``intermediate'' operator 
$$ \bar \cL_\alpha h = 2 \begin{pmatrix}
    ( \mathbbm{1}_{\XX_\alpha (\cM_1)} - \mathbbm{1}_{\XX_\alpha (\cM_3)})  
\int_{x\geq 1/2} h^{(1)} + (\mathbbm{1}_{\XX_\alpha (\cM_4)} -  \mathbbm{1}_{\XX_\alpha (\cM_2)})
\int_{x< 1/2} h^{(1)} 
\\
0
\end{pmatrix} \,,$$
where we recall 
\begin{align}
    \cM_1 = \{ x \geq 1/2 \,, y + \alpha x \geq 1/2   \pmod{1}\} \,, \qquad \cM_2 = \{ x < 1/2 \,, y - \alpha x \geq 1/2   \pmod{1}\} \,,
    \\
     \cM_3 = \{ x \geq 1/2 \,, y + \alpha x < 1/2  \pmod{1}\} \,, \qquad \cM_4 = \{ x < 1/2 \,, y - \alpha x < 1/2  \pmod{1}\} \,.
\end{align}
By the triangle inequality and Lemma \ref{lemma:smooth multiplier} we have
\begin{align*}
    \|\e^{2 \pi i g}\cL_\alpha h -\e^{2 \pi i g}\cL_\infty h \|  \leq C \left (  \| \cL_\alpha h -  \bar{ \mathcal{L}}_\alpha h\|_s + \|  \bar{ \mathcal{L}}_\alpha h - \cL_\infty h \|_s  + \| \cL_\alpha h - \cL_\infty h \|_u \right ) \,,
\end{align*}
and so if we prove the convergence to zero of all the summands on the right-hand side, the result follows.
By Proposition \ref{prop:LY} and Lemma \ref{lemma:indicators-y} we deduce
$$ 
\| \cL_\alpha h - \cL_\infty h \|_u \leq \| \cL_\alpha h \|_u + \| \cL_\infty h \|_u \leq C \alpha^{\beta - \sigma} \| h \| + C \alpha^{-1 + \beta} \| h \|\,. 
$$
To prove Lemma \ref{lemma:convergence} it is thus sufficient to prove the following two lemmas and to choose $\tilde \eta = \min \{1-\sigma,\sigma-\beta,q,\beta \}$.
\begin{lemma}
\label{lemma:convergence 1}
There exists $\alpha_0\geq 1$ so that for all $\alpha\geq \alpha_0$, it holds 
$$
\|\mathcal{L}_\alpha h - \bar{\mathcal{L}}_\alpha h\|_s \leq C( \alpha^{-q} + \alpha^{-1 + \sigma } + \alpha^{- \beta} ) \|h\|\,.
$$
\end{lemma}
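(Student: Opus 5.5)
Fix $W\in\Sigma$ and $\varphi\in C^q(W)$ with $\|\varphi\|_{\sigma,W}\le 1$. The goal is to bound
\[
\Bigl|\int_W(\cL_\alpha h-\bar\cL_\alpha h)\varphi\,\dd\HH^1\Bigr| .
\]

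\textbf{Structure of the operator.} The operator $\cL_\alpha h=\alpha^{-2}(A_\ell h)\circ\XX_\alpha^{-1}$ on $\XX_\alpha(\cM_\ell)$. Each $\alpha^{-2}A_\ell$ equals $\pm\binom{1\ 0}{0\ 0}$ up to an error of size $O(\alpha^{-1})$. The sign is $+$ for $\ell=1,4$ and $-$ for $\ell=2,3$, as the $(1,1)$ entries $1\pm\alpha^2$ in \eqref{eq:matrixMi} show. So I first replace $\alpha^{-2}A_\ell$ by $\pm e_1e_1^T$. Since $|\cL_\alpha^\ell|$-type bounds from Corollary~\ref{lemma:improved-lasota} (weak and stable estimates) hold for any constant matrix of norm $\le 2\alpha^2$, this error costs $C\alpha^{-1}\|h\|$.

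\textbf{Fragmentation.} I then apply Lemma~\ref{lemma:technical}-\eqref{enum:0}: split $W$ into pieces $W_j\subset\overline{\XX_\alpha(\cM_{\ell_j})}$ with $|W_j|\le 2/\alpha$. On each piece I replace $\varphi$ by its average $\overline\varphi^{W_j}$. As in term $I$ of Section~\ref{sub:stable}, the oscillation part is controlled by $\|h\|_s$ with gain $\alpha^{-q}$, because $|W_j|\le 2/\alpha$ gives oscillation $\le C\alpha^{-q}\|\varphi\|_{C^q}$. Summing over the fragments and pullbacks exactly as before gives $C\alpha^{-q}\|h\|$. When $|W|<\alpha^{-1}$ the averaging step is unnecessary: both $\cL_\alpha h$ and $\bar\cL_\alpha h$ are bounded separately by $C\alpha^{\sigma-1}\|h\|$ up to weak terms, via the medium/small leaf analysis. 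For $\bar\cL_\alpha$ this follows from Lemma~\ref{lemma:indicators-y}, since integrating an indicator over a short leaf gives $|W|^{1-\sigma}$. So I only need to treat $|W|\ge\alpha^{-1}$.

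\textbf{Key step: equidistribution.} It remains to compare $\int_{W_j}h^{(1)}\circ\XX_\alpha^{-1}$ with $2|W_j|\int_{x\ge1/2}h^{(1)}$ (respectively $\int_{x<1/2}$), according to the branch. Here $\XX_\alpha^{-1}(W_j)$ is a union of about $\alpha^2|W_j|$ nearly vertical full-length stable leaves $\tilde W_{j,k}$ inside $\{x\ge1/2\}$ or $\{x<1/2\}$. Their horizontal positions are spaced about $\alpha^{-2}$ apart with equal Jacobian weights $|J|\approx\alpha^{-2}$, by Lemma~\ref{lemma:jacobian}. The weighted sum $\sum_k |J|\int_{\tilde W_{j,k}}h^{(1)}$ is therefore a Riemann sum for $\int_{\text{half}}h^{(1)}\,\dd x\,\dd y$, scaled by the factor $|W_j|/(1/2)$.

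To make this quantitative, I write $\int_{\text{half}}h^{(1)}$ as an integral over $x$ of integrals along vertical leaves $V_x$ (Fubini, as in Lemma~\ref{lemma:indicators-y}). Each $V_x$ with $x$ within $\alpha^{-2}$ of the base point of $\tilde W_{j,k}$ is matched with $\tilde W_{j,k}$, using the test function $1$. Their $\dd_\Sigma$ distance is $O(\alpha^{-1})$, since the direction differs by $O(\alpha^{-1})$ and the position by $O(\alpha^{-2})$. The strong unstable norm then bounds each difference by $\alpha^{-\beta}\|h\|_u$. Boundary leaves near the edges of the half-plane are few and short, and are absorbed via $\|h\|_s$.

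Summing with weights $|\overline\varphi^{W_j}|\le|W|^{-\sigma}$ over $\sum_j|W_j|\le|W|$ gives $C\alpha^{-\beta}|W|^{1-\sigma}\|h\|$.

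\textbf{Main obstacle.} I expect this equidistribution/matching step to be the main difficulty. The delicate points are the precise bookkeeping of endpoints of the preimage leaves, which wrap around the torus, and the verification that the Jacobian weights times the count reproduce exactly the factor $2|W_j|$ in $\bar\cL_\alpha$. I would do this bookkeeping using the explicit parametrisation from Lemma~\ref{lemma:technical}-\eqref{enum:2}.
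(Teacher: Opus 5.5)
Your proposal is correct and follows essentially the same route as the paper's proof. The shared steps are: reduction to the principal part $\pm h^{(1)}\circ T_\alpha^{-1}$ at cost $O(\alpha^{-1})$; separate treatment of leaves with $|W|<\alpha^{-1}$; freezing $\varphi$ on each fragment of length $\le 2/\alpha$ at cost $\alpha^{-q}$; and matching the unit-length preimage leaves with vertical leaves through $\|h\|_u$, followed by a Riemann sum in $x$ (the paper does this matching in two $\|h\|_u$ steps, $\tilde W_{j,k}\to\tilde V_{j,k}\to\{x\}\times[0,1]$, which you merge into one).

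Two small corrections. First, for a full strip crossing the roughly $\alpha/2$ preimage leaves sweep a horizontal width $1/2$, so they are spaced about $\alpha^{-1}$ apart, not $\alpha^{-2}$; only with this spacing does $|J|\sum_k\int_{\tilde W_{j,k}}h^{(1)}$ produce your factor $2|W_j|$, and $\dd_\Sigma=O(\alpha^{-1})$ still holds within each cell. Second, the boundedly many fragments $W_j$ that only partially cross a strip must first be discarded via your short-leaf bound, since their preimages do not sweep the whole half-plane; the paper does this with the index set $\mathcal{J}^c$.
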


\begin{lemma}
\label{lemma:convergence 2}
There exists $\alpha_0\geq 1$ so that for all $\alpha\geq \alpha_0$, it holds 
$$
 \|\bar{\mathcal{L}}_\alpha h-\mathcal{L}_\infty h\|_s \leq C (\alpha^{-1+\sigma}+\alpha^{-q}) \| h \|.
$$
\end{lemma}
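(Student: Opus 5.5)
The proof reduces the difference $\bar{\mathcal{L}}_\alpha h-\mathcal{L}_\infty h$ to two scalar constants multiplying explicit $\pm1$-valued functions that oscillate fast along stable leaves, and then bounds the strong stable norm of those functions directly from the definition \eqref{eq:strong-stable}.

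\emph{Step 1: algebraic reduction.} Only the first component is nonzero. Set $a=\int_{x\ge 1/2}h^{(1)}$ and $b=\int_{x<1/2}h^{(1)}$; by the second assertion of Lemma \ref{lemma:indicators-y}, $|a|+|b|\le |h|_w\le\|h\|$. From the description of the backward smoothness regions we have
\[
\XX_\alpha(\cM_1)\cup\XX_\alpha(\cM_2)=\{y\ge 1/2\},\qquad \XX_\alpha(\cM_3)\cup\XX_\alpha(\cM_4)=\{y<1/2\}.
\]
Hence $2\mathbbm{1}_{\XX_\alpha(\cM_1)}-\mathbbm{1}_{y\ge1/2}=\psi_+\mathbbm{1}_{y\ge 1/2}$ with $\psi_+=\mathbbm{1}_{\{x-\alpha y\ge1/2\}}-\mathbbm{1}_{\{x-\alpha y<1/2\}}$ (mod 1). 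Similarly, with $\psi_-=\mathbbm{1}_{\{x+\alpha y\ge1/2\}}-\mathbbm{1}_{\{x+\alpha y<1/2\}}$, the lower half-plane combinations are multiples of $\psi_-\mathbbm{1}_{y<1/2}$. Expanding, the first component of $\bar{\mathcal{L}}_\alpha h-\mathcal{L}_\infty h$ equals
\[
(a-b)\,\psi_+\mathbbm{1}_{y\ge1/2}+(b-a)\,\psi_-\mathbbm{1}_{y<1/2},
\]
where the signs will be checked against the definitions but play no role. It therefore suffices to prove
\[
\|\psi_\pm\mathbbm{1}_{\pm}\|_s\le C(\alpha^{-1+\sigma}+\alpha^{-q}),
\]
where $\mathbbm{1}_+=\mathbbm{1}_{y\ge1/2}$ and $\mathbbm{1}_-=\mathbbm{1}_{y<1/2}$.

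\emph{Step 2: oscillation along leaves.} Fix $W=\gamma_{\xx,\vvv,S}\in\Sigma$ and $\varphi$ with $\|\varphi\|_{C^q(W)}|W|^\sigma\le1$. Since $\vvv\in\hat C_s$, we have $|\vvv^{(1)}|\le 2\alpha^{-1}$ and $\vvv^{(2)}\approx1$. Along $W$ the phase $x\mp\alpha y$ is affine in $t$ with constant speed $|\vvv^{(1)}\mp\alpha\vvv^{(2)}|\in[\alpha/2,2\alpha]$. Hence $t\mapsto\psi_\pm(\gamma(t))$ is periodic with period $p\sim\alpha^{-1}$ and has mean zero over each period. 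The cutoff $\mathbbm{1}_\pm$, including the periodic identification $y=0\sim1$, splits $W$ into at most a bounded number of subsegments. On each subsegment I decompose into complete periods of $\psi_\pm$ plus at most two remainder pieces of length $\le p$.

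\emph{Step 3: estimates.} On each complete period $P$, mean zero gives
\[
\Big|\int_P\psi\varphi\Big|=\Big|\int_P\psi(\varphi-\bar\varphi^P)\Big|\le p\cdot p^q[\varphi]_{C^q}.
\]
There are at most $C(\alpha|W|+1)$ such periods, so the total is at most $C\alpha^{-q}(|W|+\alpha^{-1})|W|^{-\sigma}$. The $O(1)$ remainders contribute at most $C\min(|W|,\alpha^{-1})\sup|\varphi|\le C\min(|W|,\alpha^{-1})|W|^{-\sigma}$. I then distinguish two cases.
\begin{itemize}
\item If $|W|<\alpha^{-1}$, I bound the whole integral crudely by $|W|\sup|\varphi|\le|W|^{1-\sigma}\le\alpha^{\sigma-1}$.
\item If $|W|\ge\alpha^{-1}$, the periodic part is at most $C\alpha^{-q}|W|^{1-\sigma}\le C\alpha^{-q}$, and the remainders are at most $C\alpha^{-1}|W|^{-\sigma}\le C\alpha^{\sigma-1}$.
\end{itemize}
Combining the two cases with Step 1 yields the lemma.

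The main obstacle is the bookkeeping in Step 2: verifying that the phase speed along every admissible leaf is uniformly comparable to $\alpha$, and that the splitting by $y=1/2$ and the toroidal wrap-around produce only $O(1)$ boundary pieces, uniformly in $\alpha$. This follows directly from the constant cone $C_s$ of Lemma \ref{lemma:uniformly hyperbolic} and the fact that $|W|\le1$.
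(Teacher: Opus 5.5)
Your proposal is correct and is essentially the paper's argument. The paper likewise reduces to bounding the strong stable norm of $\mathbf{w}\bigl(\mathbbm{1}_{T_\alpha(\mathcal{M}_1)}-\tfrac12\mathbbm{1}_{y\geq 1/2}\bigr)$ and its three analogues, which is your $\tfrac12\psi_+\mathbbm{1}_{+}$. It bounds leaves with $|W|\le\alpha^{-1}$ crudely, and for longer leaves it pairs consecutive strips of length about $\alpha^{-1}$ and uses the $C^q$ modulus of $\varphi$, which is exactly your mean-zero-per-period step.

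One small correction to Step 1, which is immaterial as you anticipated. Expanding the definitions gives the first component $\bar{\mathcal{L}}_\alpha h-\mathcal{L}_\infty h=(a+b)\bigl(\psi_+\mathbbm{1}_{+}-\psi_-\mathbbm{1}_{-}\bigr)$, so the coefficient is $a+b$ rather than $\pm(a-b)$. The bound $|a|+|b|\le|h|_w$ still closes the argument.
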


We now commence with the proof of Lemma \ref{lemma:convergence 1}.
\begin{proof}[Proof of Lemma \ref{lemma:convergence 1}]

We begin by noting that only the first component of $\mathcal{L}_\alpha h$ matters. Indeed,   from the definition of $\cL_\alpha$ in \eqref{eq:Lalpha} and the matrix decomposition \eqref{eq:matrixMi}, all components of \(\alpha^{-2}D\XX_\alpha\), other than the top-left entry, are of order at most \(\alpha^{-1}\). Therefore, we first claim that in this convergence result, we may replace the map $\mathcal{L}_\alpha h$ by the principal part of the map 
\begin{align} \label{d:principal-alpha}
h \mapsto \cL_\alpha^{(p)} h := \begin{pmatrix}
\text{sign} ([D \XX_\alpha]^{(1,1)}\circ \XX_\alpha^{-1}) h^{(1)} \circ \XX_\alpha^{-1}\\
0
\end{pmatrix}\,.
\end{align}
Indeed, the difference between $\mathcal{L}_\alpha h$ and the above map $\cL_\alpha^{(p)} h$ is equal to $(R_\alpha h) \circ \XX_{\alpha}^{-1}$, where $R_{\alpha}$ is a matrix that is constant on each $\cM_\ell$ for $\ell=1,2,3,4$, and is bounded in absolute value by $ |R_\alpha|\leq  \frac{C}{\alpha}$. In particular, since our estimates in Proposition \ref{proposition:main lasota yorke} contain a multiplicative term of a matrix with entries of order unity, it follows that 
$$
\|(R_\alpha h )\circ \XX_\alpha^{-1}\| \leq C \alpha^{-1} \|h \circ \XX_\alpha^{-1}\|\leq C \alpha^{-1} \|h\|.
$$
Hence, in the following estimates, we can replace $\cL_\alpha$ by $\cL_\alpha^{(p)}$ and denote by $c_\ell=\text{sign} ([D \XX_\alpha]^{(1,1)}(x)) \in \{ -1,1\}$, for $x \in \cM_\ell$ for all $\ell =1,2,3,4$. As such, we abuse notation for the rest of the proof and identify $h=(h^{(1)},h^{(2)})$ with its first component.
Throughout the proof, we further denote by $\mathcal{H}_\ell$ the half-plane $\{ x \geq 1/2\}$ or $\{ x\leq 1/2\}$ so that $\cM_\ell \subset \mathcal{H}_\ell$ for any $\ell$.

Pick some curve $W  \in \Sigma$ and $\varphi \in C^q (W)$ with $\| \varphi \|_{\sigma, W}\leq 1$.

If $|W|\leq \alpha^{-2}$, using property \eqref{enum:0} in Lemma \ref{lemma:technical}  we may write each $W=\bigcup_{j=1}^{M_\alpha} W_j$ with $M_\alpha  \leq 4$, where each $W_j$ is entirely contained in one of the smoothness strips $T_\alpha (\cM_\ell)$ and also $\XX_\alpha^{-1} (W_j) = \cup_{k=1}^{N_\alpha} \tilde{W}_{j,k}$ with $N_\alpha \leq 4$, $\tilde W_{j,k} \in \Sigma$ and $|\tilde W_{j,k}|\leq 2 \alpha^2 |W|$. Then,
\begin{align} \label{eq:split-comp}
    \left |\int_{W_j} (\cL_\alpha^{(p)} h-\bar{\cL}_\alpha h) \varphi \, \dd \HH^1 \right | \leq  \left | \int_{W_j} h^{(1)} \circ \XX_\alpha^{-1} \varphi \, \dd \HH^1  \right  |+ \left | \int_{W_j} \left ( \int_{\mathcal{H}_\ell} h \right  ) \varphi \, \dd \HH^1 \right | 
\end{align} 
and for the first term
\begin{align*}
     \left | \int_{W_j} h^{(1)} \circ \XX_\alpha^{-1} \varphi \, \dd \HH^1  \right  | & \leq \frac{2}{\alpha^2} \sum_k  \left | \int_{\tilde W_{j,k}} h^{(1)} \varphi \circ \XX_\alpha \, \dd \HH^1 \right | 
     \\
     & \leq 2 \alpha^{-2} \max_{k}  \| \varphi \circ \XX_\alpha \|_{\sigma, \tilde{W}_{j,k}}  \| h \|_s  
     \\
     & \leq C \alpha^{-2 + 2 \sigma} \| \varphi\|_{\sigma, W}  \| h \|_s \,,
\end{align*}
where in the last inequality we have used Lemma \ref{lemma:property-test} and $|\tilde W_{j,k}|^\sigma \leq 2^{\sigma} \alpha^{2\sigma} |W|^{\sigma}$ for any $k$. From Lemma \ref{lemma:indicators-y} the estimate of the second terms follows 
\begin{align} \label{eq:comp-lemma}
    \left | \int_{W_j} \left ( \int_{\mathcal{H}_\ell} h  \right  ) \varphi \, \dd \HH^1 \right |  \leq C \| \varphi \|_{\sigma, W} |W|^{-\sigma} |W_j| |h|_w \leq C \alpha^{-2 + 2 \sigma} |h|_w \,.
\end{align}
If $\alpha^{-2}\leq |W| \leq \alpha^{-1}$ we may write $W=\bigcup_{j=1}^{M_\alpha} W_j$ with $M_\alpha  \leq 5$, where each $W_j$ is entirely contained in one of the smoothness strips $T_\alpha (\cM_\ell)$ and also $\XX_\alpha^{-1} (W_j) = \cup_{k=1}^{N_\alpha} \tilde{W}_{j,k}$ with  $N_\alpha \leq 4 \lceil \alpha^2 |W_j| \rceil \leq 4 \lceil \alpha^2 |W| \rceil $ for any $j$. Hence, we use the bound \eqref{eq:split-comp} and  for any $j \leq 5$ we  estimate the first term 
\begin{align*}
     \left | \int_{W_j} h^{(1)} \circ \XX_\alpha^{-1} \varphi \, \dd \HH^1  \right  | & \leq \frac{C}{\alpha^2} \sum_{k=1}^{N_\alpha}  \left | \int_{\tilde W_{j,k}} h^{(1)} \varphi \circ \XX_\alpha \, \dd \HH^1 \right | 
     \\
     & \leq C |W| \max_{k}  \| \varphi \circ \XX_\alpha \|_{\sigma, \tilde{W}_{j,k}}  \| h \|_s  
     \\
     & \leq C \alpha^{-1 + \sigma} \| \varphi\|_{\sigma, W}  \| h \|_s \,.
\end{align*}
The estimate of the second term in \eqref{eq:split-comp} is identical to that in \eqref{eq:comp-lemma}, obtaining in this case a bound of order $\alpha^{-1 + \sigma} \|h\|_s$.

Thus it remains to deal with the case when $|W| \geq \frac{1}{\alpha}$.
We apply again Lemma \ref{lemma:technical} \eqref{enum:0} and we write $W = \cup_{j=1}^{M_\alpha} W_j$ with $M_\alpha \leq 4 \lceil \alpha |W| \rceil $.
We aim to estimate
\begin{align} \label{eq:estimate-1}
\sum_{j=1}^{M_\alpha}\left | \int_{W_j} (\mathcal{L}_\alpha h - \bar{\mathcal{L}}_\alpha h)\varphi \, \dd \HH^1 \right |.
\end{align}
We begin by noting that we can further select among the curves $W_j$ those that traverse the smoothness strips $\XX_\alpha (\cM_\ell)$ for some $\ell=1,2,3,4$ regularly. More precisely, we denote the set of indices $j \in \mathcal{J}$ for the lines  $W_j$  parametrized by  curve $\gamma$ so that $\gamma([t_j, t_{j+1}]) = W_j$ so that $\gamma(t_{j}) \in \{ x- \alpha y = \frac{k+1}{2}\}$ and $\gamma(t_{j+1}) \in \{ x- \alpha y = \frac{k}{2} \}$ for some $k \in \ZZ$ or $\gamma(t_{j}) \in \{ x+\alpha y = \frac{k}{2}\}$ and $\gamma(t_{j+1}) \in \{ x+\alpha y = \frac{k+1}{2} \}$ for some $k \in \ZZ$.
We note that the cardinality $\# \mathcal{J} \leq 5 \alpha |W|$ and $ \# \mathcal{J}^c  \leq 6 $.
Hence, we write
$$W=  \bigcup_{j \in \mathcal{I}} W_j \cup  \bigcup_{j \notin \mathcal{J}} W_j \,.  $$
Since all $W_j$ corresponding to $j \in \mathcal{J}^c$ have length at most $\frac{1}{\alpha}$, we estimate by the previous step 
\begin{align*}
\sum_{j \notin \mathcal{J}} \left | \int_{W_j}(\mathcal{L}_\alpha^{(p)} h - \bar{\mathcal{L}}_\alpha h)\varphi \, \dd \HH^1\right | 
&\leq 2\sum_{j \notin \mathcal{J}} \alpha^{-1+\sigma} \|\varphi\|_{\sigma, W}\|h\|_s \leq C\alpha^{-1+\sigma} \|h\|_s\,.
\end{align*}
Hence, we reduce to estimating 
$\int_{W_j}(\mathcal{L}_\alpha h- \bar{\mathcal{L}}_\alpha h) \varphi $, for any $j \in \mathcal{J}$.  Writing out the integral, we need to estimate
$$
c_j \int_{W_j}\left( h \circ \XX_\alpha^{-1}- 2\int_{\mathcal{H}_j} h \right) \varphi \, \dd \HH^1 .
$$
Without loss of generality we suppose for instance that $W_j \in \XX_\alpha (\cM_1)$, the other three cases are similar. Hence $c_1 =1$ and $\mathcal{H}_1 = \{ x \geq 1/2\}$. From the definition of the map $\XX_\alpha$ it follows that $\XX_{\alpha}^{-1} (\gamma(t_j)) \in \{ x=1\}$ and  $\XX_{\alpha}^{-1} (\gamma(t_{j+1})) \in \{ x=1/2\}$. In this case, if the tangent vector of $W$ is $\mathbf{v}= (a,b)$ with $|a|\leq 2 \alpha^{-1} b$, we  have that
$$t_{j+1} - t_j = \frac{1}{2 (\alpha b - a)} \,.$$
 Furthermore, on $\XX_\alpha (\cM_1)$, the inverse of $\XX_\alpha$ is the linear map (see \eqref{eq:matrixMi})
$$ D \XX_\alpha^{-1}|_{\XX_\alpha (\cM_1)} = \begin{pmatrix}
1 & -\alpha 
\\
-\alpha & 1+ \alpha^2 
\end{pmatrix}$$
then
$$\XX_\alpha^{-1} (\mathbf{v}) = (a- \alpha b, -\alpha a + (1+ \alpha^2)b )\,.$$
Hence viewing the curve of $\RR^2$ and denoting $(x(t), y(t)) = \XX_\alpha^{-1} (\gamma(t))$ we deduce that 
$$|x(t_{j+1})- x(t_{j})| = \frac{1}{2} \,, \qquad \left| y(t_{j+1}) - y(t_j) - \frac{\alpha}{2}\right|\leq 1\,. $$
 We  consider $x_0 = \max \{ x\in [1/2, 1] : (x,0) \in \XX_\alpha^{-1} (W_i) \}$
and define the partition 
$$ \XX_\alpha^{-1} (W_j) = \bigcup_{j=1}^{\frac{\alpha}{2} -2} \tilde W_{j,k} \cup \tilde U_{in} \cup \tilde{U}_{fin} $$
where $\tilde U_{in} $ is the curve of length $|U_{in}| \leq 1$ connecting $\XX_\alpha^{-1} (\gamma (0))$ to $(x_0, 0)$ on $\TT^2$ and $\tilde W_{j,k}$ are subsequent curves of length $1$, whereas $\tilde{U}_{fin}$ is the last curve of length $|\tilde{U}_{fin}|\leq 1$.  We define the time $S_j$ to be how long it takes   $ \XX_\alpha^{-1} (\gamma(\cdot)) $ to move the $y$-component by $1$, namely $S_j=\frac{|\XX_\alpha^{-1} \mathbf{v}|}{(1+\alpha^2)b-\alpha a}$. Now, set $x_k$ to be the $x$ component of $\XX_\alpha^{-1} (\gamma(t))$ after the $y$-component has increased by $k$, which is given by 
$$
x_k=x_0+k S_j\frac{a-\alpha b}{|\XX_\alpha^{-1} \mathbf{v}|}=x_0+k\frac{a-\alpha b}{(1+\alpha^2)b-\alpha a}=x_0-\frac{k}{\alpha}\left(1+\frac{b}{(1+\alpha^2)b-\alpha a}\right),
$$
for all $k \in \{ 1, \ldots , \frac{\alpha}{2} -2 \}$. Indeed, 
$$x_{\alpha/2}=x_0-\frac{1}{2}\left (1+\frac{b}{(1+\alpha^2)b-\alpha a} \right )<1/2,
$$
since $b>0$, but 
$$
x_{\alpha/2-2}=x_0-\left(\frac{1}{2}-\frac{2}{\alpha}\right)\left (1+\frac{b}{(1+\alpha^2)b-\alpha a} \right )>1/2,
$$
upon noting that $1-x_0\leq \frac{b}{(1+\alpha^2)b-\alpha a}$. Thus, there are at least $\alpha/2-2$ pieces, but no more than $\alpha/2-1$. Upon possibly discarding a single piece, we thus assume there are $\alpha/2-2$ pieces.

We define the vertical lines $\tilde V_{j,k } = \{ (x,y): x = x_k  \,, y \in [0,1] \}$.
Using that the second component of the base point of the curve $\tilde V_{j,k+1}$ is $0$ (which equals $k $ mod $1$), and the second component of the base point of $\tilde W_{j,k+1}$ is $k\frac{- \alpha a + (1+ \alpha^2) b}{|\XX_\alpha^{-1} (\mathbf{v})|}$, we have that    
 $$\left | k- k\frac{- \alpha a + (1+ \alpha^2) b}{|\XX_\alpha^{-1} (\mathbf{v})|} \right  | \leq   \frac{4}{\alpha^2} |k| \leq 2 \alpha^{-1} \,.$$
Similarly, the difference in $x$-components is given by 
$$
\left |k \frac{a-\alpha b}{|\XX_\alpha^{-1} \mathbf{v}|}(1-S_j) \right |\leq C \alpha^{-2}\,.
$$
Hence, it is straightforward to check that
$$\dd_\Sigma (\tilde W_{j,k} , \tilde V_{j,k} ) \leq 2\alpha^{-1} \,.
$$
From these definitions, for the particular case of $W_j$ considered, using the change of variables $\XX_\alpha^{-1}$ we have
\begin{align*}
\left | \int_{W_j}\left( h \circ \XX_\alpha^{-1}- 2\int_{x \geq 1/2} h \right) \varphi \, \dd \HH^1\right | & \leq \frac{2}{\alpha^2} \left | \sum_{k} \int_{\tilde W_{j,k}}\left(h -2\int_{x \geq 1/2} h \right) \varphi \circ \XX_\alpha \, \dd \HH^1 \right |
\\
&  \quad +  \frac{2}{\alpha^2} \left | \int_{\tilde U_{in} }\left(h -2\int_{x \geq 1/2} h \right) \varphi \circ \XX_\alpha \, \dd \HH^1 \right |
\\
& \quad +  \frac{2}{\alpha^2} \left |  \int_{\tilde U_{fin} }\left(h -2\int_{x \geq 1/2} h \right) \varphi \circ \XX_\alpha \, \dd \HH^1 \right | = I+ II + III
\end{align*}
and  the last two terms are bounded by $C \alpha^{-2 + \sigma} \|h\|_s $, so upon reintroducing the summation in $j \in \mathcal{J}$ these terms create an error of order $ \mathcal{O} (\alpha^{-1+\sigma})$. Let us thus discuss the first term $I$.
Fix $\bar x \in \XX_\alpha^{-1} (W_j)$ and sum and subtract $\varphi \circ \XX_\alpha (\bar x)$  so that 
\begin{align*}
I & \leq \left | \sum_{k} \frac{1}{\alpha^2}\int_{\tilde W_{j,k}}\left(h -2\int_{x \geq 1/2} h \right) \varphi \circ \XX_\alpha (\bar x)\, \dd \HH^1 \right | 
\\
& \quad  + \left | \sum_{k} \frac{1}{\alpha^2}\int_{\tilde W_{j,k}}\left(h -2\int_{x \geq 1/2} h \right) (\varphi \circ \XX_\alpha - \varphi \circ \XX_\alpha (\bar x))\, \dd \HH^1 \right | =I_1 + I_2 \,.
\end{align*} 
We observe that  the length of $W_j$ satisfies $|W_j| \leq 2 \alpha^{-1}$. Hence
$$ \sup_{k} \sup_{x \in \tilde W_{j,k}} |\varphi \circ \XX_\alpha (x) -  \varphi \circ \XX_\alpha (\bar x)| \leq  \sup_{x,y \in W_{j}} |\varphi (x) -  \varphi (y)| \leq C \alpha^{-q} \|\varphi \|_{C^q} \,.$$
 Using this and Lemma \ref{lemma:property-test} to estimate $[\varphi \circ \XX_\alpha]_{C^q}$ we deduce that $\| \varphi \circ \XX_\alpha - \varphi \circ \XX_\alpha (\bar x) \|_{C^q}\leq \frac{C}{\alpha^q} \|\varphi \|_{C^q}$. Thus,  denoting by $\tilde{h} = h -2\int_{x \geq 1/2} h $, observing that $\| \tilde {h} \|_s \leq C \|h\|_s$ by Lemma \ref{lemma:indicators-y} and using that the sum over $k$ contains at most $\alpha/2$ terms, we deduce that the second term is bounded by 
\begin{align*}
    I_2 & \leq  \alpha^{-1} \sup_{j,k} \left |\int_{\tilde W_{j,k}}\left(h -2\int_{x \geq 1/2} h \right) (\varphi \circ \XX_\alpha - \varphi \circ \XX_\alpha (\bar x)) \, \dd \HH^1 \right | 
    \\
    & \leq C \alpha^{-1} \|h\|_s  \sup_{j,k} \| \varphi \circ \XX_\alpha - \varphi \circ \XX_\alpha (\bar x) \|_{\sigma, \tilde{W}_{j,k}} \leq C\alpha^{-q-1} |W|^{-\sigma} \| \varphi \|_{\sigma, W}\,.
\end{align*}
Hence, reintroducing the sum over $j \in \mathcal{J}$ and noting that $\# J \leq 5 \alpha |W| $, this term becomes an error of order  $\mathcal{O} (\alpha^{-q})$. 

For the term $I_1$ we note that $|\tilde W_{j,k}|=1$, and thus $\int_{\tilde W_{j,k}} \int_{ x \geq 1/2} h =\int_{ x \geq 1/2} h . $ Furthermore, it holds that 
$$
\left | \int_{\tilde W_{j,k}} h \, \dd \HH^1-\int_{\tilde V_{j,k}} h  \, \dd \HH^1\right |\leq C \alpha^{-\beta}\|h\|_u,
$$
by definition of the strong unstable norm. As such, using that $k \leq N_\alpha \leq C \alpha$ we deduce
$$
I_1 \leq \alpha^{-\beta-1}\|h\|_u |W|^{-\sigma}\|\varphi\|_{\sigma,W}+\frac{1}{\alpha^2}\left |\sum_{k} \left (\int_{\tilde V_{j,k}} h -2 \int_{x \geq 1/2} h  \right ) \right ||W|^{-\sigma}\|\varphi\|_{\sigma,W},
$$
and reintroducing the summation in $j$, the first of these terms is bounded by $\alpha^{-\beta} \|h\|_u$. Thus, if we prove that 
$$
\frac{1}{\alpha^2}\left |\sum_{k} \left (\int_{\tilde V_{j,k}} h \, \dd \HH^1 -2 \int_{x \geq 1/2} h  \right ) \right |\leq C \alpha^{-1-\beta} \|h \|.
$$
reintroducing the summation over $j \in \mathcal{J}$ and using that $\# \mathcal{J} \leq 4\alpha |W|$ we bound the second summand of $I_1$ and we conclude the proof.
To do so, we recall that the sum runs over $k=1, \ldots, \frac{\alpha}{2} -2$ and the base points of $\tilde V_{j,k}$ are $x_k = x_{k-1} -\frac{1}{\alpha}(1+\frac{b}{(1+\alpha^2)b-\alpha a})$. From now on, we will repeatedly use the fact that
\[
\left|\frac{1}{\alpha}(1+\frac{b}{(1+\alpha^2)b-\alpha a})-\frac{1}{\alpha}\right|\le C \alpha^{-3} \,.
\]
Using the notation $\fint_{A} = \frac{1}{|A|}\int_A $ for any set $A$,  we notice that 
\begin{align*}
    &\sum_{k} 2 \int_{x\geq 1/2} h  = 2\left(\frac{\alpha}{2} -2\right) \int_{x \geq 1/2} h 
    \\
    & \qquad= (\alpha - 4) \left (  \sum_{k=1}^{\alpha/2-2} \int_{x_k}^{x_{k-1}}\int_{0}^1 h(x,y) \dd y \dd x + \int_{x_0}^{1} \int_0^1 h (x,y) \dd y \dd x  + \int_{1/2}^{x_{\frac{\alpha}{2} -2}} \int_0^1 h (x,y) \dd y \dd x   \right )
    \\
    & \qquad=  \left ( \sum_{k=1}^{\alpha/2-2}  \fint_{x_k}^{x_{k-1}}\int_{0}^1 h(x,y) \dd y \dd x  \right )+ \mathcal{O}_{\alpha \to \infty}(1)|h|_w \,.
\end{align*} 
Hence, we deduce that 
\begin{align*}
    &\frac{1}{\alpha^2}  \left |  \sum_{k=1}^{\alpha/2-2}    \left ( \int_{\tilde V_{j,k}}h \, \dd \HH^1 -2 \int_{x \geq 1/2} h  \right ) \right |  \leq  \frac{1}{\alpha^2} \sum_{k=1}^{\alpha/2-2}\left | \int_{\tilde V_{j,k}} h \, \dd \HH^1 -   \fint_{x_k}^{x_{k-1}}\int_{0}^1 h(x,y) \dd y \dd x \right | + \frac{C}{\alpha^2} |h|_w
    \\
    & \qquad\leq \frac{1}{\alpha}\sum_{k=1}^{\alpha/2-2}\int_{x_k}^{x_{k-1}}  \left |\int_{0}^1 h(x_k,y) \dd y -\int_{0}^1 h(x,y) \dd y  \right | \dd x \leq C \alpha^{-1-\beta} \| h \|_u + \frac{C}{\alpha^2} |h|_w \,,
\end{align*}
where in the last inequality we used the strong unstable norm together with the fact that 
$$\dd_\Sigma \left(\tilde{V}_{j,k}, \{ (x,y): y \in [0,1] \}\right) \leq 2 \alpha^{-1}\,, \qquad \forall x \in [x_k, x_{k-1}] \,, $$ 
concluding the proof.
\end{proof}

\begin{proof}[Proof of Lemma \ref{lemma:convergence 2}] 
This lemma is a direct consequence of the following property: there exists $\alpha_0>0$ such that for all $\alpha \geq \alpha_0$ and all vectors $\mathbf{w} \in \mathbb{C}^2$ with $|\mathbf{w}|=1$ it holds true 
$$
\|\mathbf{w}(\mathds{1}_{\XX_\alpha(\cM_1)} -1/2\mathds{1}_{y \geq 1/2})\|_s \leq C(\alpha^{-1+\sigma}+\alpha^{-q}), \quad \|\mathbf{w}(\mathds{1}_{\XX_\alpha(\cM_2)} -1/2\mathds{1}_{y \geq 1/2})\|_s \leq C(\alpha^{-1+\sigma}+\alpha^{-q}),
$$
$$
\|\mathbf{w}(\mathds{1}_{\XX_\alpha(\cM_3)} -1/2\mathds{1}_{y < 1/2})\|_s \leq C(\alpha^{-1+\sigma}+\alpha^{-q}), \quad \|\mathbf{w}(\mathds{1}_{\XX_\alpha(\cM_4)} -1/2\mathds{1}_{y < 1/2})\|_s \leq C(\alpha^{-1+\sigma}+\alpha^{-q}).
$$
We shall provide the proof for $\XX_\alpha(\cM_1)$ and  $\mathbf{w}=(1,0)$. The general proof is entirely analogous. Let $W \in \Sigma$. If $|W|\leq \frac{1}{\alpha}$, we split $W=\bigcup_{j=1}^M W_j$, with $M\leq 4$ and  using  $\|\varphi\|_{\sigma, W}\leq 1$ we obtain
\begin{align*}
\left |\int_{W} (\mathds{1}_{\XX_\alpha(\mathcal{M}_1)}-1/2\mathds{1}_{y \geq 1/2}) \varphi \, \dd \HH^1 \right | &\leq \sum_{i=j}^M \int_{W_j}\left |(\mathds{1}_{\XX_\alpha(\mathcal{M}_1)}-1/2\mathds{1}_{y \geq 1/2}) \varphi \, \dd \HH^1 \right |\\
& \leq 2M |W_j| |W|^{-\sigma}\leq 2M |W|^{1-\sigma}\leq 2M\alpha^{-1+\sigma}.
\end{align*}
Therefore, we may suppose that $|W|\geq 1/\alpha$, and furthermore we may assume that $W \subset \{ y \geq 1/2\}$, since $\XX_\alpha (\cM_1) \subset \{ y \geq 1/2\}$.  Then, $W$ may be parameterised as $\{(x_1,y_1)+t \vvv: t \in [0,T)\}$ with $T \leq 1$ and $|\vvv|=1$. By Lemma \ref{lemma:technical} we can split this curve into its connected components as $W=\bigcup_{j=0}^{M-1} W_j$, where $M \leq 4\alpha|W|$ and $W_j=\{(x_1,y_1)+t \vvv :t \in [t_j,t_{j+1}]\}$, with $t_0=0$ and $|t_{j+1} - t_j| = \frac{1}{2 (a_1 + \alpha b_1)}  \leq \alpha^{-1}$ for any $j$, where we have used $\vvv= (a_1,b_1)$, and  $W_j \subset \overline{ \XX_\alpha (\cM_\ell)}$ for some $\ell$.
The integrals over $W_0$ and $W_{M -1}$ are treated as an error since $|W_0| , |W_{M-1}| \leq \frac{2}{\alpha}$, namely for any $\varphi$ with $\|\varphi\|_{\sigma,W}\leq 1$, we have
\begin{align*}
     \left |\int_{W_0} \left (\mathds{1}_{\XX_\alpha(\cM_1)} -\frac12\mathds{1}_{y \geq 1/2}\right )\varphi \, \dd \HH^1  \right | & \leq C |W_0|^{1-\sigma} \| \varphi \|_{\sigma,W}  \leq C\alpha^{-1+\sigma} \,,
\end{align*}
and similarly for $W_{M-1}$.

Without loss of generality we now suppose that $M$ is even and  
$$\mathds{1}_{\XX_\alpha(\cM_1)} \cap W_j=\emptyset \quad \text{for } j \text { even}\,, \qquad \mathds{1}_{\XX_\alpha(\cM_1)} \cap W_j=W_j \quad  \text{for } j \text { odd} \,.$$ 
Thus, from the previous estimate, for any $\varphi \in C^q (W)$ with $\|\varphi\|_{\sigma,W}\leq 1$
we have 
$$
\int_{W} \left (\mathds{1}_{\XX_\alpha(\cM_1)} -1/2\mathds{1}_{y \geq 1/2}\right )\varphi \, \dd \HH^1 =\frac{1}{2} \sum_{j=1}^{\frac{M}{2} -2}\int_{W_{2j +1}} \varphi \, \dd \HH^1-\int_{W_{2j}} \varphi\, \dd \HH^1 +\mathcal{O}(\alpha^{\sigma-1}).
$$
Using that $|W_{2j+1}| = |W_{2j}|= \frac{1}{2 (a_1 + \alpha b_1)}  \leq \alpha^{-1}$ for any $j$, we deduce that  
$$ \left | \int_{W_{2j +1}} \varphi\, \dd \HH^1 -\int_{W_{2j}} \varphi\, \dd \HH^1 \right | \leq C \alpha^{-q-1}\|\varphi\|_{\sigma,W}|W|^{-\sigma}\,,$$
and so reintroducing the summation in $j$, and noting that $M \leq 4\alpha |W|$, we obtain
$$
\left |\int_{W} \left (\mathds{1}_{\XX_\alpha(\cM_1)} -\frac12\mathds{1}_{y \geq 1/2}\right )\varphi\, \dd \HH^1\right | \leq C(\alpha^{\sigma-1}+\alpha^{-q})\|\varphi\|_{\sigma,W},
$$
which concludes the proof.
\end{proof}

\subsection{Relationship to the flux conjecture} \label{subsec:flux}

In this subsection, we show that the velocity field $u_{\alpha, N}$ defined in \eqref{eq:u_def}, for $\alpha \in 2 \mathbb{N}$ sufficiently large, is a \emph{perfect dynamo}. This establishes that the magnetic growth in our model is not merely a result of fine-scale energy accumulation, but involves the growth of a macroscopic flux.

\begin{proposition}
\label{prop:flux conjecture}
There exists $\alpha_0 \geq 1$ so that for every $\alpha \geq \alpha_0$ the following holds true. There exists a smooth vector field $\psi \in C^\infty(\mathbb{T}^2 ; \mathbb{R}^2)$ such that
$$
\sup_{B_{\ini} \in L^2(\mathbb{T}^2)} \liminf_{n \to \infty} \frac{1}{n}\log\left |  \int_{\mathbb{T}^2} (\alpha^2 \e^{2 \pi i g}\mathcal{L}_\alpha)^n B_{\ini} \cdot \psi \,  \right | > 0\,.
$$
\end{proposition}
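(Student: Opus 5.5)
We use the isolated eigenvalue from Proposition~\ref{prop:eigenvalue} together with its Riesz projector. Set $\cP := \alpha^2 \e^{2\pi i g}\cL_\alpha$. By Proposition~\ref{prop:eigenvalue} and Theorem~\ref{thm:main eigenvalue}, $\cP$ has a simple eigenvalue $\lambda_\alpha = \alpha^2\mu_\alpha$ with $|\lambda_\alpha|\ge \alpha^2/4>1$. Let $P_\alpha$ be the associated rank-one Riesz projector on $X$ and $h_\alpha = P_\alpha h_\alpha$ a corresponding eigenvector, so that $\cP^n h_\alpha = \lambda_\alpha^n h_\alpha$. If $h_\alpha$ were an admissible (say $L^2$) initial datum, we would be done as soon as some smooth $\psi$ satisfies $\int h_\alpha\cdot\psi \neq 0$, since then $\bigl|\int \cP^n h_\alpha\cdot \psi\bigr| = |\lambda_\alpha|^n \bigl|\int h_\alpha\cdot\psi\bigr|$ and the $\liminf$ equals $\log|\lambda_\alpha|>0$. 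Such a $\psi$ exists because smooth test functions separate points of $X$: a nonzero element of $X$ has some $W\in\Sigma$ and Lipschitz $\varphi$ with $\int_W h_\alpha\varphi\,\dd\HH^1\neq 0$. Mollifying in the direction transverse to $W$ and using the $\|\cdot\|_u$ continuity, we get a smooth $\psi$ with $\int h_\alpha\cdot\psi\neq0$.

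\emph{Step 1: pairing with a smooth $\psi$ is a bounded functional on $X$.} We show that $|\int f\cdot\psi|\le C_\psi |f|_w$ for $f\in C^1$, by slicing $\TT^2$ into vertical leaves $\{x\}\times[0,1]\in\Sigma$ and applying Fubini, exactly as in the last assertion of Lemma~\ref{lemma:indicators-y}. Hence $\ell_\psi(f) = \int f\cdot\psi$ extends to $X$ and $X_w$.

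\emph{Step 2: pass from $h_\alpha$ to a genuine $L^2$ datum.} Since $h_\alpha$ is only a distribution, we take $B_{\ini}\in C^1\subset L^2$ with $P_\alpha B_{\ini}\neq 0$. Such a $B_{\ini}$ exists because $C^1$ is dense in $X$ and $P_\alpha\neq0$. Write $P_\alpha B_{\ini} = c\,h_\alpha$ with $c\neq 0$ and decompose
\[
\cP^n B_{\ini} = c\lambda_\alpha^n h_\alpha + \cP^n(\Id-P_\alpha)B_{\ini}.
\]
Then
\[
\ell_\psi(\cP^n B_{\ini}) = c\lambda_\alpha^n \ell_\psi(h_\alpha) + O\bigl(\|\cP^n(\Id-P_\alpha)\|_{X\to X}\|B_{\ini}\|\bigr).
\]
We also need $\ell_\psi(\cP^n B_{\ini})$ to coincide with the $L^2$ pairing of the true iterates. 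This holds because $\cP$ maps $C^1$-type data to piecewise-$C^1$ data, and these are approximable in $X$ so that the pairings agree. We will check that this extension is consistent.

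\emph{Step 3: spectral separation of the remainder (main obstacle).} We must show that the spectral radius of $\cP$ on $\operatorname{Ran}(\Id-P_\alpha)$ is strictly less than $|\lambda_\alpha|$. The essential spectral radius is at most $C\alpha^{2-\eta}<|\lambda_\alpha|$ by Proposition~\ref{proposition:main lasota yorke} for large $\alpha$. It remains to exclude other eigenvalues of modulus $\ge|\lambda_\alpha|$. Our first attempt uses the Neumann-series resolvent comparison from the proof of Proposition~\ref{prop:eigenvalue}. By Lemmas~\ref{lemma:convergence} and~\ref{lemma:technical resolvent lemma}, the resolvent of $\e^{2\pi ig}\cL_\alpha$ exists for all $\lambda$ with $|\lambda|\ge |\mu_\alpha|-r$ outside a small disk around $\mu$. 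The bound $\|(\lambda-M)^{-1}\|$ is uniform there, and $|\lambda|^{-1}$ is bounded away from zero. The delicate point is uniformity of the perturbation bound on an unbounded region, but for large $|\lambda|$ a plain Neumann series around $0$ suffices, since $\|\e^{2\pi i g}\cL_\alpha\|_{X\to X}\le C$. Thus the remaining spectrum lies in $\{|\lambda|\le |\mu_\alpha|-r\}$, and Gelfand's formula gives $\|\cP^n(\Id-P_\alpha)\|\le C(\alpha^2(|\mu_\alpha|-r/2))^n$. The leading term dominates, so the $\liminf$ equals $\log|\lambda_\alpha|\ge\log(\alpha^2/4)>0$.
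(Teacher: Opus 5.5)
Your proposal is correct and follows the paper's proof. The shared structure is: the Riesz projector, a $C^1$ datum with $P_\alpha B_{\ini}\neq 0$, a smooth $\psi$ detecting the eigenvector through the injectivity of $X$ into distributions, and domination of $\cP^n(\Id-P_\alpha)B_{\ini}$. The paper simply cites Demers--Liverani for that injectivity. Your transverse mollification also works, provided you additionally cut $\varphi$ off smoothly near the endpoints of $W$. The cutoff error is then $O(\ell^\sigma)$, obtained by integrating by parts against the sub-leaf bound $|\int_{W'}h\varphi\,\dd\HH^1|\le\|h\|_s|W'|^\sigma\|\varphi\|_{C^q(W')}$.

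Your Step 3 is in fact more careful than the paper. The paper attributes the bound on the spectral radius of $\cP(\Id-P_\alpha)$ to the Lasota--Yorke inequality, but that inequality only controls the essential spectrum. You instead run the Neumann-series comparison of Lemmas~\ref{lemma:convergence} and~\ref{lemma:technical resolvent lemma} on the whole region $\{|\lambda|\ge c\}\setminus B_r(\mu)$, with $0<c<|\mu|-r$ fixed, where $|\lambda|^{-1}(1+\|(\lambda-M)^{-1}\|)$ is uniformly bounded. This confines the rest of the spectrum of $\e^{2\pi i g}\cL_\alpha$ to $\{|\lambda|<c\}$, which is exactly the gap the argument needs.
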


Note that Corollary \ref{corollary:perfect-dynamo} follows directly from Proposition \ref{prop:flux conjecture}. Indeed, in the context of Definition \ref{d:perfect-dynamo}, we may consider the smooth $3d$ test function $\Psi(x,y,z) = \e^{-2\pi i z} (\psi(x,y), 0)$. The factor $\e^{-2\pi i z}$ is chosen precisely to compensate for the phase factor in our magnetic field ansatz \eqref{d:initial} and the push-forward formula \eqref{eq:comp-intro}, effectively reducing the $3d$ flux integral to the $2d$ pairing analyzed below.

\begin{proof}
Any element of the anisotropic space $X$ naturally defines a distribution in $(C^q(\mathbb{T}^2))^*$. Here, we denote by $C^q(\mathbb{T}^2)$ the closure of Lipschitz continuous functions in the $C^q$ norm. Indeed, for any $h \in C^1(\mathbb{T}^2)$ and $\psi \in C^q(\mathbb{T}^2)$, we have the estimate
$$
\left |\int_{\mathbb{T}^2} h \cdot \psi   \right | \leq C \|\psi\|_{C^q(\mathbb{T}^2)} \|h\| \,,
$$
where $\|h\|$ is the strong norm \eqref{eq:Xstrongnorm} of our anisotropic space.  It is straightforward to prove that the inclusion $(X,\|\cdot \|_X) \hookrightarrow  (C^q(\mathbb{T}^2))^*$ is injective, see for instance  \cite{demers_liverani}. Consequently, any non-zero element $h \in X$ acts as a non-zero functional on $C^q(\mathbb{T}^2)$.

By Theorem~\ref{thm:main eigenvalue}, for sufficiently large $\alpha$, the operator $\alpha^2 \e^{2 \pi i g}\mathcal{L}_\alpha$ admits a unique, simple leading eigenvalue $\lambda$ of maximum modulus, with $|\lambda| \geq \frac{\alpha^2}{4} > 1$. Let $P_\alpha$ be the Riesz projector onto the corresponding eigenspace. Since $P_\alpha$ is a non-zero projector and $C^1(\mathbb{T}^2)$ is dense in $X$, there must exist an initial datum $B_{\ini} \in C^1(\mathbb{T}^2) \subset L^2(\mathbb{T}^2)$ such that $P_\alpha B_{\ini} \neq 0$.

By the injectivity of the inclusion into the dual space and the density of $C^\infty(\mathbb{T}^2)$ in $C^q(\mathbb{T}^2)$,\footnote{This is only true because we defined $C^q(\mathbb{T}^2)$ as the completion of Lipschitz functions in the $C^q$ norm.} we can select a test function $\psi \in C^\infty(\mathbb{T}^2)$ such that the pairing is non-vanishing; without loss of generality, we assume
$$
\left | \int_{\mathbb{T}^2} P_\alpha B_{\ini} \cdot \psi  \right | \geq 1 .
$$
We now employ the spectral decomposition  
$$
(\alpha^2 \e^{2 \pi i g} \cL_\alpha)^n B_{\ini} = \lambda^n P_\alpha B_{\ini} + (\alpha^2 \e^{2 \pi i g} \cL_\alpha)^n (\Id - P_\alpha) B_{\ini}.
$$
Since the spectral radius of $\alpha^2 \e^{2 \pi i g} \cL_\alpha (\Id-P_\alpha)$ is bounded by $\alpha^{2-\eta}$ for some $\eta > 0$, see Proposition \ref{proposition:main lasota yorke} and its proof, the spectral radius formula implies that 
$$
\| (\alpha^2 \e^{2 \pi i g} \cL_\alpha)^n (\Id - P_\alpha) B_{\ini} \|  \leq C_\alpha \alpha^{(2-\eta)n} \| (\Id - P_\alpha) B_{\ini} \| 
$$ 
for all $n$. It follows that for sufficiently large $n$, the dominant eigenmode dictates the growth:
\begin{align*}
\left|\int_{\mathbb{T}^2} (\alpha^2 \e^{2 \pi i g} \cL_\alpha)^n B_{\ini} \cdot \psi   \right |  
&\geq |\lambda|^n \left| \int_{\mathbb{T}^2} P_\alpha B_{\ini} \cdot \psi   \right| - \left| \int_{\mathbb{T}^2} (\alpha^2 \e^{2 \pi i g} \cL_\alpha)^n (\Id - P_\alpha) B_{\ini} \cdot \psi  \right| \\
&\geq |\lambda|^n - C_\alpha \alpha^{(2-\eta) n} \geq \frac{1}{2} |\lambda|^n .
\end{align*}
Since $|\lambda| \geq \frac{\alpha^2}{4} >1$, the growth rate is strictly positive, completing the proof.
\end{proof}

\section{Continuous in time diffusion} \label{sec:heat}

In this section, we prove Proposition \ref{prop: uniform Lasota Yorke}. 
For clarity, we adopt the following convention: constants denoted by 
$C_{\alpha,g}>0$ may depend on the parameter $\alpha \geq 1$ defining the velocity field in \eqref{eq:u_def} and on the shear $g$.  Constants denoted by $C>0$ are independent of $\alpha$ and $g$.

We recall the two-dimensional solution operator introduced in \eqref{d:eps-2d-operator}. For $h : \TT^2 \to \RR^2$, we study the operator
\[
h \mapsto \cL_{g, \eps} \circ \e^{- 8\eps \pi^2 } \cL_{\alpha, \eps} \circ \e^{N\eps (\Delta - 4\pi^2)} h.
\]
As in the inviscid case, the essential dynamics arise primarily from the operator 
$\cL_{\alpha,\eps}\e^{N\eps \Delta}$, while the phase operator $\cL_{g,\eps}$ serves mainly to isolate the leading resonance from the essential spectrum. Accordingly, the present section is devoted primarily to the analysis of the map
\[
h \mapsto \cL_{\alpha,\eps}\e^{N\eps \Delta}h.
\]
The guiding heuristic is that, away from the forward singular set $S^\star$ defined in \eqref{d:forward-singular}, the operator $\cL_{\alpha,\eps}\e^{N\eps \Delta}$ is well approximated by a Gaussian Green's matrix composed with $T_\alpha^{-1}$. In order to make this intuition precise, in Section \ref{sec:model problem} we introduce a so-called \emph{model operator} $\cQ_{\alpha,\eps}$, which is formally of the form of a heat pulse, followed by composition with $T_\alpha^{-1}$, and decompose
\begin{equation}
\label{eq:operator splitting}
\cL_{\alpha,\eps}\e^{N\eps \Delta}
=
\mathcal{Q}_{\alpha,\eps} \e^{N \eps \Delta}
+
\mathcal{E}_{\alpha,\eps}\e^{N\eps \Delta},
\end{equation}
where $\mathcal{E}_{\alpha,\eps}=\cL_{\alpha,\eps}- \mathcal{Q}_{\alpha,\eps}$ denotes the error between the full operator and its model approximation. We recall that $\sigma, \beta \in (0,1)$ are fixed in \eqref{eq:choice-parameters}.
We have the following result.

\begin{proposition}
\label{prop:error decomposition}
There exist constants $C>0$ and $C_\alpha>0$ such that the following holds. For every $\eps>0$, there exist operators $\mathcal{Q}_{\alpha,\eps}, \mathcal{E}_{\alpha,\eps} : X \to X$,
so that $\cL_{\alpha,\eps}=Q_{\alpha,\eps}+\mathcal{E}_{\alpha,\eps}$, and for all $\eps N<1$ there holds
\begin{align}
&\|\mathcal{Q}_{\alpha,\eps} \circ \e^{N\eps \Delta} h\|
\leq C\alpha^{-\eta}\|h\|+C|h|_w,
\quad &&
|\mathcal{Q}_{\alpha,\eps} \circ \e^{N\eps \Delta} h|_w
\leq C|h|_w,
\\
& \|\mathcal{E}_{\alpha,\eps} \circ \e^{N\eps \Delta} h\|
\leq C_\alpha N^{-\frac{1-\sigma}{2}}\|h\|,
\quad &&
|\mathcal{E}_{\alpha,\eps} \circ \e^{N\eps \Delta} h|_w
\leq C_\alpha|h|_w.
\end{align}
Furthermore, the following weak--strong convergence estimates hold:
\[
|\mathcal{Q}_{\alpha,\eps} \circ \e^{N\eps \Delta} h
-\cL_\alpha \circ \e^{N\eps \Delta} h|_w
\leq C_\alpha \eps^{\beta/2}\|h\|, \qquad |\mathcal{E}_{\alpha,\eps} \circ \e^{N\eps \Delta} h|_w
\leq C_\alpha\eps^{\beta/2}\|h\|.
\]
\end{proposition}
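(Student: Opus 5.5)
The plan is to define the model operator branchwise, with a frozen-coefficient Gaussian, and to take $\mathcal{E}_{\alpha,\eps}:=\cL_{\alpha,\eps}-\mathcal{Q}_{\alpha,\eps}$ as the definition of the error. On each branch $\ell$ the flow restricted to $\cM_\ell$ is linear. So the Green's matrix of \eqref{d:solution-cL-alpha-eps}, with the flow frozen to the linear branch $A_\ell$ (shear phases followed by the idle diffusion on $[N+1,N+2]$), is an explicit anisotropic Gaussian $G^\ell_\eps$. Its covariance $\Sigma^\ell_\eps$ is obtained from Duhamel's formula along the linear flow, and its standard deviation is of order $\sqrt{\eps}$ times polynomial factors in $\alpha$. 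I would set
\[
\mathcal{Q}_{\alpha,\eps}h:=\sum_{\ell=1}^4 \frac{1}{\alpha^2}A_\ell\,\mathbbm{1}_{\XX_\alpha(\cM_\ell)}\,\bigl(G^\ell_\eps * h\bigr)\circ \XX_\alpha^{-1}
=\sum_{\ell=1}^4 \cL^\ell_\alpha\bigl(G^\ell_\eps*h\bigr),
\]
where $\cL^\ell_\alpha$ are the branch operators of Corollary~\ref{lemma:improved-lasota}.

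\textbf{Bounds for $\mathcal{Q}$.} Convolution with a probability density is an average of translates. The class $\Sigma$, the pseudo-metrics $\dd_\Sigma$ and $\dd_q$, and hence both $|\cdot|_w$ and $\|\cdot\|$, are translation invariant. Therefore $G^\ell_\eps *$ and $\e^{N\eps\Delta}$ are contractions in the weak and strong norms, and they commute with each other. The two bounds on $\mathcal{Q}_{\alpha,\eps}\e^{N\eps\Delta}$ then follow directly from Corollary~\ref{lemma:improved-lasota}, summed over $\ell$.

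For the weak--strong estimate it suffices to show $|G*h-h|_w\le C_\alpha\eps^{\beta/2}\|h\|$. Writing
\[
\int_W (G*h-h)\varphi=\int G(z)\Bigl(\int_{W+z}h\,\varphi(\cdot-z)-\int_W h\varphi\Bigr)\dd z,
\]
each bracket with $|z|\le 2/\alpha$ is at most $|z|^\beta\|h\|_u$, since $\dd_\Sigma(W+z,W)=|z|$ and $\dd_q=0$. The tail $|z|>2/\alpha$ carries Gaussian mass $\e^{-c/(\alpha^2\eps)}$ and is bounded using $|h|_w$. Integrating against $G$ then gives $C_\alpha\eps^{\beta/2}$. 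The same estimate applied to $\mathcal{Q}_{\alpha,\eps}\e^{N\eps\Delta}h-\cL_\alpha\e^{N\eps\Delta}h$ yields the stated bound.

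\textbf{Error term: kernel comparison.} I would use the stochastic (Feynman--Kac/Bismut) representation of the Green's matrix of \eqref{d:solution-cL-alpha-eps}. On paths that stay inside a single strip $\cM_\ell$ during the shear phases, the true kernel coincides with the model kernel. A path starting at distance $d$ from the partition boundaries exits its strip with probability at most $\e^{-cd^2/\eps}$. Since the derivative cocycle is bounded by $C\alpha^2$, this shows that the kernel of $\mathcal{E}_{\alpha,\eps}$ is pointwise bounded by $C_\alpha\e^{-c\,\dd(x,S^\star)^2/\eps}$ times a Gaussian, and that it is essentially supported in the neighbourhood $\mathcal{N}_\rho=\{\dd(x,S^\star)\le \rho\}$ with $\rho=C\sqrt{\eps|\log\eps|}$, or simply $\rho=C_\alpha\sqrt\eps$ at the cost of the exponential tail. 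This gives $|\mathcal{E}_{\alpha,\eps}f|_w\le C_\alpha|f|_w$. The weak bound $C_\alpha\eps^{\beta/2}\|h\|$ follows from the same localization: a stable leaf meets $\mathcal{N}_\rho$ in at most $C_\alpha$ pieces, each of length $\lesssim\rho$, and the strong stable norm gives a factor $\rho^{\sigma}\lesssim \eps^{\beta/2}$ because $\sigma>\beta$.

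\textbf{Strong bound for the error: role of the idle heat block.} Here I would use the idle heat block $\e^{N\eps\Delta}$ to gain smoothing. Let $f=\e^{N\eps\Delta}h$. By averaging the heat kernel over stable leaves, i.e.\ writing $e^{t\Delta}$ as a superposition of leaf integrals with test functions of size controlled by $t$, I would aim for a bound of the form $\|f\|_{L^\infty}\lesssim (N\eps)^{-1/2}\|h\|$, or a weaker power that still suffices. On a leaf $W$, the contribution of $\mathcal{E}f$ is concentrated on $W\cap\mathcal{N}_\rho$, which has length $\lesssim C_\alpha\sqrt\eps$. Hence
\[
\Bigl|\int_W \mathcal{E}f\,\varphi\Bigr|\lesssim C_\alpha |W\cap\mathcal{N}_\rho|^{1-\sigma}\,(\sqrt{\eps})^{\sigma}\,\|f\|_\infty\,\|\varphi\|_{\sigma,W},
\]
and the product of the $\sqrt\eps$ factors against $(N\eps)^{-1/2}$ should yield $C_\alpha N^{-(1-\sigma)/2}$. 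The unstable part of the norm is handled by matching leaves as in Lemma~\ref{lemma:technical}, adding one more $\delta$-translate comparison of the smooth kernel.

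\textbf{Main obstacle.} I expect the hardest step to be the sharp kernel estimate near $S^\star$, where diffusive paths cross between branches and the derivative cocycle jumps. There I would first try a crude bound, $|\text{kernel}|\le C_\alpha$ times a Gaussian, without any cancellation, and rely entirely on the smallness of the measure of $\mathcal{N}_\rho$ together with the $L^\infty$ gain from $\e^{N\eps\Delta}$. Getting that $L^\infty$ gain from the anisotropic norm with the correct power of $N$ is the delicate point.
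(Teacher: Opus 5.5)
Your model operator $\mathcal Q_{\alpha,\eps}=\sum_\ell \cL^\ell_\alpha(G^\ell_\eps*\,\cdot\,)$, its Lasota--Yorke and weak--strong bounds, and the cancellation-free kernel bound for the error localized near $S^\star$ all agree with the paper. The gap is in turning that kernel bound into estimates on $\mathcal E_{\alpha,\eps}\e^{N\eps\Delta}$, which is the step you call delicate, and it comes from a miscount of powers of $\eps$.

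Take $f=\e^{N\eps\Delta}h$ and $\|\varphi\|_{\sigma,W}\le 1$. Since $\|\varphi\|_{C^0(W)}$ can be as large as $|W|^{-\sigma}$, the kernel bound only gives
\[
\Bigl|\int_W \mathcal E_{\alpha,\eps}f\,\varphi\,\dd\HH^1\Bigr|\le C_\alpha\|f\|_{L^\infty}\min\{|W|,\eps^{1/2}\}\,|W|^{-\sigma}\le C_\alpha\,\eps^{\frac{1-\sigma}{2}}\|f\|_{L^\infty}.
\]
The extra factor $(\sqrt\eps)^{\sigma}$ in your display does not exist; take $|W|\approx\sqrt\eps$ crossing $S^\star$. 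With your target $\|f\|_{L^\infty}\lesssim(N\eps)^{-1/2}\|h\|$, this yields $N^{-1/2}\eps^{-\sigma/2}$, which blows up as $\eps\to0$. In fact no bound $(N\eps)^{-a}$ with $a>\frac{1-\sigma}{2}$ suffices.

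The missing ingredient is Lemma~\ref{lemma:L-infinity regularisation}, namely $\|\e^{\tau\Delta}h\|_{L^\infty}\le C\tau^{-\frac{1-\sigma}{2}}\|h\|_s$, which exploits the $|W|^\sigma$ weight in the strong stable norm.
\begin{itemize}
\item Write $h=\bar h(x)+\partial_yH$, with $\bar h(x)=\int_\TT h(x,y)\,\dd y$ and $H(x,y)=\int_0^y (h-\bar h)(x,s)\,\dd s$.
\item Testing the constant $|W|^{-\sigma}$ on vertical segments gives $[H(x,\cdot)]_{C^\sigma}\lesssim\|h\|_s$.
\item Then $\|\partial_y\e^{\tau\partial_y^2}H\|_{L^\infty}\lesssim\tau^{-\frac{1-\sigma}{2}}[H]_{C^\sigma}$.
\end{itemize}
Only with this bound do the powers cancel exactly: $\eps^{\frac{1-\sigma}{2}}(N\eps)^{-\frac{1-\sigma}{2}}=N^{-\frac{1-\sigma}{2}}$. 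The same estimate is needed for the unstable norm. With $(N\eps)^{-1/2}$ you would get $N^{-1/2}$ when $\delta\ge\sqrt\eps$, to be compared with $\delta^\beta$, which may be as small as $\eps^{\beta/2}$. When $\delta<\sqrt\eps$ you would get $\delta^\beta N^{-1/2}\eps^{-\beta/2}$.

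The weak bounds on the error must also pass through the heat block. A pointwise kernel bound yields only $|\mathcal E_{\alpha,\eps}f|_w\le C_\alpha\sqrt\eps\,\|f\|_{L^\infty}$. It does not give $|\mathcal E_{\alpha,\eps}f|_w\le C_\alpha|f|_w$ for arbitrary $f$.

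Your claim that ``the strong stable norm gives a factor $\rho^\sigma$'' also does not hold. The values of $\mathcal E_{\alpha,\eps}f$ on $W\cap\mathcal N_\rho$ depend on $f$ through a two-dimensional kernel, not through integrals of $f$ along short stable leaves. Reducing them to such integrals pathwise is exactly what fails near $S^\star$, where noise realisations cross between branches and no longer respect the cones.

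The paper instead uses $|\mathcal E_{\alpha,\eps}\e^{N\eps\Delta}h|_w\le C_\alpha\sqrt\eps\,\|\e^{N\eps\Delta}h\|_{L^\infty}$ and then applies two $L^\infty$ bounds:
\begin{itemize}
\item $\|\e^{\tau\Delta}h\|_{L^\infty}\lesssim\tau^{-1/2}|h|_w$, which gives $C_\alpha N^{-1/2}|h|_w$;
\item the $\tau^{-\frac{1-\sigma}{2}}\|h\|_s$ bound above, which gives $C_\alpha N^{-\frac{1-\sigma}{2}}\eps^{\sigma/2}\|h\|_s\le C_\alpha\eps^{\beta/2}\|h\|$, since $\sigma>\beta$.
\end{itemize}
With your $(N\eps)^{-1/2}$ bound, the weak--strong estimate for the error would come out as $N^{-1/2}\|h\|$, with no decay in $\eps$ at all.
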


Once Proposition \ref{prop:error decomposition} is established, an application of the triangle inequality immediately yields the following proposition, from which Proposition \ref{prop: uniform Lasota Yorke} will be deduced. 

\begin{proposition} \label{prop:lasota-continuous-diffusion}
For any $\alpha \geq 1$, there exist constants $C>0$ and $C_\alpha>0$ such that, for all $N\eps<1$, and for all $h \in X$ the following hold true 
\[
\|\cL_{\alpha, \eps} \circ \e^{N\eps \Delta } h\| 
\leq C \bigl(\alpha^{-\eta}+C_{\alpha}N^{-\frac{1-\sigma}{2}}\bigr)\|h\|
+ C |h|_w \,,
\]
and 
$$ |\cL_{\alpha, \eps} \circ \e^{N\eps \Delta } h |_w\leq  C |h|_w  \,.$$
Furthermore, for every $\alpha \in 2\NN$ there exists a constant {$C_{\alpha, N}>0$}, depending  on $\alpha, N$, such that
\[
|(\cL_{\alpha, \eps}-\cL_\alpha)\e^{N\eps \Delta}h |_{w} 
\leq C_{\alpha, N} \eps^{\beta/2}\|h\|_u.
\]
\end{proposition}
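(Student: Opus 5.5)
The plan is to deduce Proposition~\ref{prop:lasota-continuous-diffusion} directly from the decomposition $\cL_{\alpha,\eps}=\mathcal{Q}_{\alpha,\eps}+\mathcal{E}_{\alpha,\eps}$ of Proposition~\ref{prop:error decomposition}, using only the triangle inequality. No new kernel estimates are needed; the only issue is tracking which constants depend on $\alpha$.

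\emph{Strong-norm bound.} Composing the decomposition with the heat semigroup gives
\[
\|\cL_{\alpha,\eps}\e^{N\eps\Delta}h\|\le \|\mathcal{Q}_{\alpha,\eps}\e^{N\eps\Delta}h\|+\|\mathcal{E}_{\alpha,\eps}\e^{N\eps\Delta}h\|.
\]
The model part is bounded by $C\alpha^{-\eta}\|h\|+C|h|_w$. The error part is bounded by $C_\alpha N^{-\frac{1-\sigma}{2}}\|h\|$. Summing these yields the first inequality, with the constant in front of $|h|_w$ independent of $\alpha$, as claimed.

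\emph{Weak-norm bound.} Here I would not use the crude bound $C_\alpha|h|_w$ for $\mathcal{E}_{\alpha,\eps}$, since the statement asks for $C|h|_w$ with $C$ independent of $\alpha$. Instead I would start from the identity $\cL_{\alpha,\eps}=\mathcal{Q}_{\alpha,\eps}+\mathcal{E}_{\alpha,\eps}$ and use $|\mathcal{Q}_{\alpha,\eps}\e^{N\eps\Delta}h|_w\le C|h|_w$ for the model part. For the error part, I expect that its weak norm is small in $N$ for fixed $\alpha$, because the Gaussian model and the true Green's matrix differ only in an $\mathcal{O}(\sqrt{\eps})$ neighbourhood of $S^\star$, and the idle heat block smooths at exactly that scale. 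If that is not available from Proposition~\ref{prop:error decomposition}, I would accept $C$ depending on $\alpha$: $\alpha$ is fixed before $N$ and $\eps$, and the Keller--Liverani hypothesis~\ref{check:1} only needs some finite $M$. This constant-tracking step is the main (mild) obstacle, since everything else is bookkeeping.

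\emph{Weak--strong convergence.} Write
\[
(\cL_{\alpha,\eps}-\cL_\alpha)\e^{N\eps\Delta}h=(\mathcal{Q}_{\alpha,\eps}-\cL_\alpha)\e^{N\eps\Delta}h+\mathcal{E}_{\alpha,\eps}\e^{N\eps\Delta}h.
\]
The last two estimates of Proposition~\ref{prop:error decomposition} bound each term by $C_\alpha\eps^{\beta/2}\|h\|$. To upgrade the right-hand side to $\|h\|_u$, I would note that the proofs of those estimates only compare integrals over nearby leaves, at distance $\lesssim\sqrt{\eps}$, with close test functions; such comparisons are controlled by the strong unstable norm with gain $\eps^{\beta/2}$. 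Any residual $|h|_w$ or $\|h\|_s$ contribution is absorbed through $|\cdot|_w\le\|\cdot\|$, at the cost of an $N$-dependent constant $C_{\alpha,N}$. This gives the final inequality.
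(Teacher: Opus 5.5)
Your overall route, the triangle inequality applied to $\cL_{\alpha,\eps}=\cQ_{\alpha,\eps}+\mathcal{E}_{\alpha,\eps}$ with the bounds of Proposition~\ref{prop:error decomposition}, is the paper's entire proof, and your strong-norm bound is fine as written. Your caveat about the weak bound is also correct, and the smallness in $N$ you anticipate is available. Estimate \eqref{eq:error-weak} in Proposition~\ref{prop:error bounds} gives $|\mathcal{E}_{\alpha,\eps}\e^{N\eps\Delta}h|_w\le C_\alpha N^{-1/2}|h|_w$. So the weak constant is $C+C_\alpha N^{-1/2}$, which is $\alpha$-independent only once $N\ge N_0(\alpha)$. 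As you say, an $\alpha$-dependent constant is harmless for hypothesis~\ref{check:1} of Proposition~\ref{prop:liverani-keller}.

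The step that fails is your upgrade of the weak--strong estimate from $\|h\|$ to $\|h\|_u$. The triangle inequality gives $C_\alpha\eps^{\beta/2}\|h\|$. That is all the paper's one-line argument actually produces, and it is the form used later in Section~\ref{subsec:proof-prop}. Your justification does not reach $\|h\|_u$, for two reasons.
\begin{itemize}
\item Absorbing leftover $|h|_w$ or $\|h\|_s$ terms via $|\cdot|_w\le\|\cdot\|$ just returns $\|h\|=\|h\|_s+\|h\|_u$. Moreover, $\|h\|_s$ (tested against $C^q$ functions) is not dominated by $\|h\|_u$ (tested against $C^1$ functions). For example, $h=\mathbf{w}\,\e^{2\pi i k y}$ has $\|h\|_s\gtrsim k^{-q}$ but $\|h\|_u\lesssim k^{\beta-1}$, and $1-q>\beta$.
\item The error estimate is not a comparison of nearby leaves. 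Lemma~\ref{lemma:weak-strong error} gets its gain from the $\eps^{1/2}$-width of the neighbourhood of $S^\star$, combined with $\|\e^{N\eps\Delta}h\|_{L^\infty}\le C(N\eps)^{-(1-\sigma)/2}\|h\|_s$. That genuinely uses the stable norm. Likewise, the Gaussian tail in Lemma~\ref{lemma:gaussian-bound} is paid for with $|h|_w$.
\end{itemize}

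If you do want $\|h\|_u$ on the right, two extra facts suffice.
\begin{enumerate}
\item[(i)] $|f|_w\le C\alpha^{1-\beta}\|f\|_u$. Compare a leaf $W$ of length $S<2/\alpha$ with its truncation to length $S'\downarrow0$: same base point and direction, restricted test function, so $\dd_q=0$. This gives $|\int_W f\varphi\,\dd\HH^1|\le S^\beta\|f\|_u$. Then cut an arbitrary leaf into $O(\alpha)$ pieces of length at most $1/\alpha$.
\item[(ii)] Rerun the proof of Lemma~\ref{lemma:L-infinity regularisation}, using the $C^\beta$ bound on $H(x,\cdot)$ furnished by (i). This gives $\|\e^{\tau\Delta}h\|_{L^\infty}\le C_\alpha\max\{1,\tau^{-(1-\beta)/2}\}\|h\|_u$.
\end{enumerate}
With these, the error term satisfies $|\mathcal{E}_{\alpha,\eps}\e^{N\eps\Delta}h|_w\le C_\alpha\eps^{1/2}(N\eps)^{-(1-\beta)/2}\|h\|_u=C_\alpha N^{-(1-\beta)/2}\eps^{\beta/2}\|h\|_u$. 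For the model part, the Gaussian tail is absorbed using (i) together with $\|\e^{N\eps\Delta}h\|_u\le\|h\|_u$.
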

The majority of this section will be devoted to the proof of Proposition \ref{prop:error decomposition}.

\subsection{Gaussian bounds} 
\label{sec: Gaussian bounds}
In our perturbative analysis, Gaussian kernels will appear frequently. Indeed, a central observation underlying our approach is that convolution with Gaussians is particularly well behaved on the anisotropic spaces introduced in Section \ref{sec:anisotropic}. This section is therefore devoted to collecting several useful estimates concerning the action of Gaussian kernels.

Throughout the following sections, $\bar \theta[y;Q](x)$ denotes a Gaussian on $\RR^d$ centred at $y$ with covariance matrix $Q$, i.e.
$$
\bar\theta[y; Q](x) = \frac{1}{\sqrt{(2\pi)^d \det(Q)}} \exp\left( -\frac{1}{2} (x - y)^\top Q^{-1} (x - y) \right),\qquad x,y \in \RR^d.
$$
For $y \in \TT^d$, we define the associated periodic Gaussian on the torus by
\[
\theta [y;Q](x) = \sum_{k \in \ZZ^d} \bar \theta [y;Q](x+ k),
\qquad x \in \TT^d.
\]
We begin with the following two fundamental lemmas.
\begin{lemma} \label{lemma:gaussian-bound}
Let $\theta \in L^1 (\TT^2)$ satisfy $\| \theta\|_{L^1 (\TT^2)} \leq 1$.
Then the convolution operator $\mathcal{G} : X \to X$ defined by
\[
\mathcal G h (x) = \int_{\TT^2} \theta(x-y) h(y) \,\dd y
\]
satisfies, for all $h \in X$,
\begin{equation}\label{eq:convbound}
|\mathcal{G} h |_w \leq |h|_w,
\qquad
\|\mathcal{G} h \|_s \leq \|h\|_s,
\qquad
\|\mathcal{G} h \|_u \leq \|h\|_u.
\end{equation}
Assume furthermore that $M$ is a positive definite covariance matrix, and write 
$$
\mathcal{G}_\eps h(x) = \int_{\mathbb{T}^d}\theta[z; \eps M](x) h(z)\dd z.
$$
Then
\begin{equation}\label{eq:gaussbound}
|\mathcal{G}_\eps h-h|_w
\leq C_{\alpha,M}\eps^{\beta/2}\|h\|.
\end{equation}
\end{lemma}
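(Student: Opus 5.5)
The three bounds in \eqref{eq:convbound} will follow from one observation: every norm defined in Section~\ref{sec:anisotropic} is invariant under translations of $\TT^2$. For a test leaf $W\in\Sigma$ and $\varphi\in C^1(W)$, Fubini gives
\[
\int_W \mathcal{G}h\,\varphi\,\dd\HH^1=\int_{\TT^2}\theta(z)\int_{W-z} h(w)\,\varphi(w+z)\,\dd\HH^1(w)\,\dd z .
\]
The translated leaf $W-z$ is again in $\Sigma$, with the same direction $\vvv$ and length $S$ and base point $\xx-z$. The translated test function $\varphi(\cdot+z)$ has the same $C^1(W-z)$, $C^q(W-z)$ and $\|\cdot\|_{\sigma,W-z}$ norms as $\varphi$ has on $W$. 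Hence the inner integral is bounded by $|h|_w$, respectively by $\|h\|_s$. Integrating against $|\theta|$ and using $\|\theta\|_{L^1}\le 1$ gives the weak and strong stable bounds.

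For $\|\cdot\|_u$ I would translate the pair $(W^1,\varphi_1)$, $(W^2,\varphi_2)$ by the same $z$. Since
\[
\dd_{\TT^2}(\xx_1-z,\xx_2-z)=\dd_{\TT^2}(\xx_1,\xx_2),
\]
we get $\dd_\Sigma(W^1-z,W^2-z)=\dd_\Sigma(W^1,W^2)<\delta$. Also $\dd_q$ is computed in the common parameterisation, so it is unchanged. Each difference is therefore at most $\delta^\beta\|h\|_u$, and integrating in $z$ gives the claim. The argument is first done for $h\in C^1$ and then extended by density to the completions. The only subtlety is the canonical parameterisation of full-length vertical leaves, whose base point is normalised to $y=0$. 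After translation by $z$ this is the same set with a base point shifted vertically, so it should be re-parameterised. The integrals of $h$ are unchanged, and one checks that $\dd_\Sigma$ is not increased.

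For \eqref{eq:gaussbound} I would write $\mathcal{G}_\eps h-h=\int\theta[0;\eps M](z)\,\bigl(h(\cdot-z)-h\bigr)\,\dd z$ and pair it with $W$ and $\varphi$ satisfying $\|\varphi\|_{C^1(W)}\le1$. For a fixed shift $z$ with $|z|\le 2/\alpha$, I compare $\int_{W-z}h\,\varphi(\cdot+z)$ with $\int_W h\varphi$. The two leaves satisfy $\dd_\Sigma\le|z|$ and $\dd_q=0$, so the unstable norm bounds the difference by $C|z|^\beta\|h\|_u$ (taking $\delta$ slightly larger than $|z|$). When $|z|>2/\alpha$ I use instead the crude bound $2|h|_w\le 2\|h\|$. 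Combining the two regimes gives
\[
|\mathcal{G}_\eps h-h|_w\le \int\theta[0;\eps M](z)\,\Bigl(C|z|^\beta\mathbf 1_{|z|\le2/\alpha}+2\cdot\mathbf 1_{|z|>2/\alpha}\Bigr)\,\dd z\;\|h\| .
\]

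The main obstacle is controlling the periodic Gaussian moments. I would use $\int\theta[0;\eps M]|z|^\beta_{\TT^2}\le C_M\eps^{\beta/2}$ and the tail estimate $\int_{|z|>2/\alpha}\theta\le C_M e^{-c\alpha^{-2}/\eps}\le C_{\alpha,M}\eps^{\beta/2}$, both of which follow from bounding the periodised sum by the Gaussian on $\RR^2$. The tail bound is where the constant picks up its dependence on $\alpha$. Together these yield \eqref{eq:gaussbound}.
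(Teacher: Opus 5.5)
Your proposal is correct and follows the paper's proof essentially step for step: Fubini plus translation invariance of $\Sigma$, of the test-function norms and of $(\dd_\Sigma,\dd_q)$ give \eqref{eq:convbound}, and \eqref{eq:gaussbound} comes from splitting the shift at scale $\sim 1/\alpha$ (the paper cuts at $1/\alpha$, you at $2/\alpha$), with $\|h\|_u$ handling small shifts, the weak norm handling large ones, and the Gaussian $\beta$-moment and tail bounds giving $C_{\alpha,M}\eps^{\beta/2}$. The one inaccurate point is your remark on full-length vertical leaves, which the paper does not address: after re-basing at $y=0$, a translated full circle is no longer $\dd_\Sigma$-close to the translate of a nearby almost-full leaf (their base points now differ by roughly $z_2$), and in \eqref{eq:gaussbound} you no longer have $\dd_q=0$. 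So instead of re-basing, replace each full circle by the vertical segment of length $1-\eta$ starting at its translated base point, apply the unstable norm to these admissible pairs, and let $\eta\to 0$.
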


\begin{lemma}
\label{lemma:L-infinity regularisation}
Let $h \in X$. Then there exists a constant $C>0$ such that, for all $\tau >0$,
\begin{equation}\label{eq:heatsemiestimate}
\|\e^{\tau \Delta} h\|_{L^\infty(\TT^2)}
\leq
C\max\{1,\tau^{-\frac{1-\sigma}{2}}\} \|h\|_s,
\qquad
\|\e^{\tau \Delta} h\|_{L^\infty(\TT^2)}
\leq
C\max\{1,\tau^{-\frac{1}{2}}\} |h|_w.    
\end{equation}
\end{lemma}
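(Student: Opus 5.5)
Both bounds in \eqref{eq:heatsemiestimate} will follow from a duality argument: evaluate $\e^{\tau\Delta}h$ at a point by integrating $h$ against the heat kernel, and slice that two-dimensional integral into integrals along admissible leaves. Fix $x\in\TT^2$ and write
\[
\e^{\tau\Delta}h(x)=\int_{\TT^2}\theta[x;2\tau \Id](z)\,h(z)\,\dd z .
\]
Foliate $\TT^2$ by vertical unit segments $V_s=\{(s,y):y\in[0,1]\}$, $s\in\TT$. These lie in $\Sigma$, since the vertical direction $(0,1)\in\hat C_s$ and, by the convention of Lemma~\ref{lemma:unique parameterisation}, a vertical leaf of length $1$ has base point $(s,0)$. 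By Fubini,
\[
\e^{\tau\Delta}h(x)=\int_{\TT}\Big(\int_{V_s} h\,\varphi_{s}\,\dd\HH^1\Big)\dd s,\qquad \varphi_s(y)=\theta[x;2\tau\Id](s,y).
\]
This identity is first proved for $h\in C^1$, and both sides are then extended by density, since the right-hand side is controlled by the norms below.

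\emph{Weak bound.} For $\tau\le 1$, the kernel factorises as a product of one-dimensional periodic Gaussians, $\varphi_s(y)=\theta_1(s)\,\theta_2(y)$. Here $\theta_1$ has total mass $1$ in $s$, and $\theta_2$ has $\|\theta_2\|_{C^1}\le C\tau^{-1}$. That would give only the rate $\tau^{-1}$. To reach $\tau^{-1/2}$, I would instead split $V_s$ into about $\tau^{-1/2}$ subsegments of length $\sqrt\tau$. Each subsegment is still admissible, and on each of them I multiply by a partition of unity adapted to the $\sqrt\tau$-scale. On a subsegment $J$ the restricted test function $\theta_2|_J$ then has $\|\cdot\|_{C^1(J)}\lesssim \tau^{-1/2}\cdot\tau^{-1/2}$. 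Pairing this against $|h|_w$ and summing the Gaussian tails over subsegments recovers a single factor $\tau^{-1/2}$. In fact a cleaner route is available: rescale the test function so that its sup norm is $\tau^{-1/2}$ and its Lipschitz constant is $\tau^{-1}$, and use that only $O(1)$ subsegments carry non-negligible mass. I will use that route. For $\tau\ge1$ the kernel is bounded in $C^1$, so the bound is $C|h|_w$.

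\emph{Strong stable bound.} This is where the factor $|W|^\sigma$ in $\|\cdot\|_{\sigma,W}$ pays off. Restrict to a subsegment $J$ of length $\ell=\sqrt\tau$ near the Gaussian centre. There $\|\theta_2\|_{C^q(J)}\lesssim \tau^{-1/2}\,(1+\ell^{1-q}\tau^{-(1-q)/2})\lesssim\tau^{-1/2}$, so
\[
\|\theta_2\|_{\sigma,J}\lesssim \tau^{-1/2}\,\tau^{\sigma/2}.
\]
Pairing with $\|h\|_s$ gives $\tau^{-(1-\sigma)/2}$ per subsegment. The subsegments at distance $k\sqrt\tau$ from the centre contribute an extra factor $\e^{-ck^2}$, so the sum over $J$ converges. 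Integrating in $s$ against $\theta_1$, which has mass $1$, yields the claimed $C\tau^{-(1-\sigma)/2}\|h\|_s$. For $\tau\ge1$ the same argument on whole unit segments gives $C\|h\|_s$.

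\emph{Main obstacle.} The delicate point is justifying the slicing identity, and the leafwise pairings, for general $h\in X$ rather than $C^1$ functions, together with the bookkeeping of the periodic Gaussian tails. I will prove both estimates for $C^1$ data, where everything is classical, and then pass to completions: the maps $h\mapsto\e^{\tau\Delta}h(x)$ are continuous in the respective norms, so the bounds extend. Lemma~\ref{lemma:gaussian-bound} is not needed here. The argument uses only that vertical leaves and their subsegments belong to $\Sigma$, and that the test-function norms scale as computed above.
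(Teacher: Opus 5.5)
There is a genuine gap. Both rates are asserted rather than derived, and the leafwise pairings you use cannot produce them.

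\emph{Weak bound.} On a leaf, the weak norm only gives $|\int_J h\varphi\,\dd\HH^1|\le(\sup_J|\varphi|+\mathrm{Lip}_J\varphi)\,|h|_w$. There is no gain from the length of $J$. The one-dimensional Gaussian $\theta_2$ has height $\approx\tau^{-1/2}$ and width $\approx\tau^{1/2}$, so its Lipschitz constant on any subsegment meeting its core is $\approx\tau^{-1}$. Cutting $V_s$ into $\sqrt\tau$-pieces does not lower this. Neither does a $\sqrt\tau$-scale partition of unity (whose derivatives are $\approx\tau^{-1/2}$), nor rescaling the test function. Your own count, $\tau^{-1/2}\cdot\tau^{-1/2}$ per piece times $O(1)$ relevant pieces, therefore gives $C\tau^{-1}|h|_w$, not $C\tau^{-1/2}|h|_w$.

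\emph{Strong stable bound.} This step contains an arithmetic error. On $J$ of length $\sqrt\tau$ one has $[\theta_2]_{C^q(J)}\approx\mathrm{Lip}(\theta_2)\,|J|^{1-q}\approx\tau^{-1}\tau^{(1-q)/2}=\tau^{-(1+q)/2}$, not $\lesssim\tau^{-1/2}$. The factor $\tau^{-(1-q)/2}$ in your formula should be $\tau^{-1/2}$. Hence $\|\theta_2\|_{\sigma,J}\approx\tau^{-(1+q-\sigma)/2}$, and the argument yields only $C\tau^{-(1-\sigma+q)/2}\|h\|_s$, which misses the claim by $\tau^{-q/2}$. Changing the subsegment length does not help, because the sup part of the test-function norm already forces it to be $\approx\sqrt\tau$.

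\emph{Missing idea.} Test $h$ only against \emph{constants} on vertical sub-intervals $I$. These are admissible leaves, so $|\int_I h\,\dd\HH^1|\le|h|_w$ and $|\int_I h\,\dd\HH^1|\le|I|^\sigma\|h\|_s$. Then move the derivative onto the kernel. This is what the paper does:
\begin{itemize}
\item it subtracts the vertical mean $\overline h(x)$ and writes $h_{\neq}=\partial_y H$ with $H(x,y)=\int_0^y h_{\neq}(x,s)\,\dd s$;
\item the interval bounds make $H(x,\cdot)$ $\sigma$-H\"older with constant $\lesssim\|h\|_s$ (resp.\ bounded by $\lesssim|h|_w$);
\item it writes $\partial_y\e^{\tau\partial_y^2}H(x,y)=\int\partial_y\theta(z)\,(H(x,y-z)-H(x,y))\,\dd z$ and bounds this by $\int|\partial_y\theta(z)|\,|z|^\sigma\,\dd z\approx\tau^{-(1-\sigma)/2}$ (resp.\ $\|\partial_y\theta\|_{L^1}\approx\tau^{-1/2}$).
\end{itemize}
Inside your slicing framework, the equivalent repair is a layer-cake decomposition of $\theta_2$ over its superlevel arcs. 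Then only the total variation of $\theta_2$ enters (weighted by $|I|^\sigma$ in the strong case), not its $C^1$ or $C^q$ norm.

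Your reduction to a bound on $\int_{V_s}h\,\theta_2\,\dd\HH^1$ uniform in $s$, via the product structure of the kernel, is fine. So is the density extension from $C^1$ data.
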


\begin{remark}
In view of Proposition \ref{proposition:main lasota yorke} and the above Lemma \ref{lemma:gaussian-bound}, it follows that the following uniform-in-$\eps$ Lasota-Yorke inequality holds:
\begin{equation}
\|\e^{2 \pi i g} \e^{\eps \Delta}\mathcal{L}_\alpha h \| \leq C\alpha^{-\eta} \| h \| + C |h|_w, \qquad |\e^{2 \pi i g}  \e^{\eps \Delta}\mathcal{L}_\alpha h |_w \leq C |h|_w, \qquad \forall h \in X.    
\end{equation}  
Thanks to Theorem \ref{thm:main eigenvalue} and Proposition \ref{prop:liverani-keller}, Theorem \ref{thm:fastpulsed} can then be deduced exactly as Theorem \ref{thm:fastdynamo}, see Section \ref{sub:proof}.
\end{remark}

The estimates in Lemmas \ref{lemma:gaussian-bound}-\ref{lemma:L-infinity regularisation}  will be used repeatedly throughout the section. In particular, Lemma \ref{lemma:L-infinity regularisation} will often be applied with $\tau=N\eps$ in order to quantify the regularising effect of diffusion during the long time interval on which $u_{\alpha,N}\equiv 0$.

\begin{proof}[Proof of Lemma \ref{lemma:gaussian-bound}]
We first prove the weak norm estimate. Let $W\in \Sigma$ and let $\varphi \in C^1(W)$ satisfy
\[
|\varphi|_{C^1(W)}\leq 1.
\]
Using Fubini's theorem and translation invariance on $\TT^2$, we compute
\begin{align*}
    \int_W \mathcal G h (x)\,\varphi (x)\,\dd \HH^1(x)
    &=
    \int_W \int_{\TT^2}
    \theta(x-y) h(y)\,\dd y\,\varphi(x)\,\dd \HH^1(x)
    \\
    &=
    \int_{\TT^2} \theta(\bar y )
    \int_W
    h(x-\bar y )
    \varphi(x)
    \dd \HH^1(x)\,\dd \bar y
    \\
    &=
    \int_{\TT^2} \theta(\bar y)
    \int_{W-\bar y}
    h(\bar x)
    \varphi(\bar x+\bar y)
    \dd \HH^1(\bar x)\,\dd \bar y .
\end{align*}
Since translations preserve admissible curves and the $C^1$ norm of the test function, we obtain
\[
\left|
\int_{W-\bar y}
h(\bar x)\,
\varphi(\bar x+\bar y)\,
\dd \HH^1(\bar x)
\right|
\leq |h|_w .
\]
Using that $\|\theta\|_{L^1(\TT^2)}\leq 1$, we conclude that the first bound in \eqref{eq:convbound} holds.

The strong stable norm estimate in \eqref{eq:convbound} follows from the same argument, since the stable norm is also defined through testing against translated curves.

We next prove the unstable norm estimate, i.e. the third estimate in \eqref{eq:convbound}. Let
\[
\dd_\Sigma(W_1,W_2)<\delta,
\qquad
\dd_q(\varphi_1,\varphi_2)<\delta,
\]
with $|\varphi_i|_{C^1(W_i)}\leq 1$. Proceeding as above,
\begin{align*}
    &
    \int_{W_1} \mathcal G h(x)\varphi_1(x)\,\dd \HH^1(x)
    -
    \int_{W_2} \mathcal G h(x)\varphi_2(x)\,\dd \HH^1(x)
    \\
    &\qquad =
    \int_{\TT^2}\theta(\bar y)
    \Bigg[
    \int_{W_1-\bar y}
    h(\bar x)\varphi_1(\bar x+\bar y)\,
    \dd \HH^1(\bar x)
    -
    \int_{W_2-\bar y}
    h(\bar x)\varphi_2(\bar x+\bar y)\,
    \dd \HH^1(\bar x)
    \Bigg]
    \dd \bar y .
\end{align*}
Since translations preserve the distances defining the unstable norm,
\[
\dd_\Sigma(W_1-\bar y,W_2-\bar y)<\delta,
\qquad
\dd_q\bigl(\varphi_1(\cdot+\bar y),\varphi_2(\cdot+\bar y)\bigr)<\delta .
\]
Hence,
\[
\left|
\int_{W_1-\bar y}
h(\bar x)\varphi_1(\bar x+\bar y)\,\dd \HH^1(\bar x)
-
\int_{W_2-\bar y}
h(\bar x)\varphi_2(\bar x+\bar y)\,\dd \HH^1(\bar x)
\right|
\leq
\delta^\beta \|h\|_u .
\]
Integrating against $\theta$ yields the third bound in \eqref{eq:convbound}.

Finally, we prove the weak--strong convergence estimate for Gaussian kernels. Let $W\in\Sigma$ and let $\varphi\in C^1(W)$ satisfy $|\varphi|_{C^1(W)}\leq 1$. We compute
\begin{align}
\label{eq:weak-strong-convergence}
    \int_W (\mathcal G_\eps h-h)(x)\varphi(x)\,\dd \HH^1(x)
    &=
    \int_W
    \int_{\TT^2}
    \theta[0; \eps M](z)\bigl(h(x-z)-h(x)\bigr)\,\dd z\,
    \varphi(x)\,\dd \HH^1(x)
    \notag
    \\
    &=
    \int_{\TT^2}\theta[0; \eps M](z)
    \Bigg[
    \int_{W-z}
    h(x)\varphi(x+z)\,\dd \HH^1(x)
    \notag
    \\
    &\hspace{3cm}
    -
    \int_W
    h(x)\varphi(x)\,\dd \HH^1(x)
    \Bigg]
    \dd z .
\end{align}
We split the integral into the regions $\dd_{\TT^2}(z,0)<1/\alpha$ and $\dd_{\TT^2}(z,0)\geq 1/\alpha$. For $\dd_{\TT^2}(z,0)<1/\alpha$, the definition of the unstable norm gives
\[
\left|
\int_{W-z}
h(x)\varphi(x+z)\,\dd \HH^1(x)
-
\int_W
h(x)\varphi(x)\,\dd \HH^1(x)
\right|
\leq
C\dd^\beta_{\TT^2}(z,0) \|h\|_u .
\]
For $\dd_{\TT^2}(z,0)\geq 1/\alpha$, we use the weak norm bound,
\[
\left|
\int_{W-z}
h(x)\varphi(x+z)\,\dd \HH^1(x)
-
\int_W
h(x)\varphi(x)\,\dd \HH^1(x)
\right|
\leq
C |h|_w .
\]
Therefore,
\begin{align*}
\left|
\int_W (\mathcal G_\eps h-h)(x)\varphi(x)\,\dd \HH^1(x)
\right|
&\leq
C\|h\|_u
\int_{\dd_{\TT^2}(z,0)<1/\alpha}
\dd_{\TT^2}(z,0)^\beta \theta[0; \eps M](z)\,\dd z
\\
&\qquad
+
C|h|_w
\int_{\dd_{\TT^2}(z,0)\geq 1/\alpha}
\theta[0; \eps M](z)\,\dd z .
\end{align*}
Since $\theta_\eps$ is a Gaussian with covariance matrix $\eps M$,
\[
\int_{\RR^2}\dd_{\TT^2}^\beta(z,0) \theta[0; \eps M](z)\,\dd z
\leq C_M \eps^{\beta/2},
\]
while the Gaussian tail satisfies 
\[
\int_{\dd_{\TT^2}(z,0)\geq 1/\alpha}\theta[0; \eps M](z)\,\dd z
\leq C_{\alpha,M}\eps^{\beta/2}.
\]
Combining the estimates yields \eqref{eq:gaussbound},
which concludes the proof.
\end{proof}

\begin{proof}[Proof of Lemma \ref{lemma:L-infinity regularisation}]
The proofs of the two inequalities are identical; the weak norm bound corresponds to the case $\sigma=0$. Hence, we only prove the estimate involving the strong stable norm.

We decompose
\[
h(x,y)=h_{\neq}(x,y)+\overline h(x),
\qquad
\overline h(x)=\int_{\TT} h(x,y)\,\dd y,
\qquad
h_{\neq}(x,y)=h(x,y)-\overline h(x).
\]
First, we observe that
\[
|\overline h(x)|
=
\left|
\int_0^1 h(x,y)\,\dd y
\right|
\leq
\|h\|_s,
\]
since the vertical segment $\{x\}\times[0,1]$ is an admissible curve in $\Sigma$. Since the heat semigroup is contractive on $L^\infty$, it follows that
\[
\|\e^{\tau \Delta}\overline h\|_{L^\infty(\TT^2)}
\leq
\|\overline h\|_{L^\infty(\TT)}
\leq
\|h\|_s .
\]
It therefore remains to estimate the contribution of $h_{\neq}$. By construction, for every fixed $x$,
\[
\int_{\TT} h_{\neq}(x,y)\,\dd y=0.
\]
Hence, we may write
\[
h_{\neq}(x,y)=\partial_y H(x,y),
\qquad
H(x,y)=\int_0^y h_{\neq}(x,s)\,\dd s .
\]
Since the Laplacian separates variables,
$\e^{\tau \Delta}
=
\e^{\tau \partial_x^2}\e^{\tau \partial_y^2}$,
and therefore
\[
\e^{\tau \partial_y^2} h_{\neq}
=
\partial_y \e^{\tau \partial_y^2} H .
\]
Using standard heat semigroup estimates in Hölder spaces,
\[
\|\partial_y \e^{\tau \partial_y^2}H(x,\cdot)\|_{L^\infty(\TT)}
\leq
C\max\{1,\tau^{-\frac{1-\sigma}{2}}\}
\|H(x,\cdot)\|_{C^\sigma(\TT)} .
\]
For completeness, we briefly justify this estimate. Let $\theta[z;\tau](y)$ denote the periodic one-dimensional heat kernel. Since
\[
\int_{\TT}\partial_y\theta[0;\tau](z)\,\dd z=0,
\]
we may write
\begin{align*}
\partial_y \e^{\tau \partial_y^2}H(x,y)
&=
\int_{\TT}\partial_y\theta[0;\tau](z)
\bigl(H(x,y-z)-H(x,y)\bigr)\,\dd z .
\end{align*}
Hence,
\begin{align*}
|\partial_y \e^{\tau \partial_y^2}H(x,y)|
&\leq
C\|H(x,\cdot)\|_{C^\sigma(\TT)}
\int_{\RR}
\frac{|z|^\sigma}{\tau}
\e^{-c|z|^2/\tau}\,\dd z\leq
C\tau^{-\frac{1-\sigma}{2}}
\|H(x,\cdot)\|_{C^\sigma(\TT)} .
\end{align*}
We now estimate the Hölder norm of $H$. For any $y_1,y_2\in\TT$,
\begin{align*}
|H(x,y_1)-H(x,y_2)|
=
\left|
\int_{y_2}^{y_1} \bigl(h(x,s)-\overline h(x)\bigr)\,\dd s
\right|\leq
C|y_1-y_2|^\sigma \|h\|_s .
\end{align*}
Therefore,
\[
\|H(x,\cdot)\|_{C^\sigma(\TT)}
\leq
C\|h\|_s .
\]
Combining the previous estimates yields
\[
\|\e^{\tau \partial_y^2} h_{\neq}\|_{L^\infty(\TT^2)}
\leq
C\max\{1,\tau^{-\frac{1-\sigma}{2}}\}\|h\|_s .
\]
Finally, since $\e^{\tau \partial_x^2}$ is contractive on $L^\infty$,
\[
\|\e^{\tau \Delta} h_{\neq}\|_{L^\infty(\TT^2)}
\leq
C\max\{1,\tau^{-\frac{1-\sigma}{2}}\}\|h\|_s .
\]
Combining this with the estimate for $\overline h$ concludes the proof of \eqref{eq:heatsemiestimate}.
\end{proof}

\subsection{Construction of the model problem.}
\label{sec:model problem}

In this section, we define the model operator $\mathcal{Q}_{\alpha,\eps}$ appearing in Proposition~\ref{prop:error decomposition}. We refer to Subsection~\ref{subsec:model} for a rigorous derivation of this model problem from the kinematic dynamo equations.

The operator $\mathcal{Q}_{\alpha,\eps}$ is obtained by patching together four local operators, each corresponding to the evolution of \eqref{passive-vector} under one of the four affine branches of the planar component of the velocity field $u_{\alpha,N}^{(1,2)}$. More precisely, for $y \in \mathcal M_\ell$, the flow map associated with $u_{\alpha,N}^{(1,2)}$ is generated by alternating affine shear flows. After extending these affine shears to $\RR^2$, equation~\eqref{passive-vector} may be solved explicitly, yielding Gaussian kernels with anisotropic covariance. 

Motivated by this explicit representation, we define the $\ell$-th model operator by
\begin{align}
\label{d:model-ell-operator}
    \mathcal{Q}_{\alpha,\eps}^\ell h(x)
    =
    \mathbbm{1}_{\XX_\alpha(\mathcal M_\ell)}(x)
    \int_{\TT^2}
    Q_{\alpha,\eps}^\ell(x,y)\,h(y)\,\dd y ,\qquad \ell=1,\ldots, 4.
\end{align}
Here the Green's matrix
\[
Q_{\alpha,\eps}^\ell:\TT^2\times\TT^2\to\RR^{2\times2}
\]
is given by 
\begin{equation}
\label{eq:model kernel gaussian}
Q_{\alpha,\eps}^\ell(x,y)
=
\alpha^{-2}A_\ell\,
\theta\bigl[
y;
\eps( A_\ell^{-1}\Gamma_\alpha^\ell A_\ell^{-T}+2A_\ell^{-1} A_\ell^{-T})
\bigr]
\bigl((T_\alpha^\ell)^{-1}(x)\bigr), \qquad  A_\ell^{-T}= (A_\ell^{-1})^T,
\end{equation}
where $\Gamma_\alpha^\ell\in\RR^{2\times2}$ is a positive definite covariance matrix and $A_\ell\in\RR^{2\times2}$ is one of the matrices introduced in~\eqref{eq:matrixMi}. In particular, note that on $T_{\alpha}(\cM_\ell)$, $(T_\alpha^\ell)^{-1}$ agrees with $T_\alpha^{-1}$, so we may equivalently write \eqref{eq:model kernel gaussian} with $T_\alpha^{-1}$ instead of $(T_\alpha^\ell)^{-1}$.

In view of Section~\ref{sec: Gaussian bounds}, this representation is particularly convenient. Indeed, the operator consists of Gaussian regularisation of the initial datum $h$, followed by composition with the inverse flow map $T_\alpha^{-1}$. By Lemma~\ref{lemma:gaussian-bound}, convolution with Gaussians behaves well on the anisotropic spaces introduced in Section~\ref{sec:anisotropic}.

We then define the full model operator by
\begin{align}
\label{eq:model-problem}
\mathcal Q_{\alpha,\eps} h(x)
=
\int_{\TT^2}
Q_{\alpha,\eps}(x,y)\,h(y)\,\dd y
=
\sum_{\ell=1}^4
\mathcal Q_{\alpha,\eps}^\ell h(x).
\end{align}
Since the model operator is written explicitly as an integral kernel, it is natural to seek a corresponding representation for the true operator $\cL_{\alpha,\eps}$. The error estimates in Proposition~\ref{prop:error decomposition} will then follow from pointwise bounds comparing the true Green's matrix with its model approximation.

Accordingly, we introduce the Green's matrix
\[
\alpha^2K_{\alpha,\eps}:\TT^2\times\TT^2\to\RR^{2\times2}
\]
associated with the first two components of the solution to \eqref{passive-vector}. More precisely, if $h:\TT^2\to\RR^2$ is prescribed at time $t=N$, then the solution at time $t=N+2$ satisfies
\[
\alpha^2\cL_{\alpha,\eps}h(x)
=
\int_{\TT^2}
\alpha^2K_{\alpha,\eps}(x,y)\,h(y)\,\dd y.
\]
Finally, for convenience of notation, we define the kernel error
\[
E_{\alpha,\eps}(x,y)
=
K_{\alpha,\eps}(x,y)-Q_{\alpha,\eps}(x,y),
\]
so that
\[
\mathcal E_{\alpha,\eps}h(x)
=
\cL_{\alpha,\eps}h(x)-\mathcal Q_{\alpha,\eps}h(x)
=
\int_{\TT^2}
E_{\alpha,\eps}(x,y)\,h(y)\,\dd y .
\]
With this formulation in hand, the proof of Proposition~\ref{prop:error decomposition} reduces to suitable pointwise estimates on the difference between the true and model kernels.
\begin{lemma}\label{lemma:kernel bound}

There exist constants $C_\alpha,c_\alpha>0$ such that the following holds for all $\eps \in (0,1]$. For every multi-index $\mathbf m$ with $|\mathbf m|\leq1$, and for all $x,y\in\TT^2$, it holds
\begin{align}\label{eq:kernel-global}
|D_x^{\mathbf m}K_{\alpha,\eps}(x,y)|\leq\frac{C_\alpha}{\eps^{1+\frac{|\mathbf m|}{2}}}\exp\Bigl(-c_\alpha \eps^{-1}\dd^2_{\TT^2}(x,T_\alpha y)\Bigr).
\end{align}
Moreover, the following holds for $\mathbf m=0$, and all $x,y\in\TT^2$, and for $|\mathbf m|=1$ with $x\notin S^\star$, $y\in\TT^2$
\begin{align}\label{eq:error-kernel-bound}
|D_x^{\mathbf m}E_{\alpha,\eps}(x,y)|\leq\sum_{\ell=1}^4\frac{C_\alpha}{\eps^{1+\frac{|\mathbf m|}{2}}}\exp\Bigl(-c_\alpha \eps^{-1}\dd^2_{\TT^2}(x,S^\star)\Bigr)\exp\Bigl(-c_\alpha \eps^{-1}\dd^2_{\TT^2}(x,T^\ell_\alpha y)\Bigr),
\end{align}
where $S^\star$ denotes the forward singular set introduced in~\eqref{d:forward-singular}, and $T_\alpha^\ell$ is the extension of the map $T_\alpha|_{\cM_\ell}$ to $\TT^2$, defined in \eqref{d:T-alpha-ell}.
\end{lemma}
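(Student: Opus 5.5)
We will represent the Green's matrix probabilistically and compare it branch by branch with the explicit Gaussian kernels. On $[N,N+2]$ the first two components solve $\partial_t b+u\cdot\nabla b-b\cdot\nabla u=\eps\Delta b$ with $u$ piecewise affine in each shear phase. By Feynman--Kac (or Duhamel), $K_{\alpha,\eps}(x,y)$ is the expectation, over the backward stochastic flow $dX=-u\,dt+\sqrt{2\eps}\,dW$ started at $x$, of the transition density to $y$ weighted by the product of the piecewise-constant Jacobians $\nabla u$ along the path. The shear fields $V_\alpha,H_\alpha$ are Lipschitz with $|\nabla u|\le 2\alpha$, so the weight is bounded by $e^{C\alpha}$. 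The planar flow composed of the two half-unit shears, followed by one unit of pure heat, is a composition of three parabolic propagators. For the global bound \eqref{eq:kernel-global} we use standard Aronson-type Gaussian bounds for each stage, with the drift transported along the deterministic Lipschitz flow. The vector part of the equation is a zeroth-order bounded term, so it only costs $e^{C\alpha}$. The result is $|K|\le C_\alpha\eps^{-1}\exp(-c_\alpha\eps^{-1}\dd^2(x,T_\alpha y))$, where the Lipschitz constant of $T_\alpha^{\pm1}$ enters $c_\alpha$. The derivative bound with the extra $\eps^{-1/2}$ comes from the last heat stage $[N+1,N+2]$: writing $K=\e^{\eps\Delta}\circ K'$ and differentiating the heat kernel in $x$ gives the factor $\eps^{-1/2}$ with Gaussian tails preserved.

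For \eqref{eq:error-kernel-bound}, note that on each branch the velocity extended affinely to $\RR^2$ (namely $\pm2\alpha(x-1/2)$ without the absolute value) makes the equation exactly solvable. Its Green's matrix is $\alpha^{-2}A_\ell$ times a Gaussian with covariance $\eps(A_\ell^{-1}\Gamma^\ell_\alpha A_\ell^{-T}+2A_\ell^{-1}A_\ell^{-T})$ evaluated at $(T^\ell_\alpha)^{-1}x$. Here the $2A_\ell^{-1}A_\ell^{-T}$ term is the final unit of heat pulled back, and $\Gamma^\ell_\alpha$ is computed from the Ornstein--Uhlenbeck covariance of the shear stages. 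After periodisation this is precisely $Q^\ell_{\alpha,\eps}$. The error then arises only from paths that see a different branch from the deterministic one. Fix $x$ with $\dd(x,S^\star)=r$ and run the backward SDE. As long as the path stays in the tube where the true and extended velocities coincide, the two diffusions (coupled to the same Brownian motion) agree pathwise. The true and model kernels therefore differ only on the event that the path leaves a neighbourhood of size $\sim r/C_\alpha$ of the deterministic backward trajectory. By Lipschitz dependence and Gaussian maximal inequalities, this event has probability at most $\exp(-c_\alpha r^2/\eps)$.

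The key step is to combine this exit probability with the Gaussian localisation of the endpoint. Split the expectation by the strong Markov property at the exit time. The sub-density from the exit time to $y$ is bounded using the global bound \eqref{eq:kernel-global}, or its affine analogue, for the remaining time. A Chapman--Kolmogorov-type product of Gaussians then gives a factor $\exp(-c\eps^{-1}\dd^2(x,S^\star))\exp(-c\eps^{-1}\dd^2(x,T^\ell_\alpha y))$, after halving constants via $a^2+b^2\ge\frac12(a+b)^2$ and summing over the branches that might be visited. For $|\mathbf m|=1$ and $x\notin S^\star$, the same decomposition is applied after peeling off the last heat stage, and differentiating its explicit kernel gives the extra $\eps^{-1/2}$.

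The main obstacle will be this last localisation. The exit can occur in the first shear stage, where stretching by $\alpha$ distorts distances, and the remaining-time density may be nearly degenerate when the exit happens close to the terminal time. We would handle this with the unit idle heat stage $[N+1,N+2]$: since it always provides a full unit of nondegenerate diffusion afterwards, uniform Gaussian bounds hold for the remaining kernel with constants depending only on $\alpha$.
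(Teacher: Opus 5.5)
\textbf{Gap: the idle heat stage is on the wrong side of your exit decomposition.} Your overall idea (exact solvability of each affine branch, same-Brownian-motion coupling, Gaussian exit probability) matches the paper's, but the step you flag as the main obstacle does not close as proposed.

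You run the backward SDE from $x$. It traverses the idle interval $[N+1,N+2]$ first, then the horizontal shear, then the vertical shear, and only then reaches $y$. The exits that create a discrepancy between true and model dynamics occur in the shear stages. So after the exit time $\tau$, the remaining path to $y$ is pure shear of length $s=2-\tau\in(0,1]$, with no heat in it.

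Consequently the statement that the idle stage ``provides a full unit of nondegenerate diffusion afterwards'' is false in your own setup. For the same reason, the post-exit sub-density cannot be bounded by \eqref{eq:kernel-global}, which is a time-$2$ bound that includes the heat stage.

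What you would actually need is a flow-adapted Gaussian upper bound for the shear dynamics on arbitrarily short intervals, of the form $\lesssim_\alpha (\eps s)^{-1}\exp\bigl(-c\,|F_s(z)-y|^2/(\eps s)\bigr)$, uniformly in $\eps$. Standard Aronson bounds centred at the starting point do not give this: with diffusivity $\eps$ and drift of size $\alpha$, their constants degenerate like $\e^{C\alpha^2/\eps}$. The same unproven input is also hidden in your global bound.

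Granted such a bound, the short-time degeneracy is harmless. Split according to whether $\dd_{\TT^2}(x,T^\ell_\alpha y)\gtrsim r$, in which case the separate global bounds on $K$ and $Q^\ell$ already suffice. Otherwise, the remaining flow from the exit point lands at distance $\rho\gtrsim_\alpha r$ from $y$, and $\sup_{0<s\le1}(\eps s)^{-1}\e^{-c\rho^2/(\eps s)}\le \eps^{-1}\e^{-c\rho^2/\eps}$ once $\rho^2\gtrsim \eps$. This still has to be supplied.

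\textbf{How the paper avoids this.} It works forward from $y$, so the idle heat stage genuinely comes last. It writes $\alpha^2K_{\alpha,\eps}(x,y)=\int A\,\theta[z;2\eps\Id](x)\,\mathcal P_y(\dd z,\dd A)$, with $\mathcal P_y$ the law of $(\Phi^\eps_1(y),D\Phi^\eps_1(y))$. Only this law is needed, never a density.

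\begin{itemize}
\item \emph{Global bound.} Gaussian tails of $\dd_{\TT^2}(\Phi^\eps_1(y),T_\alpha y)$, obtained from Gr\"onwall and Doob, give \eqref{eq:kernel-global} directly. Differentiating the explicit $\theta$ gives the extra $\eps^{-1/2}$.
\item \emph{Coupling.} The forward coupling with the model SDE from $y$ can fail only if $\sup_{t\le1}|W_t|\gtrsim \alpha^{-1}\eps^{-1/2}\dd(y,\partial\cM_\ell)$. Hence $\|\mathcal P_y-\mathcal P^\ell_y\|_{TV}\lesssim \e^{-c_\alpha\dd^2(y,\partial\cM_\ell)/\eps}$.
\item \emph{Keeping the localisation.} The Gaussian localisation in $\dd_{\TT^2}(x,T_\alpha y)$ is retained by Cauchy--Schwarz: $\int\theta\,\dd|\mathcal P_y-\mathcal P^\ell_y|\le\bigl(\int\theta^2\,\dd|\mathcal P_y-\mathcal P^\ell_y|\bigr)^{1/2}\|\mathcal P_y-\mathcal P^\ell_y\|_{TV}^{1/2}$.
\item \emph{Converting to $S^\star$.} Bi-Lipschitz continuity of $T_\alpha$ on $\cM_\ell$ and the inequality $\dd(x,S^\star)\le \dd(x,T_\alpha y)+C_\alpha\dd(y,\partial\cM_\ell)$ convert the distance to $\partial\cM_\ell$ into distance to $S^\star$. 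This triangle-inequality step is what replaces your tube around the backward trajectory of $x$, which does not transfer directly to the forward picture.
\item \emph{The case $y\notin\cM_\ell$.} This is trivial, since then both $\dd(x,T_\alpha y)$ and $\dd(x,T^\ell_\alpha y)$ are at least $\dd(x,S^\star)$.
\end{itemize}
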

The proof of Lemma~\ref{lemma:kernel bound}, while lengthy, follows from fairly standard Gaussian approximation arguments and is therefore postponed to Section~\ref{sec:kernel}. The remainder of this section is devoted to proving Proposition~\ref{prop:error decomposition}, assuming Lemma~\ref{lemma:kernel bound}, and subsequently deducing Proposition~\ref{prop: uniform Lasota Yorke}.

\subsection{Uniform Lasota-Yorke and weak-strong convergence for the model problem} \label{subsec:model-lasota}

In this section, we prove the uniform Lasota-Yorke inequality and the weak-strong convergence estimate for the model operator, which constitute the first part of Proposition \ref{prop:error decomposition}. More precisely, we prove the following. 

\begin{proposition} \label{prop:lasota-model}
Let $\mathcal{Q}_{\alpha,\eps}$ be defined as in \eqref{eq:model-problem}. Then, there exists a constant $C>0$ such that for any $\alpha \geq 1$ and for any $\eps > 0$ it holds
\begin{align} \label{eq:lasota-for-model}
    \|\cQ_{\alpha, \eps}  h\| \leq C \alpha^{-\eta}\|h\|+C |h|_w \,, \qquad | \cQ_{\alpha, \eps}  h|_w \leq C |h|_w \,.
\end{align}
Furthermore, for every $\alpha \in 2\NN$, there exists a constant $C_{\alpha}>0$, depending  on $\alpha$, such that
\begin{align} \label{eq:weak-strong-model}
    |(\cQ_{\alpha, \eps}-\cL_\alpha)h |_{w} \leq  C_{\alpha} \eps^{\beta/2}\|h\|.
\end{align}
\end{proposition}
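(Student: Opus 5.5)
The plan is to factor each branch of the model operator as a Gaussian smoothing followed by the inviscid branch operator of Corollary~\ref{lemma:improved-lasota}. Fix $\ell$ and change variables in \eqref{eq:model kernel gaussian}. On $T_\alpha(\cM_\ell)$ we have $(T_\alpha^\ell)^{-1}(x)=T_\alpha^{-1}(x)$, so
\[
\cQ^\ell_{\alpha,\eps}h(x)=\mathbbm{1}_{T_\alpha(\cM_\ell)}(x)\,\alpha^{-2}A_\ell\,(\mathcal{G}^\ell_\eps h)\bigl(T_\alpha^{-1}x\bigr),
\qquad
\mathcal{G}^\ell_\eps h(z)=\int_{\TT^2}\theta\bigl[y;\eps M_\ell\bigr](z)\,h(y)\,\dd y,
\]
where $M_\ell=A_\ell^{-1}\Gamma^\ell_\alpha A_\ell^{-T}+2A_\ell^{-1}A_\ell^{-T}$ is positive definite. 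Since $\theta[y;Q](z)$ depends only on $z-y$, the operator $\mathcal{G}^\ell_\eps$ is convolution with a probability density on $\TT^2$. This gives the identity
\[
\cQ^\ell_{\alpha,\eps}=\cL^\ell_\alpha\circ\mathcal{G}^\ell_\eps,
\]
with $\cL^\ell_\alpha$ the branch operator of Corollary~\ref{lemma:improved-lasota}, and hence $\cQ_{\alpha,\eps}=\sum_{\ell=1}^4\cL^\ell_\alpha\mathcal{G}^\ell_\eps$.

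\emph{Lasota--Yorke bounds.} Apply Corollary~\ref{lemma:improved-lasota} to $\mathcal{G}^\ell_\eps h$:
\[
\|\cL^\ell_\alpha\mathcal{G}^\ell_\eps h\|\le C\alpha^{-\eta}\|\mathcal{G}^\ell_\eps h\|+C|\mathcal{G}^\ell_\eps h|_w,
\qquad
|\cL^\ell_\alpha\mathcal{G}^\ell_\eps h|_w\le C|\mathcal{G}^\ell_\eps h|_w .
\]
Lemma~\ref{lemma:gaussian-bound} \eqref{eq:convbound} (an $L^1$-normalised kernel) gives $|\mathcal{G}^\ell_\eps h|_w\le|h|_w$ and $\|\mathcal{G}^\ell_\eps h\|\le\|h\|$. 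Summing over the four branches yields \eqref{eq:lasota-for-model} with constants independent of $\alpha$ and $\eps$, since neither the corollary nor \eqref{eq:convbound} depends on $M_\ell$.

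\emph{Weak--strong convergence.} Using $\cL_\alpha=\sum_\ell\cL^\ell_\alpha$, write
\[
(\cQ_{\alpha,\eps}-\cL_\alpha)h=\sum_{\ell=1}^4\cL^\ell_\alpha(\mathcal{G}^\ell_\eps h-h).
\]
By weak continuity of $\cL^\ell_\alpha$, $|(\cQ_{\alpha,\eps}-\cL_\alpha)h|_w\le C\sum_\ell|\mathcal{G}^\ell_\eps h-h|_w$. Then \eqref{eq:gaussbound} with $M=M_\ell$ gives $|\mathcal{G}^\ell_\eps h-h|_w\le C_{\alpha,M_\ell}\eps^{\beta/2}\|h\|$. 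Because $M_\ell$ depends only on $\alpha$ and $\ell$, this is $C_\alpha\eps^{\beta/2}\|h\|$, which is \eqref{eq:weak-strong-model}.

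\emph{Main obstacle.} The delicate step is the identification $\cQ^\ell_{\alpha,\eps}=\cL^\ell_\alpha\mathcal{G}^\ell_\eps$ and the uniformity of the first part in $\alpha$ and $\eps$. For the identification I would check three things: that the periodised Gaussian is a genuine convolution on $\TT^2$ (it is, since $\theta[y;Q](z)=\theta[0;Q](z-y)$ and $\int_{\TT^2}\theta=1$); that the indicator $\mathbbm{1}_{T_\alpha(\cM_\ell)}$ together with $(T^\ell_\alpha)^{-1}$ matches exactly the definition of $\cL^\ell_\alpha$; and that the kernel carries no Jacobian factor, which holds because $\det A_\ell=1$. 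For uniformity, the $\alpha$-dependence of $M_\ell$ must not enter the first bound. It does not, because \eqref{eq:convbound} only uses $\|\theta\|_{L^1}\le1$ and translation invariance of $\Sigma$ and of the norms. The dependence on $M_\ell$ appears only in \eqref{eq:gaussbound}, where an $\alpha$-dependent constant is allowed.
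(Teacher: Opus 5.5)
Your proposal is correct and follows the paper's proof essentially line for line. Like the paper, you write $\cQ^\ell_{\alpha,\eps}=\cL^\ell_\alpha\circ\mathcal{G}^\ell_\eps$, apply Corollary~\ref{lemma:improved-lasota} together with \eqref{eq:convbound} to get the Lasota--Yorke and weak bounds uniformly in $\alpha,\eps$, and then combine the weak continuity of $\cL^\ell_\alpha$ with \eqref{eq:gaussbound} for the weak--strong estimate.
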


\begin{proof}
We begin by proving \eqref{eq:lasota-for-model}. The weak-to-weak bound is analogous, and so we do not prove it explicitly. Furthermore, by linearity, it suffices to prove the result for each $Q_{\alpha,\eps}^\ell$. Note that $Q_{\alpha,\eps}^\ell $ may be written as 
$$
Q_{\alpha,\eps}^\ell=\cL_\alpha^\ell \circ \mathcal{G}_\eps^\ell,
$$
where $\cL_{\alpha}^\ell h(x)=\alpha^{-2}A_\ell\mathds{1}_{T_\alpha(\cM_\ell)} h\circ T_\alpha^{-1}(x)$, and
$$
\mathcal{G}_\eps^\ell h(x)= \int_{\TT^2} \theta[y;\eps (A_\ell^{-1} \Gamma_\alpha^\ell A_\ell^{-T}+2A_\ell^{-1}A_\ell^{-T})](x)h(y) \dd y.
$$
By Corollary~\ref{lemma:improved-lasota}, we thus deduce that 
$$
\|Q_{\alpha,\eps}h\|\leq C\alpha^{-\eta}\|\mathcal{G}_{\eps}^\ell h\|+C|\mathcal{G}_{\eps}^\ell h|_w.
$$
Since the operator $\cG_\eps^\ell $ acts simply via a convolution with the Gaussian kernel $ \theta [y, \eps ( A_\ell^{-1 } \Gamma_\alpha^\ell A_\ell^{-T}+2A_\ell^{-1} A_\ell^{-T})] (x)$, applying Lemma \ref{lemma:gaussian-bound}, the claimed Lasota-Yorke estimate follows. Hence, it remains to deduce the weak-strong convergence. 
We estimate
\begin{equation} 
    |\cQ_{\alpha, \eps}(h) - \cL_\alpha (h)|_w  \leq  \sum_{\ell=1}^4 | \cQ_{\alpha,\eps}^\ell (h) - \cL_\alpha (h)\mathbbm{1}_{T_\alpha (\cM_\ell)} |_w  =\sum_{\ell=1}^4 |\cL_{\alpha}^\ell (\mathcal G_{\eps}^\ell (h) - h) |_w \leq C \sum_{\ell=1}^4| \mathcal G_{\eps}^\ell (h) - h |_w \,.
\end{equation}
where in the last line we have used Lemma \ref{lemma:improved-lasota}. From here, it once again suffices to appeal to Lemma \ref{lemma:gaussian-bound} to conclude the proof of weak-strong convergence.
\end{proof}

\subsection{The error terms}
\label{subsec:error}

In this subsection, we estimate the error operator
\[
\mathcal E_{\alpha,\eps}\circ \e^{N\eps\Delta}h
=
\cL_{\alpha,\eps}\circ \e^{N\eps\Delta}h
-
\mathcal Q_{\alpha,\eps}\circ \e^{N\eps\Delta}h .
\]
More precisely, we prove the following result.

\begin{proposition}
\label{prop:error bounds}
There exists a constant $C_\alpha>0$ such that, for all $N\eps<1$ and all $h\in X$, the following estimates hold:
\begin{align}
\label{eq:error-strong}
\|\mathcal E_{\alpha,\eps}\circ \e^{N\eps\Delta}h\|
&\leq
C_\alpha N^{-\frac{1-\sigma}{2}}\|h\|,
\\
\label{eq:error-weak}
|\mathcal E_{\alpha,\eps}\circ \e^{N\eps\Delta}h|_w
&\leq
C_\alpha N^{-1/2}|h|_w,
\\
\label{eq:error-weak-strong}
|\mathcal E_{\alpha,\eps}\circ \e^{N\eps\Delta}h|_w
&\leq
C_\alpha \eps^{\sigma/2}\|h\|.
\end{align}
\end{proposition}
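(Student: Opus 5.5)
The plan is to work directly with the kernel representation
\[
\mathcal E_{\alpha,\eps}\e^{N\eps\Delta}h(x)=\int_{\TT^2}E_{\alpha,\eps}(x,y)\,(\e^{N\eps\Delta}h)(y)\,\dd y ,
\]
and combine the pointwise error bound \eqref{eq:error-kernel-bound} of Lemma~\ref{lemma:kernel bound} with the $L^\infty$ smoothing of Lemma~\ref{lemma:L-infinity regularisation} at $\tau=N\eps$. Set $f=\e^{N\eps\Delta}h$. Lemma~\ref{lemma:L-infinity regularisation} gives
\[
\|f\|_{L^\infty}\le C(N\eps)^{-\frac{1-\sigma}{2}}\|h\|_s, \qquad \|f\|_{L^\infty}\le C(N\eps)^{-1/2}|h|_w
\]
(using $N\eps<1$). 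Integrating \eqref{eq:error-kernel-bound} in $y$ with $\mathbf m=0$ gives, for each $\ell$, a Gaussian of mass $O(C_\alpha)$ in $y$, since $y\mapsto T^\ell_\alpha y$ is measure preserving. Hence
\[
|\mathcal E_{\alpha,\eps}f(x)|\le C_\alpha \exp\!\bigl(-c_\alpha\eps^{-1}\dd^2(x,S^\star)\bigr)\|f\|_{L^\infty}.
\]
The same argument with $|\mathbf m|=1$, off $S^\star$, gives
\[
|\nabla_x\mathcal E_{\alpha,\eps}f(x)|\le C_\alpha\eps^{-1/2}\exp\!\bigl(-c_\alpha\eps^{-1}\dd^2(x,S^\star)\bigr)\|f\|_{L^\infty}.
\]
So $\mathcal E f$ is an $L^\infty$ function concentrated in an $O(\sqrt\eps)$ neighbourhood of the singular set, which is a finite union of line segments.

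\textbf{Weak and stable norms.} Fix an admissible leaf $W$. The set $S^\star$ consists of boundaries of the backward strips: horizontal lines $y=1/2$, $y=0$, and lines of slope $\sim\alpha^{-1}$, i.e.\ lines in the unstable cone. A stable leaf, lying in a cone transverse to these, meets the $\sqrt\eps$-neighbourhood of each of the $O(\alpha)$ components of $S^\star$ in a set of length $O(\sqrt\eps)$. Hence
\[
\int_W e^{-c\eps^{-1}\dd^2(x,S^\star)}\,\dd\HH^1\le C_\alpha\min(\sqrt\eps,|W|).
\]
For \eqref{eq:error-weak} we get $|\mathcal Ef|_w\le C_\alpha\sqrt\eps\,(N\eps)^{-1/2}|h|_w=C_\alpha N^{-1/2}|h|_w$. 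For the stable norm, $|\varphi|\le|W|^{-\sigma}$ gives $C_\alpha\min(\sqrt\eps,|W|)^{1-\sigma}\le C_\alpha\eps^{(1-\sigma)/2}$, and multiplying by $(N\eps)^{-(1-\sigma)/2}$ yields $C_\alpha N^{-(1-\sigma)/2}\|h\|_s$. For \eqref{eq:error-weak-strong} I will use the crude bound $\|f\|_{L^\infty}\le\|\e^{N\eps\Delta}h\|_{L^\infty}$, but with the time reduced. One option is to bound the contribution of $y$ through the kernel with part of the heat flow, getting $\sqrt\eps\cdot\eps^{-(1-\sigma)/2}=\eps^{\sigma/2}$. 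The cleaner route is to apply Lemma~\ref{lemma:L-infinity regularisation} with $\tau=N\eps\ge\eps$, so that $(N\eps)^{-(1-\sigma)/2}\le\eps^{-(1-\sigma)/2}$, which gives $\eps^{\sigma/2}$ times the stable estimate.

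\textbf{Unstable norm (the main obstacle).} Take $\dd_\Sigma(W_1,W_2)<\delta\le 2/\alpha$. There are two regimes.
\begin{itemize}
\item If $\delta\ge\sqrt\eps$, bound each integral separately by the stable estimate above with $|\varphi_i|\le1$. This gives $C_\alpha\sqrt\eps\,(N\eps)^{-(1-\sigma)/2}\|h\|\le C_\alpha\delta^{\sigma}N^{-(1-\sigma)/2}\|h\|\le C_\alpha\delta^\beta N^{-(1-\sigma)/2}\|h\|$, using $\sigma>\beta$.
\item If $\delta<\sqrt\eps$, parametrize both leaves over a common interval. The difference splits into three pieces: a length-mismatch term $O(\delta)\|\mathcal Ef\|_{L^\infty}$; a test-function term $\dd_q(\varphi_1,\varphi_2)\int_W|\mathcal Ef|$; and the term $\int_0^S|\mathcal Ef(\gamma_1(t))-\mathcal Ef(\gamma_2(t))|\,\dd t$, where $|\gamma_1(t)-\gamma_2(t)|\le2\delta$. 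The last is controlled by the gradient bound, giving $\delta\,\eps^{-1/2}\sqrt\eps\,\|f\|_\infty$.
\end{itemize}
The delicate issue is that the gradient bound only holds off $S^\star$, and $\mathcal Ef$ jumps across $S^\star$ because of the indicators $\mathbbm{1}_{T_\alpha(\mathcal M_\ell)}$ in $\mathcal Q$. Segments of $\gamma_1,\gamma_2$ straddling $S^\star$ have total length $O(\alpha\delta)$, and there $\mathcal Ef$ is bounded by $\|f\|_\infty$, which contributes $C_\alpha\delta\|f\|_\infty$. Altogether we get $C_\alpha\delta\,(N\eps)^{-(1-\sigma)/2}\|h\|\le C_\alpha\delta^\beta\delta^{1-\beta}\eps^{-(1-\sigma)/2}N^{-(1-\sigma)/2}\|h\|$. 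Since $\delta<\sqrt\eps$, this is at most $C_\alpha\delta^\beta\eps^{(\sigma-\beta)/2}N^{-(1-\sigma)/2}\|h\|$, which is acceptable.

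I expect the main work to lie in verifying the transversality and counting claim uniformly, namely that every stable leaf spends length $\lesssim_\alpha\sqrt\eps$ near $S^\star$, and in handling the jump of $\mathcal Ef$ across $S^\star$ in the unstable estimate.
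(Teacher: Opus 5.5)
Your proposal is correct and follows the paper's proof essentially step by step. Like the paper, you integrate the error-kernel bound of Lemma~\ref{lemma:kernel bound} in $y$, use transversality of stable leaves to $S^\star$ to gain $\min\{|W|,\eps^{1/2}\}$, apply Lemma~\ref{lemma:L-infinity regularisation} at $\tau=N\eps$, and split the unstable norm into the cases $\delta\ge\eps^{1/2}$ and $\delta<\eps^{1/2}$, using the gradient bound in the second case. The one minor difference is in that second case: you discard the parameters $t$ where $[\gamma_1(t),\gamma_2(t)]$ meets $S^\star$ (a set of measure $O(\alpha\delta)$), whereas the paper uses the matched/unmatched decomposition of Lemma~\ref{lemma:technical} together with convexity of the strips; your version is equally valid here, since only $\alpha$-dependent constants are needed.
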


Once Proposition~\ref{prop:error bounds} is established, Proposition~\ref{prop:error decomposition} follows immediately by combining these estimates with the bounds for the model operator obtained in Proposition~\ref{prop:lasota-model}.

\subsubsection{The strong stable norm}
We begin by estimating the strong stable norm.
\begin{lemma}
\label{lemma:sublemma 1}
There exists a constant $C_\alpha>0$ such that for all $N\eps<1$,
\[
\|\mathcal{E}_{\alpha,\eps}\circ \e^{N\eps \Delta } h\|_s
\leq
C_{\alpha}N^{-\frac{1-\sigma}{2}}\|h\| \,.
\]
\end{lemma}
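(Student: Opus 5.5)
Set $f=\e^{N\eps\Delta}h$. The plan is to regularise first and then test the error kernel against stable leaves, so that the singular behaviour of $E_{\alpha,\eps}$ near $S^\star$ is paid for by the smallness of a thin neighbourhood.

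\emph{Step 1: $L^\infty$ control of $f$.} Lemma~\ref{lemma:L-infinity regularisation} with $\tau=N\eps$ gives
\[
\|f\|_{L^\infty}\le C(N\eps)^{-\frac{1-\sigma}{2}}\|h\|_s ,
\]
since $N\eps<1$. All further estimates then only need $L^1$-type bounds on the error kernel.

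\emph{Step 2: bound the test integral by a neighbourhood of $S^\star$.} Fix $W\in\Sigma$ and $\varphi$ with $\|\varphi\|_{\sigma,W}\le 1$, so that $\|\varphi\|_{L^\infty(W)}\le |W|^{-\sigma}$. We write
\[
\int_W \mathcal E_{\alpha,\eps}f\,\varphi\,\dd\HH^1=\int_W\varphi(x)\int_{\TT^2}E_{\alpha,\eps}(x,y)f(y)\,\dd y\,\dd\HH^1(x).
\]
Apply \eqref{eq:error-kernel-bound} with $\mathbf m=0$ and integrate in $y$. Each Gaussian factor $\eps^{-1}\exp(-c_\alpha\eps^{-1}\dd^2(x,T^\ell_\alpha y))$ has $y$-integral $\le C_\alpha$, because $T^\ell_\alpha$ is measure-preserving and affine. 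This yields
\[
\left|\int_W \mathcal E_{\alpha,\eps}f\,\varphi\right|\le C_\alpha\|f\|_{L^\infty}|W|^{-\sigma}\int_W \exp\bigl(-c_\alpha\eps^{-1}\dd^2(x,S^\star)\bigr)\,\dd\HH^1(x).
\]

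\emph{Step 3: geometric estimate (the main obstacle).} We need the transversality bound
\[
\int_W \exp\bigl(-c_\alpha\eps^{-1}\dd^2(x,S^\star)\bigr)\,\dd\HH^1\le C_\alpha\min\{|W|,\sqrt\eps\}.
\]
$S^\star$ is a finite union of at most $C\alpha$ segments: the horizontal lines $y=0,1/2$ and lines of slope $1/\alpha$, with $x\pm\alpha y$ constant. The tangent of $W$ lies in $C_s$, within angle $\lesssim\alpha^{-1}$ of vertical, so $W$ crosses each of these segments at an angle bounded below. Hence the set of points of $W$ at distance $\le r$ from $S^\star$ has length $\le C_\alpha r$. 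Summing over dyadic shells gives the bound, and the constant $C_\alpha$ absorbs both the count $\sim\alpha$ of segments and the angle factor. Combining,
\[
\text{LHS}\le C_\alpha|W|^{-\sigma}\min\{|W|,\sqrt\eps\}\le C_\alpha\eps^{\frac{1-\sigma}{2}}.
\]

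\emph{Step 4: conclusion.} Putting the steps together,
\[
\|\mathcal E_{\alpha,\eps}f\|_s\le C_\alpha\eps^{\frac{1-\sigma}{2}}(N\eps)^{-\frac{1-\sigma}{2}}\|h\|_s=C_\alpha N^{-\frac{1-\sigma}{2}}\|h\|,
\]
which is the claimed bound. The powers of $\eps$ cancel exactly, which is the reason for introducing the idle heat block $\e^{N\eps\Delta}$.

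The delicate points are the uniform transversality in Step 3 and checking that the periodic Gaussian sum in Step 2 is really integrable uniformly in $\eps\le1$. Near-tangency cannot occur, because every component of $S^\star$ has slope at most $1/\alpha$ while the leaves are almost vertical.
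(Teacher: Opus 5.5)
Your proposal is correct and follows the paper's proof essentially step by step. The steps are the same: the $L^\infty$ bound $\|\e^{N\eps\Delta}h\|_{L^\infty}\le C(N\eps)^{-\frac{1-\sigma}{2}}\|h\|_s$, then integrating out $y$ in the error-kernel bound, then the transversality estimate $\int_W \e^{-c_\alpha\eps^{-1}\dd^2_{\TT^2}(x,S^\star)}\,\dd\HH^1\le C_\alpha\min\{|W|,\eps^{1/2}\}$, and finally the balancing $\min\{|W|,\eps^{1/2}\}|W|^{-\sigma}\le \eps^{\frac{1-\sigma}{2}}$.

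Your Step 3 counts the $O(\alpha)$ nearly horizontal components of $S^\star$ met by an almost vertical leaf. This justifies the transversality bound more explicitly than the paper does. It also correctly shows that the constant there must depend on $\alpha$, which the paper's final $C_\alpha$ absorbs anyway.
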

\begin{proof}
Recall that the action of $\mathcal{E}_{\alpha,\eps}$ is given by integration against the kernel $E_{\alpha,\eps}$. Let therefore $W\in\Sigma$ and $\varphi\in C^q(W)$ satisfy
$\|\varphi\|_{C^q(W)}\le |W|^{-\sigma}$.
Then
\begin{align}
\int_W \mathcal{E}_{\alpha,\eps}f(x)\varphi(x)\,\dd\HH^1(x)
=
\int_W \int_{\TT^2}
E_{\alpha,\eps}(x,y)f(y)\,\dd y\,\varphi(x)\,\dd\HH^1(x).
\label{eq:operator-main-corrected}
\end{align}
Applying this identity with
$f=\e^{N\eps\Delta}h$,
and using the pointwise kernel bound from Lemma~\ref{lemma:kernel bound}, we obtain
\begin{align*}
&
\left|
\int_W \mathcal{E}_{\alpha,\eps}\e^{N\eps\Delta}h(x)
\varphi(x)\,\dd\HH^1(x)
\right|
\nonumber
\\
&\qquad\le
\|\e^{N\eps\Delta}h\|_{L^\infty(\TT^2)}
\|\varphi\|_{C^0(W)}
\int_W\int_{\TT^2}
|E_{\alpha,\eps}(x,y)|
\,\dd y\,\dd\HH^1(x)
\\
&\qquad\le
\sum_{\ell=1}^4C_\alpha
\|\e^{N\eps\Delta}h\|_{L^\infty(\TT^2)}
\|\varphi\|_{C^0(W)}
\int_W\int_{\TT^2}
\frac1\eps
\e^{-c_\alpha\eps^{-1}{\dd^2_{\TT^2}(x,S^\star)}}
\e^{-c_\alpha\eps^{-1}\dd^2_{\TT^2}(x, T^\ell_\alpha y)}
\,\dd y\,\dd\HH^1(x).
\label{eq:error-bound-1}
\end{align*}
Integrating first in $y$ yields
\[
\sum_{\ell=1}^4 \int_{\TT^2}
\frac1\eps
\e^{-c_\alpha\eps^{-1}\dd^2_{\TT^2}(x, T^\ell_\alpha y)}
\,\dd y
\le C_\alpha,
\]
and therefore
\[
\left|
\int_W \mathcal{E}_{\alpha,\eps}\e^{N\eps\Delta}h(x)
\varphi(x)\,\dd\HH^1(x)
\right|
\le
C_\alpha
\|\e^{N\eps\Delta}h\|_{L^\infty(\TT^2)}
\|\varphi\|_{C^0(W)}
\int_W
\e^{-c_\alpha\eps^{-1}\dd^2_{\TT^2}(x,S^\star)}
\,\dd\HH^1(x).
\]
Since every admissible curve $W\in\Sigma$ is uniformly transverse to the singular set $S^\star$, there exists $C>0$ such that 
\[
\int_W
\e^{-c_\alpha\eps^{-1}\dd_{\TT^2}^2(x,S^\star)}
\,\dd\HH^1(x)
\le
C\min\{|W|,\eps^{1/2}\}.
\]
Hence,
\begin{equation}\label{eq:estimate-model-real}
\left|
\int_W \mathcal{E}_{\alpha,\eps}\e^{N\eps\Delta}h(x)
\varphi(x)\,\dd\HH^1(x)
\right|
\le
C_\alpha
\|\e^{N\eps\Delta}h\|_{L^\infty(\TT^2)}
\min\{|W|,\eps^{1/2}\}
\|\varphi\|_{C^0(W)}.
\end{equation}
Using now Lemma~\ref{lemma:L-infinity regularisation}, we obtain
\[
\|\e^{N\eps\Delta}h\|_{L^\infty(\TT^2)}
\le
C(N\eps)^{-\frac{1-\sigma}{2}}\|h\|_s,
\]
since $N\eps<1$. Together with $\|\varphi\|_{C^0(W)}
\le |W|^{-\sigma}$,
this gives
\begin{align*}
\left|
\int_W \mathcal{E}_{\alpha,\eps}\e^{N\eps\Delta}h(x)
\varphi(x)\,\dd\HH^1(x)
\right|\le
C_\alpha
(N\eps)^{-\frac{1-\sigma}{2}}
\|h\|_s
\min\{|W|,\eps^{1/2}\}
|W|^{-\sigma}.
\end{align*}
Finally, since
\[
\min\{|W|,\eps^{1/2}\}|W|^{-\sigma}
\le
C\eps^{\frac{1-\sigma}{2}},
\]
we conclude that
\[
\left|
\int_W \mathcal{E}_{\alpha,\eps}\e^{N\eps\Delta}h(x)
\varphi(x)\,\dd\HH^1(x)
\right|
\le
C_\alpha
N^{-\frac{1-\sigma}{2}}
\|h\|_s.
\]
Taking the supremum over admissible $W$ and test functions $\varphi$ yields the result.
\end{proof}

\subsubsection{The strong unstable norm}
To complete the proof of \eqref{eq:error-strong}, we need a bound on the strong unstable norm of the error term.
\begin{lemma}
\label{lemma:sublemma 2}
There exists a constant $C_\alpha>0$ such that for all $N\eps<1$,
\[
\|\mathcal{E}_{\alpha,\eps}\circ \e^{N\eps \Delta}h \|_{u}
\leq
C_\alpha N^{-\frac{1-\sigma}{2}}\|h\| \, .
\]
\end{lemma}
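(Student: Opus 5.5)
We bound the unstable norm of $f_\eps:=\mathcal E_{\alpha,\eps}\e^{N\eps\Delta}h$ by splitting according to the relative size of $\delta$ and $\sqrt\eps$. Fix $0<\delta\le 2/\alpha$, curves $W^1,W^2\in\Sigma$ with $\dd_\Sigma(W^1,W^2)<\delta$, and test functions with $\|\varphi_i\|_{C^1}\le1$ and $\dd_q(\varphi_1,\varphi_2)<\delta$. We must show
\[
\left|\int_{W^1} f_\eps\varphi_1\,\dd\HH^1-\int_{W^2} f_\eps\varphi_2\,\dd\HH^1\right|\le C_\alpha\delta^\beta N^{-\frac{1-\sigma}{2}}\|h\|.
\]

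\emph{Large separations, $\delta\ge\eps^{1/2}$.} We do not compare the two integrals and bound each separately. Estimate \eqref{eq:estimate-model-real} from the proof of Lemma~\ref{lemma:sublemma 1}, together with Lemma~\ref{lemma:L-infinity regularisation} applied with $\tau=N\eps$, gives for each $i$
\[
\left|\int_{W^i} f_\eps\varphi_i\right|\le C_\alpha (N\eps)^{-\frac{1-\sigma}{2}}\eps^{1/2}\|h\|_s=C_\alpha N^{-\frac{1-\sigma}{2}}\eps^{\sigma/2}\|h\|_s .
\]
Since $\beta<\sigma$ by \eqref{eq:choice-parameters} and $\eps^{1/2}\le\delta$, we have $\eps^{\sigma/2}\le\eps^{\beta/2}\le\delta^\beta$. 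This closes the case.

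\emph{Small separations, $\delta<\eps^{1/2}$.} Here we exploit the smoothness of the error kernel in $x$. Parametrise $W^i$ by $\xx_i+t\vvv_i$ and write the difference as the sum of three pieces: the integral over the length mismatch $|S_1-S_2|\le\delta$, the integral of $f_\eps(\varphi_1-\varphi_2)$ in the parametrisation, and $\int_0^{S}\big(f_\eps(\xx_1+t\vvv_1)-f_\eps(\xx_2+t\vvv_2)\big)\varphi_2\,\dd t$.

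The first two pieces are bounded by $\delta\,\|f_\eps\|_{L^\infty}$ restricted to the relevant curves. Using the same decay $\e^{-c_\alpha\dd^2(x,S^\star)/\eps}$ and transversality argument as in Lemma~\ref{lemma:sublemma 1}, these contribute at most $C_\alpha\delta(N\eps)^{-\frac{1-\sigma}{2}}\|h\|_s$.

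The third piece is controlled by the mean value theorem with the derivative bound \eqref{eq:error-kernel-bound} for $|\mathbf m|=1$. The segment joining $\xx_1+t\vvv_1$ to $\xx_2+t\vvv_2$ has length at most $2\delta$. Integrating the gradient bound in $y$ yields $\eps^{-1/2}$, and integrating along $W$ the factor $\e^{-c\dd^2(\cdot,S^\star)/\eps}$ (which is comparable along the connecting segments, since $\delta<\sqrt\eps$) yields $\eps^{1/2}$. The resulting bound is $C_\alpha\delta\,\eps^{-1/2}\,\eps^{1/2}\,\|\e^{N\eps\Delta}h\|_{L^\infty}$.

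In both small-separation contributions we then use
\[
\delta\le\delta^\beta\eps^{\frac{1-\beta}{2}}\le\delta^\beta\eps^{\frac{1-\sigma}{2}},
\]
which cancels $(N\eps)^{-\frac{1-\sigma}{2}}$ and leaves $C_\alpha\delta^\beta N^{-\frac{1-\sigma}{2}}\|h\|_s$. Taking suprema gives the lemma.

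\emph{Main obstacle.} The derivative bound \eqref{eq:error-kernel-bound} only holds for $x\notin S^\star$, while the connecting segments may cross $S^\star$, where the kernel $E_{\alpha,\eps}$ (through the indicators in $\mathcal Q_{\alpha,\eps}$) jumps. To handle this, we first split each $W^i$ into matched and unmatched pieces as in Lemma~\ref{lemma:technical}\eqref{enum:1}. Matched pieces lie in the same strip, so their connecting segments avoid $S^\star$. The at most $8$ unmatched pieces have length $\le C\delta$, and each is bounded directly by $C_\alpha\delta\|\e^{N\eps\Delta}h\|_{L^\infty}$, which is then absorbed exactly as the first two pieces above. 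Each matched piece needs its own gradient bound after this splitting, but the total length of the matched pieces is at most $|W^i|\le 1$, so summing over them does not affect the argument.
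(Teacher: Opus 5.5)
Your proposal is correct and follows the paper's proof essentially step by step: the same split at $\delta\sim\eps^{1/2}$, the same direct bound via \eqref{eq:estimate-model-real} and Lemma~\ref{lemma:L-infinity regularisation} when $\delta\ge\eps^{1/2}$, and for $\delta<\eps^{1/2}$ the same matched/unmatched decomposition from Lemma~\ref{lemma:technical}, with the test-function and length-mismatch terms handled separately and a mean-value bound on $\nabla_x E_{\alpha,\eps}$ inside a single strip. The one loose point is your ``total length'' remark: it covers the gradient term, since the matched pieces are disjoint subsets of $W^i$, but the length-mismatch terms are summed over $M_\alpha\le 2\alpha$ matched pairs, and that count is simply absorbed into $C_\alpha$, as in the paper.
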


\begin{proof}
Fix curves $W_1,W_2\in\Sigma$ satisfying
\[
\dd_\Sigma(W_1,W_2)\le \delta,
\]
and test functions
\[
\varphi_i\in C^1(W_i),
\qquad
\|\varphi_i\|_{C^1(W_i)}\le 1,
\qquad
\dd_q(\varphi_1,\varphi_2)\le \delta,
\]
for $i=1,2$. We distinguish two cases.

\smallskip

\noindent\textbf{Case 1: $\delta\ge \eps^{1/2}$.} 
Using the kernel bound from Lemma~\ref{lemma:kernel bound}, we estimate for each $i=1,2$
\begin{align*}
&
\left|
\int_{W_i}
\mathcal E_{\alpha,\eps}\e^{N\eps\Delta}h(x)
\varphi_i(x)\,\dd\HH^1(x)
\right|
\\
&\qquad\le
\sum_{\ell=1}^4\|\e^{N\eps\Delta}h\|_{L^\infty}
\|\varphi_i\|_{C^0}
\int_{W_i}\int_{\TT^2}
\frac{C_\alpha}{\eps}
\e^{-c_\alpha\eps^{-1}\dd_{\TT^2}^2(x,T^\ell_\alpha y)}
\e^{-c_\alpha\eps^{-1}\dd^2_{\TT^2}(x,S^\star)}
\,\dd y\,\dd\HH^1(x)\\
&\qquad
\le
C_\alpha
\|\e^{N\eps\Delta}h\|_{L^\infty}
\int_{W_i}
\e^{-c_\alpha\eps^{-1}\dd_{\TT^2}^2(x,S^\star)}
\,\dd\HH^1(x)\\
&\qquad
\le
C_\alpha
\eps^{1/2}
\|\e^{N\eps\Delta}h\|_{L^\infty}.
\end{align*}
Finally, since $\delta\ge \eps^{1/2}$, we have that
$\eps^{\sigma/2}
\le
\delta^\beta \eps^{\frac{\sigma-\beta}{2}}$,
and because $\sigma>\beta$, we conclude
\[
\left|
\int_{W_i}
\mathcal E_{\alpha,\eps}\e^{N\eps\Delta}h
\varphi_i
\right|
\le
C_\alpha
N^{-\frac{1-\sigma}{2}}
\delta^\beta
\|h\|_s.
\]

\smallskip

\noindent\textbf{Case 2: $\delta<\eps^{1/2}$.}
We now apply Lemma~\ref{lemma:technical} and decompose the curves into matched and unmatched curves as
\[
W_i
=
\bigcup_{j=1}^{M_\alpha}W_{i,j}
\cup
\bigcup_{j=1}^{8}U_{i,j}.
\]
The unmatched pieces are estimated exactly as in the strong stable norm bound:
\[
C\delta
\|\e^{N\eps\Delta}h\|_{L^\infty}
\le
C
N^{-\frac{1-\sigma}{2}}
\delta^\beta
\|h\|_s,
\]
since $\delta<\eps^{1/2}$.

Thus it suffices to estimate the matched pieces. Up to a multiplicative constant depending on $M_\alpha$, it is enough to bound 
\begin{align}
\label{eq:matched-main}
\Bigg|
\int_{\TT^2} \e^{N\eps \Delta}h(y)
\Bigg(
\int_{W_{1,j}}
E_{\alpha,\eps}(x,y)\varphi_1(x)\,\dd\HH^1(x)
-
\int_{W_{2,j}}
E_{\alpha,\eps}(x,y)\varphi_2(x)\,\dd\HH^1(x)
\Bigg)
\,\dd y
\Bigg|,
\end{align}
where $W_{1,j}, W_{2,j}$ are matched curves, and in particular lie in the same $T_\alpha(\cM_\ell)$.  We may assume
$|W_{1,j}|\le |W_{2,j}|$.
Let
\[
\gamma_i:[0,S_i]\to W_{i,j}
\]
be arc-length parametrisations. Define the translated test function
\[
\widetilde\varphi_2(\gamma_1(t))
=
\varphi_2(\gamma_2(t)),
\qquad
0\le t\le S_1.
\]
Then 
\[
\dd_q(\varphi_2,\widetilde\varphi_2)=0.
\]
Adding and subtracting the integral with $\widetilde\varphi_2$, one contribution becomes
\[
I_1:=\left|
\int_{\TT^2}
\e^{N\eps \Delta}h(y)
\int_{W_{1,j}}
E_{\alpha,\eps}(x,y)
(\varphi_1(x)-\widetilde\varphi_2(x))
\,\dd\HH^1(x)
\,\dd y
\right|.
\]
Estimating as before,
\[
I_1\le
C_\alpha
\|\e^{N\eps\Delta}h\|_{L^\infty}
\delta
\int_{W_{1,j}}
\e^{-c_\alpha\eps^{-1}\dd_{\TT^2}^2(x,S^\star)}
\,\dd\HH^1(x)
\le
C_\alpha
\delta\eps^{1/2}
\|\e^{N\eps\Delta}h\|_{L^\infty}
\le
C_\alpha
N^{-\frac{1-\sigma}{2}}
\delta^\beta
\|h\|_s,
\]
since $\delta<\eps^{1/2}$.

We now estimate the remaining term
\begin{align*}
I_2:=\int_{\TT^2} \e^{N\eps \Delta}h(y)
\left(
\int_{W_{1,j}}
E_{\alpha,\eps}(x,y)\widetilde\varphi_2(x)\,\dd\HH^1(x)-
\int_{W_{2,j}}
E_{\alpha,\eps}(x,y)\varphi_2(x)\,\dd\HH^1(x)
\right)
\,\dd y.
\end{align*}
After trimming the longer curve, we may assume
$|W_{1,j}|=|W_{2,j}|$.
The discarded piece is estimated as above.
Writing
\[
\gamma_i(t)=x_i+t v_i,
\]
the difference becomes
\begin{equation}
\label{eq:difference kernels equation corrected}
\int_0^{S_1}
\varphi_2(\gamma_2(t))
\bigl(
E_{\alpha,\eps}(\gamma_1(t),y)
-
E_{\alpha,\eps}(\gamma_2(t),y)
\bigr)
\,\dd t.
\end{equation}
Using the fundamental theorem of calculus,
\[
E_{\alpha,\eps}(\bar x,y)-E_{\alpha,\eps}(x,y)
=
\int_0^{|\bar x-x|}
\nabla_x E_{\alpha,\eps}
\Bigl(
\bar x+s\frac{x-\bar x}{|x-\bar x|},
y
\Bigr)
\cdot
\frac{x-\bar x}{|x-\bar x|}
\,\dd s.
\]
Noting that $W_{1,j},W_{2,j} $ both live in the same connected component of the projection of $T_{\alpha}(\cM_\ell)$ onto the unit square, which is in particular a convex set, we may apply Lemma~\ref{lemma:kernel bound} to bound,
\begin{align*}
|E_{\alpha,\eps}(\gamma_1(t),y)-E_{\alpha,\eps}(\gamma_2(t),y)|
\le \sum_{\ell=1}^4 C_\alpha
\int_0^{|\gamma_1(t)-\gamma_2 (t)|}
\eps^{-3/2}
\e^{-c_\alpha\eps^{-1} \dd_{\TT^2}^2( z_s(t), T^\ell_\alpha y)}
\e^{-c_\alpha\eps^{-1}\dd_{\TT^2}^2(z_s(t),S^\star)}
\,\dd s,
\end{align*}
where
\[
z_s(t)
=
\bar \gamma_1(t)+s\frac{\gamma_2(t)-\gamma_1(t)}{|\gamma_1(t)-\gamma_2(t)|}.
\]
Integrating in $y$ removes one power of $\eps^{-1}$ and yields
\[
|E_{\alpha,\eps}(\gamma_1(t),y)-E_{\alpha,\eps}(\gamma_2(t),y)|\le
C_\alpha
\int_0^{|\bar \gamma_1(t)-\gamma_2(t)|}
\eps^{-1/2}
\e^{-c_\alpha\eps^{-1}\dd_{\TT^2}^2(z_s(t),S^\star)}
\,\dd s.
\]
Since the distance between the matched curves is at most $C\delta<C\eps^{1/2}$, Young's inequality implies
\[
\dd_{\TT^2}^2(z_s(t),S^\star)
\ge
\frac12\dd_{\TT^2}^2( \gamma_1(t),S^\star)-C\delta^2
\ge
\frac12\dd^2_{\TT^2}( \gamma_1(t),S^\star)-C\eps.
\]
Substituting into \eqref{eq:difference kernels equation corrected}, we obtain
\[
|I_2|\le
C_\alpha
\delta
\|\e^{N\eps\Delta}h\|_{L^\infty}
\int_0^{S_1}
\eps^{-1/2}
\e^{-c_\alpha\eps^{-1}\dd_{\TT^2}^2(\gamma_1(t),S^\star)}
\,\dd t.
\]
The transversality of admissible curves to $S^\star$ yields
\[
\int_0^{S_1}
\eps^{-1/2}
\e^{-c_\alpha\eps^{-1}\dd_{\TT^2}^2(\gamma_1(t),S^\star)}
\,\dd t
\le
C,
\]
and therefore
\[
|I_2|\le
C_\alpha
\delta
\|\e^{N\eps\Delta}h\|_{L^\infty}\le
C_\alpha
N^{-\frac{1-\sigma}{2}}
\delta^\beta
\|h\|_s,
\]
since $\delta<\eps^{1/2}$. Taking the supremum over admissible curves and test functions concludes the proof.

\end{proof}

\subsubsection{The weak-strong convergence and the weak bound}
We now prove a slightly stronger version of \eqref{eq:error-weak}-\eqref{eq:error-weak-strong}.
\begin{lemma}
\label{lemma:weak-strong error}
There exists a constant $C_\alpha>0$, depending only on $\alpha$, such that for all $N\eps<1$ and all $h\in X$,
\begin{equation}\label{eq:weakweakstrong}
|\mathcal{E}_{\alpha,\eps} \circ \e^{N\eps \Delta}h|_w
\leq
C_\alpha N^{- \frac{1-\sigma}{2}} \eps^{\frac{\sigma}{2}}\|h\|_s,
\qquad
|\mathcal{E}_{\alpha,\eps} \circ \e^{N\eps \Delta}h|_w
\leq
C_\alpha N^{-1/2}|h|_w.
\end{equation}
\end{lemma}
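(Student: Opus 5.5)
The plan is to reuse the structure of the proof of Lemma~\ref{lemma:sublemma 1} almost verbatim, with the weak test functions $\|\varphi\|_{C^1(W)}\le 1$ in place of the strong stable ones. Fix $W\in\Sigma$ and $\varphi\in C^1(W)$ with $\|\varphi\|_{C^1(W)}\le1$. The steps are as follows.

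\emph{Step 1: reduce to a sup-norm bound.} Write the pairing as
\[
\int_W \mathcal{E}_{\alpha,\eps}\e^{N\eps\Delta}h\,\varphi\,\dd\HH^1=\int_W\int_{\TT^2}E_{\alpha,\eps}(x,y)\,\e^{N\eps\Delta}h(y)\,\dd y\,\varphi(x)\,\dd\HH^1(x),
\]
and use the pointwise bound \eqref{eq:error-kernel-bound} with $\mathbf m=0$. Integrating the Gaussian factor $\eps^{-1}\e^{-c_\alpha\eps^{-1}\dd^2(x,T^\ell_\alpha y)}$ in $y$ gives a constant $C_\alpha$. Indeed, $T_\alpha^\ell$ is an affine map of determinant one, so the change of variables $y\mapsto T^\ell_\alpha y$ is harmless.

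\emph{Step 2: use transversality to the singular set.} Since admissible curves are uniformly transverse to $S^\star$, we get
\[
\Big|\int_W \mathcal{E}_{\alpha,\eps}\e^{N\eps\Delta}h\,\varphi\Big|\le C_\alpha\|\e^{N\eps\Delta}h\|_{L^\infty}\min\{|W|,\eps^{1/2}\}\le C_\alpha \eps^{1/2}\|\e^{N\eps\Delta}h\|_{L^\infty}.
\]
This is the same inequality as \eqref{eq:estimate-model-real}, now with $\|\varphi\|_{C^0}\le1$.

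\emph{Step 3: the first bound in \eqref{eq:weakweakstrong}.} Apply the first inequality of Lemma~\ref{lemma:L-infinity regularisation} with $\tau=N\eps<1$, which gives $\|\e^{N\eps\Delta}h\|_{L^\infty}\le C(N\eps)^{-\frac{1-\sigma}{2}}\|h\|_s$. The total power of $\eps$ is then $\eps^{\frac12-\frac{1-\sigma}{2}}=\eps^{\sigma/2}$, so
\[
|\mathcal{E}_{\alpha,\eps}\e^{N\eps\Delta}h|_w\le C_\alpha N^{-\frac{1-\sigma}{2}}\eps^{\sigma/2}\|h\|_s .
\]

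\emph{Step 4: the second bound in \eqref{eq:weakweakstrong}.} Apply instead the second inequality of Lemma~\ref{lemma:L-infinity regularisation}, which gives $\|\e^{N\eps\Delta}h\|_{L^\infty}\le C(N\eps)^{-1/2}|h|_w$. The factor $\eps^{1/2}$ cancels exactly, leaving $C_\alpha N^{-1/2}|h|_w$.

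Taking suprema over $W$ and $\varphi$ concludes both estimates. The main obstacle is the transversality estimate
\[
\int_W\e^{-c_\alpha\eps^{-1}\dd^2(x,S^\star)}\,\dd\HH^1\le C\min\{|W|,\eps^{1/2}\},
\]
which is already used in Lemma~\ref{lemma:sublemma 1}. Its justification rests on two facts. First, $S^\star$ consists of the horizontal lines $y\in\{0,1/2\}$ together with lines of slope $\pm\alpha^{-1}$. Second, stable leaves have slope at least $\alpha/2$ in absolute value. Hence $W$ meets $S^\star$ at angles bounded below, and meets it in at most $C_\alpha$ points. Each such crossing contributes $O_\alpha(\eps^{1/2})$, and the constant may depend on $\alpha$, which the statement allows. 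I would cite this step rather than redo it.
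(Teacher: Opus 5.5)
Your proposal is correct and follows the paper's proof essentially verbatim. The paper also reduces, via the kernel bound \eqref{eq:error-kernel-bound} and the transversality estimate \eqref{eq:estimate-model-real} from Lemma~\ref{lemma:sublemma 1}, to $C_\alpha\eps^{1/2}\|\e^{N\eps\Delta}h\|_{L^\infty}$, and then applies the two inequalities of Lemma~\ref{lemma:L-infinity regularisation} with $\tau=N\eps<1$. Your remark that the transversality constant really depends on $\alpha$ (about $\alpha$ crossings of $S^\star$, plus the constant $c_\alpha$ in the exponent) is accurate and harmless, since it is absorbed into $C_\alpha$.
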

\begin{proof}
Let $W \in \Sigma$ and let $\varphi \in C^1(W)$ satisfy
$\|\varphi\|_{C^1(W)} \leq 1$.
Using the same computation as in Lemma~\ref{lemma:sublemma 1}, and in particular estimate~\eqref{eq:estimate-model-real}, we obtain
\[
\left|
\int_W
\int_{\TT^2}
E_{\alpha,\eps}(x,y)
\e^{N\eps\Delta}h(y)
\,\dd y\,
\varphi(x)
\,\dd\mathcal H^1(x)
\right|
\leq
C_\alpha\eps^{1/2}
\|\e^{N\eps\Delta}h\|_{L^\infty(\TT^2)}
\|\varphi\|_{C^0(W)}.
\]
Applying Lemma~\ref{lemma:L-infinity regularisation} yields
\[
\left|
\int_W
\int_{\TT^2}
E_{\alpha,\eps}(x,y)
\e^{N\eps\Delta}h(y)
\,\dd y\,
\varphi(x)
\,\dd \mathcal H^1(x)
\right|
\leq
C_\alpha
\eps^{1/2}
(N\eps)^{-\frac{1-\sigma}{2}}
\|h\|_s 
\leq 
C_\alpha N^{-\frac{1-\sigma}{2}}
\eps^{\sigma/2}\|h\|_s  .
\]
Taking the supremum over admissible curves and test functions gives the first estimate in \eqref{eq:weakweakstrong}.

The weak-to-weak estimate in \eqref{eq:weakweakstrong} follows from the same argument, using instead the weak norm estimate from Lemma~\ref{lemma:L-infinity regularisation}, and concludes the proof.
\end{proof}

\subsection{Proof of Proposition \ref{prop: uniform Lasota Yorke}} \label{subsec:proof-prop}
Having established Proposition~\ref{prop:error decomposition}, and hence Proposition~\ref{prop:lasota-continuous-diffusion}, we now prove Proposition~\ref{prop: uniform Lasota Yorke}.
We recall that the operator appearing in Proposition~\ref{prop: uniform Lasota Yorke} is
\[
\cL_{g, \eps}
\circ
\e^{- 8\eps \pi^2 }
\cL_{\alpha, \eps}
\circ
\e^{N  \eps (\Delta_{x,y} - 4\pi^2)} .
\]
While Proposition~\ref{prop:lasota-continuous-diffusion} treats the main operator
\[
\cL_{\alpha,\eps} \circ \e^{N \eps \Delta_{x,y}},
\]
it remains to control the phase-shift operator $\cL_{g,\eps}$. Fortunately, this operator arises from a well-behaved advection--diffusion equation, whose action on the anisotropic spaces can be controlled directly.

Indeed, let $b^\eps:[0,1]\times \TT^3\to\RR^2$ solve
\begin{equation}
\label{eq:passive-scalar-equation}
\partial_t b^\eps
-
g(x,y)\partial_z b^\eps
=
\eps \Delta b^\eps,
\end{equation}
with initial datum
\[
b_{\initial}(x,y,z)
=
\e^{2\pi i z} h(x,y).
\]
Writing
\[
b^\eps(t,x,y,z)
=
\e^{2\pi i z} h_t^\eps(x,y),
\]
we recall that
\[
\cL_{g,\eps}(h)=h_1^\eps .
\]
We then have the following result.
\begin{lemma}
\label{lemma:transport-diff-in-z}
There exists a constant $C_g>0$, depending only on $g$, such that
\begin{equation}
\label{eq:norm-strong-passive}
\|
\cL_{g,\eps}(h)
\|
\leq
C_g \|h\|,
\qquad
|
\cL_{g,\eps}(h)
|_w
\leq
C_g |h|_w ,
\end{equation}
and
\begin{equation}
\label{eq:norm-weak-strong-passive}
|
\cL_{g,\eps}(h)
-
\cL_{g,0}(h)
|_w
\leq
C_g \eps^{\beta/2}\|h\|.
\end{equation}
\end{lemma}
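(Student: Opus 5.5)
The approach is to solve the reduced equation for $h_t^\eps$ explicitly via a Feynman--Kac/Duhamel representation, and to reduce every estimate to the Gaussian convolution bounds of Lemma~\ref{lemma:gaussian-bound} combined with the multiplier bounds of Lemma~\ref{lemma:smooth multiplier}.

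Since $\partial_z$ acts on $\e^{2\pi i z}$ as multiplication by $2\pi i$, the reduced equation is
\[
\partial_t h = \eps(\Delta_{x,y}-4\pi^2)h + 2\pi i g h .
\]
The factor $\e^{-4\pi^2\eps t}$ is a harmless contraction, so we drop it. For $\eps=0$ the solution is $\cL_{g,0}h=\e^{2\pi i g}h$, and Lemma~\ref{lemma:smooth multiplier} immediately gives the strong and weak bounds with $C_g=C\|\e^{2\pi i g}\|_{C^1}$.

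For $\eps>0$, Feynman--Kac gives
\[
h_1^\eps(x)=\e^{-4\pi^2\eps}\,\mathbb E\Big[\exp\Big(2\pi i\int_0^1 g(x+\sqrt{2\eps}B_s)\,\dd s\Big)\, h(x+\sqrt{2\eps}B_1)\Big].
\]
We write this as an average over Brownian paths $\omega$ of operators of the form $h\mapsto m_\omega(x)\,h(x+\sqrt{2\eps}\omega_1)$, where
\[
m_\omega(x)=\exp\Big(2\pi i\int_0^1 g(x+\sqrt{2\eps}\omega_s)\,\dd s\Big).
\]
Each such operator is a translation, which preserves $|\cdot|_w$, $\|\cdot\|_s$ and $\|\cdot\|_u$ exactly, as in the proof of Lemma~\ref{lemma:gaussian-bound}. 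It is followed by multiplication by $m_\omega$, which satisfies $\|m_\omega\|_{C^1}\le 1+2\pi\|g\|_{C^1}$ uniformly in $\omega$ and $\eps$. The factorisation is valid because the translation and the multiplier are both applied at the fixed $x$. By Lemma~\ref{lemma:smooth multiplier}, each path operator is bounded on $X$ and on $X_w$ with a constant $C_g$. Testing against $W,\varphi$ and using Fubini to average over $\omega$ then gives \eqref{eq:norm-strong-passive}. We must be careful that the suprema defining the norms sit outside the expectation. This is fine, since for fixed $W$ and $\varphi$ the pairing is linear and we simply bound it pathwise.

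For \eqref{eq:norm-weak-strong-passive} we split the difference as
\begin{align*}
h_1^\eps-\e^{2\pi i g}h
&= \mathbb E\big[m_\omega\,(h(\cdot+\sqrt{2\eps}\omega_1)-h)\big] \\
&\quad + \mathbb E\big[(m_\omega-\e^{2\pi i g})\big]\,h
 + (\e^{-4\pi^2\eps}-1)\,\mathbb E\big[m_\omega\,h(\cdot+\sqrt{2\eps}\omega_1)\big].
\end{align*}
The third term is $O(\eps)|h|_w$. For the first term we bound $|m_\omega(\tau_z h - h)|_w\le C_g|\tau_z h-h|_w$, where $\tau_z$ denotes translation by $z$. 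As in the last part of the proof of Lemma~\ref{lemma:gaussian-bound}, this is at most $C\,|z|^\beta\|h\|_u$ for $|z|<1/\alpha$ and at most $2|h|_w$ otherwise. Taking the expectation with $z=\sqrt{2\eps}\omega_1$ gives $C_{\alpha,g}\eps^{\beta/2}\|h\|$. For the second term, the multiplier $n(x)=\mathbb E[m_\omega(x)]-\e^{2\pi i g(x)}$ has $\|n\|_{C^0}\le C_g\sqrt\eps$ and $\|n\|_{C^1}\le C_g$. This gives $\|n\|_{C^{\beta}}\lesssim\eps^{(1-\beta)/2}$, but Lemma~\ref{lemma:smooth multiplier} only uses the full $C^1$ norm.

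The hardest step is this middle term, because Lemma~\ref{lemma:smooth multiplier} does not directly yield smallness from a small $C^0$ norm. I would handle it by testing against $\varphi$ with $\|\varphi\|_{C^1}\le1$ and writing $n\varphi$. Its $C^q$ norm along $W$ is at most $C_g\eps^{(1-q)/2}$ by interpolation, which gives
\[
\Big|\int_W n h\varphi\Big|\le \|h\|_s\,\|n\varphi\|_{\sigma,W}\le C_g\eps^{(1-q)/2}\|h\|.
\]
Since $1-q>\beta$ by \eqref{eq:choice-parameters}, this is bounded by $C_g\eps^{\beta/2}\|h\|$, which concludes the proof.
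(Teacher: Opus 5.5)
Your proof is correct, but it follows a different route from the paper. The paper never writes the solution explicitly. It uses Duhamel's formula $h_t^\eps=\e^{t\eps(\Delta_{x,y}-4\pi^2)}h+2\pi i\int_0^t\e^{(t-s)\eps(\Delta_{x,y}-4\pi^2)}(g h_s^\eps)\,\dd s$. It then bounds the heat semigroup by Lemma~\ref{lemma:gaussian-bound}, since it is convolution with an $L^1$-normalised kernel, bounds multiplication by $g$ by Lemma~\ref{lemma:smooth multiplier}, and closes with Gr\"onwall on $\sup_{s\le t}\|h_s^\eps\|$. The weak--strong estimate is obtained by comparing the Duhamel formulas for $h^\eps$ and $h^0$ and applying a second Gr\"onwall argument; all the smallness comes from \eqref{eq:gaussbound} applied to $h$ and to $g h_s^0$.

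You instead use Feynman--Kac to write $\cL_{g,\eps}$ as an average over Brownian paths of the exact operators $h\mapsto m_\omega\,h(\cdot+\sqrt{2\eps}\,\omega_1)$. This works because the noise is additive, so each path acts by a genuine translation. Every estimate then becomes a pathwise bound on a fixed pairing $\int_W(\cdot)\varphi\,\dd\HH^1$. This avoids Gr\"onwall entirely and gives constants linear in $\|g\|_{C^1}$ rather than exponential. The price is the extra term $(\mathbb E[m_\omega]-\e^{2\pi i g})h$, for which Lemma~\ref{lemma:smooth multiplier} gives no smallness. Your interpolation handles it correctly: a multiplier that is $O(\sqrt\eps)$ in $C^0$ and $O(1)$ in $C^1$ is $O(\eps^{(1-q)/2})$ in $C^q$, and pairing against $\|h\|_s$ with $1-q>\beta$ gives $\eps^{\beta/2}$. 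The paper's argument is shorter and does not rely on the additive-noise structure; it would apply to any bounded multiplicative perturbation of a semigroup that is contractive on the three norms and satisfies \eqref{eq:gaussbound}.

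In both versions the weak--strong constant depends on $\alpha$, through the Gaussian tail beyond distance $1/\alpha$. Your $C_{\alpha,g}$ is accurate, and the paper's claim that $C_g$ depends only on $g$ is slightly optimistic here. This is harmless, since that estimate is only used where dependence on $\alpha$ is allowed.
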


\begin{proof}
A direct computation shows that $h_t^\eps$ solves
\[
\partial_t h_t^\eps
=
\eps(\Delta_{x,y}-4\pi^2)h_t^\eps
+
2\pi i\, g h_t^\eps,
\]
with initial datum $h_0^\eps=h$. Applying Duhamel's formula gives
\[
h_t^\eps
=
\e^{t\eps(\Delta_{x,y}-4\pi^2)}h
+
2\pi i
\int_0^t
\e^{(t-s)\eps(\Delta_{x,y}-4\pi^2)}
(g h_s^\eps)
\,\dd s.
\]
Define
\[
H_t^\eps
=
\sup_{0\leq s\leq t}\|h_s^\eps\|.
\]
Using Lemma~\ref{lemma:smooth multiplier} together with Lemma~\ref{lemma:gaussian-bound}, we obtain
\[
H_t^\eps
\leq
\|h\|
+
2\pi \|g\|_{C^1}
\int_0^t
H_s^\eps
\,\dd s.
\]
An application of Grönwall's inequality yields
$H_t^\eps\leq C_g \|h\|$,
which proves the strong norm bound in
\eqref{eq:norm-strong-passive}. The weak norm estimate follows identically.

We next prove the weak--strong convergence estimate. Using Duhamel's formula for both $h_t^\eps$ and $h_t^0$, we obtain
\begin{align*}
|h_t^\eps-h_t^0|_w
&\leq
|
\e^{t\eps(\Delta_{x,y}-4\pi^2)}h-h
|_w
+
2\pi
\int_0^t
\Big|
\e^{(t-s)\eps(\Delta_{x,y}-4\pi^2)}
(g h_s^\eps-g h_s^0)
\Big|_w
\,\dd s
\\
&\quad
+
2\pi
\int_0^t
\Big|
\e^{(t-s)\eps(\Delta_{x,y}-4\pi^2)}
(g h_s^0)
-
g h_s^0
\Big|_w
\,\dd s.
\end{align*}
Using the weak--strong convergence estimate from
Lemma~\ref{lemma:gaussian-bound},
together with Lemma~\ref{lemma:smooth multiplier},
we deduce
\begin{align*}
|h_t^\eps-h_t^0|_w
\leq
C\eps^{\beta/2}\|h\|
+
2\pi \|g\|_{C^1}
\int_0^t
|h_s^\eps-h_s^0|_w
\,\dd s
+
C\eps^{\beta/2}\|g\|_{C^1}
\int_0^t
\|h_s^0\|
\,\dd s.
\end{align*}
Since \eqref{eq:norm-strong-passive} already implies
\[
\sup_{s\in[0,1]}
\|h_s^0\|
\leq
C_g\|h\|,
\]
we obtain
\[
|h_t^\eps-h_t^0|_w
\leq
C_g \eps^{\beta/2}\|h\|
+
2\pi \|g\|_{C^1}
\int_0^t
|h_s^\eps-h_s^0|_w
\,\dd s.
\]
Defining
\[
\widetilde H_t^\eps
=
\sup_{0\leq s\leq t}
|h_s^\eps-h_s^0|_w,
\]
Grönwall's inequality yields
\[
\widetilde H_t^\eps
\leq
C_g \eps^{\beta/2}\|h\|,
\]
which proves
\eqref{eq:norm-weak-strong-passive}.
\end{proof}

\subsubsection{Uniform Lasota--Yorke inequality and the weak bound}
Combining Proposition~\ref{prop:lasota-continuous-diffusion} with Lemma~\ref{lemma:transport-diff-in-z}, we deduce that there exists a constant $C_g>0$, depending only on $g$, such that
\[
\|
\cL_{g, \eps}
\circ
\e^{- 8\eps \pi^2 }
\cL_{\alpha, \eps}
\circ
\e^{N  \eps (\Delta_{x,y} - 4\pi^2)}
h
\|
\leq
C_g
\Bigl(
\alpha^{- \eta}
+
N^{- \frac{1-\sigma}{2}}
\Bigr)
\| h \|
+
C_g |h|_w .
\]
Choosing $N$ sufficiently large, depending on $\alpha$, yields
\eqref{eq:uniform-lasota-yorke}.
Similarly, combining the weak bounds from Proposition~\ref{prop:lasota-continuous-diffusion} and Lemma~\ref{lemma:transport-diff-in-z}, we obtain
\[
|
\cL_{g, \eps}
\circ
\e^{- 8\eps \pi^2 }
\cL_{\alpha, \eps}
\circ
\e^{N  \eps (\Delta_{x,y} - 4\pi^2)}
h
|_w
\leq
C_g |h|_w ,
\]
which proves \eqref{eq:weak-full-operator}.

\subsubsection{Weak--strong convergence}
We now prove \eqref{eq:weak-strong-convergence-full-operator}.
By Proposition~\ref{prop:lasota-continuous-diffusion}, together with the inequality
$|h|_w\leq \|h\|$, we have
\[
\|
\e^{- 8\eps \pi^2 }
\cL_{\alpha, \eps}
\circ
\e^{N  \eps (\Delta_{x,y} - 4\pi^2)}
h
\|
\leq
C \| h \|.
\]
Moreover, Lemma~\ref{lemma:smooth multiplier} implies
\[
|
\cL_{g,0}h
|_w
\leq
C_g |h|_w .
\]
Using Lemma~\ref{lemma:transport-diff-in-z} together with the triangle inequality, we obtain
\begin{align*}
&
\Big|
\cL_{g, \eps}
\circ
\e^{- 8\eps \pi^2 }
\cL_{\alpha, \eps}
\circ
\e^{N  \eps (\Delta_{x,y} - 4\pi^2)}
h
-
\cL_{g, 0}
\circ
\cL_{\alpha} h
\Big|_w
\\
&\qquad\leq
\Big|
(\cL_{g, \eps} - \cL_{g,0})
\circ
\e^{- 8\eps \pi^2 }
\cL_{\alpha, \eps}
\circ
\e^{N  \eps (\Delta_{x,y} - 4\pi^2)}
h
\Big|_w
\\
&\qquad\quad
+
\Big|
\cL_{g, 0}
\Big(
\e^{- 8\eps \pi^2 }
\cL_{\alpha, \eps}
\circ
\e^{N  \eps (\Delta_{x,y} - 4\pi^2)}
h
-
\cL_{\alpha} h
\Big)
\Big|_w
\\
&\qquad\leq
C_g \eps^{\beta/2}\|h\|
+
C_g
\Big|
\e^{-8\eps \pi^2}
\cL_{\alpha,\eps}
\circ
\e^{N\eps(\Delta_{x,y}-4\pi^2)}
h
-
\cL_\alpha h
\Big|_w .
\end{align*}
For the remaining term, we first note that Lemma~\ref{lemma:gaussian-bound} gives
\[
|
\e^{N\eps\Delta}h-h
|_w
\leq
C (N\eps)^{\beta/2}\|h\|.
\]
Using Proposition~\ref{prop:lasota-continuous-diffusion}, together with the weak boundedness of $\cL_\alpha$, we therefore deduce
\[
\Big|
\e^{-8\eps \pi^2}
\cL_{\alpha,\eps}
\circ
\e^{N\eps(\Delta_{x,y}-4\pi^2)}
h
-
\cL_\alpha h
\Big|_w
\leq
C_{\alpha,N}
\eps^{\beta/2}\|h\|.
\]
Combining the previous estimates proves
\eqref{eq:weak-strong-convergence-full-operator}.

\section{Kernel bounds} \label{sec:kernel} 

In this section, we prove Lemma~\ref{lemma:kernel bound}.
We begin by studying the stochastic flows associated with the planar velocity field $u_{\alpha, N}^{(1,2)}$ defined in \eqref{eq:u_def}. Let $(\Omega,\mathcal F,\mathbb P)$ be a probability space, and let $(W_t)_{t \geq 0}$ be a standard Brownian motion defined on this space. We denote by $\Phi_t^\eps$ the stochastic flow associated with the time-shifted velocity field $u_{\alpha, N}^{(1,2)}(t+N)$, namely the solution to the SDE
\begin{equation}\label{eq:SDE-T2-u-alpha-N}
\dd\Phi_t^\eps =u_{\alpha, N}^{(1,2)}(t+N,\Phi_t^\eps)\,\dd t+ \sqrt{2\eps}\,\dd W_t .    
\end{equation}
We also denote by $\Phi_t$ the deterministic flow corresponding to the case $\eps=0$.
We first record the following elementary probabilistic estimate.
\begin{lemma}
\label{lemma:gaussian distance bound}
There exist constants $c,C>0$ depending on $\| u_{\alpha, N} \|_{L^\infty_t W^{1, \infty}_x}$ and $T>0$ such that for every $x \in \mathbb{T}^2$, every
$\varepsilon \in (0,1]$, every $r > 0$, and every $t \in [0,T]$, it holds 
\begin{equation}
\mathbb{P}(\omega : \dd_{\TT^2} ( \Phi_t^\eps (x, \omega),\Phi_t(x)) \ge r\bigr)
\le
C\exp \left(-c \frac{r^2}{\varepsilon}\right) \,.
\end{equation}
\end{lemma}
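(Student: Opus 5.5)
The approach is a pathwise Gronwall comparison of $\Phi^\eps_t$ with $\Phi_t$, followed by a Gaussian tail bound for the Brownian supremum. Lift both flows to $\RR^2$, which is allowed since $u_{\alpha,N}^{(1,2)}$ is periodic and Lipschitz. The toroidal distance is bounded by the Euclidean distance of the lifts started from the same point, so it suffices to bound $|\tilde\Phi^\eps_t(x)-\tilde\Phi_t(x)|$ in $\RR^2$. Write $L=\|u_{\alpha,N}\|_{L^\infty_tW^{1,\infty}_x}$. Subtracting the integral equations gives
\[
\tilde\Phi^\eps_t-\tilde\Phi_t=\int_0^t\bigl(u(s+N,\tilde\Phi^\eps_s)-u(s+N,\tilde\Phi_s)\bigr)\,\dd s+\sqrt{2\eps}\,W_t .
\]
The vector field is only piecewise constant in time, but it is Lipschitz in space uniformly in time, which is all that is needed. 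Setting $D_t=|\tilde\Phi^\eps_t-\tilde\Phi_t|$, we get $D_t\le L\int_0^tD_s\,\dd s+\sqrt{2\eps}\sup_{s\le t}|W_s|$. The last term is nondecreasing in $t$, so Gronwall's inequality yields, pathwise for every $\omega$,
\[
D_t\le \sqrt{2\eps}\,e^{LT}\sup_{s\le T}|W_s|,\qquad t\in[0,T].
\]

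It remains to control the tail of the Brownian maximum. We have
\[
\mathbb P\bigl(D_t\ge r\bigr)\le \mathbb P\Bigl(\sup_{s\le T}|W_s|\ge \tfrac{r e^{-LT}}{\sqrt{2\eps}}\Bigr).
\]
Apply the reflection principle to each of the two coordinates. Using $\mathbb P(\sup_{s\le T}|W^{(i)}_s|\ge a)\le 4e^{-a^2/(2T)}$ and a union bound over the components (with $a$ replaced by $a/\sqrt2$), we obtain
\[
\mathbb P\bigl(\sup_{s\le T}|W_s|\ge a\bigr)\le 8e^{-a^2/(4T)} .
\]
Substituting $a=re^{-LT}/\sqrt{2\eps}$ gives $C\exp(-c r^2/\eps)$ with $C=8$ and $c=e^{-2LT}/(8T)$. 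These constants depend only on $L$ and $T$, as claimed, and the bound is uniform in $x$, $\eps\in(0,1]$ and $t\in[0,T]$.

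I do not expect a genuine obstacle. The only delicate point is to remember that $u_{\alpha,N}$ is merely Lipschitz and time-discontinuous. Strong existence and uniqueness for the SDE still holds, so the pathwise comparison is legitimate. Working with the lift to $\RR^2$ avoids any issue with the non-smoothness of $\dd_{\TT^2}$.
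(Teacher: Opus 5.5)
Your proposal is correct and follows the same route as the paper: lift to $\RR^2$, run a pathwise Gr\"onwall comparison to get $\sup_{s\le t}|\Phi^\eps_s-\Phi_s|\le\sqrt{2\eps}\,e^{Lt}\sup_{s\le t}|W_s|$, then apply a Gaussian tail bound for the Brownian maximum. The only difference is cosmetic: you obtain the tail from the reflection principle with explicit constants, whereas the paper just cites a maximal inequality.
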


\begin{proof}
We work with a lift of the processes to $\RR^2$. From \eqref{eq:SDE-T2-u-alpha-N}, we have the pathwise identity 
\[
  \Phi_t^\eps(x,\omega)-\Phi_t(x)
=
\int_0^t
\Bigl(u_{\alpha,N}^{(1,2)}(s+N,\Phi_s^\eps(x,\omega))
-u_{\alpha,N}^{(1,2)}(s+N,\Phi_s(x))\Bigr)\,\dd s
+\sqrt{2\eps}\,W_t(\omega).
\]
Taking suprema on $[0,t]$, using the Lipschitz bound $\| u \|_{L^\infty_t W^{1, \infty}_x}  \leq C$, and applying Gr\"onwall gives
\[
\sup_{0\le s\le t}
|\Phi_s^\eps(x)-\Phi_s(x)|
\le
\sqrt{2\eps}\,\e^{Ct}
\sup_{0\le s\le t}|W_s|,
\qquad
0\le t\le T.
\]
Hence, we obtain
\[
\mathbb{P}\bigl(\dd_{\TT^2}( \Phi_t^\eps(x),\Phi_t(x)) \ge r\bigr)
\le
\mathbb{P}\left(
\sup_{0\le s\le t}|W_s|
\ge
\frac{r \e^{-C t}}{\sqrt{2\varepsilon}}
\right).
\]
By the Doob maximal inequality  for any $\lambda \geq 0$ we have
\[
\mathbb{P}\left(\sup_{0\le s\le t}|W_s|\ge \lambda\right)
\le
 C\e^{- c\lambda^2/t}\,,
\]
from where the proof follows.
\end{proof}
Furthermore, we shall require the following elementary estimates on periodised Gaussian kernels.

\begin{lemma}
\label{lemma:periodised gaussian}
Let $\Gamma$ be a positive definite matrix. Then, there exist constants $C,c>0$ depending only on $\Gamma$, such that for every $\eps\in(0,1]$ and every $x,y\in\TT^2$, there holds
\[
\frac{1}{C\eps}
\exp\left(
-\frac{c}{\eps}\dd_{\TT^2}^2(y,x)
\right)
\le
\theta[y;\eps\Gamma](x)
\le
\frac{C}{\eps}
\exp\left(
-\frac{1}{c\eps}\dd_{\TT^2}^2(y,x)
\right).
\]
Furthermore, there holds the derivative bound
\[
|D_x\theta[y;\eps\Gamma](x)|
\le
\frac{C}{\eps^{3/2}}
\exp\left(
-\frac{1}{c\eps}\dd_{\TT^2}^2(y,x)
\right).
\]
\end{lemma}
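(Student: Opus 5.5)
The plan is to reduce everything to the Euclidean Gaussian on $\RR^2$ by diagonalising $\Gamma$, and then handle the periodisation sum by splitting off the closest lattice translate. Since $\Gamma$ is positive definite, there exist constants $0<\lambda_{\min}\le\lambda_{\max}$ depending only on $\Gamma$ such that
\[
\lambda_{\max}^{-1}|z|^2\le z^\top\Gamma^{-1}z\le \lambda_{\min}^{-1}|z|^2 .
\]
Consequently, for every $z\in\RR^2$,
\[
\frac{1}{2\pi\eps\sqrt{\det\Gamma}}\,\e^{-|z|^2/(2\lambda_{\min}\eps)}\le \bar\theta[0;\eps\Gamma](z)\le \frac{1}{2\pi\eps\sqrt{\det\Gamma}}\,\e^{-|z|^2/(2\lambda_{\max}\eps)} .
\]
This settles the question for a single translate. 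For the periodic kernel $\theta[y;\eps\Gamma](x)=\sum_{k\in\ZZ^2}\bar\theta[0;\eps\Gamma](x-y+k)$, I will pick the representative $z_0=x-y+k_0$ with $|z_0|=\dd_{\TT^2}(x,y)\le \sqrt2/2$.

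For the lower bound I simply discard all terms except $k=k_0$. Since every term is positive, this gives
\[
\theta\ge \frac{1}{C\eps}\,\e^{-\dd^2_{\TT^2}(x,y)/(2\lambda_{\min}\eps)} ,
\]
which is the claimed lower bound with $c=1/(2\lambda_{\min})$.

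For the upper bound, the $k_0$ term already has the right form. For $k\ne k_0$, I use $|z_0+k-k_0|\ge |k-k_0|-|z_0|\ge |k-k_0|/2$, together with the elementary inequality $|z_0+m|^2\ge \tfrac12|z_0|^2+\tfrac14|m|^2$ for $m\in\ZZ^2\setminus\{0\}$ and $|z_0|\le \sqrt2/2$. This holds because $|z_0+m|\ge |m|-|z_0|$, $|m|\ge1$ and $|z_0|\le |m|/\sqrt2$, and checking it reduces to a one-variable quadratic inequality; I expect this to be the only slightly fiddly point, and one may have to adjust the constants $\tfrac12,\tfrac14$ to make it hold. With it, the tail is bounded by
\[
\frac{C}{\eps}\,\e^{-|z_0|^2/(4\lambda_{\max}\eps)}\sum_{m\ne0}\e^{-|m|^2/(8\lambda_{\max}\eps)} .
\]
The lattice sum is bounded uniformly for $\eps\in(0,1]$, since it is dominated by its value at $\eps=1$. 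This gives the upper bound with $1/c=1/(4\lambda_{\max})$.

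For the derivative, I differentiate term by term, which is justified by the uniform convergence of the differentiated series:
\[
D_x\bar\theta[0;\eps\Gamma](z)=-\eps^{-1}\Gamma^{-1}z\,\bar\theta[0;\eps\Gamma](z).
\]
I then use $\eps^{-1}|z|\,\e^{-|z|^2/(2\lambda_{\max}\eps)}\le C\eps^{-1/2}\e^{-|z|^2/(4\lambda_{\max}\eps)}$, which follows from $s\,\e^{-s^2/2}\le C\e^{-s^2/4}$ with $s=|z|/\sqrt{\lambda_{\max}\eps}$. This turns each derivative term into $C\eps^{-3/2}$ times a Gaussian with halved exponent, and the same periodisation argument as above then yields the claimed bound with possibly smaller constants. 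No step is a genuine obstacle; the main care is keeping $c,C$ independent of $\eps$ in the periodisation tail.
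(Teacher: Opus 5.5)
Your argument is correct and is essentially the paper's proof. The lower bound comes from the single nearest translate. The upper bound uses $z^\top\Gamma^{-1}z\ge\lambda_{\max}^{-1}|z|^2$ together with a lattice sum bounded uniformly in $\eps\in(0,1]$. The derivative bound comes from differentiating each Gaussian and absorbing the factor $\eps^{-1}|z|$ into a Gaussian with halved exponent.

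The one step to repair is your tail inequality. As written, $|z_0+m|^2\ge\frac12|z_0|^2+\frac14|m|^2$ is false: take $z_0=(-\frac12,0)$ and $m=(1,0)$. The chain $|m|-|z_0|\ge|m|/2$ also fails for $|m|=1$ once $|z_0|>\frac12$. The clean fix is to use minimality of $z_0$, i.e.\ $|z_0+m|\ge\max\{|z_0|,|m|-|z_0|\}\ge|m|/2$, which gives $|z_0+m|^2\ge\frac12|z_0|^2+\frac18|m|^2$.

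The paper avoids this step altogether by not splitting off $k_0$. It uses $|y+k-x|\ge\dd_{\TT^2}(y,x)$ for every $k$, hence $|y+k-x|^2\ge\frac12|y+k-x|^2+\frac12\dd^2_{\TT^2}(y,x)$. It then bounds $\sum_k\e^{-|y+k-x|^2/(4\eps\lambda_{\max})}$ uniformly.
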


\begin{proof}
From the definition of $\theta[y;\eps\Gamma](x)$, we write
\begin{equation}
\label{eq:periodised Gaussian expression}
\theta[y;\eps\Gamma](x)
=
\sum_{k\in\ZZ^2}
\frac{1}{2\pi\eps|\Gamma|^{1/2}}
\exp\left(
-\frac{1}{2\eps}
(y+k-x)^T\Gamma^{-1}(y+k-x)
\right).
\end{equation}
The lower bound follows immediately from the contribution of the minimizing lattice point. For the upper bound, we note that
\[
(y+k-x)^T\Gamma^{-1}(y+k-x)
\ge
\frac{1}{\lambda_{\max}(\Gamma)}
|y+k-x|^2,
\]
where $\lambda_{\max}(\Gamma)$ is the maximal eigenvalue of $\Gamma$. Moreover,
\[
|y+k-x|^2
\ge
\frac12|y+k-x|^2
+
\frac12\dd_{\TT^2}^2(y,x).
\]
Substituting these bounds into \eqref{eq:periodised Gaussian expression} yields
\[
\theta[y;\eps\Gamma](x)
\le
\frac{1}{2\pi\eps|\Gamma|^{1/2}}
\e^{-\frac{1}{4\eps\lambda_{\max}(\Gamma)}
\dd_{\TT^2}^2(y,x)}
\sum_{k\in\ZZ^2}
\e^{-\frac{1}{4\eps\lambda_{\max}(\Gamma)}
|y+k-x|^2}.
\]
The summation term is uniformly bounded for $\eps\in(0,1]$ and $x,y\in\TT^2$, since the Gaussian tails are summable uniformly in the torus shifts. This proves the upper bound.

Finally, differentiating the Gaussian explicitly gives
\[
\left|
D_x
\exp\left(
-\frac{1}{2\eps}
(y+k-x)^T\Gamma^{-1}(y+k-x)
\right)
\right|
\le
C_\Gamma\eps^{-1/2}
\exp\left(
-\frac{1}{4\eps}
(y+k-x)^T\Gamma^{-1}(y+k-x)
\right),
\]
from which the derivative estimate follows immediately.
\end{proof}

\subsection{The model problem} \label{subsec:model} 
This section is devoted to a detailed derivation of the model kernels $Q_{\alpha,\eps}^\ell(x,y)$ defined in \eqref{d:model-ell-operator} and employed in the proofs of Proposition \ref{prop: uniform Lasota Yorke} and Proposition \ref{prop:error decomposition}. In keeping with the theme of the paper, the model kernels, and in particular the error bounds of Lemma \ref{lemma:kernel bound}, are most naturally derived from a pathwise perspective.

Indeed, the argument hinges on the following observation: the solution at time
$t$ of the two-dimensional passive vector equation \eqref{passive-vector},
associated with the planar velocity field
$u_{\alpha,N}^{(1,2)}(t+N)$ and divergence-free initial datum
$B_{\initial}:\TT^2\to\RR^2$, is given by
\[
B(t,x)
=\mathbb E\Big[(D\Phi_t^\eps B_{\initial})
\circ (\Phi_t^\eps)^{-1}(x)\Big],
\]
where $\Phi_t^\eps:\TT^2 \times \Omega \to\TT^2$ in \eqref{eq:SDE-T2-u-alpha-N}.

In particular, this immediately yields the existence of a Green matrix
$K_{\alpha,\eps,t}:\TT^2\times\TT^2\to\RR^{2\times 2}$ satisfying
\begin{equation}
\label{eq:greens matrix expression}
\alpha^2 K_{\alpha,\eps,t}(x,y)
=
\mathbb E\Big[
D\Phi_t^\eps(y)\,
\delta(x-\Phi_t^\eps(y))
\Big],
\footnote{
This expression is well defined since, by the result of
\cite{FlandoliGubinelliPriola}, the map
$y\mapsto \Phi_t^\eps(y)$ is almost surely
$C^{1+\xi}(\TT^2;\TT^2)$ for every $\xi\in(0,1)$.
}
\end{equation}
so that
\[
B(t,x)
=
\int_{\TT^2}
\alpha^2 K_{\alpha,\eps,t}(x,y)\,
B_{\initial}(y)\,\dd y .
\]
Throughout the section we use the notation
\[
K_{\alpha,\eps}(x,y)
=
K_{\alpha,\eps,t}|_{t=2}(x,y).
\]
Furthermore, we shall frequently extend the affine maps
$T_\alpha|_{\mathcal M_\ell}$ to $\RR^2$. In view of the explicit expressions \eqref{eq:Talpha2}\eqref{eq:Talpha1}, we therefore write  
\begin{equation}\label{d:T-alpha-ell}
T_\alpha^\ell x
=T_\alpha|_{\mathcal M_\ell}x
=A_\ell x+b_\ell , \qquad \ell=1,\ldots, 4,
\end{equation}
with $A_\ell$ as in \eqref{eq:matrixMi}, viewed as a linear transformation on all of $\RR^2$,  and a suitable translation $b_\ell$.

With these preliminaries in place, we now introduce the SDE associated with the $\ell$-th model problem on $\RR^2$. The construction is motivated by the fact
that the velocity field $u_{\alpha,N}^{(1,2)}$ is piecewise affine. For example, consider the integral curve advected by
$u_{\alpha,N}^{(1,2)}$ starting from
$(x,y)\in\mathcal M_1$. Suppose that, for all
$t\in[N,N+1/2]$, the trajectory remains in the region
$\{x\in(1/2,1)\}$, and that, for all
$t\in[N+1/2,N+1)$, it remains in the region
$\{y\in(1/2,1)\}$. Then, from the definition of
$u_{\alpha,N}^{(1,2)}$ in \eqref{eq:shears}, the trajectory experiences only
the affine velocity field
\[
u_{\alpha,N}^{(1,2)}(t,x,y)=
\begin{cases}
\begin{pmatrix}
0 \\
2\alpha(x-1/2)
\end{pmatrix},
& t\in[N,N+1/2),\\[0.4cm]
\begin{pmatrix}
2\alpha(y-1/2)\\
0
\end{pmatrix},
& t\in[N+1/2,N+1).
\end{cases}
\]
Guided by this observation, we define the $\ell$-th model SDE on $\RR^2$ by
\begin{equation} \label{eq:model-SDE}
\dd M_t^{\eps,\ell}
=
\begin{cases}
(V_\ell M_t^{\eps,\ell}+C_\ell)\,\dd t
+\sqrt{2\eps}\,\dd W_t,
& 0\le t<\frac12,\\[1mm]
(H_\ell M_t^{\eps,\ell}+D_\ell)\,\dd t
+\sqrt{2\eps}\,\dd W_t,
& \frac12\le t<1,
\end{cases}
\qquad
M_0^{\eps,\ell}=y,
\end{equation}
where we have the four matrices 
\[
V_\ell=
\begin{pmatrix}
0&0\\
\pm2\alpha&0
\end{pmatrix},
\qquad
H_\ell=
\begin{pmatrix}
0&\pm2\alpha\\
0&0
\end{pmatrix},
\]
and corresponding translations 
\[
C_\ell=
\begin{pmatrix}
0\\
\mp\alpha
\end{pmatrix},
\qquad
D_\ell=
\begin{pmatrix}
\mp\alpha\\
0
\end{pmatrix}.
\]
These correspond precisely to the four possible affine branches of the velocity
field $u_{\alpha,N}^{(1,2)}$, and therefore the index
$\ell$ ranges over $\ell=1,2,3,4$.

We note that \eqref{eq:model-SDE} is simply the composition of two Ornstein--Uhlenbeck
processes. Consequently, at time $1$ the law of the solution is Gaussian, with
mean \eqref{d:T-alpha-ell}
and covariance $\eps\Gamma_\alpha^\ell$. Here
$A_\ell\in\RR^{2\times2}$ is one of the matrices listed in
\eqref{eq:matrixMi}, $b_\ell\in\RR^2$ is the corresponding translation vector,
and $\Gamma_\alpha^\ell$ is a positive definite matrix depending only on $\alpha$ and $\ell$.

By a slight abuse of notation, we also denote by
$T_\alpha^\ell:\TT^2\to\TT^2$ the induced toral map.
The central comparison result is the following.

\begin{lemma}
\label{lemma:TV comparison} 
Let $y\in\mathcal M_\ell$, viewed as an element of $[0,1]^2 \subset \RR^2$, and let $\Phi_t^\eps$ be the solution to the SDE
\eqref{eq:SDE-T2-u-alpha-N} on $\RR^2$, where the velocity field
$u_{\alpha,N}^{(1,2)}$ is extended periodically to $\RR^2$, with initial
condition
\[
\Phi_0^\eps(y,\omega)=y.
\]
Assume that the Brownian motion driving
\eqref{eq:SDE-T2-u-alpha-N} is the same as the one driving the
$\ell$-th model SDE \eqref{eq:model-SDE}. Define
\[
\Omega_y
=
\left\{
\omega:
\sup_{t\in[0,1]}|W_t|
\le
\frac{1}{10\alpha\sqrt{2\eps}}
\dist_{\RR^2}(y,\partial\mathcal M_\ell)
\right\}.
\]
Then, for every $\omega\in\Omega_y$,
\[
\Phi_1^\eps \equiv M_1^{\eps,\ell} \pmod{1},
\qquad D\Phi_1^\eps=DM_1^{\eps,\ell}.
\]
Finally, if $\overline\mu_y^\ell(\dd z)$ denotes the law of
$M_1^{\eps,\ell}(y)$ on $\RR^2$, then
\[
\overline\mu_{y+m}^\ell =(\tau_{A_\ell m})_\#\overline\mu_y^\ell,
\]
where $\tau_k$ denotes translation by $m\in\ZZ^2$.
\end{lemma}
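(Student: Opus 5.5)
The plan is a pathwise coupling argument. On $\Omega_y$ we show that the true trajectory $\Phi_t^\eps(y)$ never leaves the affine region in which the branch-$\ell$ velocity field agrees with $u^{(1,2)}_{\alpha,N}$. Pathwise uniqueness for the two SDEs, which share the same Brownian motion, then forces the two processes to coincide up to an integer lattice shift.

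Concretely, fix $y\in\mathcal M_\ell$, say $\ell=1$, and set $d=\dist(y,\partial\mathcal M_\ell)$. During $[0,1/2)$ the true drift is $(0,2\alpha|x-1/2|)$. It agrees with the affine drift $V_1x+C_1=(0,2\alpha(x-1/2))$ as long as the first coordinate stays in $(1/2,1)$. The first coordinate is moved only by noise, $\Phi^{(1)}_t=y^{(1)}+\sqrt{2\eps}W^{(1)}_t$, and the definition of $\Omega_y$ gives $\sqrt{2\eps}|W_t|\le d/(10\alpha)<d$. Since $\mathcal M_1\subset\{x\ge 1/2\}$ and $d\le \dist(y^{(1)},\{1/2,1\})$, the first coordinate never crosses $1/2$ or $1$. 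Here one uses that $x=1$ is not a true kink, because the periodic extension of $|x-1/2|$ is smooth away from $x\in 1/2+\ZZ$; but one should check whether crossing $x\equiv 0$ matters. Hence on $[0,1/2]$ both processes solve the same SDE with the same noise, so $\Phi^\eps_t=M^{\eps,1}_t$.

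On $[1/2,1)$ the drift is horizontal and switches at $y\equiv1/2$. Here $\Phi^{(2)}_t=\Phi^{(2)}_{1/2}+\sqrt{2\eps}(W^{(2)}_t-W^{(2)}_{1/2})$, so we must control where $\Phi^{(2)}_{1/2}$ sits. Write $\Phi^{(2)}_{1/2}=y^{(2)}+\alpha(y^{(1)}-1/2)+\text{noise}$, where the noise term is bounded by $\sqrt{2\eps}\,\sup|W|(1+2\alpha\cdot\tfrac12)\lesssim \alpha\cdot d/(10\alpha)$. The deterministic part $y^{(2)}+\alpha y^{(1)}$ (mod 1) lies at distance $\gtrsim d$ from the switching line, because the boundary piece $\{y+\alpha x\equiv1/2\}$ of $\mathcal M_1$ is a line of slope $-\alpha$. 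The map $(x,y)\mapsto y+\alpha x$ therefore has gradient of size $\sim\alpha$, so Euclidean distance $d$ to that line corresponds to a value gap of order $\alpha d$. This yields margin $\gtrsim\alpha d-\tfrac{2}{10}\alpha d>\text{noise on }[1/2,1]$, and the factor $1/(10\alpha)$ in $\Omega_y$ is designed precisely to absorb this amplification. The step I expect to be the main obstacle is exactly this bookkeeping: propagating the deviation bound through the first shear, where it is amplified by $\alpha$, into the second shear while respecting the skewed geometry of $\partial\mathcal M_\ell$ and the periodicity. The key point is the linear-algebra fact $\dist(y,\{y^{(2)}+\alpha y^{(1)}\equiv 1/2\})\sim |\,y^{(2)}+\alpha y^{(1)}-1/2 \pmod 1|/\sqrt{1+\alpha^2}$, and I would bound the noise through $\sup|W|$ using Gr\"onwall on each half-interval.

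Given pathwise equality mod 1 (the periodic extension may add integer translations, which is why the conclusion is stated mod 1), the derivatives agree. Both flows are affine in the initial point in a neighbourhood of $y$, since $\Omega_y$ contains a small neighbourhood condition uniformly in nearby points for fixed $\omega$, so $D\Phi^\eps_1=DM^{\eps,\ell}_1=A_\ell$. For the final claim, the model SDE is affine with deterministic solution map $y\mapsto A_\ell y+b_\ell$ plus a noise term independent of $y$: $M^{\eps,\ell}_1(y)=A_\ell y+b_\ell+\xi$, where $\xi$ is a Gaussian functional of $W$. Hence $M^{\eps,\ell}_1(y+m)=M^{\eps,\ell}_1(y)+A_\ell m$ pathwise, and taking laws gives $\overline\mu^\ell_{y+m}=(\tau_{A_\ell m})_\#\overline\mu^\ell_y$.
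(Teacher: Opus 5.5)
Your route is the paper's. You compare the two processes pathwise on $[0,\tfrac12]$ and $[\tfrac12,1]$ with the shared Brownian motion, and you read the translation identity off the affine formula $M_1^{\eps,\ell}(y)=A_\ell y+b_\ell+\xi(W)$. Two steps, however, are wrong as written.

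\textbf{The kinks at integers.} You claim that the periodic extension of $|x-\tfrac12|$ is smooth away from $\tfrac12+\ZZ$. That is false: it is a triangle wave with corners at every point of $\tfrac12\ZZ$. At $x\equiv 0$ the slope of $2\alpha|x-\tfrac12|$ jumps from $+2\alpha$ to $-2\alpha$, which is why $\{x=0\}$ and $\{y+\alpha x\equiv 0\}$ belong to $\partial\mathcal M_\ell$.

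On $[0,\tfrac12]$ this does no harm, because you exclude crossing $x=1$ anyway. On $[\tfrac12,1]$, however, you assert that the horizontal drift switches only at $y\equiv\tfrac12$, and you keep $\Phi^{(2)}$ away only from $\{y+\alpha x\equiv\tfrac12\}$. If $\Phi^{(2)}_t$ crossed an integer, the drift would change from $2\alpha(\Phi^{(2)}-n-\tfrac12)$ to $2\alpha(n+\tfrac32-\Phi^{(2)})$, and the comparison with $H_1x+D_1$ would fail.

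The fix is immediate, because $d$ is the distance to the whole boundary. With $y_2+\alpha y_1\in(k+\tfrac12,k+1)$, one has $d\le\min\{|y_2+\alpha y_1-k-\tfrac12|,\,|y_2+\alpha y_1-k-1|\}$. Your stronger value-gap bound $\sqrt{1+\alpha^2}\,d$ is also valid, but you need it for both lines. The accumulated noise satisfies $\sqrt{2\eps}\,|2\alpha\int_0^{1/2}W^{(1)}_s\,\dd s+W^{(2)}_t|\le\frac{\alpha+1}{10\alpha}d\le\frac d4$, so $\Phi^{(2)}_t \bmod 1$ stays in $(\tfrac12,1)$; this is exactly the paper's estimate. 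Note that this noise is of size $d$, not $\alpha d$. No Gr\"onwall argument is needed, since both processes are explicit.

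\textbf{The mechanism behind ``mod 1''.} On $[\tfrac12,1]$ one has $\Phi^{(2)}_t=M^{(2)}_t\in(n+\tfrac12,n+1)$ with $n=k-\tfrac\alpha2$. This $n$ is generically nonzero, since $M^{(2)}_{1/2}\approx y_2+\alpha(y_1-\tfrac12)$ ranges up to about $\tfrac\alpha2+1$. On that strip the periodic field is $H_1x+D_1-(2\alpha n,0)$, which is neither $H_1x+D_1$ nor a lattice translate of it.

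So pathwise uniqueness does not make the processes agree up to a lattice shift. In fact $\Phi^{(1)}_t-M^{(1)}_t=-2\alpha n(t-\tfrac12)$, which is not an integer for $t\in(\tfrac12,1)$. The congruence holds only at $t=1$, where the discrepancy is $\alpha n\in\ZZ$. This uses $\alpha\in\ZZ$, and also $\alpha\in 2\NN$ to reduce $\alpha(y_1-\tfrac12)$ mod $1$ at time $\tfrac12$. The paper obtains it from the closed form $\Phi^{(1)}_1=(1+\alpha^2)y_1+\alpha y_2-\alpha k-\tfrac\alpha2+(\text{noise})$.

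Your framing can be made rigorous by shifting at time $\tfrac12$. The process $\Phi-(0,n)$ solves the second-half model SDE started from $M_{1/2}-(0,n)$, and the affine second-half map $I+\tfrac12H_1$ sends $(0,n)$ to $(\alpha n,n)\in\ZZ^2$. Either way, the integrality of $\alpha$ must enter the argument, and your proposal never invokes it.
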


\begin{proof}
We present the proof in the case $\ell=1$; the remaining cases are identical. Let
$y=(y_1,y_2)\in\mathcal M_1$, namely
\begin{equation}\label{eq:modys}
y_1\in(1/2,1),
\qquad
y_2+\alpha y_1\in(k+1/2,k+1),    
\end{equation}
for some integer $k\in\ZZ$. For times $t\in[0,1/2)$, the process $\Phi_t^\eps$ satisfies
\begin{align}
(\Phi_t^\eps)^{(1)}
&=y_1+\sqrt{2\eps}\,W_t^{(1)},
\\
(\Phi_t^\eps)^{(2)} 
&=y_2+ 2\alpha\int_0^t\Big|(y_1+\sqrt{2\eps}W_s^{(1)})\pmod{1} -\frac12 \Big|\,\dd s
+ \sqrt{2\eps}\,W_t^{(2)}.
\end{align}
Since
\[
\dist_{\RR^2}(y,\partial\mathcal M_\ell)
\le
\min\{
|y_1-1/2|,
|y_1-1|
\},
\]
for every $\omega\in\Omega_y$ we have
\[
\Big|
y_1+\sqrt{2\eps}W_s^{(1)}
\pmod{1}
-\frac12
\Big|
=
y_1-\frac12
+
\sqrt{2\eps}W_s^{(1)},
\qquad
s\in[0,1/2].
\]
Therefore,
\[
(\Phi_{1/2}^\eps)^{(2)}
=
y_2
+
\alpha(y_1-1/2)
+
2\alpha
\int_0^{1/2}
\sqrt{2\eps}W_s^{(1)}\,\dd s
+
\sqrt{2\eps}W_{1/2}^{(2)}.
\]
It follows immediately that
\[
\Phi_{1/2}^\eps(y,\omega)
=
M_{1/2}^{\eps,\ell}(y,\omega)
\pmod{1},
\qquad
\omega\in\Omega_y.
\]
On the interval $[1/2,1]$, the horizontal shear gives
\begin{align}
(\Phi_t^\eps)^{(1)}
&=
y_1
+
\sqrt{2\eps}W_t^{(1)}
+
2\alpha
\int_{1/2}^t
\Big|
(\Phi_s^\eps)^{(2)}\pmod{1}
-\frac12
\Big|
\,\dd s,
\\
(\Phi_t^\eps)^{(2)}
&=
(\Phi_{1/2}^\eps)^{(2)}
+
\sqrt{2\eps}(W_t^{(2)}-W_{1/2}^{(2)}).
\end{align}
Using the previous expression,
\[
(\Phi_t^\eps)^{(2)}
=
y_2+\alpha y_1-k
+
2\alpha
\int_0^{1/2}
\sqrt{2\eps}W_s^{(1)}\,\dd s
+
\sqrt{2\eps}W_t^{(2)}
\pmod{1}.
\]
Moreover, for every $\omega\in\Omega_y$,
\begin{align}
\sqrt{2\eps}
\left|
2\alpha
\int_0^{1/2}W_s^{(1)}\,\dd s
+
W_t^{(2)}
\right|
&\le
\frac14
\dist_{\RR^2}(y,\partial\mathcal M_1)
\\
&\le
\frac14
\min\{
|y_2+\alpha y_1-k-1/2|,
|y_2+\alpha y_1-k-1|
\}.
\end{align}
From \eqref{eq:modys},
we deduce
\[
(\Phi_t^\eps)^{(2)}\pmod{1}
=
y_2+\alpha y_1-k
+
2\alpha
\int_0^{1/2}
\sqrt{2\eps}W_s^{(1)}\,\dd s
+
\sqrt{2\eps}W_t^{(2)}
\in[1/2,1).
\]
Hence, at time $t=1$,
\begin{align}
(\Phi_1^\eps)^{(1)}
&=
y_1
+
\sqrt{2\eps}W_1^{(1)}
+
2\alpha
\int_{1/2}^1
\left[
y_2+\alpha y_1-k-\frac12
+
2\alpha
\int_0^{1/2}
\sqrt{2\eps}W_r^{(1)}\,\dd r
+
\sqrt{2\eps}W_s^{(2)}
\right]
\,\dd s
\\
&=
y_1(1+\alpha^2)
+\alpha y_2
-\alpha k
-\frac{\alpha}{2}
+
\sqrt{2\eps}W_1^{(1)}
+
2\alpha^2
\int_0^{1/2}
\sqrt{2\eps}W_r^{(1)}\,\dd r
+
2\alpha
\int_{1/2}^1
\sqrt{2\eps}W_s^{(2)}\,\dd s .
\end{align}
This coincides with the explicit expression for
$M_1^{\eps,\ell}(y)$, and therefore
\[
\Phi_1^\eps(y,\omega)
=
M_1^{\eps,\ell}(y,\omega)
\pmod{1},
\qquad
\omega\in\Omega_y.
\]
The equality of derivatives follows immediately, since the same computation
holds in a neighbourhood of radius
$\frac{1}{10\alpha}\dist_{\RR^2}(y,\partial\mathcal M_\ell)$ around $y$.

Finally, the explicit formula for $M_1^{\eps,\ell}$ shows that
\[
M_1^{\eps,\ell}(y+m)
=
M_1^{\eps,\ell}(y)
+
A_\ell m,
\]
from which the translation identity for
$\overline\mu_y^\ell$ follows immediately.
\end{proof}

We next introduce the model PDE on $\RR^2$ associated with the process
$M^{\eps,\ell}$. Namely, we consider the solution
$B:[0,2]\times\RR^2\to\RR^2$ to \eqref{passive-vector}, with velocity field
given by alternating affine shears:
\begin{equation}
\label{eq:model PDE}
\begin{cases}
\partial_t B
+
(V_\ell x+C_\ell)\cdot\nabla B
-
B\cdot\nabla(V_\ell x+C_\ell)
=
\eps\Delta B,
& t\in[0,1/2),
\\[2mm]
\partial_t B
+ (H_\ell x+D_\ell)\cdot\nabla B
- B\cdot\nabla(H_\ell x+D_\ell) = \eps\Delta B, & t\in[1/2,1), \\[2mm]
\partial_t B =\eps\Delta B,
& t\in[1,2),
\end{cases}
\end{equation}
with periodic initial datum
$B(0)=B_{\initial}\in L^2(\TT^2)$ extended periodically to $\RR^2$. The corresponding model kernels are precisely the Green matrices associated
with \eqref{eq:model PDE}.

\begin{lemma}
\label{lemma:model kernel expression}
Let $\ell\in\{1,2,3,4\}$. Then the time-$2$ solution to
\eqref{eq:model PDE} satisfies
\[
B(2,x)
=
\int_{\TT^2}
\alpha^2Q_{\alpha,\eps}^\ell(x,y)
B_{\initial}(y)\,\dd y ,
\]
where
\[
\alpha^2Q_{\alpha,\eps}^\ell(x,y)
=
\int_{\TT^2\times GL(\RR^2)}
A\,\theta[z;2\eps\Id](x)\,
\mathcal P_y^\ell(\dd z,\dd A),
\]
and $\mathcal P_y^\ell$ denotes the law of
$(M_1^{\eps,\ell}(y),DM_1^{\eps,\ell}(y))$ on
$\TT^2\times GL(\RR^2)$.
Moreover,
\[
Q_{\alpha,\eps}^\ell(x,y)
=
\alpha^{-2}
A_\ell
\theta\Big[
y;
\eps\big(
A_\ell^{-1}\Gamma_\alpha^\ell A_\ell^{-T}
+
2A_\ell^{-1}A_\ell^{-T}
\big)
\Big]
\big((T_\alpha^\ell)^{-1}(x)\big).
\]
\end{lemma}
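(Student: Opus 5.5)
The plan is to compute the solution of \eqref{eq:model PDE} in two stages: first the two affine shear phases on $[0,1)$, then the pure heat phase on $[1,2)$. Afterwards I will identify the resulting kernel explicitly using Gaussian convolution identities. No estimates are needed; the content is an exact computation.

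\emph{Stage one ($t\in[0,1)$).} The velocity fields $V_\ell x+C_\ell$ and $H_\ell x+D_\ell$ are affine and trace-free, so the stochastic flow is exactly $M^{\eps,\ell}_t$ from \eqref{eq:model-SDE}. Its spatial derivative is deterministic:
\[
DM^{\eps,\ell}_1=(\Id+\tfrac12 H_\ell)(\Id+\tfrac12 V_\ell)=A_\ell,
\]
because $V_\ell,H_\ell$ are nilpotent, so $\exp(\tfrac12 V_\ell)=\Id+\tfrac12 V_\ell$. I will verify the Lagrangian representation $B(1,x)=\mathbb E[(DM_1 B_{\initial})\circ M_1^{-1}(x)]$ on $\RR^2$ directly; the backward Itô/Kunita argument is standard for linear SDEs. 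Alternatively, one can check it by taking the Fourier transform: with affine coefficients the equation becomes a first-order transport in frequency, which can be solved explicitly. This gives
\[
B(1,x)=\int_{\RR^2}\mathbb E\big[DM_1(y)\,\delta(x-M_1(y))\big]B_{\initial}(y)\,\dd y .
\]
Since $B_{\initial}$ is periodic, I will fold the $y$-integral onto $\TT^2$ using the translation identity $\overline\mu^\ell_{y+m}=(\tau_{A_\ell m})_\#\overline\mu^\ell_y$ from Lemma~\ref{lemma:TV comparison}. Because $A_\ell\in SL(2,\ZZ)$ permutes $\ZZ^2$, the folded law is exactly the pushforward to $\TT^2$, which produces $\mathcal P^\ell_y$.

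\emph{Stage two ($t\in[1,2)$).} This is convolution with the periodic heat kernel $\theta[z;2\eps\Id](x)$. Composing the two stages gives
\[
\alpha^2 Q^\ell_{\alpha,\eps}(x,y)=\int A\,\theta[z;2\eps\Id](x)\,\mathcal P^\ell_y(\dd z,\dd A),
\]
which is the first claimed formula.

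\emph{Explicit form.} Since $A=A_\ell$ is deterministic and $M_1(y)\sim\mathcal N(A_\ell y+b_\ell,\eps\Gamma^\ell_\alpha)$ (a composition of Ornstein--Uhlenbeck processes), averaging the periodised heat kernel over this law is a Gaussian convolution. The result is $\theta[A_\ell y+b_\ell;\eps(\Gamma^\ell_\alpha+2\Id)](x)$. I will then change variables. Writing
\[
x-A_\ell y-b_\ell=A_\ell\big((T^\ell_\alpha)^{-1}x-y\big),
\]
the quadratic form becomes one with covariance $\eps A_\ell^{-1}(\Gamma^\ell_\alpha+2\Id)A_\ell^{-T}=\eps(A_\ell^{-1}\Gamma^\ell_\alpha A_\ell^{-T}+2A_\ell^{-1}A_\ell^{-T})$. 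The normalising determinant is unchanged because $\det A_\ell=1$. The lattice sum reindexes via $k\mapsto A_\ell^{-1}k$, a bijection of $\ZZ^2$. Dividing by $\alpha^2$ yields the stated expression for $Q^\ell_{\alpha,\eps}$.

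The main obstacle I expect is the rigorous justification of the stochastic representation together with the periodic folding. These require care with the unbounded affine drift on $\RR^2$ and with the consistency of the $\delta$-kernel after reduction modulo $\ZZ^2$. I would handle both by the explicit Fourier/Gaussian solution of the affine problem, which avoids abstract flow theory.
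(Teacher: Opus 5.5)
Your proposal is correct and follows essentially the same route as the paper: the Green's-matrix representation on $[0,1)$ with the deterministic Jacobian $DM_1^{\eps,\ell}=A_\ell$, then the heat kernel on $[1,2)$. The folding onto $\TT^2$ uses $\overline\mu^\ell_{y+m}=(\tau_{A_\ell m})_\#\overline\mu^\ell_y$ together with the bijectivity of $A_\ell$ on $\ZZ^2$, followed by the Gaussian convolution and the $\det A_\ell=1$ change of variables. The only differences are cosmetic: you fold before applying the heat flow rather than after, which is equivalent since the heat semigroup on $\RR^2$ acts on periodic data as the periodic one, and you verify $DM_1^{\eps,\ell}=(\Id+\tfrac12 H_\ell)(\Id+\tfrac12 V_\ell)=A_\ell$ explicitly, which the paper takes for granted.
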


\begin{proof}
Let $\overline\mu_y^\ell$ denote the law of the lifted random variable
$M^{\ell,\eps}_1(y)$ on $\RR^2$. By the stochastic representation formula for solutions to the passive vector equation on $[0,1]$, and since in the present affine setting
$$
DM^{\ell,\eps}_1(y)=A_\ell ,
$$
we have
$$
B(1,x)
=
\int_{\RR^2}
A_\ell \,
\mathbb E\bigl[\delta(x-M^{\ell,\eps}_1(y))\bigr]
B_{\ini}(y)\,\dd y .
$$
Applying the heat flow on the time interval $[1,2]$, we obtain
$$
B(2,x)
=
A_\ell
\int_{\RR^2}
\int_{\RR^2}
\overline\theta[z;2\eps \Id](x)\,
\overline\mu_y^\ell(\dd z)\,
B_{\ini}(y)\,\dd y .
$$
Using the periodicity of \(B_{\ini}\), this can be rewritten as
$$
B(2,x)
=
A_\ell
\int_{[0,1)^2}
\sum_{m\in\ZZ^2}
\int_{\RR^2}
\overline\theta[z;2\eps \Id](x)\,
\overline\mu_{y+m}^\ell(\dd z)\,
B_{\ini}(y)\,\dd y .
$$
By Lemma \ref{lemma:TV comparison}, it holds
$
\overline\mu^\ell_{y+m} = (\tau_{A_\ell m})_\#\overline\mu^\ell_y,
$
and so
$$
\sum_{m\in\ZZ^2}
\int_{\RR^2}
\overline\theta[z;2\eps \Id](x)\,
\overline\mu_{y+m}^\ell(\dd z)
=
\sum_{m\in\ZZ^2}
\int_{\RR^2}
\overline\theta[z+A_\ell m;2\eps \Id](x)\,
\overline\mu_y^\ell(\dd z).
$$
Since $A_\ell$ and its inverse have integer entries, it defines a bijection
$A_\ell:\ZZ^2\to\ZZ^2$, and therefore
$$
\sum_{m\in\ZZ^2}
\int_{\RR^2}
\overline\theta[z+A_\ell m;2\eps \Id](x)\,
\overline\mu_y^\ell(\dd z)
=
\int_{\RR^2}
\sum_{n\in\ZZ^2}
\overline\theta[z+n;2\eps \Id](x)\,
\overline\mu_y^\ell(\dd z).
$$
By definition of the periodised heat kernel and symmetry in $x$ and $z$,
$$
\theta[z;2\eps \Id](x)
=
\sum_{n\in\ZZ^2}
\overline\theta[z+n;2\eps \Id](x),
$$
and hence
$$
B(2,x)
=
A_\ell
\int_{[0,1)^2}
\int_{\RR^2}
\theta[z;2\eps \Id](x)\,
\overline\mu_y^\ell(\dd z)\,
B_{\ini}(y)\,\dd y .
$$
Since \(\theta[\cdot;2\eps\Id](x)\) is \(\ZZ^2\)-periodic in the \(z\)-variable, we may equivalently write
$$
\int_{\RR^2}
\theta[z;2\eps \Id](x)\,
\overline\mu_y^\ell(\dd z)
=
\int_{\TT^2}
\theta[z;2\eps \Id](x)\,
\mu_y^\ell(\dd z),
$$
where \(\mu_y^\ell\) denotes the projection of \(\overline\mu_y^\ell\) onto \(\TT^2\). Thus
$$
B(2,x)
=
\int_{\TT^2}
A_\ell
\left(
\int_{\TT^2}
\theta[z;2\eps \Id](x)\,
\mu_y^\ell(\dd z)
\right)
B_{\ini}(y)\,\dd y .
$$
Equivalently, if \(\mathcal P_y^\ell\) denotes the joint law of
\(
(M^{\ell,\eps}_1(y),DM^{\ell,\eps}_1(y))
\)
on \(\TT^2\times GL(\RR^2)\), then we conclude the representation formula
$$
B(2,x)
=
\int_{\TT^2}
\left(
\int_{\TT^2\times GL(\RR^2)}
A\theta[z;2\eps\Id](x)\,
\mathcal P_y^\ell(\dd z,\dd A)
\right)
B_{\ini}(y)\,\dd y .
$$
It remains to compute this kernel explicitly. By the Ornstein--Uhlenbeck structure of the SDE \eqref{eq:model-SDE}, the law of \(M^{\ell,\eps}_1(y)\) is Gaussian on the torus with mean
\(T_\alpha^\ell(y)\) and covariance \(\eps\Gamma^\ell_\alpha\). Therefore convolution with the heat kernel gives
$$
\alpha^2Q^\ell_{\alpha,\eps}(x,y)
=
A_\ell
\theta\bigl[
T_\alpha^\ell(y);
\eps(\Gamma^\ell_\alpha+2\Id)
\bigr](x).
$$
Writing
$$
T_\alpha^\ell(y)=A_\ell y+b_\ell,
$$
we have
$$
T_\alpha^\ell(y)-x
=
A_\ell y+b_\ell-x
=
A_\ell
\left(
y-A_\ell^{-1}(x-b_\ell)
\right)
=
A_\ell
\left(
y-(T_\alpha^\ell)^{-1}(x)
\right).
$$
Since $A_\ell\in GL(2,\ZZ)$ and $|\det A_\ell|=1$, the Gaussian change of variables yields
$$
\theta
\left[
T_\alpha^\ell(y);
\eps(\Gamma^\ell_\alpha+2\Id)
\right](x)
=
\theta\left[
y;
\eps\left(
A_\ell^{-1}\Gamma^\ell_\alpha A_\ell^{-T}
+
2A_\ell^{-1}A_\ell^{-T}
\right)
\right]
((T_\alpha^\ell)^{-1}(x)),
$$
concluding the proof.
\end{proof}

We finally define the model Green's matrix on $\TT^2$ exactly as in Section \ref{sec:model problem} by gluing together the kernels associated to the four model problems $\ell=1,2,3,4$, namely\begin{align} \label{d:kernel-model}Q_{\alpha,\eps}(x,y)=\sum_{\ell=1}^4\mathbbm 1_{\XX_\alpha(\mathcal M_\ell)}(x)Q^\ell_{\alpha,\eps}(x,y).\end{align}
This corresponds to selecting the appropriate affine model according to the branch of the deterministic dynamics reaching \(x\).

\subsection{Proof of Lemma \ref{lemma:kernel bound}}

We now have all the ingredients needed to prove Lemma \ref{lemma:kernel bound}. The proof is divided into several steps.

\medskip

\noindent
\textbf{Step 1: Proof of \eqref{eq:kernel-global}.}
Similarly to Lemma \ref{lemma:model kernel expression}, the kernel $K_{\alpha,\eps}$ admits the representation
\begin{equation}
\label{eq:full-kernel-representation}
\alpha^2 K_{\alpha,\eps}(x,y)
=
\int_{\TT^2\times GL(\RR^2)}
A \,
\theta[z;2\eps \Id](x)\,
\mathcal P_y(\dd z,\dd A),
\end{equation}
where $\mathcal P_y(\dd z,\dd A)$ denotes the law of the random variable
$(\Phi_1^\eps(y),D\Phi_1^\eps(y))$ on $\TT^2\times GL(\RR^2)$. Since the Jacobian matrices satisfy the uniform bound
\[
|D\Phi_1^\eps(y)|\le C\alpha^2,
\]
it follows that the support of $\mathcal P_y$ is contained in
\[
\{A\in GL(\RR^2): |A|\le C\alpha^2\}.
\]
Consequently,
\begin{equation}
\label{eq:kernel-first-bound}
|K_{\alpha,\eps}(x,y)|
\le
C
\int_{\TT^2}
\theta[z;2\eps\Id](x)\,
\mu_y(\dd z),
\end{equation}
where $\mu_y$ denotes the law of $\Phi_1^\eps(y)$ on $\TT^2$. By Lemma \ref{lemma:gaussian distance bound}, there exist constants
$c_\alpha,C_\alpha>0$ such that
\[
\mu_y\bigl(\TT^2\setminus B_r(T_\alpha(y))\bigr)
\le
C_\alpha
\exp\left(-\frac{c_\alpha r^2}{\eps}\right).
\]
Fix now
\[
r:=\dd_{\TT^2}(T_\alpha(y),x).
\]
Splitting the integral in \eqref{eq:kernel-first-bound}, we obtain
\begin{equation}
|K_{\alpha,\eps}(x,y)|
\le
C
\int_{B_{r/2}(T_\alpha(y))}
\theta[z;2\eps\Id](x)\,
\mu_y(\dd z)+
C
\int_{\TT^2\setminus B_{r/2}(T_\alpha(y))}
\theta[z;2\eps\Id](x)\,
\mu_y(\dd z).
\label{eq:kernel-split}
\end{equation}
For $z\in B_{r/2}(T_\alpha(y))$, the triangle inequality gives
\[
\dd_{\TT^2}(x,z)
\ge
\dd_{\TT^2}(x,T_\alpha(y))
-
\dd_{\TT^2}(T_\alpha(y),z)
\ge
\frac{r}{2}.
\]
Hence, by Lemma \ref{lemma:periodised gaussian},
\[
\theta[z;2\eps\Id](x)
\le
\frac{C}{\eps}
\exp\left(
-\frac{c\dd_{\TT^2}^2(x,z)}{\eps}
\right)
\le
\frac{C}{\eps}
\exp\left(
-\frac{c r^2}{4\eps}
\right).
\]
Therefore,
\begin{equation}
\label{eq:first-piece-kernel}
\int_{B_{r/2}(T_\alpha(y))}
\theta[z;2\eps\Id](x)\,
\mu_y(\dd z)
\le
\frac{C_\alpha}{\eps}
\exp\left(
-\frac{c_\alpha r^2}{\eps}
\right).
\end{equation}
For the second term in \eqref{eq:kernel-split}, using again Lemma
\ref{lemma:periodised gaussian} together with the tail estimate on $\mu_y$ yields
\begin{equation}
\int_{\TT^2\setminus B_{r/2}(T_\alpha(y))}
\theta[z;2\eps\Id](x)\,
\mu_y(\dd z)\le
\frac{C}{\eps}
\mu_y\bigl(\TT^2\setminus B_{r/2}(T_\alpha(y))\bigr)\le
\frac{C_\alpha}{\eps}
\exp\left(
-\frac{c_\alpha r^2}{\eps}
\right).
\label{eq:second-piece-kernel}
\end{equation}
Combining \eqref{eq:kernel-split},
\eqref{eq:first-piece-kernel}, and
\eqref{eq:second-piece-kernel}, we conclude that
\[
|K_{\alpha,\eps}(x,y)|
\le
\frac{C_\alpha}{\eps}
\exp\left(
-\frac{c_\alpha}{\eps}
\dd_{\TT^2}^2(x,T_\alpha(y))
\right),
\]
which proves \eqref{eq:kernel-global}.

We now turn to the derivative bounds. Differentiating
\eqref{eq:full-kernel-representation} and using Lemma
\ref{lemma:periodised gaussian}, we obtain
\begin{align*}
|D_x^{\mathbf m}K_{\alpha,\eps}(x,y)|
=
\left|
\int_{\TT^2\times GL(\RR^2)}
A\,
D_x^{\mathbf m}\theta[z;2\eps\Id](x)\,
\mathcal P_y(\dd z,\dd A)
\right| \le
C_\alpha
\eps^{-1/2}
\int_{\TT^2}
\theta[z;c_\alpha\eps\Id](x)\,
\mu_y(\dd z).
\end{align*}
Repeating the previous argument yields
\[
|D_x^{\mathbf m}K_{\alpha,\eps}(x,y)|
\le
\frac{C_\alpha}{\eps^{3/2}}
\exp\left(
-\frac{c_\alpha}{\eps}
\dd_{\TT^2}^2(x,T_\alpha(y))
\right),
\]
which completes the proof of \eqref{eq:kernel-global}.

\smallskip

\noindent
\textbf{Step 2: Proof of \eqref{eq:error-kernel-bound} for $y\notin \cM_\ell$.}
Fix $x\in T_\alpha(\cM_\ell)$, so that
\[
Q_{\alpha,\eps}(x,y)=Q^\ell_{\alpha,\eps}(x,y).
\]
If $y\notin \cM_\ell$, then
\begin{align}
|K_{\alpha,\eps}(x,y)-Q_{\alpha,\eps}(x,y)|
&\le
|K_{\alpha,\eps}(x,y)|
+
|Q^\ell_{\alpha,\eps}(x,y)|
\nonumber
\\
&\le
\frac{C_\alpha}{\eps}
\left(
\exp\left(
-\frac{c_\alpha}{\eps}
\dd_{\TT^2}^2(T_\alpha(y),x)
\right)+
\exp\left(
-\frac{c_\alpha}{\eps}
\dd_{\TT^2}^2(T_\alpha^\ell(y),x)
\right)
\right),
\label{eq:error-bound-preliminary}
\end{align}
where the estimate on $Q^\ell_{\alpha,\eps}$ follows from Lemma
\ref{lemma:model kernel expression} together with Lemma
\ref{lemma:periodised gaussian}.
Since $T_\alpha^\ell$ agrees with $T_\alpha$ on $\cM_\ell$, and is injective on $\TT^2$, it holds
\[
T_\alpha(y)\notin T_\alpha(\cM_\ell),
\qquad
T_\alpha^\ell(y)\notin T_\alpha(\cM_\ell).
\]
Therefore, since $x\in T_\alpha(\cM_\ell)$, it holds
\[
\dd_{\TT^2}(T_\alpha(y),x)
\ge
\dd_{\TT^2}(x,S^\star),
\qquad
\dd_{\TT^2}(T_\alpha^\ell(y),x)
\ge
\dd_{\TT^2}(x,S^\star).
\]
Substituting into \eqref{eq:error-bound-preliminary} yields
\[
|K_{\alpha,\eps}(x,y)-Q_{\alpha,\eps}(x,y)|
\le
\frac{C_\alpha}{\eps}\left(
\exp\left(
-\frac{c_\alpha}{\eps}
\dd_{\TT^2}^2(T_\alpha(y),x)
\right)+
\exp\left(
-\frac{c_\alpha}{\eps}
\dd_{\TT^2}^2(T_\alpha^\ell(y),x)
\right)
\right)
\exp\left(
-\frac{c_\alpha}{\eps}
\dd_{\TT^2}^2(x,S^\star)
\right),
\]
which proves the desired estimate in the case $y\notin \cM_\ell$. Similarly, differentiating the kernels and using Lemma
\ref{lemma:periodised gaussian}, we obtain
\begin{align*}
|D_x^{\mathbf m}(K_{\alpha,\eps}(x,y)-Q_{\alpha,\eps}(x,y))|
&\le
\frac{C_\alpha}{\eps^{3/2}}
\left(
\exp\left(
-\frac{c_\alpha}{\eps}
\dd_{\TT^2}^2(T_\alpha(y),x)
\right) 
+
\exp\left(
-\frac{c_\alpha}{\eps}
\dd_{\TT^2}^2(T_\alpha^\ell(y),x)
\right)
\right),
\end{align*}
and the same argument concludes the derivative estimate.

\smallskip

\noindent
\textbf{Step 3: Proof of \eqref{eq:error-kernel-bound} for $y\in \cM_\ell$.}
Assume now that $y\in \cM_\ell$. In this case,
\begin{equation}
\label{eq:kernel-difference}
K_{\alpha,\eps}(x,y)-Q^\ell_{\alpha,\eps}(x,y)
=
\alpha^{-2}
\int_{\TT^2\times GL(\RR^2)}
A\,
\theta[z;2\eps\Id](x)\,
(\mathcal P_y-\mathcal P_y^\ell)(\dd z,\dd A).
\end{equation}
Since both $\mathcal P_y$ and $\mathcal P_y^\ell$ are supported on matrices
with norm bounded by $C\alpha^2$, Cauchy--Schwarz yields
\begin{equation}
|K_{\alpha,\eps}(x,y)-Q^\ell_{\alpha,\eps}(x,y)|
\le
C
\left(
\int_{\TT^2\times GL(\RR^2)}
\theta^2[z;2\eps\Id](x)\,
|\mathcal P_y-\mathcal P_y^\ell|(\dd z,\dd A)
\right)^{1/2}
\|\mathcal P_y-\mathcal P_y^\ell\|_{TV}^{1/2}.
\label{eq:kernel-cs}
\end{equation}
Using Lemma \ref{lemma:periodised gaussian},
\begin{align*}
\int_{\TT^2\times GL(\RR^2)}
\theta^2[z;2\eps\Id](x)\,
|\mathcal P_y-\mathcal P_y^\ell|(\dd z,\dd A)
&\le
\frac{C_\alpha}{\eps}
\left(
\int_{\TT^2}
\theta[z;2\eps\Id](x)\,
\mu_y(\dd z)
\right.
\\
&\qquad\qquad
\left.
+
\int_{\TT^2}
\theta[z;2\eps\Id](x)\,
\mu_y^\ell(\dd z)
\right).
\end{align*}
Since $T_\alpha^\ell(y)=T_\alpha(y)$ for $y\in \cM_\ell$, the estimates from
Step 1 imply
\[
\left(
\int_{\TT^2\times GL(\RR^2)}
\theta^2[z;2\eps\Id](x)\,
|\mathcal P_y-\mathcal P_y^\ell|(\dd z,\dd A)
\right)^{1/2}
\le
\frac{C_\alpha}{\eps}
\exp\left(
-\frac{c_\alpha}{\eps}
\dd_{\TT^2}^2(T_\alpha(y),x)
\right).
\]
Next, recall that $\mathcal P_y$ and $\mathcal P_y^\ell$ are respectively the
laws of
\[
(\Phi_1^\eps(y),D\Phi_1^\eps(y))
\qquad\text{and}\qquad
(M_1^{\eps,\ell}(y),DM_1^{\eps,\ell}(y)).
\]
Coupling the two processes with the same Brownian motion, we obtain
\[
\|\mathcal P_y-\mathcal P_y^\ell\|_{TV}
\le
\mathbb P\left(
(\Phi_1^\eps(y),D\Phi_1^\eps(y))
\neq
(M_1^{\eps,\ell}(y),DM_1^{\eps,\ell}(y))
\right).
\]
By Lemma \ref{lemma:TV comparison},
\begin{align*}
\|\mathcal P_y-\mathcal P_y^\ell\|_{TV}
\le
\mathbb P\left(
\sup_{t\in[0,1]}|W_t|
\ge
C_\alpha
\eps^{-1/2}
\dd_{\TT^2}(y,\partial\cM_\ell)
\right)\le
C
\exp\left(
-\frac{c_\alpha}{\eps}
\dd_{\TT^2}^2(y,\partial\cM_\ell)
\right).
\end{align*}
Since $T_\alpha$ is bi-Lipschitz on $\cM_\ell$, there exists
$C_\alpha>0$ such that
\[
\dd_{\TT^2}(y,\partial\cM_\ell)
\ge
C_\alpha
\dd_{\TT^2}(T_\alpha(y),\partial T_\alpha(\cM_\ell)).
\]
Therefore,
\[
\dd_{\TT^2}(x,S^\star)
\le
\dd_{\TT^2}(x,T_\alpha(y))
+
C_\alpha^{-1}
\dd_{\TT^2}(y,\partial\cM_\ell).
\]
Combining this inequality with \eqref{eq:kernel-cs}, we conclude that
\begin{align*}
|K_{\alpha,\eps}(x,y)-Q_{\alpha,\eps}(x,y)|
&\le
\frac{C_\alpha}{\eps}
\exp\left(
-\frac{c_\alpha}{\eps}
\dd_{\TT^2}^2(T_\alpha(y),x)
\right)
\exp\left(
-\frac{c_\alpha}{\eps}
\dd_{\TT^2}^2(y,\partial\cM_\ell)
\right)
\\
&\le
\frac{C_\alpha}{\eps}
\exp\left(
-\frac{\bar c_\alpha}{\eps}
\dd_{\TT^2}^2(T_\alpha(y),x)
\right)
\exp\left(
-\frac{\bar c_\alpha}{\eps}
\dd_{\TT^2}^2(x,S^\star)
\right),
\end{align*}
for some constant $\bar c_\alpha>0$. Finally, differentiating \eqref{eq:kernel-difference} and using Lemma
\ref{lemma:periodised gaussian}, we obtain
\[
|D_x^{\mathbf m}(K_{\alpha,\eps}(x,y)-Q_{\alpha,\eps}(x,y))|
\le
C_\alpha
\eps^{-1/2}
\int_{\TT^2\times GL(\RR^2)}
\theta[z;c_\alpha\eps\Id](x)\,
|\mathcal P_y-\mathcal P_y^\ell|(\dd z,\dd A).
\]
Repeating the previous argument yields the corresponding derivative estimate,
thereby concluding the proof of the lemma.

\appendix

\section{Proof of Lemma \ref{lemma:technical}} \label{appendix}
Here we prove  Lemma \ref{lemma:technical}, the main technical lemma of the paper.  We recall that the set of admissible curves is denoted by  $\Sigma$  \eqref{d:Sigma}, the stable cone is denoted by $C_s$ \eqref{eq:cones}, the map $\XX_\alpha$ and the sets $\mathcal{M}_\ell$ with $\ell =1, 2,3,4$ are defined in Section \ref{sec:themap}.

\begin{proof}
The proof of \eqref{enum:0} follows from the definition of the $\XX_\alpha (\cM_\ell)$ for $\ell=1,2,3,4$. Indeed, we have that $\partial \XX_\alpha (\cM_\ell)$ are curves $y = \pm \frac{1}{\alpha} x$ periodised on the torus, which are almost horizontal lines, see Figure \ref{figApp:1}. Also by definition of admissible curves $W \in \Sigma$ we have that such affine lines are almost vertical with tangent vector $\mathbf{v} =(a, b) $ with $|a| \leq \frac{2}{\alpha} b$. Thanks to Lemma \ref{lemma:uniformly hyperbolic} we deduce that the tangent vector $\frac{\XX_\alpha^{-1} (\vvv)}{|\vvv|}$ is still in the stable cone ${C}_s$ if $\alpha \geq \alpha_0$ is large enough and hence each subcurve of $\XX_\alpha^{-1} (W_j)$ of length less than $1$ is an admissible curve. Since $|D \XX_\alpha^{-1}| \leq 2 \alpha^2$ there are at most $ N_\alpha \leq 3+ \lceil \alpha^2 |W_j| \rceil$  
curves $(\tilde W_{j,k})_{k=1}^{N_\alpha}$  such that $\tilde W_{j,k} \in \Sigma$ and  $ \XX_\alpha^{-1} (W_j) = \cup_{k=1}^{N_\alpha} \tilde W_{j,k}$.
Hence the properties follow up to choosing $\alpha \geq \alpha_0$ sufficiently large.

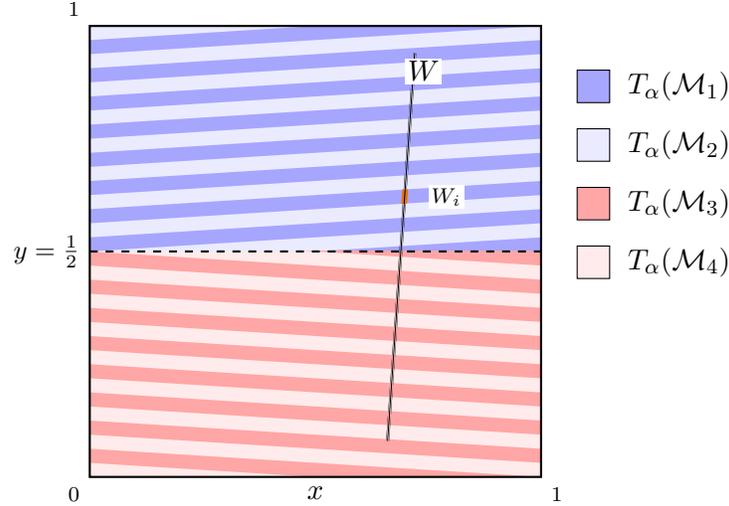
\begin{figure}[ht]
\centering
\begin{tikzpicture}[scale=6]

% Large alpha (example)
\def\A{16}

% Colors
\colorlet{Mone}{blue!35}
\colorlet{Mtwo}{blue!8}
\colorlet{Mthree}{red!35}
\colorlet{Mfour}{red!8}

% ==========================================================
% TOP HALF: y >= 1/2
% Centered at y = 1/2  => use x - A(y-1/2) mod 1
% M1: >= 1/2 mod 1   (dark blue)
% M2: <  1/2 mod 1   (light blue)
% ==========================================================
\begin{scope}
  \clip (0,0.5) rectangle (1,1);
  \fill[Mtwo] (0,0.5) rectangle (1,1); % base = M2

  \foreach \k in {-60,...,60} {
    \pgfmathsetmacro{\ca}{\k + 0.5}
    \pgfmathsetmacro{\cb}{\k + 1.0}
    % x - A(y-1/2) in [k+1/2, k+1)
    \fill[Mone]
      ({\A*(-1.5) + \ca},-1) --
      ({\A*( 1.5) + \ca}, 2) --
      ({\A*( 1.5) + \cb}, 2) --
      ({\A*(-1.5) + \cb},-1) -- cycle;
  }
\end{scope}

% ==========================================================
% BOTTOM HALF: y < 1/2
% Centered at y = 1/2  => use x + A(y-1/2) mod 1
% M3: >= 1/2 mod 1   (dark red)
% M4: <  1/2 mod 1   (light red)
% ==========================================================
\begin{scope}
  \clip (0,0) rectangle (1,0.5);
  \fill[Mfour] (0,0) rectangle (1,0.5); % base = M4

  \foreach \k in {-60,...,60} {
    \pgfmathsetmacro{\ca}{\k + 0.5}
    \pgfmathsetmacro{\cb}{\k + 1.0}
    % x + A(y-1/2) in [k+1/2, k+1)
    \fill[Mthree]
      ({-\A*(-1.5) + \ca},-1) --
      ({-\A*( 1.5) + \ca}, 2) --
      ({-\A*( 1.5) + \cb}, 2) --
      ({-\A*(-1.5) + \cb},-1) -- cycle;
  }
\end{scope}

% ==========================================================
% Curve W (almost vertical, length < 1), split into subcurves
% ==========================================================
\coordinate (Wstart) at (0.66,0.08);
\coordinate (Wend)   at (0.72,0.94); % almost vertical, length < 1

% Draw W in black first
\draw[line width=1.15pt, black] (Wstart) -- (Wend);

% --- Top half coloring of W (base color = M2) ---
\begin{scope}
  \clip (0,0.5) rectangle (1,1);
  \draw[line width=0.9pt, Mtwo] (Wstart) -- (Wend);
\end{scope}

% Overlay top-half stripes (M1) on W
\begin{scope}
  \clip (0,0.5) rectangle (1,1);
  \foreach \k in {-60,...,60} {
    \pgfmathsetmacro{\ca}{\k + 0.5}
    \pgfmathsetmacro{\cb}{\k + 1.0}
    \begin{scope}
      \clip
        ({\A*(-1.5) + \ca},-1) --
        ({\A*( 1.5) + \ca}, 2) --
        ({\A*( 1.5) + \cb}, 2) --
        ({\A*(-1.5) + \cb},-1) -- cycle;
      \draw[line width=0.9pt, Mone] (Wstart) -- (Wend);
    \end{scope}
  }
\end{scope}

% --- Bottom half coloring of W (base color = M4) ---
\begin{scope}
  \clip (0,0) rectangle (1,0.5);
  \draw[line width=0.9pt, Mfour] (Wstart) -- (Wend);
\end{scope}

% Overlay bottom-half stripes (M3) on W
\begin{scope}
  \clip (0,0) rectangle (1,0.5);
  \foreach \k in {-60,...,60} {
    \pgfmathsetmacro{\ca}{\k + 0.5}
    \pgfmathsetmacro{\cb}{\k + 1.0}
    \begin{scope}
      \clip
        ({-\A*(-1.5) + \ca},-1) --
        ({-\A*( 1.5) + \ca}, 2) --
        ({-\A*( 1.5) + \cb}, 2) --
        ({-\A*(-1.5) + \cb},-1) -- cycle;
      \draw[line width=0.9pt, Mthree] (Wstart) -- (Wend);
    \end{scope}
  }
\end{scope}

% Re-draw a thin black centerline for crispness
\draw[line width=0.22pt, black] (Wstart) -- (Wend);

% ==========================================================
% Highlight ONE subcurve W_i (chosen in the top half, one blue stripe)
% ==========================================================
\begin{scope}
  \clip (0,0.5) rectangle (1,1);
  % choose one stripe index k=-2 (for A=16 this intersects W in one short segment)
  \pgfmathsetmacro{\ca}{-2 + 0.5}
  \pgfmathsetmacro{\cb}{-2 + 1.0}
  \begin{scope}
    \clip
      ({\A*(-1.5) + \ca},-1) --
      ({\A*( 1.5) + \ca}, 2) --
      ({\A*( 1.5) + \cb}, 2) --
      ({\A*(-1.5) + \cb},-1) -- cycle;
    \draw[line width=2.2pt, orange!90!black] (Wstart) -- (Wend);
  \end{scope}
\end{scope}

% Thin black line on top of the highlighted piece
\begin{scope}
  \clip (0,0.5) rectangle (1,1);
  \pgfmathsetmacro{\ca}{-2 + 0.5}
  \pgfmathsetmacro{\cb}{-2 + 1.0}
  \begin{scope}
    \clip
      ({\A*(-1.5) + \ca},-1) --
      ({\A*( 1.5) + \ca}, 2) --
      ({\A*( 1.5) + \cb}, 2) --
      ({\A*(-1.5) + \cb},-1) -- cycle;
    \draw[line width=0.25pt, black] (Wstart) -- (Wend);
  \end{scope}
\end{scope}

% Labels for W and highlighted W_i
\node[fill=white, inner sep=1pt] at (0.74,0.90) {$W$};
\node[fill=white, inner sep=1pt, font=\scriptsize] at (0.79,0.62) {$W_i$};

% Main square and split line y = 1/2
\draw[thick] (0,0) rectangle (1,1);
\draw[thick,dashed] (0,0.5) -- (1,0.5);

% Axis labels
\node[below] at (0.5,0) {$x$};

% Mark y=1/2
\node[left] at (0,0.5) {\small $y=\frac12$};

% Corner labels
\node[below left]  at (0,0) {\scriptsize $0$};
\node[below right] at (1,0) {\scriptsize $1$};
\node[above left]  at (0,1) {\scriptsize $1$};

% Legend
\begin{scope}[shift={(1.08,0.83)}, scale=0.9]
  \fill[Mone] (0,0) rectangle (0.08,0.08);
  \draw (0,0) rectangle (0.08,0.08);
  \node[right] at (0.10,0.04) {$\XX_\alpha(\mathcal M_1)$};
\end{scope}

\begin{scope}[shift={(1.08,0.70)}, scale=0.9]
  \fill[Mtwo] (0,0) rectangle (0.08,0.08);
  \draw (0,0) rectangle (0.08,0.08);
  \node[right] at (0.10,0.04) {$\XX_\alpha(\mathcal M_2)$};
\end{scope}

\begin{scope}[shift={(1.08,0.57)}, scale=0.9]
  \fill[Mthree] (0,0) rectangle (0.08,0.08);
  \draw (0,0) rectangle (0.08,0.08);
  \node[right] at (0.10,0.04) {$\XX_\alpha(\mathcal M_3)$};
\end{scope}

\begin{scope}[shift={(1.08,0.44)}, scale=0.9]
  \fill[Mfour] (0,0) rectangle (0.08,0.08);
  \draw (0,0) rectangle (0.08,0.08);
  \node[right] at (0.10,0.04) {$\XX_\alpha(\mathcal M_4)$};
\end{scope}

\end{tikzpicture}
\caption{The sets $\XX_\alpha(\mathcal M_\ell)$ in $[0,1]^2$ with $\alpha=16$, with an admissible curve $W \in \Sigma$ and one highlighted subcurve $W_i \subset W\cap \overline{\XX_\alpha(\mathcal M_1)}$.}
\label{figApp:1}
\end{figure}

\medskip

    We now prove \eqref{enum:1}. We begin by picking a fundamental domain in $x$ so that both curves are entirely contained in this domain. This is always possible due to the length restrictions on the curves, as well as the fact that they are ``almost vertical''. Without loss of generality, we will identify this fundamental domain with $[0,1]$.
    Next, we parameterise $W_1, W_2$ as two curves $\gamma_1: [0, S_1] \to \TT^2$ and $\gamma_2: [0, S_2] \to \TT^2$ with $S_1,S_2 \leq 1$  denoted by 
    \begin{align} \label{eq:parameterisation-gamma-i}
        \gamma_1 (t) = (x_1, y_1) + t(a_1, b_1) \,, \quad \gamma_2 (t) = (x_2, y_2) + t(a_2, b_2) \,.
    \end{align}
    We first observe that, once we choose the unmatched subcurves $U$ so that their total length is bounded by
$C\,\dd_\Sigma(W_1,W_2)$, the base points of the remaining pieces $W_1\setminus U$ and $W_2$ (and vice versa)
stay close. More precisely, the distance between the corresponding base points is controlled by the triangle inequality by the distance
between the original base points plus the length of the unmatched parts, which is in turn bounded by
$(C+1) \dd_\Sigma(W_1,W_2)$. Therefore, it suffices to estimate the length of the unmatched pieces.

    \textbf{Step 1: Initial pre-processing--Unmatched curves for base points and end points.}
    As a first step, we shall ``cut'' pieces from the beginnings and ends of $W_1,W_2$ in order to obtain two curves that are more amenable for matching. In particular, we shall endeavour to ensure that both curves begin  in the same half plane $(\{y \geq 1/2\}$ or $\{y <1/2\}$) and  terminate in the same half plane $(\{y \geq 1/2\}$ or $\{y <1/2\}$), and furthermore begin and terminate on the same strip $\{x \pm \alpha y=q\}$.
    
    More formally, we claim that for any $W_1, W_2 \in \Sigma$ we can define unmatched curves $U_{1,1}, U_{1,2}$, $U_{2,1}, U_{2,2}$,
    $$ W_1 \setminus (U_{1,1} \cup U_{1,2}) \,,  W_2 \setminus (U_{2,1} \cup U_{2,2}) \in \Sigma$$ 
    and the base points of $W_1 \setminus (U_{1,1} \cup U_{1,2}) \,,  W_2 \setminus (U_{2,1} \cup U_{2,2})$
    that we call $(\bar{x}_1, \bar{y}_1), (\bar{x}_2, \bar{y}_2) $ satisfy
    $$ \bar{y}_1, \bar{y}_2 \geq 1/2 \qquad \text{or} \qquad \bar{y}_1, \bar{y}_2 < 1/2$$
    and if $\bar{y}_1 \geq 1/2$ then there exists $q \in \RR$ such that 
    $$ (\bar{x}_1, \bar{y}_1), (\bar{x}_2, \bar{y}_2) \in \{x- \alpha y = q  \} $$
    and if $\bar{y}_1 < 1/2$ then there exists $q \in \RR$ such that 
    $$ (\bar{x}_1, \bar{y}_1), (\bar{x}_2, \bar{y}_2) \in \{x+ \alpha y = q  \} \,.$$
    Furthermore, the same holds for the endpoints of $W_i \setminus \{U_{i,1} \cup U_{i,2})$. We begin by outlining the argument for the base points.

\begin{figure}[ht]
\centering
\begin{tikzpicture}[scale=6,line cap=round]

% Large alpha (example)
\def\A{16}

% Colors
\colorlet{Mone}{blue!35}
\colorlet{Mtwo}{blue!8}
\colorlet{Mthree}{red!35}
\colorlet{Mfour}{red!8}

% ==========================================================
% Background partition: \XX_\alpha(\mathcal M_\ell)
% (centered at y = 1/2)
% ==========================================================

% Top half: y >= 1/2, split by x - A(y-1/2) mod 1
\begin{scope}
  \clip (0,0.5) rectangle (1,1);
  \fill[Mtwo] (0,0.5) rectangle (1,1); % base = M2

  \foreach \k in {-60,...,60} {
    \pgfmathsetmacro{\ca}{\k + 0.5}
    \pgfmathsetmacro{\cb}{\k + 1.0}
    \fill[Mone]
      ({\A*(-1.5) + \ca},-1) --
      ({\A*( 1.5) + \ca}, 2) --
      ({\A*( 1.5) + \cb}, 2) --
      ({\A*(-1.5) + \cb},-1) -- cycle;
  }
\end{scope}

% Bottom half: y < 1/2, split by x + A(y-1/2) mod 1
\begin{scope}
  \clip (0,0) rectangle (1,0.5);
  \fill[Mfour] (0,0) rectangle (1,0.5); % base = M4

  \foreach \k in {-60,...,60} {
    \pgfmathsetmacro{\ca}{\k + 0.5}
    \pgfmathsetmacro{\cb}{\k + 1.0}
    \fill[Mthree]
      ({-\A*(-1.5) + \ca},-1) --
      ({-\A*( 1.5) + \ca}, 2) --
      ({-\A*( 1.5) + \cb}, 2) --
      ({-\A*(-1.5) + \cb},-1) -- cycle;
  }
\end{scope}

% ==========================================================
% Same-strip guide lines (dashed)
% ==========================================================

% Bottom strip: x + A(y-1/2) = q_-
\pgfmathsetmacro{\qb}{0.58 + \A*(0.17-0.5)}
\pgfmathsetmacro{\yLbot}{0.5 + (\qb-1)/\A} % x=1
\pgfmathsetmacro{\yRbot}{0.5 + \qb/\A}     % x=0
\begin{scope}
  \clip (0,0) rectangle (1,0.5);
  \draw[densely dashed, black, line width=0.4pt] (1,\yLbot) -- (0,\yRbot);
\end{scope}

% Top strip: x - A(y-1/2) = q_+
\pgfmathsetmacro{\qt}{0.44 - \A*(0.86-0.5)}
\pgfmathsetmacro{\yLtop}{0.5 - \qt/\A}       % x=0
\pgfmathsetmacro{\yRtop}{0.5 + (1-\qt)/\A}   % x=1
\begin{scope}
  \clip (0,0.5) rectangle (1,1);
  \draw[densely dashed, black, line width=0.4pt] (0,\yLtop) -- (1,\yRtop);
\end{scope}

% ==========================================================
% Two segments:
%   - Left segment  = W_2 (green top part = U_{2,2})
%   - Right segment = W_1 (red bottom part = U_{1,1})
% ==========================================================

% Line equations for dashed lines:
\pgfmathsetmacro{\aBot}{\yRbot}
\pgfmathsetmacro{\bBot}{\yLbot-\yRbot}
\pgfmathsetmacro{\aTop}{\yLtop}
\pgfmathsetmacro{\bTop}{\yRtop-\yLtop}

% --- Left segment (W_2) ---
\pgfmathsetmacro{\xLoneS}{0.22}
\pgfmathsetmacro{\xLoneE}{0.38}
\pgfmathsetmacro{\yLoneS}{\aBot + \bBot*\xLoneS}                 % ON lower dashed
\pgfmathsetmacro{\yLoneE}{\aTop + \bTop*\xLoneE + 0.06}          % ABOVE upper dashed (longer)

% Intersection with upper dashed
\pgfmathsetmacro{\dxL}{\xLoneE-\xLoneS}
\pgfmathsetmacro{\dyL}{\yLoneE-\yLoneS}
\pgfmathsetmacro{\tUpL}{(\aTop+\bTop*\xLoneS-\yLoneS)/(\dyL-\bTop*\dxL)}
\pgfmathsetmacro{\xUpL}{\xLoneS+\tUpL*\dxL}
\pgfmathsetmacro{\yUpL}{\yLoneS+\tUpL*\dyL}

% draw full left segment
\draw[line width=1.1pt, black] (\xLoneS,\yLoneS) -- (\xLoneE,\yLoneE);

% green top part = U_{2,2}
\draw[line width=2.0pt, green!60!black] (\xUpL,\yUpL) -- (\xLoneE,\yLoneE);
\draw[line width=0.25pt, black] (\xUpL,\yUpL) -- (\xLoneE,\yLoneE);
\node[fill=white, inner sep=1pt] at (0.33,0.63) {$W_2$};

% label U_{2,2} to the RIGHT of the green part
\node[fill=white, inner sep=1pt, font=\scriptsize, text=green!60!black, anchor=west]
  at (\xLoneE+0.02, {0.5*(\yUpL+\yLoneE)}) {$U_{2,2}$};

% --- Right segment (W_1) ---
\pgfmathsetmacro{\xRtwoS}{0.78}
\pgfmathsetmacro{\xRtwoE}{0.56}
\pgfmathsetmacro{\yRtwoS}{\aBot + \bBot*\xRtwoS - 0.06}          % BELOW lower dashed (longer)
\pgfmathsetmacro{\yRtwoE}{\aTop + \bTop*\xRtwoE}                 % ON upper dashed

% Intersection with lower dashed
\pgfmathsetmacro{\dxR}{\xRtwoE-\xRtwoS}
\pgfmathsetmacro{\dyR}{\yRtwoE-\yRtwoS}
\pgfmathsetmacro{\tLowR}{(\aBot+\bBot*\xRtwoS-\yRtwoS)/(\dyR-\bBot*\dxR)}
\pgfmathsetmacro{\xLowR}{\xRtwoS+\tLowR*\dxR}
\pgfmathsetmacro{\yLowR}{\yRtwoS+\tLowR*\dyR}

% draw full right segment
\draw[line width=1.1pt, black] (\xRtwoS,\yRtwoS) -- (\xRtwoE,\yRtwoE);

% red bottom part = U_{1,1}
\draw[line width=2.0pt, red!75!black] (\xRtwoS,\yRtwoS) -- (\xLowR,\yLowR);
\draw[line width=0.25pt, black] (\xRtwoS,\yRtwoS) -- (\xLowR,\yLowR);
\node[fill=white, inner sep=1pt] at (0.67,0.63) {$W_1$};

% label U_{1,1} to the RIGHT of the red part
\node[fill=white, inner sep=1pt, font=\scriptsize, text=red!75!black, anchor=west]
  at (\xRtwoS+0.02, {0.5*(\yRtwoS+\yLowR)}) {$U_{1,1}$};

% ==========================================================
% Frame and labels
% ==========================================================
\draw[thick] (0,0) rectangle (1,1);
\draw[thick,dashed] (0,0.5) -- (1,0.5);

\node[below] at (0.5,0) {$x$};
\node[left] at (0,0.5) {\small $y=\frac12$};

\node[below left]  at (0,0) {\scriptsize $0$};
\node[below right] at (1,0) {\scriptsize $1$};
\node[above left]  at (0,1) {\scriptsize $1$};

% Partition legend
\begin{scope}[shift={(1.08,0.83)}, scale=0.9]
  \fill[Mone] (0,0) rectangle (0.08,0.08);
  \draw (0,0) rectangle (0.08,0.08);
  \node[right] at (0.10,0.04) {$\XX_\alpha(\mathcal M_1)$};
\end{scope}

\begin{scope}[shift={(1.08,0.70)}, scale=0.9]
  \fill[Mtwo] (0,0) rectangle (0.08,0.08);
  \draw (0,0) rectangle (0.08,0.08);
  \node[right] at (0.10,0.04) {$\XX_\alpha(\mathcal M_2)$};
\end{scope}

\begin{scope}[shift={(1.08,0.57)}, scale=0.9]
  \fill[Mthree] (0,0) rectangle (0.08,0.08);
  \draw (0,0) rectangle (0.08,0.08);
  \node[right] at (0.10,0.04) {$\XX_\alpha(\mathcal M_3)$};
\end{scope}

\begin{scope}[shift={(1.08,0.44)}, scale=0.9]
  \fill[Mfour] (0,0) rectangle (0.08,0.08);
  \draw (0,0) rectangle (0.08,0.08);
  \node[right] at (0.10,0.04) {$\XX_\alpha(\mathcal M_4)$};
\end{scope}

\end{tikzpicture}
\caption{Pre-processing of two admissible curves $W_1,W_2$: a small initial curve $U_{1,1}$ is cut from $W_1$ and a small final curve $U_{2,2}$ is cut from $W_2$. The base points of $W_1 \setminus U_{1,1}$ and $W_2 \setminus U_{2,2}$ lie in the same half-plane and on the same $\{ x+ \alpha y = q_{in}\}$ for some $q_{in} \in \RR$ (indicated by the dashed strip lines); likewise for the endpoints of $W_1 \setminus U_{1,1}$ and $W_{2} \setminus U_{2,2}$  lie in the same half-plane and on the same $\{ x- \alpha y = q_{fin}\}$ for some $q_{fin} \in \RR$.}
\label{figApp:2}
\end{figure}
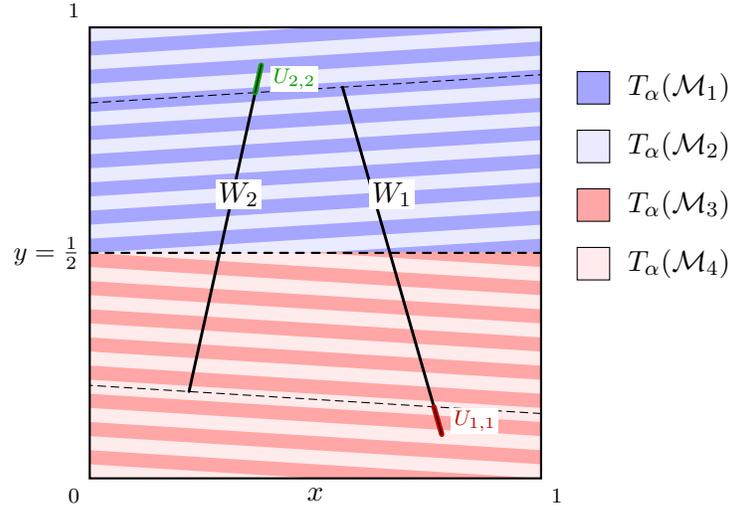

    \textbf{Base points.} Recall the parameterisations $\gamma_i$ for $i=1,2$ \eqref{eq:parameterisation-gamma-i}. There are now two options: either $\dd_{\mathbb{T}}(y_1,1/2)\leq \dd_{\mathbb{T}}(y_1,1)$, or $\dd_{\mathbb{T}}(y_1,1)\leq \dd_{\mathbb{T}}(y_1,1/2)$. Assume that $\dd_{\mathbb{T}}(y_1,1/2)\leq \dd_{\mathbb{T}}(y_1,1)$. The other case is handled in exactly the same way by symmetry. In particular, the conditions of $\dd_{\mathbb{T}}(y_1,1/2)\leq \dd_{\mathbb{T}}(y_1,1)$, together with $\dd_\Sigma(W_1,W_2)\leq 2/\alpha$ imply that the euclidean distance between the base points viewed as elements of $[0,1)^2$ is the same as the toroidal distance.  We now have two cases for the base points of the curves $\gamma_1, \gamma_2$, which are viewed as elements of $[0,1)^2$:
    \begin{itemize}
        \item[1)]  $y_1,y_2\geq 1/2$ or $y_1,y_2<1/2$.
        \item[2)]  ($y_1<1/2$ and  $y_2 \geq 1/2$) or ($y_1\geq 1/2$ and  $y_2 < 1/2)$.
    \end{itemize}
    Assume that we are in the second case, for instance $y_1<1/2$, $y_2 \geq 1/2$. Then we define the unmatched curve
    $$ U_{1,1} = \{\gamma_1 (t): t \in [0, t_1 \wedge S_1] \}$$
    where $t_1$ is chosen so that  the second component of $\gamma_1(t_1)$ is equal to $1/2$ and we  get $t_1 = \frac{1/2 - y_1}{b_1} \leq \frac{y_2 - y_1}{b_1} \leq 2 \dd_\Sigma (W_1, W_2)$. 
    
    Therefore, we deal with case 1), assuming for now that $y_1,y_2<1/2$. The other case is identical. 
    We can assume without loss of generality that $x_1 +\alpha y_1 \leq  x_2 + \alpha y_2$. 
    We define $q_2 = x_2 + \alpha y_2 \in \RR$ and we define the unmatched curve 
    $$ U_{1,1} = \{\gamma_1 (t): t \in [0, t_1 \wedge S_1] \} $$
    where $t_1$ is now defined so that $\gamma_1(t_1) \in \{x+ \alpha y = q_2\,, \, \, \, x,y \in [0,1) \}$. 
    Hence 
    $$0 \leq  t_1 = \frac{x_2 +\alpha y_2 - (x_1 + \alpha y_1) }{a_1 +\alpha b_1} \leq 2 \dd_\Sigma (W_1, W_2)\,. 
    $$  
    If $y_1+t_1 b_1<1/2$, we are done. If $y_1+t_1 b_1>1/2$, we define another unmatched piece 
    $$
    U_{2,2}=\{\gamma_2(t): t \leq t_2 \wedge S_2\},
    $$
    where $t_2=\frac{1/2-y_2}{b_2}$. Now, 
    $$
    t_2 \leq \frac{y_1+t_1 b_1-y_2}{b_2} \leq C\dd_{\Sigma}(W_1,W_2).
    $$
Hence, no matter what case we are in, we may either conclude our cutting of the initial points, or reduce to the case where $y_1,y_2 \geq 1/2$, and $y_1, y_2$ are close to $1/2$. Hence, we assume that both curves start with $y_1,y_2>1/2$, with $y_1,y_2$ a distance less than $\frac{C}{\alpha}$ to $y=1/2$. Without loss of generality, we assume that $x_1 -\alpha y_1 \geq x_2-\alpha y_2=:\tilde q_2$, and define the unmatched piece 
$$
U_{1,1}=\{\gamma_1(t):t \in [0, t_1 \wedge S_1]\},
$$
where $t_1=\frac{\tilde q_2-(x_1-\alpha y_1)}{-b_1\alpha +a_1}.$ As before, we estimate $t_1 \leq C\dd_{\Sigma}(W_1,W_2)$, and since both $y_1,y_2$ are close to $1/2$ it still holds that $y_1+t_1 b_1<1$, so the cutting of base points is complete.

Finally we remark that, although in the previous computations we had always implicitly assumed that $t_1\wedge S_1=t_1$ and $t_2 \wedge S_2 = t_2$, in the case where for instance $S_1 \leq t_1$, we may actually view both $W_1, W_2$ entirely as unmatched. Indeed, we saw that $t_1 \leq C\dd_{\Sigma}(W_1,W_2)$. Thus, $S_1 \leq C \dd_\Sigma(W_1,W_2)$. Furthermore, $S_2 \leq |S_1-S_2|+S_2 \leq (C+1)\dd_\Sigma(W_1,W_2)$, and thus they are indeed unmatched.
\subsubsection*{End points.} We now undertake precisely the same cutting procedure that we did for the base points in reverse: we consider the curves $\tilde \gamma_i(t)=\{(x_i+S_i a_i,y_i+S_i b_i)+t(-a_i,-b_i): t \leq S_i-t_i\}$, where the $t_i$ are the new starting points of $W_i\setminus U_{i,1}$. Note that the distance between these ``flipped'' curves is bounded by the distance between the non-flipped ones. Indeed, the only thing to check is the distance between the base points, which is bounded by 
$$
|x_1-x_2|+|y_1-y_2|+|S_1a_1-S_2 a_2|+|S_1 b_1-S_2 b_2|\leq C\dd_\Sigma(W_1,W_2).
$$
As such, we now obtain curves $U_{1,2}$, $U_{2,2}$ with $|U_{i,2}|\leq C\dd_{\Sigma}(W_1,W_2)$ so that the end points $\tilde x_i, \tilde y_i$ of $W_{i} \setminus U_{i,2}\cup U_{i,1}$ satisfy either 
$$ \tilde{y}_1, \tilde{y}_2 \geq 1/2 \qquad \text{or} \qquad \tilde{y}_1, \tilde{y}_2 < 1/2$$
    and if $\tilde{y}_1 \geq 1/2$ then there exists $q \in \RR$ such that 
    $$ (\tilde{x}_1, \tilde{y}_1), (\tilde{x}_2, \tilde{y}_2) \in \{x- \alpha y = q  \} $$
    and if $\tilde{y}_1 < 1/2$ then there exists $q \in \RR$ such that 
    $$ (\tilde{x}_1, \tilde{y}_1), (\tilde{x}_2, \tilde{y}_2) \in \{x+\alpha y = q  \} \,.$$

    \textbf{Step 2: Matched curves.} 
    We first define the matched curves. Without loss of generality and up to discarding some unmatched curves defined above,  we suppose that the base points $(x_1, y_1), (x_2,y_2)$ are such that $y_1, y_2 \in [0,1/2)$ and $(x_1, y_1), (x_2,y_2) \in \{ x+ \alpha y = q_{in} \}$ for some $q_{in} \in [\frac{j_{in}}{2}, \frac{j_{in}+1}{2})$, where $j_{in} \in \ZZ$. Without loss of generality we can suppose that $\gamma_1(S_1),\gamma_2 (S_2)$ are such that the second component is in $[1/2, 1)$ and  $\gamma_1(S_1),\gamma_2 (S_2) \in \{ x-\alpha y = q_{fin} \}$ for some $q_{fin} \in (\frac{j_{fin}}{2}, \frac{j_{fin}+1}{2}]$, where $j_{fin} \in \ZZ$. We define accordingly $t_{1 , j_{in}} = t_{2, j_{in}}=0 $ and $t_{1, j_{fin}} = S_1$ and $t_{2, j_{fin}}= S_2$.
    For  $ j_{in} \leq j$ we also define  $t_{1,j} \geq t_{1, j_{in}} =0$ as the minimal time so that $\gamma_1(t_{1,j}) \in \{ x+ \alpha y = \frac{j}{2} \}$ and similarly $t_{2, j} >0$ as the minimal time so that $\gamma_2(t_{2,j}) \in \{ x+\alpha y = \frac{j}{2} \}$.  Hence
    $$ t_{1,j}  = \frac{\frac{j}{2} - (x_1 + \alpha y_1)}{a_1 + \alpha b_1}\,,  \qquad t_{2,j}  = \frac{\frac{j}{2} - (x_2+ \alpha y_2)}{a_2 + \alpha b_2}$$
    $$ |t_{1,j} - t_{2,j}| = \left | \frac{(\frac{j}{2} - (x_1 + \alpha y_1)) (a_2 +\alpha b_2 ) - (\frac{j}{2} - (x_2 + \alpha y_2)) (a_1 +\alpha b_1 ) }{(a_2 +\alpha b_2 )(a_1 + \alpha b_1)} \right | \leq 2 \dd_\Sigma (W_1, W_2)\,,$$
   where in the last we have used that $x_2 +\alpha y_2 = x_1 +\alpha y_1$ and $|j| \leq C \alpha$. Hence, for all $j \in \ZZ$ so that $\gamma_{1} (t_{1,j})$ and $\gamma_{2} (t_{2,j})$ has the second component less then $1/2$, we  define the matched curves  as 
    \begin{align}\label{eq:matched-curves}
        W_{1,j} = \{ \gamma_1 (t) : t \in [t_{1,j+1}, t_{1,j} ] \} \,, \qquad  W_{2,j} = \{ \gamma_2 (t) : t \in [t_{2,j+1}, t_{2,j}] \}  \,.
    \end{align}
    Defining $ (x_{1,j}, y_{1,j})$ the intersection of $W_1$ with $\{ x+\alpha y = \frac{j}{2}\}$ and defining similarly $(x_{2,j}, y_{2,j})$ we deduce that 
    $$ |y_{2,j} - y_{1,j}| = |y_{2} +  t_{2,j} b_2 - (y_{1} + t_{1,j} b_1)| \leq   |y_{2} - y_1 | + |t_{1,j} b_1 - t_{2,j} b_2| \leq 3 \dd_\Sigma (W_1, W_2) \,.$$
     The same argument holds true for the $x$-variable. Hence, these matched curves satisfy all the properties.
    The above argument may be continued until the second component of either curve reaches $1/2$. We outline here the case of crossing $y=1/2$.   We denote by $\bar{j} \in \ZZ$ the largest number so that the second component of $\gamma_{1} (t_{1, \bar j})$ and $\gamma_{2} (t_{2, \bar j})$ are less then $1/2$. Hence, at least one of the two curves cross the set $\{ y =1/2\}$ before crossing $\{x + \alpha y = \frac{\bar j +1}{2} \}$. Since both curves terminate in $\{y \geq 1/2\}$ by Step 1, there exists  $\bar t_2 \in (t_{2, \bar j}, t_{2, \bar j +1}]$ such that the second component of  $\gamma_{2} (\bar t_{2})$ is $1/2$ and $\bar t_1 \in (t_{1, \bar j}, t_{1, \bar j +1}]$ such that the second component of  $\gamma_{1} (\bar t_{1})$ is $1/2$. Suppose that $x_1+\bar t_1a_1+\alpha/2 \geq x_2+\bar t_2a_2+\alpha/2$, see Figure \ref{figApp:3_zoom}. 
    We now define  $q_{2,up}, q_{2, down} \in \RR$ such that 
    $$\gamma_{2} (\bar t_2) \in \{ x+ \alpha y = q_{2, down}\} \cap \{ x-\alpha y = q_{2, up}\} \,.$$

\begin{figure}[ht]
\centering
\begin{tikzpicture}[x=8cm,y=35cm] % zoomed scales (x reduced to avoid TeX dimension overflow)

% Large alpha (example)
\def\A{16}

% Zoom window around y = 1/2
\def\ymin{0.455}
\def\ymax{0.545}

% Colors
\colorlet{Mone}{blue!35}
\colorlet{Mtwo}{blue!8}
\colorlet{Mthree}{red!35}
\colorlet{Mfour}{red!8}

% ==========================================================
% Geometry
%   Left broken curve is labeled W_2
%   Right straight curve is labeled W_1
%   We shift W_2 so that P = W_2 \cap {y=1/2} sits at the center
%   of the T_\alpha(M_2) region on y=1/2, i.e. x = 1/4.
% ==========================================================

% Parameter for the point on segment B1--Mmid where y=1/2
\pgfmathsetmacro{\tPm}{(0.5-0.17)/(0.52-0.17)}

% Base x-coordinate of P (before shifting W_2)
\pgfmathsetmacro{\xPmBase}{0.58 + (0.52-0.58)*\tPm}

% Target x-coordinate: middle of the light-blue region T_\alpha(M_2) at y=1/2
% (on [0,1], the T_\alpha(M_2) portion is [0,1/2], so the center is x=1/4)
\pgfmathsetmacro{\xTargetPm}{0.25}

% Shift needed for W_2
\pgfmathsetmacro{\shiftWtwo}{\xTargetPm - \xPmBase}

% Left broken curve (W_2), shifted left
\pgfmathsetmacro{\xBOne}{0.58+\shiftWtwo}
\pgfmathsetmacro{\xMmid}{0.52+\shiftWtwo}
\pgfmathsetmacro{\xEOne}{0.44+\shiftWtwo}

\coordinate (B1)   at (\xBOne,0.17);
\coordinate (Mmid) at (\xMmid,0.52);
\coordinate (E1)   at (\xEOne,0.86);

% Right straight curve (W_1)
\coordinate (B2) at (0.72,0.16125);
\coordinate (E2) at (0.8991,0.8887);

% ----------------------------------------------------------
% P = W_2 \cap {y=1/2}   (W_2 is the LEFT broken curve)
% ----------------------------------------------------------
\pgfmathsetmacro{\xPm}{\xBOne + (\xMmid-\xBOne)*\tPm}
\coordinate (Pm) at (\xPm,0.5);

% Dashed lines through Pm:
%   x + A y = q_{2,down}
%   x - A y = q_{2,up}
\pgfmathsetmacro{\qtwodown}{\xPm + 0.5*\A}
\pgfmathsetmacro{\qtwoup}{\xPm - 0.5*\A}

% Useful equivalent constant in centered coordinates:
%   x + A(y-1/2) = xPm,  x - A(y-1/2) = xPm
\pgfmathsetmacro{\qPm}{\xPm}

% Endpoints on x=1 (draw only portions to the right of Pm)
% x - A y = q_{2,up}  (slope +1/A)
\pgfmathsetmacro{\yRpmPlus}{(1-\qtwoup)/\A}
% x + A y = q_{2,down} (slope -1/A)
\pgfmathsetmacro{\yRpmMinus}{(\qtwodown-1)/\A}

% ----------------------------------------------------------
% U_{1,1} = segment on W_1 (RIGHT curve) between intersections
% with the two dashed lines
% ----------------------------------------------------------
\pgfmathsetmacro{\dxWone}{0.8991-0.72}
\pgfmathsetmacro{\dyWone}{0.8887-0.16125}

% Intersection with x + A(y-1/2) = qPm  (equiv. x + A y = q_{2,down})
\pgfmathsetmacro{\tUoneoneA}{
  (\qPm - (0.72 + \A*(0.16125-0.5))) / (\dxWone + \A*\dyWone)
}
\coordinate (U11A) at ({0.72 + \tUoneoneA*\dxWone},{0.16125 + \tUoneoneA*\dyWone});

% Intersection with x - A(y-1/2) = qPm  (equiv. x - A y = q_{2,up})
\pgfmathsetmacro{\tUoneoneB}{
  (\qPm - (0.72 - \A*(0.16125-0.5))) / (\dxWone - \A*\dyWone)
}
\coordinate (U11B) at ({0.72 + \tUoneoneB*\dxWone},{0.16125 + \tUoneoneB*\dyWone});

% ==========================================================
% Draw everything clipped to the zoom window
% ==========================================================
\begin{scope}
  \clip (0,\ymin) rectangle (1,\ymax);

  % Background partition (same as original, but clipped)

  % Top half: y >= 1/2
  \begin{scope}
    \clip (0,0.5) rectangle (1,\ymax);
    \fill[Mtwo] (0,0.5) rectangle (1,\ymax);

    \foreach \k in {-6,...,6} {
      \pgfmathsetmacro{\ca}{\k + 0.5}
      \pgfmathsetmacro{\cb}{\k + 1.0}
      \fill[Mone]
        ({\A*(-1.5) + \ca},-1) --
        ({\A*( 1.5) + \ca}, 2) --
        ({\A*( 1.5) + \cb}, 2) --
        ({\A*(-1.5) + \cb},-1) -- cycle;
    }
  \end{scope}

  % Bottom half: y < 1/2
  \begin{scope}
    \clip (0,\ymin) rectangle (1,0.5);
    \fill[Mfour] (0,\ymin) rectangle (1,0.5);

    \foreach \k in {-6,...,6} {
      \pgfmathsetmacro{\ca}{\k + 0.5}
      \pgfmathsetmacro{\cb}{\k + 1.0}
      \fill[Mthree]
        ({-\A*(-1.5) + \ca},-1) --
        ({-\A*( 1.5) + \ca}, 2) --
        ({-\A*( 1.5) + \cb}, 2) --
        ({-\A*(-1.5) + \cb},-1) -- cycle;
    }
  \end{scope}

  % Dashed construction lines starting at Pm and drawn only to the right
  \draw[dashed, black, line width=0.6pt] (Pm) -- (1,\yRpmPlus);  % x - A y = q_{2,up}
  \draw[dashed, black, line width=0.6pt] (Pm) -- (1,\yRpmMinus); % x + A y = q_{2,down}

  % Curves
  \draw[line width=1.1pt, black] (B1) -- (Mmid) -- (E1); % left curve (W_2)
  \draw[line width=1.1pt, black] (B2) -- (E2);           % right curve (W_1)

  % U_{1,1} highlighted in RED
  \draw[line width=2.3pt, red!85!black] (U11A) -- (U11B);

  % Markers
  \fill (Pm)   circle (0.45pt);
  \fill (U11A) circle (0.50pt);
  \fill (U11B) circle (0.50pt);
\end{scope}

% ==========================================================
% Frame and labels for zoomed view
% ==========================================================
\draw[thick] (0,\ymin) rectangle (1,\ymax);
\draw[thick,dashed] (0,0.5) -- (1,0.5);

\node[left] at (0,0.5) {\small $y=\frac12$};
\node[below left]  at (0,\ymin) {\scriptsize $0$};
\node[below right] at (1,\ymin) {\scriptsize $1$};
\node[below] at (0.5,\ymin) {$x$};

% Labels
\node[fill=white, inner sep=1pt] at (0.18,0.515) {$W_2$};
\node[fill=white, inner sep=1pt] at (0.86,0.528) {$W_1$}; % moved upward
\node[fill=white, inner sep=1pt, font=\scriptsize, text=red!85!black] at (0.875,0.5015) {$U_{1,1}$}; % closer to red line

% Optional label for P
% \node[fill=white, inner sep=1pt, font=\scriptsize] at ({\xPm+0.015},0.503) {$P$};

% ==========================================================
% Larger, more spaced legend for T_\alpha(\mathcal M_\ell) (no box)
% ==========================================================
\begin{scope}[shift={({1.12},{0.543})}, x=1cm, y=1cm]

  % entry 1
  \fill[Mone]  (0, 0.00) rectangle (0.18,-0.18);
  \draw        (0, 0.00) rectangle (0.18,-0.18);
  \node[anchor=west, font=\footnotesize] at (0.26,-0.09) {$T_\alpha(\mathcal M_1)$};

  % entry 2
  \fill[Mtwo]  (0,-0.36) rectangle (0.18,-0.54);
  \draw        (0,-0.36) rectangle (0.18,-0.54);
  \node[anchor=west, font=\footnotesize] at (0.26,-0.45) {$T_\alpha(\mathcal M_2)$};

  % entry 3
  \fill[Mthree](0,-0.72) rectangle (0.18,-0.90);
  \draw        (0,-0.72) rectangle (0.18,-0.90);
  \node[anchor=west, font=\footnotesize] at (0.26,-0.81) {$T_\alpha(\mathcal M_3)$};

  % entry 4
  \fill[Mfour] (0,-1.08) rectangle (0.18,-1.26);
  \draw        (0,-1.08) rectangle (0.18,-1.26);
  \node[anchor=west, font=\footnotesize] at (0.26,-1.17) {$T_\alpha(\mathcal M_4)$};

\end{scope}

\end{tikzpicture}
\caption{We represent $W_1, W_2$ as two black lines. We zoom near $y=\frac12$.  The two dashed lines are $\{ x+ \alpha y = q_{2, down}\}$ and $\{ x-\alpha y = q_{2, up}\}$ passing through $W_2 \cap \{ y = 1/2 \}$. The red segment is the unmatched curve $U_{1,1}\subset W_1$.}
\label{figApp:3_zoom}
\end{figure}
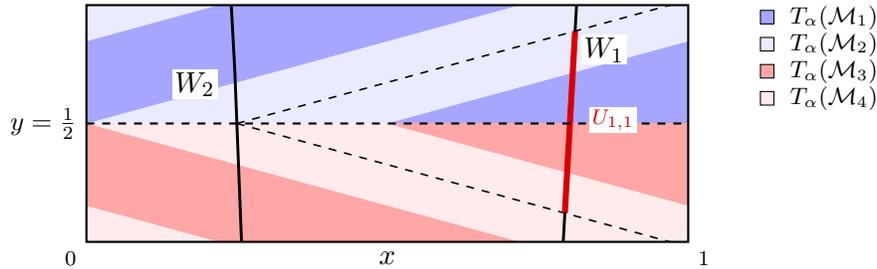

    Hence we define the unmatched curve
    $$ U_{1,1} = \{ \gamma_1(t): t \in [t_{1, down} \, , t_{1, up}] \}  \,,$$
    so that 
    $$ \gamma_1 (t_{1, down}) \in \{ x+\alpha y = q_{2, down}\} \,, \qquad  \gamma_1 (t_{1, up}) \in \{ x-\alpha y = q_{2, up}\} \,.$$
    By the assumption that $x_1+\bar t_1 a_1+\alpha/2 \geq x_2+\bar t_2 a_2 +\alpha/2$, it follows that $y_1+t_{1,up}b_1 \leq 1/2$, and so the curves $\{\gamma_2(t): t \in [t_{2,\bar j},\bar t_2]\}$, and $\{\gamma_1(t):t \in [t_{1,\bar j}, t_{1,down}\}$ are indeed matched, with distance bounded as before by $c\dd_{\Sigma}(W_1,W_2)$. Next, we have 
    \begin{align*}
        x_1 + t_{1, down} a_1 + \alpha (y_1 + t_{1, down} b_1) = q_{2, down} = x_2 + \bar{t}_2 a_2 + \alpha (y_2 + \bar t_2 b_2)\,,
        \\
        x_1 + t_{1, up} a_1 - \alpha (y_1 + t_{1, up} b_1) = q_{2, up} = x_2 + \bar{t}_2 a_2 - \alpha (y_2 + \bar t_2 b_2)\,,
    \end{align*}
    and summing the two we deduce 
    \begin{align*}
        |t_{1, up} - t_{1, down}| & = \left | \frac{2 (x_2 - x_1) + (\bar t_2 a_2 - t_{1, down} a_1) + (\bar t_2 a_2 - t_{1, up} a_1)}{\alpha b_1} \right | 
        \\
        & \leq C \alpha^{-1} ( \dd_\Sigma (W_1, W_2) +  |\bar t_2 a_2 - t_{1, up} a_1|)
    \end{align*}
    where in the last we used $|\bar t_2 a_2 - t_{1, down} a_1|=| \bar t_2 (a_2 - a_1) + a_1 (\bar t_2 - t_{1, down}) | \leq C \dd_\Sigma (W_1, W_2)$ since we have already an upper bound on $|\bar t_2 - t_{1, down} |$ by similar computations given above. Hence, using also that $|\bar t_2 a_2 - t_{1, up} a_1| \leq |(\bar t_2 - t_{1, down}) a_2 | + |(a_2 - a_1)t_{1, down}| + |(t_{1, down}- t_{1, up}) a_1| \leq C \dd_{\Sigma} (W_1, W_2) + C \alpha^{-1} |(t_{1, down}- t_{1, up})|$ we deduce that up to relabelling the constant $C>0$
    $$ |t_{1, up} - t_{1, down}|  \leq C \alpha^{-1} \dd_{\Sigma} (W_1, W_2)  + C \alpha^{-2} |(t_{1, down}- t_{1, up})|$$
    and since $\alpha \geq \alpha_0$ is sufficiently large we deduce (up to again relabelling the constant) $|t_{1, up} - t_{1, down}| \leq C \alpha^{-1} \dd_{\Sigma} (W_1,W_2)$.

    From the new base points given by $\gamma_1(t_{1,up})$ and $\gamma_2(\bar t_2)$ we create matched pieces iteratively similarly as in \eqref{eq:matched-curves} with iterative times so that the curves cross $\{x - \alpha y = \frac{j}{2} \}$ for $j \in \ZZ$ for $y \in [1/2, 1)$. We then may define also unmatched curves close to  $y =1$ similarly to the case $y =1/2$ discussed above.

\medskip

     We now prove \eqref{enum:2}. We suppose that we have two matched curves $W_{1,1}, W_{2,1}$  parametrized by $\gamma_1, \gamma_2$, more precisely
     $$\gamma_1 (t) = (x_1, y_1) + t (a_1, b_1)\,, \qquad \gamma_2 (t) = (x_2, y_2) + t (a_2, b_2)\,.$$
     By \eqref{enum:1} we have that $\gamma_1 ([t_{1, in}, t_{1,fin}]), \gamma_2 ([t_{2, in}, t_{2,fin}]) \subset \XX_\alpha (\cM_\ell)$ for some $\ell=1,2,3,4$. It is straightforward to see that $|t_{i,in} - t_{i,fin}|\leq \frac{2}{\alpha}$ for $i=1,2$. To simplify the notation and by the matching of the curves given in \eqref{enum:1}, we can always translate the parameterisation and suppose that $\gamma_1 (0), \gamma_2 (0) \in \{ x+ \alpha y = q\}$ and $\gamma_1 (t_1),\gamma_2 (t_2) \in \{ x+ \alpha y = \bar{q} \}$ with $\bar{q} \geq q$ and $|q- \bar{q}|\leq 1/2$ and $|t_1 - t_2|\leq C \alpha^{-1}\dd_\Sigma (W_{1,1}, W_{2,1})$. Indeed, $t_i=\frac{\bar q-q}{a_i+\alpha b_i}$, so $t_1-t_2=(\bar q-q)\frac{a_2-a_1+\alpha (b_2-b_1)}{(a_1+\alpha b_1)(a_2+\alpha b_2)} \leq C\alpha^{-1}\dd_\Sigma(W_{1,1},W_{2,1})$. We suppose that we are working on $\XX_\alpha (\cM_4)$, more precisely 
     $$ D \XX_\alpha^{-1} = \begin{pmatrix}
          1 & \alpha
         \\
         \alpha & 1+ \alpha^2 
     \end{pmatrix} \,.$$
    We compute the first component of $\XX_\alpha^{-1} (\gamma_1 (t))$
    $$ (\XX_\alpha^{-1} (\gamma_1 (t)))^{(1)} =x_1 + \alpha y_1 +  t( a_1 + \alpha b_1)    $$
    and the second component 
    $$ (\XX_\alpha^{-1} (\gamma_1 (t)))^{(2)} = \alpha x_2+  (1+ \alpha^2) y_2 + t  \left (\alpha a_2 +(\alpha^2 +1) b_2 \right ) \,. $$
    Hence,  we have 
    that the base point of $\XX_\alpha^{-1} (\gamma_1 (t))$ is $\XX_\alpha^{-1} (\gamma(0)) =  (x_1 + \alpha y_1, \alpha x_1+  (1+ \alpha^2) y_1)$. From which we deduce  
    $$X_1 = \XX_\alpha^{-1} (\gamma_1 (0)) = (q, y_1 + \alpha q) \,.$$
    Similarly we find the parameterisation of $\XX_\alpha^{-1} (\gamma_2 (t))$ and  its base point is 
    $$ X_2 = (q, y_2 + \alpha q) \,.$$
    We deduce that the second component of these base points satisfy 
    $$ |X_1^{(2)} -  X_2^{(2)}| \leq |y_1 - y_2| \leq \alpha^{-1} |x_1 - x_2| \leq C\alpha^{-1} \dd_\Sigma (W_{1,1}, W_{2,1})\,,$$
    where we have used that $x_i + \alpha y_i = q$ for $i=1,2$. We now suppose that $y_1 \leq y_2$ and we define the time $t^\star$ such that the second component of  
    $\XX_\alpha^{-1} (\gamma_1 (t))$ is the same as the second component of $\XX_\alpha^{-1} (\gamma_2 (0))$. Hence, using again that $|y_2 - y_1| \leq \alpha^{-1} |x_2 - x_1|\leq C\alpha^{-1} \dd_\Sigma (W_{1,1},W_{2,1})$ we deduce 
    $$ t^\star = \frac{y_2 - y_1}{(1+ \alpha^2) b_1 + \alpha a_1} \leq C \alpha^{-3} \dd(W_{1,1}, W_{2,1})\,.$$
    We define the unmatched curve 
    $$ \tilde U_1 = \{\XX_\alpha^{-1}(\gamma_1(t)) : t \in [0, t^*] \} \,. $$

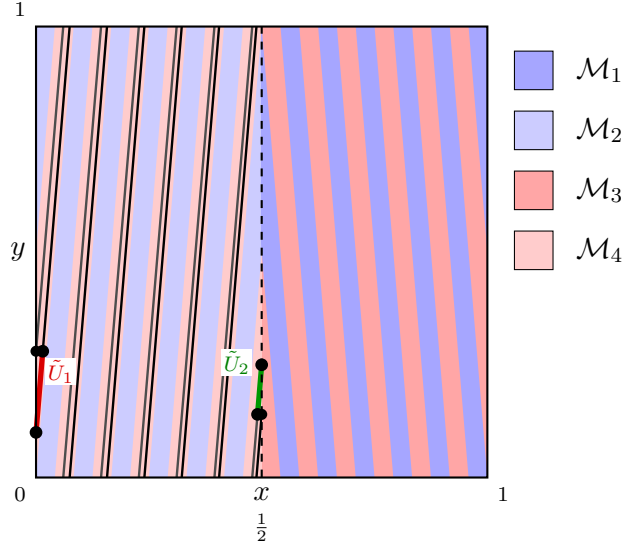
\begin{figure}[ht]
\centering
\begin{tikzpicture}[scale=6]

% Parameter alpha
\def\A{12}

% Colors (same palette style as before, but left side more visible)
\colorlet{Mone}{blue!35}   % M1
\colorlet{Mtwo}{blue!20}   % M2
\colorlet{Mthree}{red!35}  % M3
\colorlet{Mfour}{red!20}   % M4

% ==========================================================
% Partition of [0,1]^2 into M1,...,M4
% ==========================================================

% Left half: x < 1/2  (M4 base + M2 stripes)
\begin{scope}
  \clip (0,0) rectangle (0.5,1);

  % base region M4
  \fill[Mfour] (0,0) rectangle (0.5,1);

  % stripes for M2: y - A x in [k+1/2, k+1)
  \foreach \k in {-20,...,20} {
    \pgfmathsetmacro{\ca}{\k + 0.5}
    \pgfmathsetmacro{\cb}{\k + 1.0}
    \fill[Mtwo]
      (0,\ca) --
      (1,{\A + \ca}) --
      (1,{\A + \cb}) --
      (0,\cb) -- cycle;
  }
\end{scope}

% Right half: x >= 1/2  (M3 base + M1 stripes)
\begin{scope}
  \clip (0.5,0) rectangle (1,1);

  % base region M3
  \fill[Mthree] (0.5,0) rectangle (1,1);

  % stripes for M1: y + A x in [k+1/2, k+1)
  \foreach \k in {-20,...,40} {
    \pgfmathsetmacro{\ca}{\k + 0.5}
    \pgfmathsetmacro{\cb}{\k + 1.0}
    \fill[Mone]
      (0,\ca) --
      (1,{\ca - \A}) --
      (1,{\cb - \A}) --
      (0,\cb) -- cycle;
  }
\end{scope}

% ==========================================================
% Two wrapped curves in M4 (left half), almost parallel but not parallel
% They satisfy y = frac(s_i x + c_i), x in [0,1/2], and are chosen to
% stay in M4 (equivalently y - A x mod 1 stays < 1/2).
% ==========================================================

% Curve 1 parameters (lower starting point)
\pgfmathsetmacro{\sOne}{\A + 0.08}
\pgfmathsetmacro{\cOne}{0.10}

% Curve 2 parameters (higher starting point)
\pgfmathsetmacro{\sTwo}{\A - 0.06}
\pgfmathsetmacro{\cTwo}{0.28}

% ----------------------------------------------------------
% Red initial segment \tilde U_1 on the lower curve:
% from x=0 until it reaches the same y-level as the other
% curve's starting point y = cTwo.
% ----------------------------------------------------------
\pgfmathsetmacro{\xUredEnd}{(\cTwo-\cOne)/\sOne}
\coordinate (UredA) at (0,\cOne);
\coordinate (UredB) at (\xUredEnd,\cTwo);

% ----------------------------------------------------------
% End y-values at x=1/2 (torus y-coordinate = fractional part)
% ----------------------------------------------------------
\pgfmathsetmacro{\yOneEndRaw}{0.5*\sOne + \cOne}
\pgfmathsetmacro{\yOneEnd}{\yOneEndRaw - floor(\yOneEndRaw)}

\pgfmathsetmacro{\yTwoEndRaw}{0.5*\sTwo + \cTwo}
\pgfmathsetmacro{\yTwoEnd}{\yTwoEndRaw - floor(\yTwoEndRaw)}
\pgfmathsetmacro{\nTwoEnd}{floor(\yTwoEndRaw)} % branch index near x=1/2

% ----------------------------------------------------------
% Green final segment \tilde U_2 on the curve with higher endpoint
% (with these parameters, curve 2 ends higher than curve 1):
% take the terminal piece of curve 2 from the point where it reaches
% y = yOneEnd up to x = 1/2.
% ----------------------------------------------------------
\pgfmathsetmacro{\xUgreenStart}{(\nTwoEnd + \yOneEnd - \cTwo)/\sTwo}
\coordinate (UgreenA) at (\xUgreenStart,\yOneEnd);
\coordinate (UgreenB) at (0.5,\yTwoEnd);

% ==========================================================
% Draw the two wrapped curves (black/gray)
% ==========================================================

% Curve 1: y = frac(sOne*x + cOne), x in [0,1/2]
\begin{scope}
  \clip (0,0) rectangle (0.5,1);
  \foreach \n in {0,...,7} {
    \pgfmathsetmacro{\xa}{max(0,(\n-\cOne)/\sOne)}
    \pgfmathsetmacro{\xb}{min(0.5,(\n+1-\cOne)/\sOne)}
    \ifdim \xb pt>\xa pt
      \draw[line width=0.9pt, black]
        plot[domain=\xa:\xb, samples=2] (\x,{\sOne*\x + \cOne - \n});
    \fi
  }
\end{scope}

% Curve 2: y = frac(sTwo*x + cTwo), x in [0,1/2]
\begin{scope}
  \clip (0,0) rectangle (0.5,1);
  \foreach \n in {0,...,7} {
    \pgfmathsetmacro{\xa}{max(0,(\n-\cTwo)/\sTwo)}
    \pgfmathsetmacro{\xb}{min(0.5,(\n+1-\cTwo)/\sTwo)}
    \ifdim \xb pt>\xa pt
      \draw[line width=0.9pt, black!70]
        plot[domain=\xa:\xb, samples=2] (\x,{\sTwo*\x + \cTwo - \n});
    \fi
  }
\end{scope}

% ==========================================================
% Highlighted pieces
% ==========================================================

% Red initial piece \tilde U_1 on curve 1
\draw[line width=2.0pt, red!85!black]
  plot[domain=0:\xUredEnd, samples=2] (\x,{\sOne*\x + \cOne});

% Green final piece \tilde U_2 on curve 2
\draw[line width=2.0pt, green!60!black]
  plot[domain=\xUgreenStart:0.5, samples=2] (\x,{\sTwo*\x + \cTwo - \nTwoEnd});

% Endpoints / markers
\fill (0,\cOne) circle (0.35pt);
\fill (0,\cTwo) circle (0.35pt);
\fill (0.5,\yOneEnd) circle (0.35pt);
\fill (0.5,\yTwoEnd) circle (0.35pt);

\fill (UredA) circle (0.40pt);
\fill (UredB) circle (0.40pt);
\fill (UgreenA) circle (0.40pt);
\fill (UgreenB) circle (0.40pt);

% Labels for highlighted pieces
\node[fill=white, inner sep=0.8pt, font=\scriptsize, text=red!85!black]
  at (0.055,0.235) {$\tilde U_1$};

\node[fill=white, inner sep=0.8pt, font=\scriptsize, text=green!50!black]
  at (0.445,0.255) {$\tilde U_2$};

% ==========================================================
% Frame and guides
% ==========================================================
\draw[thick] (0,0) rectangle (1,1);
\draw[thick,dashed] (0.5,0) -- (0.5,1); % x = 1/2

% Axes labels
\node[below] at (0.5,0) {$x$};
\node[left]  at (0,0.5) {$y$};

% Tick labels
\node[below left]  at (0,0) {\scriptsize $0$};
\node[below right] at (1,0) {\scriptsize $1$};
\node[above left]  at (0,1) {\scriptsize $1$};
\node[below]       at (0.5,0) [yshift=-10pt] {\scriptsize $\frac12$};

% ==========================================================
% Legend
% ==========================================================
\begin{scope}[shift={(1.06,0.86)}, scale=0.95]
  \fill[Mone] (0,0) rectangle (0.09,0.09);
  \draw (0,0) rectangle (0.09,0.09);
  \node[right] at (0.12,0.045) {$\mathcal M_1$};

  \fill[Mtwo] (0,-0.14) rectangle (0.09,-0.05);
  \draw (0,-0.14) rectangle (0.09,-0.05);
  \node[right] at (0.12,-0.095) {$\mathcal M_2$};

  \fill[Mthree] (0,-0.28) rectangle (0.09,-0.19);
  \draw (0,-0.28) rectangle (0.09,-0.19);
  \node[right] at (0.12,-0.235) {$\mathcal M_3$};

  \fill[Mfour] (0,-0.42) rectangle (0.09,-0.33);
  \draw (0,-0.42) rectangle (0.09,-0.33);
  \node[right] at (0.12,-0.375) {$\mathcal M_4$};
\end{scope}

\end{tikzpicture}
\caption{We represent $\XX_\alpha^{-1} (W_1)$ and $\XX_\alpha^{-1} (W_2)$ as the two black lines in the region $\mathcal{M}_4$. In the proof these lines are cut in $\mathcal{O}(\alpha)$ curves of length $1$.  The red segment $\tilde U_1$ is the initial unmatched curve of $\XX_\alpha ^{-1} (W_1)$, starting from $x=0$ until it reaches the same $y$-level as the  curve $\XX_\alpha ^{-1} (W_2)$. The green segment $\tilde U_2$ is the final unmatched curve of $\XX_\alpha ^{-1} (W_2)$.}
\label{fig:App4}
\end{figure}

By reparametrizing the time $\mathcal{O} (\alpha^2) t$ so that this curve has tangent vector of norm $1$ we get that $\tilde U_1$ is an admissible curve with length $|\tilde U_1|\leq C \alpha^{-1} \dd_{\Sigma} (W_{1,1}, W_{2,1})$. We now define the new base points $ \tilde{X}_1= \XX_\alpha^{-1} (\gamma_1(t^*))$. Since the second component of $\tilde{X}_1$ is the same of the second component of  $ X_2$, their distance is just given by the distance of the first component distance, i.e.
$$ |\tilde X_1 - X_2| = |t^* (a_1 + \alpha b_1)| \leq C \alpha^{-2} \dd_{\Sigma} (W_{1,1}, W_{2,1})\,.$$
We denote  by $\mathbf{v}_1 = (a_1, b_1)$ and $\mathbf{v}_2 = (a_2, b_2)$.
The tangent vectors of $\XX_\alpha^{-1} (\gamma_1(t))$ and $\XX_\alpha^{-1}(\gamma_2(t))$ will be denoted by 
$$ 
\www_1= {\XX_\alpha^{-1} (\mathbf{v}_1)}, \qquad \www_2= {\XX_\alpha^{-1} (\mathbf{v}_2)},
$$
and the matched curves are defined as
$$ 
\tilde W_{1,1,,k} = \left\{\tilde{X}_1 + t \frac{\mathbf{w}_1}{ |\mathbf{w}_1|} : t \in [k, k+1]\right\}, \qquad \tilde W_{2,1,k} = \left\{{X}_2 + t \frac{\mathbf{w}_2}{|\mathbf{w}_2|} : t \in [k, k+1]\right\},
$$ 
for any $k$ until they reach the end of the curve $\XX_\alpha^{-1} (\gamma_1(t))$ and $\XX_\alpha^{-1} (\gamma_2(t))$. Note that, despite these curves being of length $1$, they are well-defined objects in $\Sigma$. Indeed, a computation shows that if $W \in \Sigma$, then $\XX_\alpha^{-1}W$ is never a perfectly vertical curve. Note also that there is an unmatched curve $\tilde U_2$ for the final part of the curve that satisfies the same properties as $\tilde U_1$. We address this curve later.

We now compute the distance of the tangent vectors. We claim that 
\begin{align} \label{eq:distance-vector}
   \left|\frac{\mathbf{w}_1}{|\mathbf{w}_1|}- \frac{\mathbf{w}_2}{|\mathbf{w}_2|}\right|=  \left |  \frac{\XX_\alpha^{-1} (\mathbf{v}_1)}{|\XX_\alpha^{-1} (\mathbf{v}_1)|} - \frac{\XX_\alpha^{-1} (\mathbf{v}_2)}{|\XX_\alpha^{-1} (\mathbf{v}_2)|} \right | \leq C \alpha^{-4} |\mathbf{v}_1 - \mathbf{v}_2| \,.
\end{align}
We simply rewrite
\[
A_\alpha \coloneqq 
\begin{pmatrix}
1 & \alpha\\
\alpha & 1+\alpha^2
\end{pmatrix},
\qquad 
\XX_\alpha^{-1}(z)=A_\alpha z,
\qquad 
\hat z \coloneqq \frac{z}{|z|} \qquad (z\neq 0) \,.
\]
Recall that $\alpha\ge 1$ and that $\mathbf{v}_1=(a_1, b_1)$, $\mathbf{v}_2=(a_2, b_2)$ satisfy
\[
|\mathbf{v}_1|=|\mathbf{v}_2|=1,
\qquad 
|a_1|\le 2\alpha^{-1}|b_1|,
\qquad 
|a_2|\le 2\alpha^{-1}|b_2|.
\]
We observe that since the angle $\theta$ between $\mathbf{w}_1, \mathbf{w}_2$ satisfies $\theta \in [- \pi/2 , \pi/2]$ we deduce
\[
\left|\hat{\mathbf{w}}_1-\hat{\mathbf{w}}_2 \right|^2
=2(1-\cos\theta)\le 2\sin^2\theta,
\]
hence
\[
\left|\hat{\mathbf{w}}_1-\hat{\mathbf{w}}_2 \right|
\le \sqrt2\,|\sin\theta|
=\sqrt2\,\frac{|\det(\mathbf{w}_1,\mathbf{w}_2)|}{|\mathbf{w}_1|\,|\mathbf{w}_2|}.
\]
Here $(\mathbf{w}_1,\mathbf{w}_2)$ denotes the matrix with colums given by $\mathbf{w}_1,\mathbf{w}_2$ respectively.
Since $\det(A_\alpha)=1$,
\[
\det(\mathbf{w}_1,\mathbf{w}_2)=\det(A_\alpha\mathbf{v}_1,A_\alpha\mathbf{v}_2)=\det(\mathbf{v}_1,\mathbf{v}_2).
\]
Moreover,
\[
|\det(\mathbf{v}_1,\mathbf{v}_2)|
=|\det(\mathbf{v}_1-\mathbf{v}_2,\mathbf{v}_2)|
\le |\mathbf{v}_1-\mathbf{v}_2|\,|\mathbf{v}_2|
=|\mathbf{v}_1-\mathbf{v}_2|.
\]
Therefore
\[
\left|\hat{\mathbf{w}}_1-\hat{\mathbf{w}}_2\right|
\le \sqrt2\,\frac{|\mathbf{v}_1-\mathbf{v}_2|}{|A_\alpha\mathbf{v}_1|\,|A_\alpha\mathbf{v}_2|}.
\] 
It is straightforward to check that for $\alpha $ sufficiently large 
\[
|A_\alpha\mathbf{v}_1|\,|A_\alpha\mathbf{v}_2|\ge \frac{\alpha^4}{2}\,,
\]
hence the claim follows.

We consider the distances of base points. We have already shown that the distance of the base points is bounded by $|\tilde{X}_1 - X_2|\leq C \alpha^{-2} \dd_\Sigma (W_{1,1}, W_{2,1})$. Furthermore, since $|D\XX_\alpha^{-1} |\leq C\alpha^2$ and the original curves are of length bounded by $ \frac{2}{\alpha}$ we deduce that $\XX_{\alpha}^{-1} (W_{1,1}), \XX_{\alpha}^{-1} (W_{2,1})$ are made of at most $\mathcal{O}(\alpha)$ admissible curves. By \eqref{eq:distance-vector} we deduce that base points on each matched admissible curves can separate at most by $C {\alpha^{-4}}\dd_\Sigma (W_{1,1}, W_{2,1})$. Therefore, all matched base points are always distant at most $\leq  C \alpha^{-2} \dd_\Sigma (W_{1,1}, W_{2,1})$.

We now consider the last unmatched curve. Recalling the notation for the curves $\tilde W_{1,1,k}$ and $\tilde W_{2,1,k}$, their lengths are always $1$, until we arrive at the final matched curve of length $1$, which terminates at $t=N \in \NN$ with $N \leq C\alpha$. Following this, we construct another set of matched curves by $\tilde W_{1,1,fin}=\{\tilde X_1+N\hat \www_1 +t \hat \www_1: t \leq |\mathbf{w}_1 | t_1 \wedge |\mathbf{w}_2| t_2-N\}$, $\tilde W_{2,1,fin}=\{\tilde X_2+N\hat \www_2 +t \hat \www_2: t \leq |\mathbf{w}_1 | t_1 \wedge |\mathbf{w}_2| t_2-N\}$. Recalling that 
$t_i=\frac{\bar q-q}{a_i+\alpha b_i}$ for $i=1,2$ we deduce that for $i=1,2$ we have 
\begin{align*}
     t_i^2 |DT_\alpha^{-1} \mathbf{v}_i|^2 & = t_i^2 \left ( (a_i + \alpha b_i)^2 (1 + \alpha^2) + 2 \alpha (a_i + \alpha b_i) b_i + b_i^2 \right )
     \\
     & = (\bar q - q)^2 \left ( (1+ \alpha^2) + \frac{2  \alpha  b_i}{(a_i + \alpha b_i)} +  \frac{b_i^2}{(a_2 + \alpha b_2)^2} \right )
\end{align*}
thus, defining $R_i = \frac{2 \alpha b_i}{(1+ \alpha^2)(a_i + \alpha b_i)} + \frac{b_i^2}{(1+ \alpha^2)(a_i + \alpha b_i)^2}$ for $i=1,2$ we deduce
\begin{align*}
     |t_1 |DT_\alpha^{-1} \mathbf{v}_1| - t_2 |DT_\alpha^{-1} \mathbf{v}_2| |
     \leq |\bar q - q| \sqrt{1+ \alpha^2} \left| \sqrt{1 + R_1} - \sqrt{1 + R_2}   \right |  \leq C \alpha |R_1 - R_2 |\,.
\end{align*}
Using that 
$$ |R_1 - R_2| \leq C \alpha^{-2} \dd_\Sigma (W_1, W_2)$$
we conclude that the final unmatched curve defined by (assuming for instance that $|\mathbf{w}_2|t_2\geq |\mathbf{w}_1|t_1$)
$$ 
\tilde U_2 = \left\{ \XX_\alpha^{-1} (\gamma_2 (t)): t\in [|\mathbf{w}_1|t_1, |\mathbf{w}_2|t_2] \right\}
$$
satisfies $|\tilde U_2| \leq C \alpha^{-1} \dd_\Sigma (W_1, W_2)$. Hence, we conclude the proof.
\end{proof}

\section*{Acknowledgments}
MS and DV thank Sam Punshon--Smith, Kyle Liss and Carlangelo Liverani for fruitful discussions on anisotropic Banach spaces. The research of MCZ was partially supported by the Royal Society URF\textbackslash R1\textbackslash 191492 and the ERC/EPSRC Horizon Europe Guarantee EP/X020886/1. MS acknowledges support from the Chapman Fellowship at Imperial College London. The research of DV was funded by the Imperial College President’s PhD Scholarships.

 \bibliographystyle{abbrv}
 \bibliography{DynamoBiblio.bib}

\end{document}